\numberwithin{equation}{section}
\numberwithin{figure}{section}
  \theoremstyle{remark}
  \newtheorem*{rem*}{\protect\remarkname}
  \theoremstyle{plain}
  \newtheorem*{lem*}{\protect\lemmaname}
  \theoremstyle{conjecture}
  \newtheorem*{conjecture*}{\protect\conjecturename}
\theoremstyle{plain}
\newtheorem{thm}{\protect\theoremname}[section]
  \theoremstyle{plain}
  \newtheorem{lem}[thm]{\protect\lemmaname}
  \theoremstyle{definition}
  \newtheorem{defn}[thm]{\protect\definitionname}
  \theoremstyle{plain}
  \newtheorem{prop}[thm]{\protect\propositionname}
  \theoremstyle{plain}
  \newtheorem{cor}[thm]{\protect\corollaryname}
   \theoremstyle{definition}
  \newtheorem{remark}[thm]{\protect\remarkname}
\newtheorem*{ack}{Acknowledgements}
  \providecommand{\corollaryname}{Corollary}
  \providecommand{\definitionname}{Definition}
  \providecommand{\lemmaname}{Lemma}
  \providecommand{\propositionname}{Proposition}
  \providecommand{\remarkname}{Remark}
  \providecommand{\conjecturename}{Conjecture}
\providecommand{\theoremname}{Theorem}
\newcommand{\sfd}{\mathsf{d}}
\newcommand{\m}{\mathfrak{m}}
\newcommand{\mm}{\mathfrak{m}}
\newcommand{\R}{\mathbb{R}}
\newcommand{\RR}{\mathbb{R}}
\newcommand{\N}{\mathbb{N}}
\newcommand{\RCD}{\mathsf{RCD}}
\newcommand{\CD}{\mathsf{CD}}
\newcommand{\MCP}{\mathsf{MCP}}
\newcommand{\CDE}{\mathsf{CDE}}
\newcommand{\Geo}{{\rm Geo}}
\newcommand{\Opt}{\mathrm{OptGeo}}
\newcommand{\ee}{{\rm e}}
\newcommand{\Ent}{{\rm Ent}}
\newcommand{\weakto} {\rightharpoonup}
\newcommand{\orb}{\mathcal{O}}
\newcommand{\vphi}{\varphi}
\newcommand{\GTB}{(\mathsf{GTB})}
\def\In{\subseteq}
\global\long\def\G{\mathsf{G}}
\global\long\def\H{\mathsf{H}}
\global\long\def\Lip{\operatorname{Lip}}
\global\long\def\LIP{\operatorname{LIP}}
\global\long\def\quotient{\mathsf{p}}
\global\long\def\lip{\operatorname{lip}}
\global\long\def\Ch{\operatorname{Ch}}
\global\long\def\OptGeo{\operatorname{OptGeo}}
\global\long\def\ord{\operatorname{ord}}
\global\long\def\vol{\operatorname{vol}}
\def\mc{\mathcal}
\global\long\def\supp{\operatorname{supp}}
\global\long\def\Ric{\operatorname{Ric}}
\global\long\def\vol{\operatorname{vol}}
\begin{document}


\title{On quotients of spaces with  Ricci curvature bounded below}



\author[F.~Galaz-Garc\'ia]{Fernando Galaz-Garc\'ia$^*$}
\address[F.~Galaz-Garc\'ia]{Institut f\"ur Algebra und Geometrie, Karlsruher Institut f\"ur Technologie (KIT), Karlsruhe, Germany}
\email{galazgarcia@kit.edu}
\thanks{$^{*}$ Supported in part by the Deutsche Forschungsgemeinschaft grant GA 2050 2-1 within the  Priority Program SPP 2026 ``Geometry at Infinity''.}

\author[M.~Kell]{Martin Kell}
\address[M.~Kell]{Fachbereich Mathematik, Universit\"at T\"ubingen, Germany}
\email{martin.kell@math.uni-tuebingen.de}

\author[A.~Mondino]{Andrea Mondino} 
\address[A.~Mondino]{Mathematics Institute, The University of Warwick, United Kingdom}
\email{A.Mondino@warwick.ac.uk}

\author[G.~Sosa]{Gerardo Sosa}
\address[G.~Sosa]{Max Planck Institute for Mathematics in the Sciences, Leipzig, Germany}
\email{gsosa@mis.mpg.de}

\setcounter{tocdepth}{1}


\keywords{Ricci curvature, group actions, optimal transport, quotients}

\bibliographystyle{plain}


\begin{abstract}
 Let $(M,g)$ be a smooth Riemannian manifold and $\G$ a compact Lie group acting on $M$ effectively and by isometries. It is well known that a lower bound of the sectional curvature of $(M,g)$ is again a bound for the curvature of the quotient space, which is an Alexandrov space of curvature bounded below. Moreover, the analogous stability property holds for metric foliations and submersions. 
\\The goal of the paper is to prove the corresponding stability properties for synthetic Ricci curvature lower bounds. Specifically, we show that such stability holds for quotients of $\RCD^{*}(K,N)$-spaces, under isomorphic compact group actions and more generally under metric-measure foliations and submetries. An $\RCD^{*}(K,N)$-space is a metric measure space with an upper dimension bound $N$ and weighted Ricci curvature bounded below by $K$ in a generalized sense. In particular, this shows that if $(M,g)$ has Ricci curvature bounded below by $K\in \R$ and dimension $N$, then the quotient space is an $\RCD^{*}(K,N)$-space. Additionally, we tackle the same problem for the $\CD/\CD^*$ and $\MCP$  curvature-dimension conditions. 

We provide as well geometric applications which include: A generalization of Kobayashi's Classification Theorem of homogenous manifolds to $\RCD^{*}(K,N)$-spaces with \emph{essential minimal dimension} $n\leq N$; a structure theorem for $\RCD^{*}(K,N)$-spaces admitting actions by \emph{large (compact) groups}; and geometric rigidity results for orbifolds such as Cheng's Maximal Diameter and Maximal Volume Rigidity Theorems. 

Finally, in two appendices  we apply the methods of the paper to study quotients by isometric group actions  of discrete spaces  and of (super-)Ricci flows.
\end{abstract}

\maketitle


\tableofcontents


\section{Introduction}
Studying the geometry of isometry groups has proven to be advantageous for the understanding  of Riemannian manifolds. For instance, this point of view has been particularly successful in the construction of new examples under the assumption that the sectional curvature of a Riemannian manifold is non-negative or positive  (see, for example, the surveys by K.\ Grove \cite{Grove} and W.\ Ziller \cite{Ziller} and references therein). A main motivation to consider these types of bounds is that they are preserved under quotients of isometric actions. More precisely, the possibly non-smooth orbit space satisfies the same sectional curvature lower bounds of the original space in a synthetic fashion: in the sense of comparison triangles \`a la Alexandrov.   
A natural question that arises is weather a similar stability statement holds, and if so in which sense, when the assumptions are weakened by considering instead Ricci curvature lower bounds. The goal of the present paper is  to answer this question and to provide applications.

In order to get a feeling of the problem and of the state-of-the-art research, we discuss the simpler case when the quotient space is a smooth Riemannian manifold. A typical example is  when a Lie group $\G$ acts isometrically, freely and properly on a Riemannian manifold $(M,g)$. Then the projection 
onto the quotient space 
equipped with the quotient metric is a Riemannian submersion, and the problem amounts to understanding how curvature bounds behave under such maps. In this context, O'Neill's formula \cite{ONeill} shows that sectional curvature lower bounds are preserved. However, C.\ Pro and F.\ Wilhelm recently showed \cite{ProWil} that this is not the case for the \emph{standard Ricci curvature tensor}. To be more precise, Pro and Wilhelm presented  examples of Riemannian submersions from compact manifolds with positive Ricci curvature to manifolds that have small neighborhoods of arbitrarily negative Ricci curvature. On the other hand,   J.\ Lott \cite{Lott} showed in this framework that Ricci curvature lower bounds in a \emph{weighted sense} are stable. Specifically, he considered the \emph{Bakry-\'Emery Ricci tensor} rather than the standard Ricci curvature tensor under the condition that the  fiber transport preserves the measure up to constants. Inspired by this result it is then natural to ask whether Ricci curvature lower bounds are preserved, in a synthetic sense, in a more general case. Namely, without assuming that the original and quotient spaces are smooth manifolds.

\emph{Synthetic Ricci curvature lower bounds}  for  \emph{metric measure spaces} have been introduced in the seminal papers of Lott-Villani \cite{LottVillani} and Sturm \cite{sturm:I,sturm:II}. As usual, a metric measure space $(M,\sfd,\mm)$ is a complete separable metric space $(M,\sfd)$ endowed with a  non-negative, locally finite Borel measure $\mm$. The rough idea is to analyze the convexity properties of suitable functionals, called \emph{entropies}, along geodesics in the space of probability measures endowed with the quadratic Wasserstein distance. In a nutshell,  the more the entropy is \emph{convex} along geodesics the more the space is \emph{Ricci curved}. This led to the definition of $\CD(K,N)$-spaces which corresponds to metric measure spaces having \emph{synthetic Ricci curvature}   bounded below by $K$ and dimension bounded above by $N$, for $K\in \R$ and $N \in [1,\infty]$. See Section \ref{sec:Prel} for the precise notions. 

In order to isolate \emph{Riemannian} from \emph{Finslerian} structures, Ambrosio-Gigli-Savar\'e  \cite{Ambrosio-Gigli-Savare11b}  proposed to strengthen the classical $\CD$ conditions with the requirement that the heat flow is linear (see  \cite{AmbrosioGigliMondinoRajala} for the present simplified axiomatization, and \cite{AmbrosioMondinoSavare, BS10, ErbarKuwadaSturm, gigli:laplacian} for the finite dimensional refinements).  Such a strengthening led to the notion of  $\RCD(K,\infty)$ and $\RCD^{*}(K,N)$ spaces 
 which correspond to \emph{Riemannian} metric measure spaces having \emph{synthetic Ricci curvature}   bounded below by $K$ and dimension bounded above by $N$. For more details see the beginning of  Section \ref{sec:StructQuotRCD}. Let us mention that both $\CD$ and $\RCD$ conditions are compatible with the smooth counterpart and  are stable under pointed measured Gromov-Hausdorff convergence.
\\

To describe the contents and main results of the article, we introduce some notation.  Let $\G$ be a compact Lie group acting on the metric measure space $(M,\sfd,\mm)$ by isomorphisms. That is, by writing $\tau_{g}(x):= g x$ for the translation map by $g \in \G$, we require that $\tau_{g}$ is a measure-preserving isometry:  
\[
\sfd(gx, gy)=\sfd(x,y) \quad \text{and}  \quad (\tau_{g})_{\sharp} \mm=\mm, \qquad   \text{ for all } g \in \G \text{ and  for all }  x,y \in M.
\]
Let us assume that $(M,\sfd)$ is a  geodesic space.
We denote by $M^{*}:=M/\G$ the quotient space and by $\quotient:M \to M^{*}$ the quotient map. Furthermore, we endow $M^{*}$ with the pushforward measure $\mm^{*}:=\quotient_{\sharp} \mm$ and the quotient metric 
\[
\sfd^{*}(x^{*},y^{*}):=\inf_{x \in \quotient^{-1}(x^{*}), y \in \quotient^{-1}(y^{*})} \sfd(x,y).
\]
It has recently been shown by Guijarro and Santos-Rodr\'iguez \cite{GS2016IsoRCD}, and independently by the the fourth author \cite{Sosa2016}, that the  group of (measure-preserving) isometries of an $\RCD^*(K,N)$ space is a Lie group. Moreover, as shown in \cite{Sosa2016}, the corresponding statements are also valid for strong $\CD/\CD^*(K,N)$-spaces and essentially non-branching $\MCP$-spaces for which tangent cones are \emph{well-behaved},  yet might fail to be Euclidean.  We recall that granted that the space is compact, then its isometry group must be compact as well.

Our first  main result states that the quotient space $(M^{*},\sfd^{*}, \mm^{*})$ inherits the synthetic Ricci curvature lower bounds from  $(M,\sfd,\mm)$.


\begin{thm}[Theorem \ref{thm:CDbyGisCD} and Theorem \ref{thm:M*RCD}]
\label{thm:quotCDRCD}
Assume that $(M,\sfd,\m)$  satisfies one of the following conditions: strong $\CD(K,N)$, strong $\CD(K,\infty)$, strong  $\CD^{*}(K,N)$,  $\RCD^{*}(K,N)$, or $\RCD(K,\infty)$. Then the quotient metric measure space  $(M^{*},\sfd^{*},\m^{*})$ satisfies the corresponding condition for the same parameters $K$ and $N$.
\end{thm}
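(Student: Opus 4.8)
The plan is to lift Wasserstein geodesics from the quotient $M^*$ back to $M$, verify the entropy-convexity inequality upstairs, and push it back down. The key structural fact is that the quotient map $\quotient\colon M\to M^*$ is a \emph{submetry}: for every $x\in M$ and $r>0$ one has $\quotient(B_r(x)) = B_r(\quotient(x))$, which follows from compactness of $\G$ together with the definition of $\sfd^*$. This implies in particular that $\quotient$ is $1$-Lipschitz and that geodesics in $M^*$ can be lifted to \emph{horizontal} geodesics in $M$ of the same length. The measure-theoretic counterpart is that $\mm = \int_{M^*} \mm_{x^*}\, d\mm^*(x^*)$ for a disintegration into the orbit measures, where each fiber $\quotient^{-1}(x^*)$ is an orbit $\G\cdot x$ carrying a canonical $\G$-invariant probability measure; crucially, $(\tau_g)_\sharp\mm = \mm$ forces these fiber measures to be transported isometrically by horizontal geodesics, which is the key cancellation making the argument work (this is the synthetic analogue of Lott's measure-preserving fiber transport hypothesis).

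Concretely, I would proceed as follows. Given $\mu_0^*, \mu_1^* \in \mathcal{P}_2(M^*)$ with finite entropy and an optimal dynamical plan $\Pi^* \in \OptGeo(\mu_0^*,\mu_1^*)$, I first lift it: choose a measurable selection of horizontal geodesics realizing each geodesic of $M^*$, and combine it with the orbit-measure disintegration to produce a plan $\Pi$ on $\Geo(M)$ whose time-$t$ marginals $\mu_t := (e_t)_\sharp \Pi$ satisfy $\quotient_\sharp \mu_t = \mu_t^*$. One then checks that $\Pi$ is itself optimal in $M$ — this uses that horizontal lifts preserve length and that the submetry property makes the lifted cost equal to the downstairs cost — so $(\mu_t)$ is a $W_2$-geodesic in $M$. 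Next, because $\mu_t$ is built fiberwise from the $\G$-invariant orbit measures and $\mm$ is $\G$-invariant, the relative entropy is preserved: $\Ent_{\mm^*}(\mu_t^* \mid \mm^*) = \Ent_{\mm}(\mu_t \mid \mm)$, and similarly for the Rényi-type functionals $S_N$ entering the $\CD^*(K,N)$ and $\MCP$ definitions. Applying the $\CD$/$\CD^*$ inequality valid for $(M,\sfd,\mm)$ along $(\mu_t)$ and reading off the same inequality for $(\mu_t^*)$ then yields the curvature-dimension bound on $M^*$; the ``strong'' versions, which require the inequality for \emph{all} optimal plans and with the distortion coefficients evaluated along the plan, transfer because every optimal plan downstairs arises (after the entropy-preserving identification) from one upstairs. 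For the $\RCD$ cases one additionally needs to check the infinitesimal-Hilbertian property: the Cheeger energy on $M^*$ pulls back to the $\G$-invariant part of the Cheeger energy on $M$, which is a quadratic form since $\Ch_M$ is, so $\Ch_{M^*}$ is quadratic too; equivalently one verifies that $M^*$ has no branching in the sense needed and invokes the tensorization/pullback of the linear heat flow.

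The main obstacle I anticipate is the \emph{measurable lifting of Wasserstein geodesics through the submetry together with the optimality of the lifted plan}. Producing a single horizontal lift of a single geodesic is elementary, but one needs a jointly measurable family of such lifts indexed by the geodesics appearing in $\Pi^*$, and then one must argue that the resulting coupling on $M\times M$ (endpoints of lifted geodesics) is $\sfd^2$-cyclically monotone, not merely that it has the right marginals — a priori the horizontal lift of an optimal plan could fail to be optimal if different geodesics are lifted to ``incompatible'' orbits. The resolution is that the submetry property gives $\sfd(x,y) \ge \sfd^*(\quotient x, \quotient y)$ always, with equality along horizontal lifts, so the lifted cost is a lower bound for any plan projecting correctly and is achieved; hence optimality is automatic once the marginals are correct. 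A secondary subtlety is that $M^*$ need not be geodesic a priori in the non-proper case, but under the stated conditions $(M,\sfd)$ is geodesic and the submetry pushes geodesics forward, so $M^*$ is geodesic as well. I would also need to confirm that the orbit-measure disintegration is well-defined and weakly measurable, which follows from the compact group action and invariance of $\mm$, and that the essentially-non-branching hypothesis needed for the $\MCP$ statement (handled in the companion Theorem~\ref{thm:CDbyGisCD}) descends to the quotient — this last point uses that horizontal geodesics in $M$ project injectively enough onto geodesics in $M^*$.
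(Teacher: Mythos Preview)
Your proposal is correct and follows essentially the same strategy as the paper: lift the Wasserstein geometry from $M^*$ to the $\G$-invariant part of $\mathcal{P}_2(M)$, apply the curvature-dimension inequality upstairs, and push it back down using that lifted densities are constant along orbits and that the lifted coupling is concentrated on the set $\mathcal{OD}$ where $\sfd=\sfd^*$; for the $\RCD$ part, the identification of $\Ch_{M^*}$ with the restriction of $\Ch_M$ to $\G$-invariant functions is exactly what the paper proves in Section~5.

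The one noteworthy technical difference is that the paper avoids your ``main obstacle'' entirely: rather than lifting a dynamical plan $\Pi^*\in\mathcal{P}(\Geo(M^*))$ via a measurable selection of horizontal geodesics and then checking cyclic monotonicity of the endpoints, the paper first establishes (Theorem~\ref{thm:LiftIsom}) that the measure lift $\Lambda:\mathcal{P}_p(M^*)\to\mathcal{P}_p^\G(M)$ is an \emph{isometry} by lifting static couplings to $\mathcal{OD}$ and, for the reverse inequality, lifting Kantorovich potentials (Lemma~\ref{lem:c-concave-lifts}). Once $\Lambda$ is known to be isometric, the lift of any $W_2$-geodesic is automatically a $W_2$-geodesic and optimality of the lifted plan is immediate---no geodesic selection, no cyclic-monotonicity check. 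This is cleaner than your route, though yours would also go through. Your aside about ``fiber measures transported isometrically by horizontal geodesics'' and the remark about non-branching/linear heat flow for infinitesimal Hilbertianity are not needed and are not how the paper proceeds.
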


The next corollary follows from the compatibility of the $\RCD^{*}(K,N)$-condition with its smooth counterpart. 


\begin{cor}
Let $(M,g)$ be a smooth complete $N$-dimensional Riemannian manifold with $\Ric_{g} \geq K \, g$ and denote by $\sfd$  the Riemannian distance and by $\mm$ the Riemannian volume measure on $M$. Then the quotient  metric measure space  $(M^{*},\sfd^{*},\m^{*})$ satisfies the $\RCD^{*}(K,N)$-condition. 
\end{cor}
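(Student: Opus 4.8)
The plan is to reduce the statement to two inputs: the compatibility of the $\RCD^{*}(K,N)$ condition with smooth Riemannian geometry, and the first main result, Theorem \ref{thm:quotCDRCD}, applied to the given isometric action of $\G$. In particular, no new analysis is needed; the work is entirely in checking hypotheses and invoking known facts.

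First I would verify that the metric measure space $(M,\sfd,\mm)$ built from $(M,g)$ fits the standing assumptions preceding Theorem \ref{thm:quotCDRCD}. Completeness of $(M,g)$ and the Hopf--Rinow theorem give that $(M,\sfd)$ is a geodesic space; the Riemannian volume $\mm$ is a non-negative, locally finite Borel measure, so $(M,\sfd,\mm)$ is an admissible metric measure space. Since every Riemannian isometry of $(M,g)$ preserves both $\sfd$ and $\mm$, the compact Lie group $\G$ of the standing hypotheses acts on $(M,\sfd,\mm)$ by isomorphisms in the required sense, and $(M^{*},\sfd^{*},\mm^{*})$ is exactly the quotient metric measure space considered there.

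Second, I would recall why $(M,\sfd,\mm)$ is itself an $\RCD^{*}(K,N)$-space. The bound $\Ric_{g}\geq K\,g$ together with $\dim M = N$ yields the curvature-dimension condition $\CD^{*}(K,N)$ (equivalently, here, $\CD(K,N)$), by the classical equivalence on smooth manifolds between the Bochner/Bakry--\'Emery formulation and the Lott--Sturm--Villani one (von Renesse--Sturm, Sturm, Lott--Villani, and the finite-dimensional refinements cited in Section \ref{sec:StructQuotRCD}). Moreover the Cheeger energy of a Riemannian manifold is a quadratic form, the associated heat flow being the linear semigroup generated by the Laplace--Beltrami operator, so $(M,\sfd,\mm)$ is infinitesimally Hilbertian; combining the two facts, it satisfies $\RCD^{*}(K,N)$. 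Applying Theorem \ref{thm:quotCDRCD} to the action of $\G$ on this space then gives that $(M^{*},\sfd^{*},\mm^{*})$ is $\RCD^{*}(K,N)$ with the same constants, which is the assertion.

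I do not expect a genuine obstacle: the argument is a direct concatenation. If there is any delicate point, it is the appeal to the smooth-to-synthetic compatibility for the \emph{reduced} dimensional condition $\CD^{*}(K,N)$ together with infinitesimal Hilbertianity — i.e.\ that a smooth $N$-manifold with $\Ric\geq K$ is $\RCD^{*}(K,N)$ and not merely $\CD(K,N)$ — but this is by now standard and is precisely the compatibility statement the corollary refers to.
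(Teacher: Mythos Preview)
Your argument is correct and matches the paper's approach exactly: the corollary is stated as an immediate consequence of the compatibility of the $\RCD^{*}(K,N)$-condition with its smooth counterpart together with Theorem~\ref{thm:quotCDRCD}, and that is precisely what you invoke. The only minor addition you make is the explicit check of the standing hypotheses (geodesic space, measure-preserving isometries), which the paper leaves implicit.
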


In particular we recover Lott's result \cite[Theorem 2]{Lott}  saying  that $\Ric_{g^{*}, \Psi^{*},N} \geq K \, g^{*}$  holds, granted that  $(M^{*},\sfd^{*},\m^{*})$ is isomorphic as a m.m.\ space to a smooth weighted Riemannian manifold $(M^{*}, g^{*}, \Psi^{*} \vol_{g^{*}})$. Above, $\Ric_{g^*,\Psi^{*},N}$ is the \emph{Bakry-\'Emery $N$-Ricci tensor} which is defined, for $n\leq N$, as
\begin{equation*}
\textrm{Ric}_{g^{*}, \Psi^{*}, N} :=\textrm{Ric}_{g^{*}}- (N-n) \frac{\nabla_{g^*}^2 (\Psi^{*})^{\frac{1}{N-n}}}{(\Psi^{*})^{\frac{1}{N-n}}},
\end{equation*}
where $n$ is the topological dimension of $M^{*}$ and, when $n=N$, $\Psi^*$ is assumed to be constant and the bound reads as $\Ric_{g^*,\Psi^{*},N} :=\Ric_{g^{*}} \geq K g^{*}$. See Theorem \ref{ref:weakONeillRicciFormula} for a more general statement of this result.

The proof of Theorem \ref{thm:quotCDRCD} is based on a detailed isometric identification of  the space of probability measures on the quotient space $\mathcal{P}_{2}(M^{*})$  with the space of $\G$-invariant probability measures on the original space $\mathcal{P}_{2}^{\G}(M)$  (see  Theorem \ref{thm:LiftIsom}). Let us mention that the stability of the strong $\CD(0,N)$ and strong $\CD(K,\infty)$ conditions for compact spaces under quotients of isometric compact group actions had already been proved,  via an independent argument, in Lott-Villani \cite{LottVillani}.  Beyond dropping the compactness assumption on $(M,\sfd)$, and considering arbitrary lower bounds in the finite dimensional case, the real advantage of Theorem \ref{thm:quotCDRCD} compared to  \cite[Theorem 5.35]{LottVillani} is the extra information that the quotient space is $\RCD^{*}(K,N)$ when the starting  space is so. For us, such additional information is fundamental to obtain the  geometric  applications that we discuss below.

As a first step we establish the next key technical result roughly saying that, under mild assumptions, most of  the orbits are homeomorphic. In order to state the result (see Theorem $\ref{thm:PrincipleOrbit}$ for the complete statement), recall that the isotropy group  of $x\in M$ is the subgroup $\G_{x}\leq \G$ that contains all elements that fix $x$. 
Moreover, by definition we say that  $(M,\sfd,\mm)$ has \emph{good transport behavior} if for all pairs of measures $\mu(\ll \mm),\nu \in \mathcal{P}_{2}(M)$ any $L^{2}$-optimal coupling from $\mu$ to $\nu$ is induced by a map. In this case we write  $\GTB$ for short. Examples of spaces having $\GTB$ are $\RCD^{*}(K,N)$-spaces and, more generally, strong $\CD^{*}(K,N)$-spaces and essentially non-branching $\MCP(K,N)$-spaces  (see Theorem \ref{thm:ExGTB} collecting results from  \cite{CavallettiHuesmann, CMMapsNB, GRS2016}).


\begin{thm}[Principal Orbit Theorem \ref{thm:PrincipleOrbit}]
\label{thm:POIntro}
Assume that $(M,\sfd,\m)$ has $\GTB$. Then, there exists (up to conjugation) a unique subgroup  $\G_{\min}\le\G$  such that the orbit $\G(y)$ of $\,\m$-a.e. $y\in M$  is homeomorphic to the quotient $\nicefrac{\G}{\G}_{\min}$.  
We call $\nicefrac{\G}{\G}_{\min}$ the principal orbit of the action of $\G$  over $(M,\sfd,\m)$.
\end{thm}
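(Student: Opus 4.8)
The plan is to bypass the slice theorem, which is unavailable at this level of generality, and to extract genericity of the orbit type directly from the measure $\m$, using $\GTB$ to force optimal transport maps to be $\G$-equivariant; we may assume $\m\not\equiv 0$. The first step is a transport lemma: for \emph{every} $x\in M$ the isotropy group $\G_y$ is subconjugate to $\G_x$ for $\m$-a.e.\ $y\in M$. To see this, fix $x$ and let $\nu_x\in\mathcal{P}_{2}(M)$ be the pushforward of the normalized Haar measure of $\G$ under the orbit map $g\mapsto gx$; by left-invariance of Haar, $\nu_x$ is a $\G$-invariant probability measure concentrated on the compact orbit $\G(x)$. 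Pick a $\G$-invariant Borel set $A$ with $0<\m(A)<\infty$, say $A=\quotient^{-1}(B)$ for a metric ball $B\In M^{*}$, and put $\mu:=\m|_A/\m(A)$, a $\G$-invariant measure in $\mathcal{P}_{2}(M)$ with $\mu\ll\m$. By $\GTB$ every $L^{2}$-optimal coupling of $\mu$ and $\nu_x$ is induced by a map; moreover it is \emph{unique}, since if there were two distinct ones, both induced by maps, their average would be optimal but not induced by a map, contradicting $\GTB$. As $\tau_g$ is a measure-preserving isometry, $\pi\mapsto(\tau_g\times\tau_g)_{\sharp}\pi$ sends optimal couplings of $\mu$ and $\nu_x$ to optimal couplings of $\mu$ and $\nu_x$, so by uniqueness the optimal coupling $\pi=(\mathrm{id},T)_{\sharp}\mu$ satisfies $(\tau_g\times\tau_g)_{\sharp}\pi=\pi$ for all $g\in\G$; unravelling this yields, for each fixed $g$, the identity $T(gy)=gT(y)$ for $\mu$-a.e.\ $y$. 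A Fubini argument over $\G$ then shows that for $\m$-a.e.\ $y\in A$ and every $h\in\G_y$ one has $h\in\G_{T(y)}$; since $T(y)$ lies on $\G(x)$, its isotropy group is conjugate to $\G_x$, and hence $\G_y$ is subconjugate to $\G_x$. Letting $A$ exhaust $M$ gives the claim.

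Granting the (Borel) measurability of the sets in play, the claim and Fubini give that for $\m\otimes\m$-a.e.\ $(x,y)$ the group $\G_y$ is subconjugate to $\G_x$; interchanging $x$ and $y$ and intersecting, $\m\otimes\m$-a.e.\ pair has $\G_x$ and $\G_y$ mutually subconjugate, hence conjugate --- in a compact Lie group a closed subgroup containing a conjugate of itself has the same dimension and number of connected components as that conjugate, so must equal it, and mutual subconjugacy then forces conjugacy. A final application of Fubini produces a point $x_0$ (in fact $\m$-a.e.\ point works) with $(\G_y)=(\G_{x_0})$ for $\m$-a.e.\ $y$; set $\G_{\min}:=\G_{x_0}$. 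For such $y$, the orbit map descends to a continuous bijection $\nicefrac{\G}{\G}_y\to\G(y)$, which is a homeomorphism because its source is compact and $\G(y)\In M$ is Hausdorff; composing with the homeomorphism $\nicefrac{\G}{\G}_y\cong\nicefrac{\G}{\G}_{\min}$ induced by a conjugacy $\G_y=g\G_{\min}g^{-1}$ gives $\G(y)\cong\nicefrac{\G}{\G}_{\min}$. Uniqueness up to conjugation is immediate: any subgroup with this property must be conjugate to $\G_y$ for $\m$-a.e.\ $y$, hence to $\G_{\min}$.

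The equivariance-of-transport-maps step is the conceptual core and is short once $\GTB$ is available; the genuinely delicate points are measure-theoretic. One needs that $x\mapsto\G_x$ is Borel as a map into the space of closed subgroups of $\G$ (with the Chabauty topology), that the orbit-type sets and the relation ``$\G_y$ subconjugate to $\G_x$'' are Borel, and that the exceptional sets appearing in the transport lemma can be chosen Borel so as to justify the Fubini steps; all of this is available because $\G$ is compact and acts with closed orbits, but carefully establishing it --- together with the Fubini-over-$\G$ promotion of the pointwise-a.e.\ equivariance of $T$ --- is where the real work lies.
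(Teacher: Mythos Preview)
Your argument is correct and proves the same transport lemma as the paper --- for every $x\in M$ and $\m$-a.e.\ $y$, the isotropy $\G_y$ is subconjugate to $\G_x$ --- but via a different mechanism. The paper works with the $c_p$-concave potential $\varphi(y)=\sfd(\G(x),y)^p$ (the lift of the $c_p$-concave function $\sfd^*(x^*,\cdot)^p$ on $M^*$), observes that its superdifferential $\partial^{c_p}\varphi(y)$ is $\G_y$-invariant because $\varphi$ is $\G$-invariant, and then invokes $\GTB$ in the equivalent form ``$c_p$-superdifferentials are $\m$-a.e.\ single-valued'' to conclude that the unique closest point $x_y\in\G(x)$ is $\G_y$-fixed; this is a pointwise argument with a single a.e.\ step at the end, so no Fubini over $\G$ is needed. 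Your uniqueness-forces-equivariance argument for the optimal map $T$ from $\mu$ to $\nu_x$ is a valid alternative, but upgrading ``for each fixed $g$, $T(gy)=gT(y)$ $\mu$-a.e.'' to ``$\G_y\le\G_{T(y)}$'' is a genuine extra step beyond bare Fubini (for $h\in\G_y$ one picks $g$ in the full-Haar-measure set $E_y\cap E_yh^{-1}$, using bi-invariance), which you correctly flag as delicate. For the conclusion the paper selects a minimal orbit type via the descending chain condition on closed subgroups of the compact Lie group $\G$, whereas you symmetrize by Fubini and use that mutually subconjugate closed subgroups of a compact Lie group are conjugate; both are fine, with your route trading the chain argument for the (routine) measurability checks you identify.
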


The theorem generalizes a previous  result  of  Guijarro and the first author   in the framework of  finite dimensional Alexandrov spaces  \cite{GGG2013}. Recall that for some suitable parameters such spaces are $\RCD^{*}(K,N)$, thus they satisfy $\GTB$. Our proof is independent and uses optimal transport arguments which rely on the $\GTB$ rather than on the strong geometric information granted by the  Alexandrov assumption.

By using the  Principal Orbit Theorem \ref{thm:POIntro} we get the next corollary on cohomogeneity one actions, i.e. actions whose orbit space is one-dimensional.  Compare with the analogous statements in Alexandrov geometry proved in  \cite{GGS2009}.

\begin{cor}[Corollary \ref{co:bundle-s1} and Corollary \ref{co:bundle-R}]\label{co:CoHom1}
Let $(M,\sfd,\m)$ have $\GTB$ and let $\G_{\min}$ and $\G/\G_{\min}$ denote a minimal isotropy group and the  principal orbit of the action respectively.
\begin{itemize}
\item Assume that  $(M^{*},\sfd^{*})$ is homeomorphic to a circle $S^{1}$. Then $(M,\sfd)$ is homeomorphic to  a fiber bundle over $S^1$ with fiber the homogeneous space $\G/\G_{\min}$. In particular, $(M,\sfd)$ is a topological manifold.
\item Assume that   $(M^{*},\sfd^{*})$ is homeomorphic  to an interval $[a,b]\subset \R$. Then the open dense set $
\quotient^{-1}((a,b))\subset M$
has full $\m$-measure and is homeomorphic to $(a,b)\times (\G / \G_{\min})$.  In case $(M^*,\sfd^{*})$ is  homeomorphic to $\mathbb R$ we have that $(M,\sfd)$ is homeomorphic to the product ${\mathbb R} \times (\G / \G_{\min})$,  in particular  $(M,\sfd)$ is a topological manifold.
\end{itemize}
\end{cor}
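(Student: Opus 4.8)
The plan is to show, in each case, that the restriction of $\quotient$ over the \emph{topological interior} of $M^{*}$ (all of $S^{1}$ in the first bullet; the open interval $(a,b)$, or all of $\R$, in the second) is a fiber bundle with fiber the principal orbit $\G/\G_{\min}$, and then to invoke triviality of fiber bundles over contractible bases. As preliminaries I would record that, $\G$ being compact, $\quotient\colon M\to M^{*}$ is an open surjective submetry whose fibers are precisely the orbits, with $\sfd(x,\G(y))=\sfd^{*}(\quotient(x),\quotient(y))$; that $M^{*}$ is a geodesic space which by Theorem~\ref{thm:quotCDRCD} is itself an $\RCD^{*}/\CD^{*}/\MCP$ space, so that its reference measure $\m^{*}$ is non-atomic (the space being non-degenerate); and that, by the Principal Orbit Theorem~\ref{thm:POIntro}, the set $M^{*}_{\mathrm{reg}}$ of points $p$ with $\quotient^{-1}(p)\cong\G/\G_{\min}$ has full $\m^{*}$-measure, hence is dense in $M^{*}$.

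The crux is Step~2: showing that \emph{every} point $p_{0}$ at which $M^{*}$ is locally homeomorphic to $\R$ belongs to $M^{*}_{\mathrm{reg}}$; equivalently, that if $\G(x_{0})$ is not a principal orbit then $\quotient(x_{0})$ is a topological boundary point of $M^{*}$. I would prove this by a local slice analysis around the orbit $\G(x_{0})$: using that the action is isometric and $\quotient$ a submetry, a small $\G$-invariant neighbourhood of $\G(x_{0})$ fibres $\G$-equivariantly over the orbit with ``slice'' $S_{x_{0}}$ a geodesic space on which the isotropy group $\G_{x_{0}}$ acts by isometries fixing $x_{0}$, and $S_{x_{0}}/\G_{x_{0}}$ is locally homeomorphic to $M^{*}$ near $p_{0}$. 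If $\G(x_{0})$ were not principal, then $\G_{x_{0}}$ would act non-trivially on $S_{x_{0}}$ in the directions transverse to the orbit; since the quotient $S_{x_{0}}/\G_{x_{0}}$ must be one-dimensional this forces $\G_{x_{0}}$ to act essentially transitively on the space of directions at $x_{0}$ in $S_{x_{0}}$, so that $S_{x_{0}}/\G_{x_{0}}$ has a ray $[0,\infty)$ as its local model at the image of $x_{0}$, i.e.\ $p_{0}$ is a boundary point. Here the essential non-branching/$\GTB$ hypotheses are used to control transverse geodesics (they do not branch off the orbits) and to make the slice decomposition metrically faithful, and the $\RCD^{*}$ regularity of $M^{*}$ is used to give meaning to ``ray-like local model'' as the obstruction to being a manifold point. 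Combining this with the density of $M^{*}_{\mathrm{reg}}$: in the $S^{1}$ case every orbit is $\cong\G/\G_{\min}$, and in the $[a,b]$ case every orbit over $(a,b)$ is $\cong\G/\G_{\min}$ while any exceptional behaviour is confined to $\{a,b\}$, which is $\m^{*}$-null; hence $\quotient^{-1}((a,b))$ is open, dense and of full $\m$-measure.

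For Step~3 I would upgrade ``all fibres over the interior are the homogeneous space $\G/\G_{\min}$'' to ``$\quotient$ is a fiber bundle there'' by constructing local trivialisations over small geodesic segments $I$ of the interior of $M^{*}$: the geodesic of $M^{*}$ through a point $p\in I$ admits a horizontal lift to $M$ (a standard property of submetries between locally compact geodesic spaces), essential non-branching gives a well-defined transverse geodesic through each $x\in\quotient^{-1}(p)$ projecting homeomorphically onto $I$, and letting these transverse geodesics sweep out, together with the $\G$-action, a $\G$-equivariant homeomorphism $\quotient^{-1}(I)\cong I\times\quotient^{-1}(p)$; alternatively one uses the measure-preserving fiber transport in the spirit of Lott~\cite{Lott}, whose $\G$-equivariance and the constancy of the orbit type along $I$ make it a homeomorphism onto each nearby fibre.

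Finally, in Step~4: over $(a,b)\cong\R$ and over $\R$ the base is contractible, so the bundle is trivial, giving $\quotient^{-1}((a,b))\cong(a,b)\times(\G/\G_{\min})$ and, in the line case, $M\cong\R\times(\G/\G_{\min})$; over $S^{1}$ one gets a (possibly non-trivial) fiber bundle $\quotient\colon M\to S^{1}$ with fiber $\G/\G_{\min}$. In every case $\G/\G_{\min}$ is a smooth closed manifold, since $\G$ is a compact Lie group and $\G_{\min}$ a closed subgroup, so the total space is locally a product of manifolds and therefore a topological manifold. I expect Step~2 to be the main obstacle, as it amounts to transplanting the smooth cohomogeneity-one slice theorem to the non-smooth setting, replacing linear slice representations by a soft argument that combines the submetry property, essential non-branching, and the one-dimensionality together with the $\RCD^{*}$ regularity of $M^{*}$.
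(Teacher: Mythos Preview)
Your overall architecture---show that every orbit over the interior of $M^{*}$ is principal, then build local trivialisations by lifting short geodesics of $M^{*}$, and finally invoke triviality over contractible bases---matches the paper's. The divergence is entirely in Step~2, and there your proposal has a real gap.

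You argue that non-principal orbits must sit over boundary points via a slice: construct a transversal $S_{x_0}$ carrying a $\G_{x_0}$-action and deduce that a non-trivial isotropy forces $S_{x_0}/\G_{x_0}$ to be a ray. This is the smooth cohomogeneity-one picture, but none of its ingredients are available under the bare hypothesis $\GTB_p$: there is no slice theorem for compact Lie group actions on such spaces, no a~priori one-dimensionality or linearity of the slice, and no well-defined ``space of directions'' on which $\G_{x_0}$ acts as you need. You flag this yourself as the main obstacle; it is in fact a separate theorem, not a lemma on the way to this one. Note also that the statement assumes only $\GTB_p$: you cannot invoke Theorem~\ref{thm:quotCDRCD} or $\RCD^{*}$ regularity of $M^{*}$, since $\GTB_p$ is strictly weaker than the curvature hypotheses that theorem requires.

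The paper bypasses all of this by recycling the mechanism of the Principal Orbit Theorem itself. Given any $x^{*}$ in the interior of $M^{*}$, pick principal-orbit points $y^{*},z^{*}$ on opposite sides of $x^{*}$ (they are dense, having full $\m^{*}$-measure). Using Lemma~\ref{lem:GTB-sup} and the proof of Theorem~\ref{thm:PrincipleOrbit}, choose $z\in\quotient^{-1}(z^{*})$ in the full-measure good set so that there is a \emph{unique} nearest point $y\in\quotient^{-1}(y^{*})$ to $z$ and a \emph{unique} geodesic $\gamma$ from $z$ to $y$; since $x^{*}$ lies between $y^{*}$ and $z^{*}$, this geodesic meets $\quotient^{-1}(x^{*})$ at some $x$. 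A triangle-inequality argument then shows that $y$ is also the unique nearest point of $\quotient^{-1}(y^{*})$ to $x$, and now the isotropy-containment step from the proof of Theorem~\ref{thm:PrincipleOrbit} (if $g\in\G_x$ then $g\cdot y$ is another nearest point, hence $g\cdot y=y$) gives $\G_{x}\le\G_{y}=\G_{\min}$, so $\G_{x}=\G_{\min}$. No slices, no tangent cones, no curvature assumption on $M^{*}$: only $\GTB_p$ and the one-dimensionality of $M^{*}$ enter. Your Steps~3 and~4 are then essentially as in the paper.
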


In Section \ref{sec:StructQuotRCD} we establish geometric applications of Theorem \ref{thm:quotCDRCD}  and Theorem \ref{thm:POIntro} to $\RCD^{*}(K,N)$-spaces admitting \emph{large} isomorphic group actions.  

Recall that in \cite{MN2014}  it was proved that $\RCD^{*}(K,N)$-spaces with $N<\infty$
have $\m$-almost everywhere unique Euclidean tangent spaces of possibly
varying dimension. Since the infinitesimal regularity is a metric-measure property and the group $\G$ acts on $(M,\sfd,\mm)$ by measure-preserving isometries, we have that $x\in M$  has a unique tangent space $\R^{n}$ if and only if all the points in the orbit of $x$, $\G(x)$, satisfy the same property. To stress the dependence on the base point we denote the dimension of the tangent space by  $n(x)$ for $\mm$-a.e. $x \in M$,  and by $n(x^{*})$ for $\mm^{*}$-a.e. $x^{*}\in M^{*}$. Set also
\begin{equation}\label{eq:defnIntro}
n:=\underset{x \in M}{{\rm{ess\,inf}}} \;  n(x).
\end{equation}
We now equip $\G$ with a bi-invariant inner metric $\sfd_\G$, whose existence is granted by the compactness of $\G$. Moreover, the arguments that follow are independent of the choice of such a metric since any other bi-invariant inner metric on $\G$ is bi-Lipschitz equivalent to $\sfd_\G$ (see for instance \cite[Theorem 7]{Berestovskii1}). In the next result we strengthen the  assumptions on the group: we assume that the metric on $\G$ is such that the map $\star_y:\G\to\G(y), g\mapsto gy$, is locally Lipschitz and co-Lipschitz continuous, for some (and hence for all) $y$ with principal orbit type.  Specifically, in view of the bi-invariance of the metric,  we assume that for every $y\in M$ of principal orbit, there exist constants $R,C>0$ such that, for all $r\in(0,R)$,
\[
 B_{C^{-1} r}(y)\cap\G(y) \subset \{g \cdot y ~|~ g\in B^\G_r(1_\G)\}\subset B_{Cr}(y) \cap\G(y),
\]
where $B_r^\G(1_\G)$ denotes the $\sfd_\G$-ball of radius $r$ around the identity $1_\G\in \G$.  \\

One of our main geometric applications is the next rigidity/structure result. 

\begin{thm}[Theorem  \ref{thm:Rigidity}, Theorem \ref{thm:cohom1RCD}, Theorem \ref{thm:DimM*}]\label{thm:StructRig}
Let $(M,\sfd,\mm)$ be an $\RCD^{*}(K,N)$-space for some $K \in \R, N \in (1,\infty)$ and let $n$  be defined in \eqref{eq:defnIntro}.   Let $\G$ be a compact Lie group  acting effectively  both Lipschitz and co-Lipschitz continuously by measure-preserving isometries of $(M,\sfd,\mm)$. Then the following hold:
\begin{itemize}
\item The group $\G$ has dimension at most $n(n+1)/2$. Moreover, if equality is attained, then the action is transitive and $(M,\sfd,\mm)$ is isomorphic as a m.m.\ space to either  $\mathbb{S}^{n}$ or $\mathbb{RP}^{n}$, up to multiplying the measure $\mm$ by a normalizing constant.
\item Assume that  $N \in [2,\infty)$,  that   the action is not transitive, and  that 
\[
\dim(\G) \geq \frac{(n-1)n}{2}.
\]
Then $\G$ has dimension  $\frac{(n-1)n}{2}$ and acts on $M$ by cohomogeneity one with principal orbit homeomorphic to $\mathbb{S}^{n-1}$ or $\mathbb{RP}^{n-1}$. Moreover, $M^*$ is isometric to a circle or a possibly unbounded closed  interval (i.e. possibly equal to the real line or half line). In the former case, $(M,\sfd)$ is (equivariantly) homeomorphic to a fiber bundle with fiber the principal orbit and base $\mathbb{S}^1$. In particular, in this case, $(M,\sfd)$ is a topological manifold.
\item  If  $\dim(\G) \geq \frac{(n-2)(n-1)}{2}$ and none of the above two possibilities hold, then  $\mm^{*}$-a.e. point in $M^{*}$ is regular with unique tangent  space isomorphic to the 2-dimensional Euclidean space $\R^{2}$.\end{itemize}
\end{thm}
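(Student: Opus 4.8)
The plan is to push by one more step the dimension count behind the first two items of the theorem, and then to read off the structure of the resulting two-dimensional orbit space.

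First I would invoke the Principal Orbit Theorem~\ref{thm:POIntro} to fix a principal isotropy subgroup $\G_{\min}\le\G$ and the principal orbit $\G/\G_{\min}$, of dimension $k:=\dim\G-\dim\G_{\min}$. By Theorem~\ref{thm:quotCDRCD} the quotient $(M^{*},\sfd^{*},\mm^{*})$ is again an $\RCD^{*}(K,N)$-space, and by Theorem~\ref{thm:DimM*} its essential minimal dimension $n^{*}$ satisfies $n=k+n^{*}$, with the cohomogeneity of the action (the topological dimension of $M^{*}$) equal to $n^{*}$. The key estimate is
\[
\dim\G\ \le\ \frac{1}{2}\,k(k+1).
\]
Indeed, at a point $y$ of principal orbit type the slice representation of $\G_{\min}$ is trivial, so the isotropy action of $\G_{\min}$ on the ($\mm$-a.e.\ Euclidean) tangent cone $T_{y}M$ preserves the splitting into the $k$-dimensional tangent space of the orbit and the slice direction, fixing the latter; effectiveness of the $\G$-action forces this tangent action to be faithful, so $\G_{\min}$ embeds into $O(k)$ and hence $\dim\G=k+\dim\G_{\min}\le k+\frac12 k(k-1)=\frac12 k(k+1)$. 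This is the same computation that, via $k\le n$, yields the bound $\dim\G\le\frac12 n(n+1)$ of the first item; here I keep track of $k$.

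Next I would run the pigeonhole. From $\dim\G\ge\frac12(n-1)(n-2)$ and $\dim\G\le\frac12 k(k+1)$ one gets $k(k+1)\ge(n-1)(n-2)$, hence $k\ge n-2$; and since by hypothesis neither of the first two alternatives holds, the action is neither transitive nor of cohomogeneity one, so $n^{*}=\dim M^{*}\ge2$ and therefore $k=n-n^{*}\le n-2$. Thus $k=n-2$ and $n^{*}=2$; tracing the inequalities back, $\dim\G_{\min}=\frac12(n-2)(n-3)=\dim O(n-2)$, so $\G_{\min}$ contains $SO(n-2)$ and $\G/\G_{\min}$ is an $(n-2)$-dimensional homogeneous space acted on transitively by a group of dimension $\frac12(n-1)(n-2)$, hence isometric to $\mathbb{S}^{n-2}$ or $\mathbb{RP}^{n-2}$ (recorded but not needed here). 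It remains to upgrade the equality $n^{*}=2$ to the stated regularity of $M^{*}$: since $M^{*}$ is $\RCD^{*}(K,N)$, its non-regular set is $\mm^{*}$-negligible by~\cite{MN2014}; the sets $\mathcal{R}_{0}$ and $\mathcal{R}_{1}$ are $\mm^{*}$-null because the essential minimal dimension of $M^{*}$ is $2$; and $\mathcal{R}_{j}$ for $j\ge3$ is $\mm^{*}$-null by the constancy of the essential dimension on $\RCD^{*}(K,N)$-spaces. Hence $\mm^{*}$-a.e.\ point of $M^{*}$ lies in $\mathcal{R}_{2}$, which is the claim.

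Modulo the machinery already in place, this is a short arithmetic refinement of the first two items; the genuinely load-bearing step — the one I expect to be the main obstacle — is the inequality $\dim\G\le\frac12 k(k+1)$, which depends on the structural facts (established in the earlier sections on isometric actions) that the action lifts to the tangent cones, that the principal isotropy fixes the slice direction, and that effectiveness makes the induced tangent action faithful. A secondary delicate point is the use of Theorem~\ref{thm:DimM*} to identify the cohomogeneity of the action with $n^{*}$, together with the constancy-of-dimension input invoked in the last step.
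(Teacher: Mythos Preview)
Your overall strategy for the third item is the same as the paper's (Theorem~\ref{thm:DimM*}): combine the dimension formula $n=k+n(x_0^*)$ from Theorem~\ref{thm:struct}---not Theorem~\ref{thm:DimM*}, which is the statement you are proving---with the isotropy bound $\dim\G_{x_0}\le \tfrac{1}{2}k(k-1)$ and the hypothesis $\dim\G\ge\tfrac{1}{2}(n-1)(n-2)$, and then exclude $n(x_0^*)\in\{0,1\}$ via the first two items.

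Where you diverge is in the justification of the isotropy bound, and this is where your argument has a gap. You argue via a splitting of the tangent cone $T_yM$ into an orbit direction and a slice, with the principal isotropy acting trivially on the slice and faithfully on the orbit direction, so that $\G_{\min}\hookrightarrow O(k)$. In the $\RCD^*$ setting none of this apparatus---a slice theorem, a compatible linear splitting of the tangent cone, the rigidity statement that an isometry with trivial tangent action must be the identity---is established in the paper, and you do not indicate how to obtain it. The paper's route bypasses all of this: the proof of Theorem~\ref{thm:struct} shows (via Berestovskii's structure theorem and Pansu's differentiability theorem) that the principal orbit $(\G(x_0),\sfd_{\G(x_0)})$ is isometric to a smooth Riemannian homogeneous manifold of dimension $k$; one then applies Corollary~\ref{cor:dimGx}, the classical Kobayashi isotropy bound, directly to that smooth manifold to get $\dim\G_{x_0}\le\tfrac{1}{2}k(k-1)$. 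The point is that one never needs to analyze the tangent cone of the ambient $\RCD^*$-space; everything happens on the orbit, where classical Riemannian geometry applies. (Effectiveness of $\G$ on a principal orbit follows from effectiveness on $M$ together with the density of principal-orbit points.)

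A minor further remark: in your last step you invoke constancy of the essential dimension on $\RCD^*$-spaces to upgrade $n^*=2$ to the $\mm^*$-a.e.\ statement. That result postdates this paper; the paper's own conclusion proceeds by direct exclusion of the first two alternatives.
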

In case that $(M,\sfd,\mm)$ is a smooth Riemannian manifold  the first item is a celebrated theorem due to Kobayashi  \cite[Ch.~II, Theorem~3.1]{Kobayashi}, which for the reader's convenience will be recalled in Theorem \ref{thm:kobayashi} (note that the compactness assumption on $\G$ rules out all the cases except those of $\mathbb{S}^{n}$ and $\mathbb{RP}^{n}$).  This theorem was generalized by Guijarro and Santos-Rodriguez \cite{GS2016IsoRCD} to $\RCD^{*}(K,N)$-spaces, who obtained a bound  in terms of $\lfloor N\rfloor$, the integer part of $N$. Let us mention that in case $(M,\sfd,\mm)$ is a   pointed measured Gromov-Hausdorff limit of a sequence of Riemannian manifolds with Ricci $\geq K$ and dimension $\leq N$, then the assumption that the  topological dimension of $M$ coincides with the integer part of $N$ implies that such a Ricci limit space is non-collapsed \cite{CC1}.
Therefore the first item of Theorem \ref{thm:StructRig}  includes possibly collapsed Ricci limits.
The second and third items of Theorem \ref{thm:StructRig} seem to be new, to the best of our knowledge, even for smooth Riemannian manifolds with Ricci curvature bounded below.
\\

In Sections \ref{Sec:OrbFol} and \ref{sec:foliations} we apply the methods of the paper to study curvature properties of orbifolds, orbispaces, and foliations.

For orbifolds we  show that the $\RCD^{*}(K,N)$-condition  is equivalent to requiring a lower Ricci curvature bound $K$ on the regular part and an upper dimension bound $N$ of the orbifold. Using such equivalence we  prove rigidity results for orbifolds that extend previous works by Borzellino \cite{Borzellino,Borzellino1993MaxDiam}:   we generalize to weighted Riemannian orbifolds Cheng's celebrated Maximal Diameter Theorem,  and we prove a  rigidity result for orbifolds having maximal volume. Consult Theorems \ref{THM:ORBI_EQUIV}, \ref{thm:Cheng}, \ref{cor:OrbRig} for the precise statements.

In Theorem \ref{thm:StabMetricOrbSpa} and in Theorem \ref{thm:RCDorbispaces},  we generalize the stability under quotients of synthetic Ricci curvature lower bounds to metric-measure orbispaces and  give  applications to almost free discrete group actions in Theorem \ref{thm:discreteActRCD} and in Corollary  \ref{cor:discreteActRCD}.

Finally, we show the stability of synthetic Ricci curvature lower bounds under  metric-measure foliations and submetries in Theorem \ref{thm:CDbyMMFisCD} and Corollary \ref{cor:CDbyMMFisCD}. Regarding this point, let us mention that the corresponding result for sectional curvature lower bounds was proved in the celebrated paper of Burago-Gromov-Perelman \cite[Section 4.6]{BGP}. More precisely, it was shown in \cite{BGP} that if $(M,g)$ is a smooth manifold with \emph{sectional} curvature bounded below by $K\in \R$ endowed with a metric foliation, then the quotient space is Alexandrov  with curvature bounded below by $K$. We prove the analogous statement in the category of metric-measure spaces with Ricci curvature bounded below. 
If $(M,\sfd,\m)$ is an $\RCD^{*}(K,N)$-space endowed with a \emph{metric-measure} foliation, then the quotient space is an $\RCD^{*}(K,N)$-space. In particular, this is valid for smooth Riemannian $N$-manifolds with Ricci curvature bounded below by $K$. We refer to Section \ref{sec:foliations} for the precise notions and statements.
\\

As a consequence of Theorem \ref{thm:quotCDRCD} and  the results in Sections \ref{Sec:OrbFol} and \ref{sec:foliations} we can enlarge the list of examples of $\RCD^{*}(K,N)$-spaces to include quotients by isometric group actions and foliations of Riemannian manifolds with $\Ric_{g} \geq K g$, and more generally, of $\RCD^{*}(K,N)$-spaces  as well as orbifolds with  $\Ric_{g} \geq K g$ on the regular part. Note that, since  orbifolds can be seen as a generalization of cones, the $\RCD$ property of orbifold (see Theorem \ref{THM:ORBI_EQUIV}) can be seen as an extension of the $\RCD$ property of cones proved by Ketterer \cite{Ketterer}.
\\

To conclude, in the two appendices we show that the methods of the paper can be useful also to study quotients by isometric group actions of discrete spaces (see Appendix \ref{App:Discrete}) and (super-)Ricci flows (see Appendix \ref{App:RF}). In particular we show that if $(M,g_{t})_{t\in [0,T]}$ is a Ricci flow of complete metrics with bounded curvature,  and if $\G$ is a compact Lie subgroup of the isometry group of  the initial datum $(M,g_{0})$ then the quotient flow is a super-Ricci flow in the sense of Sturm \cite{Sturm:SuperRicciI}. See Proposition \ref{Prop:QuotSRF} and Corollary \ref{cor:QuotRF} for the precise statements.

\begin{ack}
The paper was initiated during the Follow-up Workshop to the Junior Trimester Program ``Optimal Transportation'' at the  Hausdorff Center of Mathematics in Bonn. The authors wish to thank the HCM and the organizers of the workshop for the excellent working conditions and the  stimulating atmosphere.
\end{ack}


\section{Preliminaries}\label{sec:Prel}


\subsection*{Metric measure spaces and optimal transport theory}
Throughout the paper $(M,\sfd)$ denotes a complete separable metric space and $\m$ is a $\sigma$-finite non-negative Borel measure on it with full support, that is, $\mathrm{supp}(\m)=M$. The triple  $(M,\sfd, \m)$ is called a metric measure space, m.m. space for short.
\\A metric space is a \emph{geodesic space} if and only if for each $x,y \in M$ 
there exists $\gamma \in \Geo(M)$ so that $\gamma_{0} =x, \gamma_{1} = y$, with
$$
\Geo(M) : = \{ \gamma \in C([0,1], M):  \sfd(\gamma_{s},\gamma_{t}) = |s-t| \sfd(\gamma_{0},\gamma_{1}), \text{ for every } s,t \in [0,1] \}.
$$
For simplicity, we will always assume $(M,\sfd)$ to be geodesic. Given a map $f: M\to M$, we denote by $f_{\sharp} \m$ the push-forward measure defined by $[f_{\sharp} \m] (B):=\m(f^{-1}(B))$ for every Borel set $B\subset M$.
We denote with $\mathcal{P}(M)$ the space of Borel  probability measures over $M$, and with   $\mathcal{P}_{p}(M)$ the subspace of probabilities with finite $p$-moment,  $p \in [1,\infty)$:
\begin{equation*}
 \mathcal{P}_{p}(M):=\{\mu \in  \mathcal{P}(M)\,:  \int_{M} \sfd^{p}(x,x_{0})  d \mu(x) < 	\infty \text{ for some (and then for every) } x_{0} \in M \}. 
\end{equation*}
We endow the space $\mathcal{P}_{p}(M)$ with the  $L^{p}$-Wasserstein distance  $W_{p}$ defined as follows:  for $\mu_0,\mu_1 \in \mathcal{P}_{p}(X)$ we set
\begin{equation}\label{eq:Wdef}
  W_p(\mu_0,\mu_1)^p = \inf_{ \pi} \int_{M\times M} \sfd^p(x,y) \, d\pi(x,y),
\end{equation}
where the infimum is taken over all $\pi \in \mathcal{P}(M \times M)$ with $\mu_0$ and $\mu_1$ as the first and the second marginal.
\\Assuming the space $(M,\sfd)$ to be geodesic, also the space $(\mathcal{P}_p(M), W_p)$ is geodesic. 
\\Any geodesic $\{\mu_t\}_{t \in [0,1]}$ in $(\mathcal{P}_p(M), W_p)$  can be lifted to a measure $\Pi \in {\mathcal {P}}(\Geo(M))$, 
so that $({\rm e}_t)_\sharp \, \Pi = \mu_t$ for all $t \in [0,1]$.  {Moreover, in this case,  $({\rm e}_0, {\rm e}_1)_{\sharp}\Pi$ is a minimizer in \eqref{eq:Wdef} called \emph{optimal coupling induced by the geodesic $\{\mu_{t}\}_{t \in [0,1]}$}.
Here ${\rm e}_{t}$ denotes the evaluation map which is defined for $t\in [0,1]$ by: 
\[
  {\rm e}_{t} : \Geo(M) \to M, \qquad {\rm e}_{t}(\gamma) : = \gamma_{t}.
\]
\noindent
Given $\mu_{0},\mu_{1} \in \mathcal{P}_{p}(M)$, we denote by 
$\Opt_{p}(\mu_{0},\mu_{1})$ the space of all $\Pi \in \mathcal{P}(\Geo(X))$ for which $({\rm e}_0,{\rm e}_1)_\sharp\, \Pi$ 
realizes the minimum in \eqref{eq:Wdef}. By the observations above we have that if $(M,\sfd)$ is geodesic then the set  $\Opt_{p}(\mu_{0},\mu_{1})$ is non-empty for any $\mu_0,\mu_1\in \mathcal{P}_p(M)$.  

It is also worth introducing the subspaces  $ \mathcal{P}^{ac}(M,\sfd,\m):=\{\mu \in  \mathcal{P}(M)\,:\, \mu \ll \m \}$ and  $\mathcal{P}_{p}^{ac}(M,\sfd,\m):= \mathcal{P}_{p}(M)  \cap \mathcal{P}^{ac}(M,\sfd,\m)$. We  simply write $\mathcal{P}^{ac}(M)$ whenever it is clear from the context to which reference measure we refer.

We  say that an optimal dynamical plan  $\Pi\in \Opt_{p}(\mu_{0},\mu_{1})$ is given by the map $G:X\to \Geo(M)$ if $\Pi=G_{\sharp} \mu_{0}$. 
Notice that in this case, in particular, the optimal transference plan $(\ee_{0}, \ee_{1})_{\sharp} \Pi$ is induced  by the optimal map $\ee_{1}\circ G$.
%
\begin{defn}
We call a set $\Gamma \subset \Geo(M)$ \emph{non-branching} if for any $\gamma^{1},\gamma^{2} \in \Gamma$ we have: 
if there exists $t \in (0,1)$ such that $\gamma^{1}_{s} = \gamma^{2}_{s}$ for all $s \in [0,t]$, then $\gamma^{1} = \gamma^{2}$.

\medskip
\noindent
A measure $\Pi \in \mathcal{P}(\Geo(M))$ is \emph{concentrated on a set of non-branching geodesics} if there exists a non-branching Borel set $\Gamma \subset \Geo(M)$ such that $\Pi(\Gamma) = 1$.
\end{defn}

We now recall the following definition given for the first time in \cite{RS2014}.

\begin{defn}\label{def:essNB}
A metric measure space $(M,\sfd,\m)$ is \emph{essentially non-branching} if for every $\mu_{0},\mu_{1} \in \mathcal{P}^{ac}(M)$
any $\Pi \in \Opt_{2}(\mu_{0},\mu_{1})$ is concentrated on a set of non-branching geodesics.
\end{defn}

Let us recall the definition of $c_{p}$-transform which will turn out to be useful in the sequel. Let $\psi:M \to \R\cup\{\pm \infty\}$ be any function. Its \emph{$c_{p}$-transform} $\psi^{c_{p}}:M \to \R\cup\{\pm \infty\}$ is defined as
\begin{equation} \label{eq:defcpTransf}
\psi^{c_{p}}(x):= \inf_{y \in M} \left( \sfd^{p}(x,y) - \psi(y)\right ).
\end{equation}
We say that $\phi: M \to \R\cup\{-\infty\}$ is \emph{$c_{p}$-concave}  if there exists $\psi:M \to \R\cup\{- \infty\}$ such that $\phi=\psi^{c_{p}}$. The $c_p$-superdifferential of a $c_p$-concave function $\varphi$ is defined as the set
\[
\partial^{c_p}\varphi := \{(x,y)\in M \times M \,|\, \varphi(x)+\varphi^{c_p}(y)=\sfd^{p}(x,y)\}.
\]
In what follows, we will also make use of the dual formulation of the optimal transport problem \eqref{eq:Wdef} stating that 
\begin{equation}\label{eq:dual}
W_p(\mu_0,\mu_1)^p = \sup_{(\phi,\psi)} \quad \int_{M} \phi \, d \mu_{0} + \int_{M} \psi \, d \mu_{1}, 
\end{equation}
the supremum taken over all the pairs  $(\phi,\psi)$ with $\phi \in L^{1}(\mu_{0}), \psi \in L^{1}(\mu_{1})$ such that 
$$\phi(x)+\psi(y)\leq \sfd^{p}(x,y), \quad \forall x,y \in M.$$
It is a classical result of optimal transport  that the supremum in the dual problem \eqref{eq:dual} is always attained by a  maximizing couples of the form $(\phi, \phi^{c_{p}})$ for some $c_{p}$-concave function $\phi$ called \emph{$c_{p}$-Kantorovich potential}.
\\Finally let us also recall that if $\pi$ is a $p$-optimal coupling for $(\mu_{0},\mu_{1})$ then there exists a $c_{p}$-Kantorovich potential $\phi$ such that 
$$\phi(x)+\phi^{c_{p}}(y) = \sfd(x,y)^{p}, \quad \text{for $\pi$-a.e. } (x,y). $$
In this case we say that $(\phi, \phi^{c_{p}})$ is a $p$-dual solution  corresponding to $\pi$.

It goes beyond the scope of this short section to give
a comprehensive introduction to optimal transport, for this purpose we refer the reader to \cite{AGUser} and \cite{Villani}.


\subsection*{Ricci curvature lower bounds for metric measure spaces}

In order to formulate the curvature properties for a metric measure space $(M,\sfd,\m)$, we introduce the following distortion coefficients: given two numbers $K,N\in \R$ with $N\geq0$, we set for $(t,\theta) \in[0,1] \times \R_{+}$, 
\begin{equation}\label{E:sigma}
\sigma_{K,N}^{(t)}(\theta):= 
\begin{cases}
\infty, & \textrm{if}\ K\theta^{2} \geq N\pi^{2}, \crcr
\displaystyle  \frac{\sin(t\theta\sqrt{K/N})}{\sin(\theta\sqrt{K/N})} & \textrm{if}\ 0< K\theta^{2} <  N\pi^{2}, \crcr
t & \textrm{if}\ K \theta^{2}<0 \ \textrm{and}\ N=0, \ \textrm{or  if}\ K \theta^{2}=0,  \crcr
\displaystyle   \frac{\sinh(t\theta\sqrt{-K/N})}{\sinh(\theta\sqrt{-K/N})} & \textrm{if}\ K\theta^{2} \leq 0 \ \textrm{and}\ N>0.
\end{cases}
\end{equation}
\noindent
We also set, for $N\geq 1, K \in \R$ and $(t,\theta) \in[0,1] \times \R_{+}$
\begin{equation} \label{E:tau}
\tau_{K,N}^{(t)}(\theta): = t^{1/N} \sigma_{K,N-1}^{(t)}(\theta)^{(N-1)/N}.
\end{equation}
Before stating the definitions of  curvature-dimension conditions proposed  in the pioneering works of Lott-Villani \cite{LottVillani} and Sturm \cite{sturm:I,sturm:II} (and Bacher-Sturm \cite{BS10} for the  reduced condition $\CD^{*}(K,N)$), let us recall that the  Shannon (relative) entropy functional $\Ent_{\m}: \mathcal{P}(M) \to [-\infty, +\infty]$ is defined as
\begin{equation}\label{eq:defEntm}
\Ent_{\m}(\mu):= \int_{M} \rho \, \log \rho \, d \m,
\end{equation}
if  $\mu=\rho \m \in  \mathcal{P}^{ac}(M,\sfd,\m)$ and if  the positive part of $ \rho \, \log \rho$ is integrable. For other measures in   $\mathcal{P}(M)$ we set $\Ent_{\m}(\mu)=+\infty$.
 
\begin{defn}[Curvature-dimension conditions]
Let $(M,\sfd,\m)$ be a metric measure space and fix  $K\in \R, N \in [1,+\infty]$. We say that $(M,\sfd,\m)$ satisfies, for $N<+\infty$, the  
\begin{itemize}
\item \emph{$\CD(K,N)$-condition}  if for each pair $\mu_{0}=\rho_{0} \m,\, \mu_{1}=\rho_{1} \m \in \mathcal{P}_{2}^{ac}(M,\sfd,\m)$ there exists  a $W_{2}$-geodesic $\{\mu_{t}=\rho_{t} \m\}_{t \in [0,1]} \subset  \mathcal{P}_{2}^{ac}(M,\sfd,\m) $ such that
\begin{align}\label{def:CDKN}
\int_{M} \rho_{t}^{1-\frac{1}{N'}} d\m \geq \int_{M\times M} \Big[ \tau^{(1-t)}_{K,N'} \big(\sfd(x,y) \big) \rho_{0}(x)^{-\frac{1}{N'}} +    \tau^{(t)}_{K,N'} \big(\sfd(x,y) \big) \rho_{1}(y)^{-\frac{1}{N'}} \Big]   d \pi(x,y)
\end{align}
holds for all $t\in [0,1]$ and $N'\in [N, \infty)$, where $\pi$ is the  optimal coupling  induced by $\{\mu_{t}\}_{t \in [0,1]}$.

\item \emph{$\CD^{*}(K,N)$-condition}  if for each pair $\mu_{0}=\rho_{0} \m,\, \mu_{1}=\rho_{1} \m \in \mathcal{P}_{2}^{ac}(M,\sfd,\m)$ there exists  a $W_{2}$-geodesic $\{\mu_{t}=\rho_{t} \m\}_{t \in [0,1]} \subset  \mathcal{P}_{2}^{ac}(M,\sfd,\m) $ such that
\begin{align}\label{def:CD*KN}
\int_{M} \rho_{t}^{1-\frac{1}{N'}} d\m \geq \int_{M\times M} \Big[ \sigma^{(1-t)}_{K,N'} \big(\sfd(x,y) \big) \rho_{0}(x)^{-\frac{1}{N'}} +    \sigma^{(t)}_{K,N'} \big(\sfd(x,y) \big) \rho_{1}(y)^{-\frac{1}{N'}} \Big]   d \pi(x,y)
\end{align}
holds for all $t\in [0,1]$ and $N'\in [N, \infty)$, where $\pi$ is the  optimal coupling  induced by $\{\mu_{t}\}_{t \in [0,1]}$.
\item  \emph{$\MCP(K,N)$-condition}  if for each $o\in M$ and $\mu_{0}=\rho_{0} \m \in  \mathcal{P}_{2}^{ac}(M,\sfd,\m)$, denoting with $\mu_{1}=\delta_{o}$,  there exists  a $W_{2}$-geodesic $\{\mu_{t}\}_{t \in [0,1]}\subset \mathcal{P}_{2}(M)$ such that 
\begin{align}\label{def:MCP}
\int_{M} \rho_{t}^{1-\frac{1}{N'}} d\m \geq \int_{M}  \tau^{(1-t)}_{K,N'} \big(\sfd(x,o) \big) \rho_{0}(x)^{1-\frac{1}{N'}} \, d \m,  
\end{align}
holds for all $t\in [0,1]$ and $N'\in [N, \infty)$, where we write $\mu_{t}=\rho_{t} \m+\mu_{t}^{s}$ with $\mu_{t}^{s} \perp \m$.
\end{itemize}
Additionally, we say that $(M,\sfd,\m)$ satisfies the 
\begin{itemize}
\item  \emph{$\CD(K,\infty)$-condition}  if for each $\mu_{0},\mu_{1} \in  \mathcal{P}_{2}^{ac}(M,\sfd,\m)$ there exists  a $W_{2}$-geodesic $\{\mu_{t}\}_{t \in [0,1]}$ along which $\Ent_{\m}$ is $K$-convex, i.e.
\begin{align}\label{def:CDKinfty}
\Ent_{\m}(\mu_{t}) \leq (1-t)  \Ent_{\m}(\mu_{0}) + t  \Ent_{\m}(\mu_{1})- \frac{K}{2} t (1-t) W_{2}(\mu_{0},\mu_{1})^{2},
\end{align}
holds for all $t\in [0,1]$
\end{itemize}
\noindent
We also say that  $(M,\sfd,\m)$ satisfies the  \emph{strong $\CD(K,N)$-condition} (resp. \emph{strong $\CD^{*}(K,N)$-condition,  strong $\CD(K,\infty)$}) if \eqref{def:CDKN} (resp.  \eqref{def:CD*KN},  \eqref{def:CDKinfty}) holds for \emph{every}  $W_{2}$-geodesic $\{\mu_{t}\}_{t \in [0,1]}$ connecting  any given pair  $\mu_{0}=\rho_{0} \m, \mu_{1}=\rho_{1} \m \in P_{2}^{ac}(M, \sfd,\m)$.
\end{defn}

In contrast to the smooth framework, in general the curvature-dimension conditions defined above are not equivalent, in some cases ones being more restrictive than others. However there is an inclusion relation between them that can be written as: 
\begin{center}
$\CD^*$-spaces	$\subsetneq \CD$-spaces $\subsetneq \MCP$-spaces,
\end{center}
 where all the inclusions are proper and the parameters $K,N$ might vary. Additionally, the inclusion $\CD (K,N<\infty)\subset \CD (K,\infty)$ holds as well.


\begin{remark}\label{rem:EssNBsCD}
A m.m. space $(M,\sfd,\m)$ satisfies \emph{strong} $\CD(K,N)$  (resp. \emph{strong} $\CD^{*}(K,N)$) for some $K\in \R, N\in [1,+\infty)$ if and only if it is essentially non-branching and satisfies $\CD(K,N)$  (resp.  $\CD^{*}(K,N)$). Indeed, on the one hand   strong $\CD(K,N)$  (resp. strong $\CD^{*}(K,N)$)  implies essentially non-branching \cite[Theorem 1.1]{RS2014} and, of course, $\CD(K,N)$. On the other hand a $\CD(K,N)$  (resp.  $\CD^{*}(K,N)$) space is proper for $N<+\infty$ and moreover, if it is essentially non-branching, then \cite[Corollary 5.3]{CMMapsNB} implies that there is a unique $W_{2}$-geodesic connecting any pair $\mu_{0},\mu_{1} \in \mathcal{P}_{2}^{ac}(M,\sfd,\m)$, therefore the strong $\CD(K,N)$-condition trivially holds.

Let us also mention that thanks to \cite[Theorem 1.1]{CMMapsNB} if  $(M,\sfd,\m)$ is an essentially non-branching m.m. space satisfying  $\MCP(K,N)$, for some $K\in \R, N\in [1,+\infty)$, then  for  any pair $\mu_{0},\mu_{1} \in \mathcal{P}_{2}^{ac}(M,\sfd,\m)$
 there is a unique $W_{2}$-geodesic connecting  them and the corresponding dynamical plan is given by a map.
\end{remark}


\subsection*{Group actions and lifts of measures.}

Let $\G\times M\to M$, $(g,x)\mapsto gx$, be an action by isomorphisms of m.m. spaces of 
a compact Lie group $\G$ on the m.m. space $(M,\sfd, \m)$, meaning the following:
\begin{itemize}
\item $(gh) \,x= g \,(hx)$ for all $x \in M$ and all $g,h \in \G$;
\item $1_{\G} \, x=x$, for all  $x \in M$,  where $1_{\G}$ is the neutral element of the group $\G$;
\item for every fixed $x \in X$, the map $\star_x:\G\to M$ given by $g\mapsto gx$ is continuous;
\item for every fixed $g \in \G$, the map $\tau_{g}:M\to M$ given by $x\mapsto \tau_{g}(x):= g x$ is an isomorphism of m.m. spaces, i.e. $\tau_{g}$ is an  isometry and moreover $(\tau_{g})_{\sharp} \m=\m$. 
\end{itemize}
 The \textit{isotropy
group} at $x\in M$ is defined by  $\G_{x}=\{g\in\G\mid gx=x\}$. Observe that
the orbit $\G(x)$ is homeomorphic to $\G/\G_{x}$. We denote the\textit{
orbit space }by $M/\G$ or $M^{*}$. 
Let $\mathsf{\quotient}\colon M\to M^{*}$ be the projection onto
the orbit space. We let $X^{\ast}$ denote the image of a subset $X$
of $M$ under $\quotient$, i.e. $X^{*}=\quotient (X)$. The action is called \textit{effective}
if the intersection of all isotropy subgroups of the action is trivial.
An element of $M^{*}$ will be denoted by $x^{*}= \quotient(x)= \quotient(\{gx\mid g\in\G\})$.
On $M^{*}$ we define the following distance:
\[
\sfd^{*}(x^{*},y^{*}):=\inf_{g\in\G} \sfd(gx,y)=\inf_{g,h\in\G} \sfd(gx,hy)= \sfd(\G(x),\G(y)),
\]
where in the last equality we consider $\G(x)$ and $\G(y)$ as subsets
of $M$. Note that $\sfd^{*}$ is well defined because, by assumption, $\G$ acts isometrically; the compactness of $\G$ assures that the function $\sfd^{*}$
is a complete metric on $M^{*}$ and that $(M, \sfd^{*})$ is separable. Moreover, if $(M,\sfd)$ is a geodesic space then also $(M^{*},\sfd^{*})$ is geodesic.

Given a function $\phi:M^{*}\to \R\cup \{-\infty\}$ we define $\hat{\phi}:M\to  \R\cup \{-\infty\}$ as
\begin{equation}\label{eq:defhatphi}
\hat{\phi}(x):=\phi(x^{*}), 
\end{equation}
and refer to $\hat{\phi}$ as the $\emph{lift}$ of the function $\phi$. 

Let $\m^{*}:=\quotient_{\sharp}\m$ be the push-forward measure on $M^{*}$; note that $\m^{*}$ is a non-negative $\sigma$-finite Borel measure over the complete and separable metric space $(M^{*}, \sfd^{*})$.
By the Disintegration Theorem (see for instance \cite[Section 452]{Fremlin}) there is an $\m^{*}$-almost everywhere
uniquely defined measurable assignment of probability measures $x^{*}\mapsto\m_{x^{*}}\in\mathcal{P}(M)$
such that $\m_{x^{*}}$ is concentrated on $\G(x)\subset M$
and, for any Borel set $B\subset M$, the following identity holds:
\begin{equation}\label{eq:mmDis}
\m(B)=\int_{M^{*}}\m_{x^{*}}(B\cap\G(x))d\m^{*}(x^{*}).
\end{equation}

We say that a  Borel measure $\mu$ is \emph{invariant} under the action of $\G$ if   $\mu(\tau_{g}(B))=\mu(B)$
for any $g\in\G$ and any Borel set $B\subset M$, and denote  the set of probability measures on $M$ that are $\G$-invariant with $\mathcal P^\G(M)\subset \mathcal P (M)$. We stress that $\mu$ being invariant is equivalent to $g_\sharp \mu =\mu$ for every $g\in\G$ since $\G$ acts by isomorphisms of m.m.  spaces.

Given $x \in M$ recall that the map $\star_x:\G \to M$ is defined as $g \mapsto gx$. Using the Haar  probability measure $\nu_{\G}$ of $\G$ we can define a $\G$-invariant measure $\nu_{x} \in \mathcal{P}(M)$ supported on $\G(x)$ as follows
\begin{equation}\label{eq:defmux}
\nu_{x}:= (\star_x)_\sharp \nu_\G = \int_{\G}\delta_{gx}\, d\nu_{\G}(g),
\end{equation}
for every $x \in M$.  Note that $\nu_{x}$ is the unique $\G$-invariant probability measure that satisfies $\quotient_{\sharp}\nu_{x}=\delta_{x^{*}}$ and that the assignment  $x\mapsto\nu_{x}$ is a measurable function. Furthermore, since $\nu_{x}=\nu_{y}$ whenever $x=gy$ for some $g\in\G$, also the assignment $x^{*}\mapsto\nu_{x}$ is well-defined and measurable.
Using the measures $\nu_x$, we can define a lifting map $\Lambda$ of measures on $M^*$ as
\begin{align}\label{eq:liftmeas}
\begin{split}
\Lambda : \;& \mathcal P (M^*) \to  \mathcal P^\G (M)\\
&\mu \mapsto \hat{\mu}:=\Lambda(\mu)=\int_{M^{*}}\nu_{x} \,d\mu(x^{*}).
\end{split}
\end{align}
Note that by definition $\hat{\mu}$ is $\G$-invariant and that $\quotient_{\sharp}\hat{\mu}=\mu$, hence our referring to $\Lambda$ as a lifting map. In the upcoming section we show that the lift $\Lambda$ is in fact a canonical isometric embedding that preserves useful measurable properties. 

\section{Equivariant Wasserstein geometry and stability of  Ricci  bounds}

We begin this section by showing that $c_p$-concavity is preserved when lifting functions.  


\begin{lem}
\label{lem:c-concave-lifts}
The lift of a $c_{p}$-concave function is $c_{p}$-concave. That is, if $\phi:M^{*}\to\mathbb{R}\cup\{-\infty\}$ is a $c_{p}$-concave function on $M^{*}$ then $\hat{\phi}:M\to\mathbb{R}\cup\{-\infty\}$
defined by $\hat{\phi}(x):=\phi(x^{*})$ is $c_{p}$-concave on $M$.
\end{lem}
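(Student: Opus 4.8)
The plan is to unwind the definition of $c_p$-concavity on the quotient and transport it upstairs through the quotient map, exploiting that $\quotient$ is a submetry in the sense that $\sfd^*(x^*,y^*) = \inf_{g\in\G}\sfd(gx,y)$. First I would write $\phi = \psi^{c_p}$ for some $\psi:M^*\to\R\cup\{-\infty\}$, so that for every $x^*\in M^*$,
\[
\phi(x^*) = \inf_{y^*\in M^*}\bigl(\sfd^*(x^*,y^*)^p - \psi(y^*)\bigr).
\]
The natural candidate for the function witnessing $c_p$-concavity of $\hat\phi$ is $\hat\psi$, the lift of $\psi$ defined by $\hat\psi(y):=\psi(y^*)$. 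So the goal reduces to proving the pointwise identity
\[
\hat\phi(x) = (\hat\psi)^{c_p}(x) = \inf_{y\in M}\bigl(\sfd(x,y)^p - \hat\psi(y)\bigr)\qquad\text{for all }x\in M.
\]

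The key computational step is to rewrite the infimum over $M$ on the right-hand side as a double infimum: first over orbits $y^*\in M^*$, then over representatives $y\in\quotient^{-1}(y^*)$. Since $\hat\psi$ is constant on orbits (it is a lift), we get
\[
\inf_{y\in M}\bigl(\sfd(x,y)^p - \hat\psi(y)\bigr)
= \inf_{y^*\in M^*}\Bigl(\inf_{y\in\quotient^{-1}(y^*)}\sfd(x,y)^p - \psi(y^*)\Bigr).
\]
Now the inner infimum $\inf_{y\in\quotient^{-1}(y^*)}\sfd(x,y)$ is exactly $\sfd(x,\G(y)) = \sfd^*(x^*,y^*)$ by the definition of the quotient metric (here one uses that $t\mapsto t^p$ is monotone increasing on $[0,\infty)$, so the infimum of $\sfd(x,y)^p$ over the fiber equals $\bigl(\inf_y\sfd(x,y)\bigr)^p$). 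Substituting this back yields
\[
\inf_{y\in M}\bigl(\sfd(x,y)^p - \hat\psi(y)\bigr) = \inf_{y^*\in M^*}\bigl(\sfd^*(x^*,y^*)^p - \psi(y^*)\bigr) = \phi(x^*) = \hat\phi(x),
\]
which is precisely what we wanted. Hence $\hat\phi = (\hat\psi)^{c_p}$ is $c_p$-concave on $M$.

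I do not expect any serious obstacle here; the statement is essentially formal once one observes that the quotient distance is built as a fiberwise infimum and that lifted functions are orbit-constant. The only minor point requiring a word of care is the interchange of the two infima and the commutation of the power $p$ with the fiberwise infimum, both of which are justified by elementary monotonicity and the general identity $\inf_{(a,b)} = \inf_a\inf_b$; one should also note that the argument is insensitive to whether the infimum defining $\sfd^*$ is attained (compactness of $\G$ guarantees it is, but this is not needed). If one wanted, the same computation shows conversely that restricting a $\G$-invariant $c_p$-concave function on $M$ descends to a $c_p$-concave function on $M^*$, but only the stated direction is required here.
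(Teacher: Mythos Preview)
Your proof is correct and follows essentially the same approach as the paper: write $\phi=\psi^{c_p}$, lift $\psi$ to $\hat\psi$, and verify $\hat\phi=(\hat\psi)^{c_p}$ by rewriting the infimum over $M$ as an infimum over orbits using the definition of the quotient metric. Your write-up is simply more explicit about the double-infimum decomposition and the monotonicity of $t\mapsto t^p$, but the argument is identical.
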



\begin{proof}
Let $\psi:M^{*}\to  \R\cup \{-\infty\}$ be  such that $\phi=\psi^{c_{p}}$ and denote
its lift by $\hat{\psi}:M\to  \R\cup \{-\infty\}$, $\hat{\psi}(x):=\psi(x^{*})$. Then 
\begin{align*}
\inf_{y\in M} \big ( \sfd(x,y)^{p}-\hat{\psi}(y) \big )=\inf_{y^{*}\in M^{*}} \big ( \sfd^{*}(x^{*},y^{*})^{p}-\psi(y^{*}) \big ) =\phi(x^{*})=\hat{\phi}(x),
\end{align*}
which shows that $\hat{\phi}=\hat{\psi}^{c_{p}}$.  
\end{proof}

Note that the proof of the lemma only depends on the definition of the quotient metric and thus also holds for quotients of non-compact groups.
The next result  will be a key tool in the proof of the main result of the section. 


\begin{thm}\label{thm:LiftIsom}
Let $\Lambda: \mathcal P (M^*) \to  \mathcal P^\G (M)$ be the lift of measures defined in (\ref{eq:liftmeas}). Assume that $\mu_0,\mu_1\in\mathcal P (M^*)$ and denote by $\hat{\mu}_0, \hat{\mu}_1 \in \mathcal P^G(M)$ their respective lifts. Then the following holds:
\begin{enumerate}
\item $\hat{\mu}_0$ is the unique $\G$-invariant probability measure with $\quotient_{\sharp} \hat{\mu}_0=\mu_0$;
\item $\Lambda (\mathcal{P}^{ac}(M^*)) = \mathcal P ^{ac}(M) \cap \mathcal P^G(M)$;
\item $\Lambda (\mathcal{P}_{p}(M^*)) = \mathcal P_{p}(M)\cap \mathcal P^G(M)$; 
\item $W_{p}(\hat{\mu}_{0},\hat{\mu}_{1})=W_{p}(\mu_{0},\mu_{1})$ whenever $\mu_0, \mu_1 \in \mathcal P_p(M^*)$.
\end{enumerate}
That is, $\Lambda: \mathcal P_p (M^*)\hookrightarrow \mathcal P_p (M)\cap \mathcal P^\G(M) $ is an isometric embedding  which preserves absolutely continuous measures. In particular, lifts of $W_{p}$-geodesics in $\mathcal{P}_{p}(M^{*})$ are $\G$-invariant $W_p$-geodesics in $\mathcal{P}_{p}(M)$.
\end{thm}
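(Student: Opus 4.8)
The plan is to prove the four items in order, since each relies on the previous. For item (1), I would argue that if $\mu$ is any $\G$-invariant probability measure with $\quotient_\sharp\mu=\mu_0$, then disintegrating $\mu$ along the fibers of $\quotient$ (via the Disintegration Theorem, exactly as in \eqref{eq:mmDis}) gives conditional measures $\mu_{x^*}$ concentrated on the orbit $\G(x)$; the $\G$-invariance of $\mu$ forces each $\mu_{x^*}$ to be invariant under the (transitive) action of $\G$ on $\G(x)\cong \G/\G_x$, hence $\mu_{x^*}=\nu_x$ by uniqueness of the $\G$-invariant probability measure on a homogeneous space, so $\mu=\int\nu_x\,d\mu_0(x^*)=\hat\mu_0$. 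For item (2), the inclusion $\Lambda(\mathcal P^{ac}(M^*))\subseteq \mathcal P^{ac}(M)\cap\mathcal P^\G(M)$ should follow from a Fubini-type argument: I would express $\Lambda(\rho^*\m^*)$ concretely and check it is absolutely continuous with respect to $\m$ using the disintegration \eqref{eq:mmDis} of $\m$ itself and the fact that $\nu_x$ is modeled on the Haar measure, which is compatible with $\m_{x^*}$; conversely, any $\G$-invariant $\mu\ll\m$ pushes forward to $\quotient_\sharp\mu\ll\quotient_\sharp\m=\m^*$, and this pushforward lifts back to $\mu$ by item (1). Item (3) is the easiest: $\G$-invariance plus $\sfd(gx,x_0)\le\sfd(gx,gx_0)+\sfd(gx_0,x_0)=\sfd(x,x_0)+\diam(\G(x_0))$, together with $\quotient_\sharp$ and $\Lambda$ being mutually inverse, shows the finite $p$-moment condition transfers both ways (using $\sfd^*(x^*,x_0^*)\le\sfd(x,x_0)$).

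The heart of the theorem is item (4), the isometry $W_p(\hat\mu_0,\hat\mu_1)=W_p(\mu_0,\mu_1)$. For the inequality $W_p(\hat\mu_0,\hat\mu_1)\le W_p(\mu_0,\mu_1)$, I would take a $p$-optimal plan $\pi^*$ between $\mu_0$ and $\mu_1$ on $M^*\times M^*$ and lift it: using a measurable selection of $\G$-invariant couplings of $(\nu_x,\nu_y)$ on each orbit pair — concretely, $\int_{M^*\times M^*}\big((\star_x\times\star_y)_\sharp(\nu_\G\otimes\nu_\G)\big)\,d\pi^*(x^*,y^*)$, or better a plan built from $(g,h)\mapsto(gx,hy)$ pushing forward Haar$\times$Haar — one produces a coupling of $\hat\mu_0$ and $\hat\mu_1$ whose cost is controlled by $\int \sfd^*(x^*,y^*)^p\,d\pi^*$, since for the \emph{optimally rotated} representatives one can arrange $\sfd(gx,hy)=\sfd^*(x^*,y^*)$. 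Actually the cleanest route: since $(M,\sfd)$ is geodesic and $\G$ compact, for each $(x^*,y^*)$ pick $g$ realizing $\sfd^*(x^*,y^*)=\sfd(gx,y)$ (measurable selection), transport $\nu_x$ to $\nu_y$ through the "diagonal" $\G$-action $h\mapsto(hgx,hy)$; this has cost exactly $\sfd^*(x^*,y^*)^p$ per fiber. Integrating gives $W_p(\hat\mu_0,\hat\mu_1)^p\le W_p(\mu_0,\mu_1)^p$.

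For the reverse inequality $W_p(\mu_0,\mu_1)\le W_p(\hat\mu_0,\hat\mu_1)$, the slick argument uses duality, which is why Lemma \ref{lem:c-concave-lifts} was proved first. Given a $c_p$-Kantorovich potential $(\phi,\phi^{c_p})$ on $M^*$ attaining the supremum in \eqref{eq:dual} for $(\mu_0,\mu_1)$, its lift $(\hat\phi,\widehat{\phi^{c_p}})$ is an admissible pair on $M$ (it satisfies $\hat\phi(x)+\widehat{\phi^{c_p}}(y)\le\sfd(x,y)^p$ because $\sfd^*(x^*,y^*)\le\sfd(x,y)$), and since $\quotient_\sharp\hat\mu_i=\mu_i$ we get $\int\hat\phi\,d\hat\mu_0+\int\widehat{\phi^{c_p}}\,d\hat\mu_1=\int\phi\,d\mu_0+\int\phi^{c_p}\,d\mu_1=W_p(\mu_0,\mu_1)^p$, whence $W_p(\hat\mu_0,\hat\mu_1)^p\ge W_p(\mu_0,\mu_1)^p$ by weak duality. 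Combining the two inequalities proves (4). The final sentence of the theorem — that lifts of $W_p$-geodesics are $\G$-invariant $W_p$-geodesics — is then immediate: $\Lambda$ is an isometric embedding, so it maps the metric geodesic $t\mapsto\mu_t$ to a curve $t\mapsto\hat\mu_t$ with $W_p(\hat\mu_s,\hat\mu_t)=W_p(\mu_s,\mu_t)=|s-t|W_p(\mu_0,\mu_1)=|s-t|W_p(\hat\mu_0,\hat\mu_1)$, and $\G$-invariance of each $\hat\mu_t$ holds by construction of $\Lambda$. The main obstacle I anticipate is the measurability bookkeeping in item (4): making the fiberwise optimal-rotation selection $(x^*,y^*)\mapsto g$ Borel measurable (Kuratowski--Ryll-Nardzewski type selection), and verifying that the resulting fiberwise couplings assemble into a genuine Borel probability measure on $M\times M$ — the duality half sidesteps this entirely, so in the write-up I would lean on duality for $\ge$ and only do the explicit lifting for $\le$, where the construction is more forgiving.
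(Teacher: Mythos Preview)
Your proposal is correct and follows essentially the same approach as the paper. The paper proves (1) via disintegration and uniqueness of the $\G$-invariant measure on each orbit, proves (2) by lifting the density $f$ to $\hat f$ and invoking (1), derives (3) \emph{after} (4) by taking one measure to be a Dirac mass (rather than your direct moment estimate, which also works), and for (4) builds the explicit diagonal lift $\hat\pi=\int\pi_{\bar x,\bar y}\,d\pi(x^*,y^*)$ with $\pi_{x,y}=(\star_{(x,y)})_\sharp\nu_\G$ (exactly your ``$h\mapsto(hgx,hy)$'' coupling) for the inequality $\le$, and uses Kantorovich duality with lifted potentials for $\ge$ --- just as you outline, including the appeal to a standard measurable selection for the representatives in $\mathcal{OD}$.
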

\begin{proof}
As noted above, the definition of the lift $\Lambda$ implies that $\hat{\mu}_0$ is $\G$-invariant and that $\quotient_{\sharp}\hat{\mu}_0=\mu_0$. Moreover, for every $x\in M$, the measure $\nu_{x}$ defined in  (\ref{eq:defmux}) is characterized as the unique $\G$-invariant probability  measure on $M$ satisfying $\quotient_{\sharp}\nu_{x}=\delta_{x^{*}}$. Therefore, the uniqueness part of the Disintegration Theorem implies that $\hat{\mu}$ is the unique probability measure on $M$ satisfying the aforementioned properties, which proves the first statement.

Next, assume that $\mu$ is absolutely continuous. Then there is a
non-negative $\m^{*}$-measurable function $f\in L^{1}(\m^{*})$ such
that $\mu=f\m^{*}$. We denote its lift by $\hat{f}:M\to[0,\infty)$ and set $\hat{\mu}':= \hat{f} \m$. Then we get that $\hat{\mu}'$ is $\G$-invariant and 
$\quotient_{\sharp} \hat{\mu}'=\mu$. The uniqueness of the lift gives  that $\hat{\mu}=\hat{\mu}'$ and in particular $\hat{\mu} \ll \m$; thus  (2) follows. 

To finish the proof we define a way to lift couplings of measures. Assume that $\mu_0,\mu_1\in\mathcal P_p(M^*)$ and fix an admissible coupling $\pi\in \mathcal{P}(M^{*}\times M^{*})$ of $(\mu_0,\mu_1)$.  Consider the subset of pairs of points in $M\times M$ that achieve the distance between orbits,
\begin{equation}\label{eq:defOD}
\mathcal{OD}:=\{(x,y)\in M\times M\,|\, \sfd(x,y)=\sfd^{*}(x^{*},y^{*})\}.
\end{equation}

As a first step we define for every $(x,y)\in\mathcal{OD}$  a $\G$-invariant coupling $\pi_{x,y}\in\mathcal{P}(M\times M)$ with $(p_{1})_{\sharp}\pi_{x,y}=\nu_{x}$ and $(p_{2})_{\sharp}\pi_{x,y}=\nu_{y}$, where $\G$ acts on $M\times M$ in the canonical diagonal manner. Specifically,
for $(x,y)\in\mathcal{OD}$ we set
\begin{equation}\label{eq:dedpixy}
\pi_{x,y}:=(\star_{(x,y)})_\sharp\nu_\G=\int_{\G}(\delta_{gx}\times\delta_{gy})d\nu_{\G}(g),
\end{equation}
where $\star_{(x,y)}:\G \to M\times M$ is the map $g \mapsto (gx,gy)$. Note that the measure $\pi_{x,y}$ is concentrated on $\mathcal{OD}\cap \G((x,y))$,
and that $\pi_{x,y}=\pi_{gx,gy}$ for every $x,y\in\mathcal{OD},$
and $g\in\G$. 
Thus, we obtain that 
\begin{equation}\label{eq:pixyd*}
\int_{M\times M}\sfd(w,z) ^{p} \, d\pi_{x,y}(w,z)  =  \int_{M\times M} \sfd^{*}(w^*,z^*) ^{p} \,d\pi_{x,y}(w,z) =  \sfd^{*}(x^{*},y^{*})^{p}.
\end{equation}
However, since $\nu_{x}$ is the unique $\G$-invariant lift of $\delta_{x^{*}}$,
and $\sfd(x,y)\ge \sfd^{*}(x^{*},y^{*})$ it always holds that
\begin{equation}\label{eq:Wpd*}
W_{p}(\nu_{x},\nu_{y})\ge \sfd^{*}(x^{*},y^{*}).
\end{equation}
The combination of   \eqref{eq:pixyd*} and \eqref{eq:Wpd*} gives that 
\begin{align}\label{eq:Wp=d*}
\begin{split}
& W_{p}(\nu_{x},\nu_{y})= \sfd^{*}(x^{*},y^{*}) \text{ and } \\
& \pi_{x,y}  \text{ is a $p$-optimal coupling for $(\nu_{x}, \nu_{y})$}, \text{ for all } (x,y) \in \mathcal{OD}. 
\end{split}
\end{align}

Since by assumption the $\G$-orbits in $M$ are compact, for every pair of equivalence classes $x^{*},y^{*} \in M^{*}$ we can find (non-unique) representatives $\bar{x}_{(x^{*},y^{*})} \in \quotient^{-1}(x^{*}),$ $\bar{y}_{(x^{*},y^{*})}  \in \quotient^{-1}(y^{*})$ such that $(\bar{x}_{(x^{*},y^{*})}, \bar{y}_{(x^{*},y^{*})}) \in \mathcal{OD}$.  By a standard measurable selection argument we can assume that the map $$M^{*}\times M^{*}\ni (x^{*},y^{*}) \mapsto (\bar{x}_{(x^{*},y^{*})}, \bar{y}_{(x^{*},y^{*})}) \in M\times M$$ is measurable.  At this point we are able to define a  lift $\hat{\pi} \in {\mathcal P}(M\times M)$ of $\pi\in {\mathcal P}(M^{*}\times M^{*})$ as 
\begin{equation}\label{eq:defhatpi}
\hat{\pi}:=\int_{M^{*}\times M^{*}}\pi_{\bar{x}_{(x^{*},y^{*})}, \bar{y}_{(x^{*},y^{*})} } \,d\pi(x^{*},y^{*})= \int_{M^{*}\times M^{*}} \left( \int_{\G} \delta_{g \bar{x}_{(x^{*},y^{*})}} \times  \delta_{g \bar{y}_{(x^{*},y^{*})}} \, d \nu_{\G}(g) \right) \,d\pi(x^{*},y^{*}) .
\end{equation}
Let us stress that while the lift $\hat{\mu}\in  {\mathcal P}(M)$ of a measure $\mu\in  {\mathcal P}(M^{*})$ is canonical, instead, the lift $\hat{\pi} \in {\mathcal P}(M\times M)$ of  a plan $\pi\in {\mathcal P}(M^{*}\times M^{*})$ defined in \eqref{eq:defhatpi} in \emph{not canonical} as we made a choice of representatives of the classes;  nevertheless this will be good enough for our purpose of understanding  equivariant Wasserstein geometry.
 It follows directly from the definitions \eqref{eq:defmux}, \eqref{eq:dedpixy} and \eqref{eq:defhatpi} that 
\begin{align*}
\hat{\pi}(B\times M)&= \int_{M^{*}\times M^{*}}\pi_{\bar{x}_{(x^{*},y^{*})}, \bar{y}_{(x^{*},y^{*})} }  (B\times M) \,d\pi(x^{*},y^{*})  = \int_{M^{*}\times M^{*}}\nu_{\bar{x}_{(x^{*},y^{*})}}(B)\, d\pi(x^{*},y^{*}) \\
&= \int_{M^{*}\times M^{*}}\nu_{x}(B)\, d\pi(x^{*},y^{*})=  \int_{M^{*}}\nu_{x}(B)\, d\mu_{0}(x^{*}) =\hat{\mu}_{0}(B), 
\end{align*}
for any Borel subset $B\subset M$, where, in the third identity, we used that $\nu_{\bar{x}_{(x^{*},y^{*})}}=\nu_{x}$ since by construction $\quotient (\bar{x}_{(x^{*},y^{*})})= x^{*}$. Analogously, we have that $\hat{\pi}(M \times B)=\hat{\mu}_{1}(B)$. In other words, the lift $\hat{\pi}$ is an admissible coupling for $(\hat{\mu}_{0},\hat{\mu}_{1})$ and thus 
\begin{equation}\label{eq:defWpled}
W_{p}(\hat{\mu}_{0},\hat{\mu}_{1})^{p}  \le  \int \sfd(x,y)^{p}\, d\hat{\pi}(x,y) =  \int \sfd^{*}(x^{*},y^{*})^{p} \,d\pi(x^{*},y^{*}).
\end{equation}
The last inequality shows that
\begin{equation}\label{eq:WphatleqWp}
W_{p}(\hat{\mu}_{0},\hat{\mu}_{1})\le W_{p}(\mu_{0},\mu_{1}).
\end{equation}

In particular, by choosing $\mu_{1}$ (resp. $\mu_{0}$) to be a Dirac mass it follows that if $\mu_{0} \in {\mathcal P}_{p}(M^{*})$  (resp. $\mu_{1} \in {\mathcal P}_{p}(M^{*})$) then also $\hat{\mu}_{0}  \in {\mathcal P}_{p}(M)$
 (resp. $\hat{\mu}_{1} \in {\mathcal P}_{p}(M)$), indeed:
 \begin{align*}
 \left(\int_{M} \sfd(x,x_{1}) ^{p} \, d\hat{\mu}_{0} \right)^{\frac{1}{p}}=W_{p}(\hat{\mu}_{0},\delta_{x_{1}}) \leq W_{p}(\hat{\mu}_{0},\nu_{x_{1}})+W_{p}(\nu_{x_{1}}, \delta_{x_{1}}) \leq W_{p}(\mu_{0},\delta_{x_{1}^*})+ \sup_{g\in \G} \sfd(gx_{1}, x_{1})<\infty,
 \end{align*}
 where in last inequality we used that the $\G$-orbits are bounded since by assumption $\G$ is compact.

Now let $(\phi,\psi)$ be a $p$-dual solution corresponding to  a $p$-optimal coupling $\pi$, i.e.
\begin{align*}
\phi(x^{*})+\psi(y^{*})\le & \,\sfd^{*}(x^{*},y^{*})^{p} \quad \text{ for all } x^{*},y^{*}\in M^{*}, \\ 
\phi(x^{*})+\psi(y^{*})= &\, \sfd^{*}(x^{*},y^{*})^{p} \quad \text{ for $\pi$-a.e. }(x^{*},y^{*});
\end{align*}
in particular 
\begin{equation} \label{eq:OptDual1} 
W_{p}(\mu_{0},\mu_{1})^{p}= \int_{M^{*}} \phi \, d\mu_{0} +  \int_{M^{*}} \psi \, d\mu_{1}.
\end{equation}
By defining the lifts $\hat{\phi}(x):=\phi(x^{*})$, and $\hat{\psi}(y):=\psi(y^{*})$, we get  that 
\begin{equation*}
\hat{\phi}(x)+\hat{\psi}(y)  =  \phi(x^{*})+\psi(y^{*}) \le  \sfd^{*}(x^{*},y^{*})^{p} =  \inf_{g\in\G} \sfd(gx,y)^{p} \le  \sfd(x,y)^{p},  \quad \text{ for all } x,y\in M,
\end{equation*}
that is to say, the couple $(\hat{\phi}, \hat{\psi})$ is $p$-admissible in $M$.
Therefore,  we have that
\begin{align}\label{eq:WpleWphat}
\begin{split}
W_{p}(\mu_{0},\mu_{1})^{p} & =  \int_{M^{*}}\phi(x^{*})\,d\mu_{0}(x^{*})+\int_{M^{*}}\psi(y^{*}) \, d\mu_{1}(y^{*}) =  \int_{M}\hat{\phi}(x) \, d\hat{\mu}_{0}(x)+\int_{M}\hat{\psi}(y) \, d\hat{\mu}_{1}(y)\\
 & \le  W_{p}(\hat{\mu}_{0},\hat{\mu}_{1})^{p}. 
\end{split}
\end{align}

The combination of  \eqref{eq:WphatleqWp} and  \eqref{eq:WpleWphat} then yields the claimed identity $W_{p}(\hat{\mu}_{0},\hat{\mu}_{1})=W_{p}(\mu_{0},\mu_{1})$.
Moreover, we conclude from \eqref{eq:defWpled} that $\hat{\pi}$ is a $p$-optimal coupling for $(\hat{\mu}_{0}, \hat{\mu}_{1})$ if $\pi$ is $p$-optimal for $({\mu}_{0}, {\mu}_{1})$ and that $\sfd(x,y)=\sfd^{*}(x^{*},y^{*})$ for $\hat{\pi}$-a.e. $(x,y)\in M\times M$ since, by construction, $\hat{\pi}$ is concentrated on $\mathcal{OD}$. Additionally, \eqref{eq:WpleWphat} grants that the lifted pair $(\hat{\phi}, \hat{\psi})$ is $p$-optimal for $(\hat{\mu}_{0}, \hat{\mu}_{1})$ if $(\phi, \psi)$ is $p$-optimal for $({\mu}_{0}, {\mu}_{1})$. In particular, $\hat{\phi}(x)+\hat{\psi}(y)=\sfd(x,y)^{p}$ for $\hat{\pi}$-a.e. $(x,y)\in M\times M$. 
\end{proof}


\begin{remark}
A result in the direction of Theorem~\ref{thm:LiftIsom} had already been shown by Lott and Villani in \cite[Lemma 5.36]{LottVillani}. In comparison to their work, we are more explicit in the construction of lifts of measures and optimal plans. This allows us additionally to show that the natural lifts of dual solutions are dual solutions as well. 
 This information will be useful in the proof of Theorem~\ref{thm:CDbyGisCD}.
\end{remark} 

As we just mentioned, we have shown that optimal plans and dual solutions are preserved by lifts.


\begin{cor}\label{co:liftoppl}
Let $\mu_0,\mu_1\in\mathcal P_p(M^*)$. Then for  every $p$-optimal coupling $\pi\in \mathcal{P}(M^{*}\times M^{*})$ of $(\mu_{0},\mu_{1})$ the (non-canonical) lifted coupling $\hat{\pi} \in \mathcal{P}(M\times M)$ defined in (\ref{eq:defhatpi}) is an optimal coupling of $(\hat{\mu}_{0},\hat{\mu}_{1})$ for which $\sfd(x,y)=\sfd^{*}(x^{*},y^{*})$ holds for $\hat{\pi}$-almost every $(x,y)\in M\times M$. Furthermore, whenever $(\phi,\psi)$ is a $p$-dual solution corresponding
to $\pi$ then the lift $(\hat{\phi},\hat{\psi})$ is a $p$-dual solution corresponding
to $\hat{\pi}$.
\end{cor}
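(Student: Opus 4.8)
The claim to prove is Corollary~\ref{co:liftoppl}, which packages together several facts already established inside the proof of Theorem~\ref{thm:LiftIsom}. So the plan is essentially bookkeeping: extract the relevant lines from that proof and assemble them into a standalone statement.

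First I would recall the setup: given $\mu_0,\mu_1\in\mathcal P_p(M^*)$ and a $p$-optimal coupling $\pi$ of $(\mu_0,\mu_1)$, the lift $\hat\pi$ was defined in \eqref{eq:defhatpi} by choosing measurable representatives $(\bar x_{(x^*,y^*)},\bar y_{(x^*,y^*)})\in\mathcal{OD}$ of each pair of orbits and integrating the $\G$-invariant couplings $\pi_{\bar x,\bar y}$ against $\pi$. The proof of Theorem~\ref{thm:LiftIsom} already checked that $\hat\pi$ is an admissible coupling of $(\hat\mu_0,\hat\mu_1)$, via the marginal computation showing $\hat\pi(B\times M)=\hat\mu_0(B)$ and $\hat\pi(M\times B)=\hat\mu_1(B)$. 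Since $\hat\pi$ is, by construction, concentrated on $\mathcal{OD}$, we have $\sfd(x,y)=\sfd^*(x^*,y^*)$ for $\hat\pi$-a.e.\ $(x,y)$; hence, using \eqref{eq:defWpled} and the already-proven isometry identity $W_p(\hat\mu_0,\hat\mu_1)=W_p(\mu_0,\mu_1)$ from Theorem~\ref{thm:LiftIsom},
\[
\int_{M\times M}\sfd(x,y)^p\,d\hat\pi(x,y)=\int_{M^*\times M^*}\sfd^*(x^*,y^*)^p\,d\pi(x^*,y^*)=W_p(\mu_0,\mu_1)^p=W_p(\hat\mu_0,\hat\mu_1)^p,
\]
so $\hat\pi$ is $p$-optimal. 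This gives the first assertion.

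For the second assertion, let $(\phi,\psi)$ be a $p$-dual solution corresponding to $\pi$. By Lemma~\ref{lem:c-concave-lifts} (or directly from the inequality chain in the proof of Theorem~\ref{thm:LiftIsom}) the lift $(\hat\phi,\hat\psi)$ is $p$-admissible on $M$, since $\hat\phi(x)+\hat\psi(y)=\phi(x^*)+\psi(y^*)\le\sfd^*(x^*,y^*)^p\le\sfd(x,y)^p$. Integrating against $\hat\pi$ and using that $\quotient_\sharp\hat\mu_i=\mu_i$ together with the optimality identity $W_p(\mu_0,\mu_1)^p=\int\phi\,d\mu_0+\int\psi\,d\mu_1$ (this is \eqref{eq:OptDual1}) yields
\[
\int_M\hat\phi\,d\hat\mu_0+\int_M\hat\psi\,d\hat\mu_1=\int_{M^*}\phi\,d\mu_0+\int_{M^*}\psi\,d\mu_1=W_p(\mu_0,\mu_1)^p=W_p(\hat\mu_0,\hat\mu_1)^p,
\]
so $(\hat\phi,\hat\psi)$ attains the supremum in the dual problem \eqref{eq:dual} for $(\hat\mu_0,\hat\mu_1)$, i.e.\ it is $p$-optimal. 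Finally, combining the optimality of $(\hat\phi,\hat\psi)$ with the optimality of $\hat\pi$ forces $\hat\phi(x)+\hat\psi(y)=\sfd(x,y)^p$ for $\hat\pi$-a.e.\ $(x,y)$, which is precisely the statement that $(\hat\phi,\hat\psi)$ is a $p$-dual solution corresponding to $\hat\pi$.

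There is no real obstacle here: the only mildly delicate point is to make sure every ingredient being cited — the marginal computation for $\hat\pi$, the isometry $W_p(\hat\mu_0,\hat\mu_1)=W_p(\mu_0,\mu_1)$, the admissibility of $(\hat\phi,\hat\psi)$, and the measurable selection of representatives in $\mathcal{OD}$ — is indeed contained in the proof of Theorem~\ref{thm:LiftIsom} and is used with the correct roles of $\mu_0$ versus $\mu_1$. The corollary is thus a direct corollary of (pieces of) that proof rather than requiring a new argument.
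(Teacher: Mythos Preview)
Your proposal is correct and follows essentially the same approach as the paper: the corollary is indeed just a repackaging of facts already established inside the proof of Theorem~\ref{thm:LiftIsom}, and you extract precisely the relevant lines (admissibility and $\mathcal{OD}$-concentration of $\hat\pi$, the cost identity via \eqref{eq:defWpled}, the isometry $W_p(\hat\mu_0,\hat\mu_1)=W_p(\mu_0,\mu_1)$, and admissibility plus optimality of $(\hat\phi,\hat\psi)$ via \eqref{eq:WpleWphat}). The paper's own treatment is exactly this bookkeeping, stated slightly more tersely at the end of that proof.
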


The following corollary also follows.


\begin{cor}\label{lem:ProjEssNB} 
If $(M,\sfd,\m)$ is essentially non-branching then the quotient space $(M^{*},\sfd^{*},\m^{*})$ is essentially non-branching.
\end{cor}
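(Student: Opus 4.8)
The plan is to deduce essential non-branching of the quotient from essential non-branching of the total space by pulling back branching geodesics via the isometric lift $\Lambda$ of Theorem~\ref{thm:LiftIsom}. So suppose, for contradiction, that $(M^*,\sfd^*,\m^*)$ is \emph{not} essentially non-branching: there exist $\mu_0,\mu_1\in\mathcal{P}^{ac}(M^*)$ and an optimal dynamical plan $\Pi^*\in\Opt_2(\mu_0,\mu_1)$ which is not concentrated on any non-branching Borel set of geodesics of $M^*$. First I would note, using Theorem~\ref{thm:LiftIsom}(2) and (3), that the lifts $\hat{\mu}_0=\Lambda(\mu_0)$ and $\hat{\mu}_1=\Lambda(\mu_1)$ lie in $\mathcal{P}^{ac}(M)\cap\mathcal{P}^\G(M)$, and by Corollary~\ref{co:liftoppl} the lifted coupling $\hat\pi$ of the optimal coupling $\pi=({\rm e}_0,{\rm e}_1)_\sharp\Pi^*$ is optimal for $(\hat\mu_0,\hat\mu_1)$ and is concentrated on $\mathcal{OD}$, i.e. $\sfd(x,y)=\sfd^*(x^*,y^*)$ for $\hat\pi$-a.e.\ $(x,y)$.

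The key construction is to lift the \emph{geodesic} plan, not just the coupling. For $\hat\pi$-a.e.\ pair $(x,y)\in\mathcal{OD}$ and any geodesic $\gamma$ from $x$ to $y$ in $M$, the projection $\quotient\circ\gamma$ has length $\sfd(x,y)=\sfd^*(x^*,y^*)$, hence is a geodesic in $M^*$ from $x^*$ to $y^*$; conversely, a geodesic $\eta$ in $M^*$ from $x^*$ to $y^*$ can be lifted to a geodesic in $M$ from $x$ to $y$ because (as $(x,y)\in\mathcal{OD}$) the infimum defining $\sfd^*$ is achieved with this choice of representatives, and one checks by a standard argument that horizontal lifts of geodesics through such pairs are again geodesics. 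I would make this precise by a measurable selection: for $\hat\pi$-a.e.\ $(x,y)$ choose a geodesic $G(x,y)\in\Geo(M)$ from $x$ to $y$ whose projection equals a fixed measurably chosen geodesic representative of the disintegration of $\Pi^*$ over $\pi$, set $\hat\Pi:=\int G(x,y)_\sharp\,\ldots\,d\hat\pi$ so that $({\rm e}_0,{\rm e}_1)_\sharp\hat\Pi=\hat\pi$ and $\quotient_\sharp$ applied to $\hat\Pi$ recovers $\Pi^*$; by the length computation $\hat\Pi\in\Opt_2(\hat\mu_0,\hat\mu_1)$. Then, since $(M,\sfd,\m)$ is essentially non-branching, $\hat\Pi$ is concentrated on a non-branching Borel set $\Gamma\subset\Geo(M)$. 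The final step is to observe that $\quotient\colon\Geo(M)\supset\Gamma\to\Geo(M^*)$ maps $\Gamma$ into a non-branching set: if two projected geodesics agree on $[0,t]$, then (using that along $\Gamma$-geodesics issuing from points of $\mathcal{OD}$ the lift is essentially determined, together with the non-branching of $\Gamma$) the original geodesics agree, hence so do the projections. This would exhibit $\Pi^*$ as concentrated on a non-branching set, the desired contradiction.

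The main obstacle I expect is the geodesic-lifting step and its measurability: asserting that a geodesic in $M^*$ joining $x^*$ and $y^*$ lifts to a geodesic in $M$ joining the chosen representatives $x,y$ with $(x,y)\in\mathcal{OD}$, and doing so in a jointly measurable way over $\hat\pi$, requires care — one must use that the quotient map $\quotient$ is $1$-Lipschitz and a \emph{submetry} (the image of a ball is a ball of the same radius), so that geodesics can be lifted starting from any preimage, and then invoke a measurable selection theorem for the (closed-valued) assignment of lifted geodesics. The rest is soft: isometry of $\Lambda$ and the explicit optimal-plan lifts from Theorem~\ref{thm:LiftIsom} and Corollary~\ref{co:liftoppl} do the heavy lifting, and the transfer of the non-branching property from $\Gamma$ downstairs is essentially formal once the lift is in place. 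An alternative, cleaner route avoiding explicit geodesic lifts would be to work directly with $W_2$-geodesics: a $W_2$-geodesic $(\mu_t)$ in $\mathcal{P}_2(M^*)$ lifts via $\Lambda$ to a $W_2$-geodesic $(\hat\mu_t)$ in $\mathcal{P}_2(M)$ (last sentence of Theorem~\ref{thm:LiftIsom}); pushing an essentially-non-branching representative plan downstairs and exploiting uniqueness of geodesics between a.c.\ measures where available would give the claim — but since $\mu_0,\mu_1$ a.c.\ does not obviously force the lifts' intermediate measures to be a.c.\ without a curvature hypothesis, the plan-level argument above is the robust one and is what I would write up.
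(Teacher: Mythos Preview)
Your approach is essentially the paper's: lift the absolutely continuous marginals and the optimal dynamical plan to $M$ using Theorem~\ref{thm:LiftIsom} and the submetry structure, then invoke essential non-branching upstairs to force the plan downstairs to be concentrated on non-branching geodesics. The paper's proof is a three-sentence sketch phrasing the key step contrapositively (``a branching geodesic in $M^{*}$ lifts to a $\G$-invariant family of branching geodesics in $M$'') and, like you, treats the passage between branching upstairs and downstairs as immediate; your explicit discussion of the measurable horizontal geodesic-lift via the submetry property is more detail than the paper provides, and your identification of that lifting as the main technical point matches what the paper leaves implicit.
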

\begin{proof}
Note that a branching geodesic in $M^{*}$ lifts to a $\G$-invariant
family of branching geodesics in $M$. Moreover, since  absolutely continuous
measures are lifted to absolutely continuous measures and any optimal dynamical
coupling on $M^{*}$ lifts to an optimal dynamical coupling on $M$, we
see that any $\gamma\in\OptGeo(\mu_{0},\mu_{1})$ between $\mu_{i}\in\mathcal{P}_{2}^{ac}(M^{*})$, $i=0,1$,
must be concentrated on a set of non-branching geodesics.
\end{proof}

We also get the following result of independent interest.


\begin{cor}
The push-forward $\quotient_{\sharp}:\mathcal{P}_{p}(M)\to\mathcal{P}_{p}(M^{*})$
is a submetry (see Definition \ref{def:submetry}).
\end{cor}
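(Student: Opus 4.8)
The statement to prove is that $\quotient_{\sharp}:\mathcal{P}_{p}(M)\to\mathcal{P}_{p}(M^{*})$ is a submetry, i.e.\ that for every $\mu\in\mathcal{P}_{p}(M)$ and every $r>0$ one has $\quotient_{\sharp}\big(B_{r}(\mu)\big)=B_{r}(\quotient_{\sharp}\mu)$, where the balls are taken in $(\mathcal{P}_{p}(M),W_{p})$ and $(\mathcal{P}_{p}(M^{*}),W_{p})$ respectively. The plan is to establish the two inclusions separately, both of which reduce to facts already available in the excerpt.

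\textbf{The easy inclusion.} First I would show $\quotient_{\sharp}\big(B_{r}(\mu)\big)\subseteq B_{r}(\quotient_{\sharp}\mu)$. This is a soft $1$-Lipschitz property of the pushforward: given $\mu,\tilde\mu\in\mathcal{P}_{p}(M)$ and any admissible coupling $\pi\in\mathcal{P}(M\times M)$ of $(\mu,\tilde\mu)$, the pushforward $(\quotient\times\quotient)_{\sharp}\pi$ is an admissible coupling of $(\quotient_{\sharp}\mu,\quotient_{\sharp}\tilde\mu)$, and since $\sfd^{*}(x^{*},y^{*})\le \sfd(x,y)$ for all $x,y\in M$ we get
\[
W_{p}(\quotient_{\sharp}\mu,\quotient_{\sharp}\tilde\mu)^{p}\le \int_{M\times M}\sfd^{*}(x^{*},y^{*})^{p}\,d\pi(x,y)\le \int_{M\times M}\sfd(x,y)^{p}\,d\pi(x,y).
\]
Taking the infimum over $\pi$ yields $W_{p}(\quotient_{\sharp}\mu,\quotient_{\sharp}\tilde\mu)\le W_{p}(\mu,\tilde\mu)$, which is exactly the first inclusion.

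\textbf{The surjectivity-onto-balls inclusion.} The reverse inclusion $B_{r}(\quotient_{\sharp}\mu)\subseteq \quotient_{\sharp}\big(B_{r}(\mu)\big)$ is the substantive point, and this is where I would use the machinery of Theorem~\ref{thm:LiftIsom}. Fix $\mu\in\mathcal{P}_{p}(M)$, set $\mu^{*}:=\quotient_{\sharp}\mu$, and let $\nu^{*}\in B_{r}(\mu^{*})\subset\mathcal{P}_{p}(M^{*})$, so $W_{p}(\mu^{*},\nu^{*})<r$. I need to produce $\nu\in\mathcal{P}_{p}(M)$ with $\quotient_{\sharp}\nu=\nu^{*}$ and $W_{p}(\mu,\nu)=W_{p}(\mu^{*},\nu^{*})$ (which in particular is $<r$). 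Choose a $p$-optimal coupling $\pi^{*}\in\mathcal{P}(M^{*}\times M^{*})$ of $(\mu^{*},\nu^{*})$. The idea is to disintegrate $\mu$ over $\mu^{*}$ via $\{\m_{x^{*}}\}$-style fibre measures and then transport each fibre. Concretely: disintegrate $\mu=\int_{M^{*}}\mu_{x^{*}}\,d\mu^{*}(x^{*})$ with $\mu_{x^{*}}$ concentrated on $\quotient^{-1}(x^{*})$; disintegrate $\pi^{*}=\int_{M^{*}}(\delta_{x^{*}}\otimes\pi^{*}_{x^{*}})\,d\mu^{*}(x^{*})$; then for each $x^{*}$ pick a measurable family of representatives and of elements $g\in\G$ realising $\sfd(x,y)=\sfd^{*}(x^{*},y^{*})$ (the set $\mathcal{OD}$ and the measurable selection argument from the proof of Theorem~\ref{thm:LiftIsom} supply exactly this), and use it to build a coupling $\hat\pi$ on $M\times M$ projecting to $\mu$ on the first factor and to some $\nu$ on the second factor, with $\quotient_{\sharp}\nu=\nu^{*}$ and with $\sfd(x,y)=\sfd^{*}(x^{*},y^{*})$ $\hat\pi$-a.e. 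Then $W_{p}(\mu,\nu)^{p}\le\int\sfd^{p}\,d\hat\pi=\int(\sfd^{*})^{p}\,d\pi^{*}=W_{p}(\mu^{*},\nu^{*})^{p}$, and the easy inclusion gives the matching lower bound, hence equality; in particular $\nu\in B_{r}(\mu)$ with $\quotient_{\sharp}\nu=\nu^{*}$.

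\textbf{Main obstacle.} The technical heart is the measurable selection in the last step: one must choose, $\mu^{*}$-measurably in $x^{*}$ and $\pi^{*}_{x^{*}}$-measurably in $y^{*}$, both a point of $\quotient^{-1}(y^{*})$ and (relative to a point of $\quotient^{-1}(x^{*})$ in the support of $\mu_{x^{*}}$) a group element achieving the orbit distance, so that the resulting family of couplings glues to a genuine Borel measure $\hat\pi$ on $M\times M$. This is precisely the argument already carried out in the proof of Theorem~\ref{thm:LiftIsom} (the construction of $\mathcal{OD}$, the map $(x^{*},y^{*})\mapsto(\bar x_{(x^{*},y^{*})},\bar y_{(x^{*},y^{*})})$, and the lifted plan $\hat\pi$ in \eqref{eq:defhatpi}); here we simply keep the first marginal equal to the \emph{given} $\mu$ rather than to a lift $\hat\mu_{0}$, which requires carrying the disintegration $\{\mu_{x^{*}}\}$ along but introduces no new difficulty. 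Everything else — the $1$-Lipschitz bound, the finiteness of $p$-moments of $\nu$ (which follows since $W_{p}(\mu,\nu)<\infty$ and $\mu\in\mathcal{P}_{p}(M)$), and the conclusion that $\quotient_{\sharp}$ maps balls onto balls — is routine.
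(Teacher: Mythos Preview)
Your approach is correct and takes a more direct route than the paper's own proof. The paper argues in two stages: first it assumes $\mu\ll\mu_{\G}$ (where $\mu_{\G}=\int_{\G}(\tau_g)_\sharp\mu\,d\nu_{\G}$ is the $\G$-average), lifts $\nu^*$ to the $\G$-invariant measure $\nu_{\G}=\Lambda(\nu^*)$, takes the optimal coupling $\pi$ for $(\mu_{\G},\nu_{\G})$ supplied by Corollary~\ref{co:liftoppl} (supported on $\mathcal{OD}$), disintegrates $\pi=\int\delta_x\times\tilde\nu_x\,d\mu_{\G}(x)$, and reweights to $\tilde\pi=\int\delta_x\times\tilde\nu_x\,d\mu(x)$; then it treats general $\mu$ by approximation in $(\mathcal{P}_p(M),W_p)$ via a sequence $\mu_n\ll(\mu_n)_\G=\mu_\G$ together with stability of optimal couplings. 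Your argument bypasses both the absolute-continuity reduction and the approximation: by performing the measurable selection at the level of $M\times M^*$ (for each $x\in M$ and each target orbit $y^*$, pick a point of $\quotient^{-1}(y^*)$ realising the orbit distance from $x$) rather than at the level of $M^*\times M^*$ as in Theorem~\ref{thm:LiftIsom}, you can build a coupling with first marginal exactly $\mu$ for \emph{arbitrary} $\mu\in\mathcal{P}_p(M)$. Note only that the selection you need is not literally the one in \eqref{eq:defhatpi} (which produces $\G$-invariant marginals) but a close variant: one selects a Borel map $T:M\times M^*\to M$ with $T(x,y^*)\in\quotient^{-1}(y^*)$ and $\sfd(x,T(x,y^*))=\sfd^*(\quotient(x),y^*)$, obtained from the same standard selection theorems applied to the nonempty closed-valued correspondence $(x,y^*)\mapsto\{y\in\quotient^{-1}(y^*):\sfd(x,y)=\sfd^*(\quotient(x),y^*)\}$. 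What your route buys is a one-shot argument; what the paper's route buys is that the lift of couplings from Corollary~\ref{co:liftoppl} is reused verbatim, at the cost of the density-and-limit step.
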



\begin{proof}
We know already that $\quotient_{\sharp}$ is onto. To see that $\quotient_{\sharp}$
is $1$-Lipschitz note that if $\pi\in \mathcal{P}(M\times M)$ is an admissible  coupling for $(\mu,\nu)$, then $\pi^{*}=(\quotient\times\quotient)_{\sharp}\pi$ is admissible for $(\quotient_{\sharp}\mu,\quotient_{\sharp}\nu)$
and 
\[
\int \sfd(x,y)^{p}d\pi(x,y)\ge\int \sfd^{*}(x^{*},y^{*})^{p}d\pi^{*}(x^{*},y^{*}).
\]
Let $\mu\in\mathcal{P}_{p}(M)$ and $\mu_{\G}\in\mathcal{P}_{p}(M)$ be its $\G$-average,
\begin{equation}\label{eq:defmuG}
\mu_{\G}=\int_{\G} (\tau_{g})_{\sharp}\mu \; d\nu_{\G}(g).
\end{equation}
Then 
\[
\mu^{*}:=\quotient_{\sharp}\mu=\quotient_{\sharp}\mu_{\G}.
\]
Fix an arbitrary $\nu^{*}\in\mathcal{P}_{p}(M^{*})$ with $W_{p}(\mu^{*},\nu^{*})=r$
and denote its lift by $\nu_{\G}$. Then, from Theorem \ref{thm:LiftIsom}, we have $W_{p}(\mu_{\G},\nu_{\G})=r$. Thanks to  Corollary \ref{co:liftoppl} there exists  a $p$-optimal coupling  $\pi$ for $(\mu_{\G},\nu_{\G})$ which is supported on the set $\mathcal{OD}$, defined on  \eqref{eq:defOD}.   
Via the Disintegration Theorem, 
we obtain that there exists a $\mu_{\G}$-a.e. well-defined measure-valued assignment $x\mapsto \tilde{\nu}_{x}$ such  that for every Borel set $B\subset M$ the  function $x\mapsto \tilde{\nu}_{x}(B) \in \R\cup\{+\infty\}$ is $\mu_{\G}$-measurable and
\[
\pi=\int\delta_{x}\times\tilde{\nu}_{x} \; d\mu_{\G}(x).
\]
From the construction of $\pi$ performed in Corollary \ref{co:liftoppl}  (see in particular \eqref{eq:defhatpi}, noting that here the lift is denoted by $\pi$ instead of $\hat{\pi}$), one can check that $\delta_{x}\times\tilde{\nu}_{x}$ is  supported on $\mathcal{OD}$ for  $\mu_{\G}$-a.e. $x$. 

Assume now that $\mu\ll\mu_{\G}$. Then we can define a new coupling
\begin{equation}\label{eq:deftildepi}
\tilde{\pi}=\int\delta_{x}\times\tilde{\nu}_{x} \; d\mu(x).
\end{equation}
Set $\nu=(p_{2})_{\sharp}\tilde{\pi}$ and observe that $\nu^{*}=\quotient_{\sharp}\nu_{\G}$; therefore $\tilde{\pi}$ is an admissible coupling for $(\mu,\nu)$ and, recalling that $\pi$ was $p$-optimal for $(\mu_{\G},\nu_{\G})$, we get  
\begin{align*}
W_{p}(\mu,\nu)^{p}&\leq \int_{M\times M} \sfd(x,y)^{p}\; d\tilde{\pi}(x,y)=  \int_{M} \int_{M} \sfd(x,y)^{p} \;d{\tilde{\nu}}_{x}(y) \, d\mu(x) =  \int_{M} \int_{M} \sfd^{*}(x^{*},y^{*})^{p} \;d{(\quotient_{\sharp}\tilde{\nu}}_{x})(y) \, d\mu^{*}(x^{*}) \\
&= \int_{M} \int_{M} \sfd(x,y)^{p} \;d{\tilde{\nu}}_{x}(y) \, d\mu_{\G}(x) = \int \sfd^{p}(x,y) \; d\pi(x,y)=W_{p}(\mu_{\G},\nu_{\G})^{p}= W_{p}(\mu^{*},\nu^{*})^{p}=r^{p}.
\end{align*}
Since we already proved that $\quotient_{\sharp}$ is $1$-Lipschitz, the last inequality  implies that  $W_{p}(\mu,\nu)=r$. Note that in particular we also get that $\tilde{\pi}$ is a  $p$-optimal coupling for $(\mu,\nu)$. 
Because $\nu^{*}$ was an arbitrary measure at $W_{p}$-distance $r$ from $\mu^{*}$,  we infer  that $\quotient_{\sharp}(B_{r}^{W_{p}}(\mu))=B_{r}^{W_{p}}(\mu^{*}).$

The  case for general $\mu \in \mathcal{P}_{p}(M)$ follows by approximation, since for such a measure the set of $\tilde\mu\in\mathcal{P}_{p}(M)$
with $\tilde\mu\ll\mu_{\G}$ is dense in $\mathcal{P}_{p}(M)$. More precisely,
one can construct a sequence $(\mu_{n})_{n\in\mathbb{N}}$ in $\mathcal{P}_{p}(M)$
with $\mu_{n}\ll(\mu_{n})_{\G}=\mu_{\G}$ such that $\mu_{n}\to\mu$
in $(\mathcal{P}_{p}(M), W_{p})$. 
\\Let $\tilde{\pi}_{n}$ be the couplings from $\mu_{n}$ obtained as in \eqref{eq:deftildepi} and set $\nu_{n}=(p_{2})_{\sharp}\tilde{\pi}_{n}$.  
From the above arguments, we know that $\tilde{\pi}_{n}$ is a $p$-optimal coupling for $(\mu_{n},\nu_{n})$.
 Because the orbits are compact and $\nu^{*}=\quotient_{\sharp}\nu_{n}$, we
see that $\{\nu_{n}\}_{n\in\mathbb{N}}$ and $\{\tilde{\pi}_{n}\}_{n\in\mathbb{N}}$
are weakly compact.  Thus there exist $\nu\in  \mathcal{P}(M), \;\tilde{\pi}\in \mathcal{P}(M\times M)$ such that, up to subsequences, $\nu_{n}$
and $\tilde{\pi}_{n}$ converge weakly to $\nu$ and  $\tilde{\pi}$ respectively.
\\Then, by \cite[Theorem 5.20]{Villani},  $\tilde{\pi}$ is a $p$-optimal coupling for $(\mu,\nu)$ and it holds 
\[
W_{p}(\mu,\nu)\le\liminf_{n\to\infty}W_{p}(\mu_{n},\nu_{n})=r.
\]
However, since $\mu^{*}=\quotient_{\sharp}\mu$ and $\nu^{*}=\quotient_{\sharp}\nu$
this must be an equality proving the claim for all $\mu^{*} \in \mathcal{P}_{p}(M^{*})$.
\end{proof}

We now prove that the convexity of the entropy in Wasserstein spaces  is stable under quotients.

\begin{thm}\label{thm:CDbyGisCD}
Assume that the metric measure space $(M,\sfd,\m)$  satisfies the strong $\CD(K,N)$-condition (resp. the strong  $\CD^{*}(K,N)$-condition, or the strong $\CD(K,\infty)$-condition). Then the quotient metric measure space  $(M^{*},\sfd^{*},\m^{*})$ satisfies
 the strong $\CD(K,N)$-condition (resp. the strong  $\CD^{*}(K,N)$-condition,  or the strong $\CD(K,\infty)$-condition). 
\end{thm}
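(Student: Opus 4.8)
The plan is to transfer the defining inequality of each curvature-dimension condition from the quotient $(M^{*},\sfd^{*},\m^{*})$ back to $(M,\sfd,\m)$ by lifting, apply the hypothesis there, and then push the resulting information back down to $M^{*}$. The backbone is Theorem~\ref{thm:LiftIsom} together with Corollary~\ref{co:liftoppl}: the lift $\Lambda$ is an isometric embedding of $\mathcal P_{2}(M^{*})$ into the $\G$-invariant part of $\mathcal P_{2}(M)$, preserves absolute continuity, sends $W_{2}$-geodesics to $\G$-invariant $W_{2}$-geodesics, and lifts optimal couplings to optimal couplings concentrated on $\mathcal{OD}$, the set where $\sfd(x,y)=\sfd^{*}(x^{*},y^{*})$.

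First I would treat the $\CD(K,\infty)$ case, which is cleanest. Given $\mu_{0}=\rho_{0}\m^{*}$, $\mu_{1}=\rho_{1}\m^{*}$ in $\mathcal P_{2}^{ac}(M^{*})$, form their lifts $\hat\mu_{0}=\hat\rho_{0}\m$, $\hat\mu_{1}=\hat\rho_{1}\m$, which are absolutely continuous by Theorem~\ref{thm:LiftIsom}(2). The key elementary computation is that the entropy is preserved by lifting: by the disintegration \eqref{eq:mmDis} and the fact that $\hat\rho_{i}$ is constant along orbits while $\m_{x^{*}}$ is a probability measure on the orbit, one gets $\Ent_{\m}(\hat\mu_{i})=\Ent_{\m^{*}}(\mu_{i})$, and likewise along any lifted geodesic. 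Now apply strong $\CD(K,\infty)$ on $M$ to the $\G$-invariant geodesic $\hat\mu_{t}:=\Lambda(\mu_{t})$ obtained from \emph{any} given $W_{2}$-geodesic $(\mu_{t})$ on $M^{*}$ (it is a genuine geodesic by Theorem~\ref{thm:LiftIsom}); since $W_{2}(\hat\mu_{0},\hat\mu_{1})=W_{2}(\mu_{0},\mu_{1})$, inequality \eqref{def:CDKinfty} on $M$ descends verbatim to \eqref{def:CDKinfty} on $M^{*}$. Because the argument works for \emph{every} $W_{2}$-geodesic downstairs, we obtain the \emph{strong} condition. For the finite-dimensional cases $\CD(K,N)$ and $\CD^{*}(K,N)$ the structure is identical, but instead of the entropy one uses the Rényi-type functional $\mu\mapsto\int\rho^{1-1/N'}\,d\m$, which again is preserved under lifting by the same disintegration argument, and one works with the optimal coupling $\hat\pi$ of $(\hat\mu_{0},\hat\mu_{1})$ supplied by Corollary~\ref{co:liftoppl}. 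The crucial point making the distortion coefficients match is precisely that $\hat\pi$ is concentrated on $\mathcal{OD}$: on that set $\tau^{(t)}_{K,N'}(\sfd(x,y))=\tau^{(t)}_{K,N'}(\sfd^{*}(x^{*},y^{*}))$ and similarly for $\sigma$, so the right-hand side of \eqref{def:CDKN} (resp.\ \eqref{def:CD*KN}) computed on $M$ with $\hat\pi$ equals the one on $M^{*}$ with $\pi$, while the left-hand sides agree by the functional identity applied to $\hat\rho_{t}$ versus $\rho_{t}$ along the lifted geodesic.

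The main obstacle I anticipate is the bookkeeping around \emph{which} geodesic one works with and the measurability of the lifted objects. One must be careful that, in the definition of the strong condition, the geodesic $(\mu_{t})$ on $M^{*}$ is arbitrary and we do not get to choose it; fortunately the lift of any such geodesic is again a geodesic on $M$ by the last assertion of Theorem~\ref{thm:LiftIsom}, and uniqueness of the lift \eqref{thm:LiftIsom}(1) forces the density of $\hat\mu_{t}$ to be exactly $\widehat{\rho_{t}}$, so the functional identity holds at every time $t$. A secondary subtlety is that the optimal coupling induced by a geodesic need not a priori be the coupling $\hat\pi$ produced by the non-canonical lift in \eqref{eq:defhatpi}; but for the inequality we only need \emph{some} optimal coupling concentrated on $\mathcal{OD}$, and the one lifting the optimal coupling induced by $(\mu_{t})$ is such. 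A final routine check is that in the $\CD^{*}$ and $\CD$ cases the convexity inequality must hold for all $N'\in[N,\infty)$ simultaneously; since the lifting/pushforward identities are uniform in $N'$, this causes no difficulty. Throughout, the measurable selection of orbit representatives and the measurability of $x^{*}\mapsto\nu_{x}$ are already established, so no new measure-theoretic input is needed.
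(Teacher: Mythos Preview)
Your proposal is correct and follows essentially the same route as the paper's own proof: lift an arbitrary $W_{2}$-geodesic and its induced optimal coupling via Theorem~\ref{thm:LiftIsom} and Corollary~\ref{co:liftoppl}, apply the strong curvature-dimension inequality on $M$, and push down using the disintegration identity $\int_{M}\hat\rho_{t}^{1-1/N'}\,d\m=\int_{M^{*}}\rho_{t}^{1-1/N'}\,d\m^{*}$ together with $\sfd=\sfd^{*}$ on $\mathcal{OD}$. The only minor difference is that you treat the $\CD(K,\infty)$ case first via the entropy identity, whereas the paper writes out the $\CD(K,N)$ case; and you are slightly more explicit than the paper about the subtlety of identifying the coupling induced by the lifted geodesic with the lifted coupling $\hat\pi$ (the paper simply asserts this equivalence).
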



\begin{proof}
We prove the result for the  strong $\CD(K,N)$-condition, $K\in \R, N<\infty$; the proofs for strong $\CD^{*}(K,N)$ or  $\CD(K,\infty)$ are completely analogous.

Let $\mu_{0},\mu_{1}\in\mathcal{P}_{2}^{ac}(M^{*})$ and let  $\{\mu_{t}\}_{t \in [0,1]}$ be a $W_{2}$-geodesic between them inducing the $2$-optimal coupling $\pi$. 
For every $t \in [0,1]$,  write $\hat{\mu}_{t}$ for the lift of $\mu_{t}$ given by  \eqref{eq:liftmeas}.
From Theorem \ref{thm:LiftIsom} we know that $\mu_{0},\mu_{1}\in\mathcal{P}_{2}^{ac}(M)$ and that   $\{\hat{\mu}_{t}\}_{t \in [0,1]}$ is a $\G$-invariant $W_{2}$-geodesic in $\mathcal{P}_{2}(M)$. By the strong $\CD(K,N)$-condition, we have $\hat{\mu}_{t}=\hat{\rho}_{t} \m \in  \mathcal{P}_{2}^{ac}(M)$ with
 \begin{equation*}
\int_{M}\hat{\rho}_{t}^{1-\frac{1}{N'}}\, d\m\ge\int_{M\times M}\left[\tau_{K,N'}^{(1-t)}(\sfd(x,y))\hat{\rho}_{0}^{-\frac{1}{N'}}(x)+\tau_{K,N'}^{(t)}(\sfd(x,y))\hat{\rho}_{1}^{-\frac{1}{N'}}(y)\right]d\hat{\pi}(x,y),
\end{equation*}
for every $t \in [0,1]$ and every $N'\geq N$, where $\hat{\pi}$ is the lift of $\pi$ defined in \eqref{eq:defhatpi} or, equivalently, the $2$-optimal coupling from $\hat{\mu}_{0}$ to $\hat{\mu}_{1}$ induced by the geodesic $\{\hat{\mu}_{t}\}_{t \in [0,1]}$.

Since the measure  $\hat{\mu}_{t}$ is $\G$-invariant, its density $\hat{\rho}_{t}$ is also $\G$-invariant. Thus, $\rho_{t}(x^{*}):=\hat{\rho}_{t}(x)$ is well defined and 
it coincides with the density of $\mu_{t}$ with respect to $\m^{*}$. Therefore, it holds that
\begin{align*}
\int_{M^{*}}&\rho_{t}^{1-\frac{1}{N'}}(x^{*}) \, d\m^{*}(x^{*})  =  \int_{M^{*}}\int_{\G(x)}\hat{\rho}_{t}^{1-\frac{1}{N'}}(y) \, d\m_{x^{*}}(y) \, d\m^{*}(x^{*})=\int_{M}\hat{\rho}_{t}^{1-\frac{1}{N'}}(y)\, d\m(y) \\
&  \ge  \int_{M\times M}\left[\tau_{K,N'}^{(1-t)}(\sfd(x,y))\hat{\rho}_{0}^{-\frac{1}{N'}}(x)+\tau_{K,N'}^{(t)}(\sfd(x,y))\hat{\rho}_{1}^{-\frac{1}{N'}}(y)\right]d\hat{\pi}(x,y)\\
 & =  \int_{M^{*}\times M^{*}}\left[\tau_{K,N'}^{(1-t)}(\sfd^{*}(x^{*},y^{*}))\rho_{0}^{-\frac{1}{N'}}(x^{*})+\tau_{K,N'}^{(t)}(\sfd^{*}(x^{*},y^{*}))\rho_{1}^{-\frac{1}{N'}}(y^{*})\right]d\pi(x^{*},y^{*}),
\end{align*}
for every $t \in [0,1]$ and $N'\geq N$, where we have used that $\rho_{i}(x^{*})=\hat{\rho}_{i}(x)$ and
$\sfd(x,y)=\sfd^{*}(x^{*},y^{*})$ for $\hat{\pi}$-almost all $(x,y)\in M\times M$. 
\end{proof}


\begin{remark}
As recalled  in Remark  \ref{rem:EssNBsCD}, for $N<\infty$ the strong $\CD(K,N)$-condition, $\CD^{*}(K,N)$-condition respectively, is equivalent to essentially non-branching plus (weak) $\CD(K,N)$, essentially non-branching plus (weak) $\CD^*(K,N)$.
Furthermore, essentially non-branching plus $\MCP(K,N)$ imply uniqueness of  the $W_{2}$-geodesic  starting from $\mu_{0}\in \mathcal{P}_{2}^{ac}(M,\sfd, \m)$ to an arbitrary measure $\mu_{1}\in \mathcal{P}_{2}(M)$. Therefore, the verbatim argument above together with Lemma \ref{lem:ProjEssNB} proves that if $(M,\sfd, \m)$ is an essentially non-branching $\MCP(K,N)$-space then also $(M^{*},\sfd^{*}, \m^{*})$ satisfies essentially non-branching plus $\MCP(K,N)$. 

As a matter of fact, note that the key property needed for the proof is that the curvature-dimension condition under consideration is satisfied along $\G$-invariant Wasserstein geodesics.
\end{remark}


\begin{remark}
The same arguments show that the strong $\CD_{p}(K,N)$-condition defined by the second
author in \cite{KellCDp} holds for quotients of strong $\CD_{p}(K,N)$-spaces. 
Additionally, granted that the group $\G$ is finite, also the intermediate $p$-Ricci lower curvature bounds in terms of optimal transport introduced by Ketterer and the third author \cite{KetMon} are preserved under quotients.
\end{remark}


\section{Principal orbit theorem and cohomogeneity one actions} 
We prove a principal orbit theorem in the context of metric measure spaces and use it to study cohomogenity one actions for spaces with nice optimal transport properties. The theorem  states that the orbits of $\m$-almost all points in $M$  are of a unique maximal type. For this result, we are inspired by  a paper of  Guijarro and the first author   \cite{GGG2013} in the framework of  Alexandrov spaces. In order to compensate the lack of information about branching of geodesics in the present context, we introduce the notion of   \emph{good optimal transport behavior} requiring, roughly speaking, that each optimal transport from an absolutely continuous measure is induced by an optimal map (see Definition \ref{def:GTB}). Such a notion is known to be implied by  lower Ricci curvature conditions such as $\CD^{*}(K,N)$ and, more generally, $\MCP(K,N)$ coupled with the  essentially non-branching property (see  Gigli-Rajala-Sturm \cite{GRS2016} and
Cavalletti-Mondino \cite{CMMapsNB}).  Thanks to the results proved in the previous section,  we will easily show that the good optimal transport behavior  is stable under quotients.

As mentioned above, it will be convenient in the sequel to consider metric measure spaces satisfying the following property which is closely related to  $p$-essential non-branching. 


\begin{defn}[Good transport behavior]\label{def:GTB}
 A metric measure space $(M,\sfd,\m)$ has
\emph{good transport behavior} $\GTB_{p}$ for $p\in(1,\infty)$ if, for
all $\mu,\nu\in\mathcal{P}_{p}(M)$ with $\mu\ll\m$,  any $p$-optimal coupling from $\mu$ to $\nu$ is induced by a map.
\end{defn}

The good transport behavior is satisfied by a large class of examples, as recalled in the next theorem.


\begin{thm}[Cavalletti-Huesmann \cite{CavallettiHuesmann}, Cavalletti-Mondino\cite{CMMapsNB}, Gigli-Rajala-Sturm\cite{GRS2016}] \label{thm:ExGTB}
 The
following spaces satisfy $\GTB_{p}$: 
\begin{itemize}
\item Essentially non-branching $\mathsf{MCP}(K,N)$-spaces for $p=2$,
$K\in\mathbb{R}$, and $N\in[1,\infty)$. In particular, this includes 
essentially non-branching $\mathsf{CD}^{*}(K,N)$-spaces, essentially
non-branching $\text{\ensuremath{\mathsf{CD}}}(K,N)$-spaces, and
$\mathsf{RCD^{*}(K,N)}$-spaces.
\item Non-branching spaces with weak quantitative $\MCP$-property for all
$p\in(1,\infty)$.
\end{itemize}
\end{thm}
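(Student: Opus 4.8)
The plan is to obtain both bullet points from the existing structure theory of optimal transport on (essentially) non-branching spaces, namely from \cite{GRS2016}, \cite{CMMapsNB} and \cite{CavallettiHuesmann}; the two bullets rest on genuinely different mechanisms according to whether one works with $p=2$ or with an arbitrary exponent $p\in(1,\infty)$, so I would treat them separately.

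For the first bullet, fix $\mu=\rho\,\m\ll\m$ and $\nu\in\mathcal P_2(M)$, let $\Pi\in\OptGeo(\mu,\nu)$ and let $\varphi$ be an associated $c_2$-Kantorovich potential, so that $\Pi$ is concentrated on geodesics $\gamma$ with $(\gamma_0,\gamma_1)\in\partial^{c_2}\varphi$. The first step is a Jacobian/change-of-variables estimate: using the $\MCP(K,N)$ inequality, applied to the conditional measures of a disintegration of $\Pi$ (so that one effectively contracts towards a single endpoint at a time) and to restrictions of $\Pi$ to small Borel sets, one shows that the intermediate measures $\mu_t:=(\ee_t)_\sharp\Pi$ are absolutely continuous with respect to $\m$ for every $t\in[0,1)$, with a density bound in terms of $\rho$ and the coefficients $\tau^{(t)}_{K,N}$. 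The second step uses essential non-branching: $\Pi$ is concentrated on a non-branching Borel set of geodesics $\Gamma$, and once $\mu_t\ll\m$ is known for $t\in(0,1)$, a measurable selection argument shows that the map $\Gamma\ni\gamma\mapsto\gamma_0$ is injective up to a $\Pi$-null set, whence $\Pi=G_\sharp\mu$ for a Borel map $G\colon M\to\Geo(M)$ and the coupling $(\ee_0,\ee_1)_\sharp\Pi$ is induced by the map $\ee_1\circ G$. These two steps are exactly what is carried out in \cite{GRS2016} for essentially non-branching $\CD^*(K,N)$-spaces and in \cite{CMMapsNB} for essentially non-branching $\MCP(K,N)$-spaces, so I would quote those statements for the details. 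The ``in particular'' clause then follows from the implications recalled in Section~\ref{sec:Prel}: a $\CD^*(K,N)$- or $\CD(K,N)$-space is in particular an $\MCP(K,N)$-space (for the same or suitably adjusted parameters), and an $\RCD^*(K,N)$-space is essentially non-branching and satisfies $\CD^*(K,N)$, hence $\MCP(K,N)$.

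For the second bullet the $W_2$-machinery above is unavailable, because the Jacobian estimate is tied to the Hilbertian structure of $W_2$-geodesics, so I would follow \cite{CavallettiHuesmann} instead. Starting from a $c_p$-Kantorovich potential $\varphi$, the relevant object is the associated transport set, decomposed into the maximal transport rays of $\varphi$, together with the one-dimensional disintegration (localization) of $\m$ along these rays. The weak quantitative $\MCP$-property is used to show that the conditional measures of this disintegration are absolutely continuous with respect to the one-dimensional Hausdorff measure on each ray; combined with non-branching, which guarantees that the rays partition the transport set up to a $\m$-negligible set of branching points, this forces $\varphi$ to be differentiable at $\mu$-a.e.\ point, so that the direction of the ray through $x$, hence the image of $x$ under the transport map, is uniquely determined. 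Again I would cite \cite{CavallettiHuesmann} for the execution.

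The step I expect to be the main obstacle, if one wrote the argument out in full rather than citing it, is the absolute continuity of $\mu_t$ for $t\in[0,1)$ in the first bullet: since $\MCP$ only controls contractions towards a single point, one has to run the estimate through a disintegration and patch the one-point bounds together over $\supp(\nu)$, all the while using essential non-branching to avoid over-counting mass. Once $\mu_t\ll\m$ is established, the extraction of a transport map is comparatively soft. Since all of this is available in the cited references, the proof here amounts to quoting \cite{GRS2016}, \cite{CMMapsNB} and \cite{CavallettiHuesmann} and observing the elementary implications among the curvature-dimension conditions.
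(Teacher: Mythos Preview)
Your proposal is correct and matches the paper's approach: the theorem is stated without proof, with the attribution to \cite{CavallettiHuesmann}, \cite{CMMapsNB}, and \cite{GRS2016} placed directly in the theorem header, so the ``proof'' is precisely the citation you arrive at. Your additional sketch of the mechanisms in those references is accurate and more informative than what the paper provides, but it is not required here.
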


Using the fact that the geometry of the $p$-Wasserstein space is preserved  under quotients we can also show the next result. 


\begin{lem}
Let $(X,\sfd,\m)$ be a m.m. space satisfying $\GTB_{p}$. Then $(X^{*},\sfd^{*},\m^{*})$
satiesfies $\GTB_{p}$ as well. 
\end{lem}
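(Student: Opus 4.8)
The idea is to pull back an optimal transport problem on the quotient $X^{*}$ to a $\G$-invariant optimal transport problem on $X$, exploit $\GTB_{p}$ upstairs to get a transport map, and then push this map back down to the quotient. First I would take $\mu^{*},\nu^{*}\in\mathcal{P}_{p}(X^{*})$ with $\mu^{*}\ll\m^{*}$ and a $p$-optimal coupling $\pi^{*}$ of $(\mu^{*},\nu^{*})$; my goal is to show $\pi^{*}$ is induced by a map. By Theorem~\ref{thm:LiftIsom} the lift $\hat{\mu}^{*}=\Lambda(\mu^{*})$ is $\G$-invariant, absolutely continuous with respect to $\m$, and lies in $\mathcal{P}_{p}(X)$; likewise $\hat{\nu}^{*}=\Lambda(\nu^{*})\in\mathcal{P}_{p}(X)$, and by Corollary~\ref{co:liftoppl} the (non-canonical) lifted coupling $\hat{\pi}^{*}$ is a $p$-optimal coupling of $(\hat{\mu}^{*},\hat{\nu}^{*})$ concentrated on $\mathcal{OD}$.

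Next I would invoke $\GTB_{p}$ for $(X,\sfd,\m)$: since $\hat{\mu}^{*}\ll\m$, the optimal coupling $\hat{\pi}^{*}$ is induced by a map, say $\hat{\pi}^{*}=(\mathrm{id},T)_{\sharp}\hat{\mu}^{*}$ for some Borel $T:X\to X$. The key structural input I want to use is that $\hat{\pi}^{*}$ is concentrated on $\mathcal{OD}$, so that $\sfd(x,T(x))=\sfd^{*}(x^{*},T(x)^{*})$ for $\hat{\mu}^{*}$-a.e.\ $x$, and also that $\hat{\pi}^{*}$ is $\G$-invariant, which by uniqueness of the transport map forces $T$ to be $\G$-equivariant up to $\hat\mu^*$-null sets, i.e.\ $T(gx)=gT(x)$ in an a.e.\ sense. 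Composing with the quotient projection, $x\mapsto T(x)^{*}=\quotient(T(x))$ is then constant on $\m$-a.e.\ orbit, hence descends to a well-defined Borel map $T^{*}:X^{*}\to X^{*}$ with $T^{*}\circ\quotient=\quotient\circ T$ $\hat\mu^*$-a.e.; one checks $(\quotient\times\quotient)_{\sharp}\hat{\pi}^{*}=(\mathrm{id},T^{*})_{\sharp}\mu^{*}$, and this pushforward is exactly $\pi^{*}$ (by construction of the lift $\hat\pi^*$ from $\pi^*$). Therefore $\pi^{*}$ is induced by the map $T^{*}$.

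There is one subtlety: $\GTB_{p}$ as stated applies to the \emph{given} optimal coupling, so I must make sure I am lifting the \emph{arbitrary} optimal $\pi^{*}$ I started with, not some special one. This is fine: Corollary~\ref{co:liftoppl} says that for \emph{every} $p$-optimal $\pi^{*}$ the lift $\hat\pi^{*}$ is $p$-optimal, and $(\quotient\times\quotient)_{\sharp}\hat\pi^{*}=\pi^{*}$ holds by the marginal computations in the proof of Theorem~\ref{thm:LiftIsom}. So the argument recovers the original coupling. The second subtlety, which I expect to be the main obstacle to write cleanly, is the equivariance/descent step: deducing that $T$ is essentially $\G$-equivariant and that $T(\cdot)^{*}$ genuinely descends to a Borel map on $X^{*}$. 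The cleanest route is to avoid arguing about $T$ directly and instead argue at the level of couplings: disintegrate $\hat\pi^{*}=\int \delta_x\times(\hat\pi^{*})_x\,d\hat\mu^{*}(x)$, note $(\hat\pi^{*})_x=\delta_{T(x)}$ a.e.\ by $\GTB_p$, then show the disintegration of $\pi^{*}=(\quotient\times\quotient)_\sharp\hat\pi^{*}$ against $\mu^{*}$ has the form $\delta_{S(x^{*})}$ for a measurable $S$ — using that $\quotient_\sharp(\hat\pi^{*})_x$ depends only on $x^{*}$, which follows from $\G$-invariance of $\hat\pi^{*}$ and the uniqueness part of the Disintegration Theorem. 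This gives that $\pi^{*}$ is induced by the map $S=T^{*}$, completing the proof; I would keep the write-up short since every ingredient is already in place from the previous section.
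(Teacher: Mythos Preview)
Your proposal is correct and follows essentially the same approach as the paper: lift the arbitrary optimal coupling $\pi^{*}$ to the $\G$-invariant optimal coupling $\hat{\pi}^{*}$ via Corollary~\ref{co:liftoppl}, use $\GTB_{p}$ upstairs to get a map $T$, and then descend $T$ to a map $T^{*}$ on the quotient using the $\G$-equivariance inherited from the diagonal $\G$-invariance of $\hat{\pi}^{*}$. Your write-up is in fact more careful than the paper's (which dispatches the equivariance/descent step in one line), and your alternative route via the Disintegration Theorem is a clean way to avoid arguing pointwise about $T$.
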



\begin{proof}  This is a consequence of  Theorem \ref{thm:LiftIsom} and  Corollary \ref{co:liftoppl}.  Indeed for any $\mu^{*},\nu^{*}\in  {\mathcal P}_{p}(M^{*})$ with $\mu^{*}\ll \mm^{*}$, any $p$-optimal coupling $\pi \in \mathcal P(M^*\times M^*)$ for $(\mu^{*},\nu^{*})$ can be written as $(\quotient\times\quotient)_\sharp \hat{\pi}$, where $\hat{\pi}$ is the lift defined by \eqref{eq:defhatpi}.
The lifts of $\mu^{*}, \nu^{*}$ given by Theorem \ref{thm:LiftIsom}, which we write as $\mu=\Lambda(\mu^{*})$ and $\nu=\Lambda(\nu^{*})$, satisfy the following: $\mu \ll \mm$ and the lifted coupling $\hat{\pi}$ is a $p$-optimal for $(\mu,\nu)$. Since 
$(M,\sfd,\mm)$ satisfies $\GTB_{p}$, the coupling  $\hat{\pi}$ is induced by a map $T$. Moreover, by the equivariant structure of $\hat{\pi}$ stated in Corollary \ref{co:liftoppl} it is easy to check that  one can choose $T$ to be  $\G$-equivariant, i.e. $T(x)=T(g x)$ for all $g \in \G$, for $\mu$-a.e. $x$. Therefore $T$ passes to the quotient, giving an optimal map which induces the coupling $\pi$.
\end{proof}
Metric measure spaces that have good transport behavior enjoy nice geodesic properties. For example, by picking a Dirac mass as the terminal measure in Definition \ref{def:GTB}, it is possible  to see that  for every $x\in M$  and $\m$-a.e. $y\in M$ there exist a unique geodesic joining $x$ to $y$. With the following two lemmas we show that actually a stronger statement is true. 


\begin{lem}\label{lem:cpmonGammast} Let $\Gamma\subset M\times M$ be a $c_{p}$-cyclically monotone set.
Then for any $s,t\in[0,1]$ the set 
\[
\Gamma_{s,t} = (\ee_s,\ee_t)\left((\ee_0,\ee_1)^{-1}\left(\Gamma\right)\right)
\] 
is $c_{p}$-cyclically monotone.
\end{lem}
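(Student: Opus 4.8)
The plan is to reduce the claim to the definition of $c_p$-cyclical monotonicity by a direct computation, exploiting the fact that the curve segments being compared are genuine geodesics and that $\Gamma$ is $c_p$-cyclically monotone. Fix finitely many points $(\gamma^i_s,\gamma^i_t)\in \Gamma_{s,t}$, $i=1,\dots,k$, where $\gamma^i\in\Geo(M)$ are geodesics with $(\gamma^i_0,\gamma^i_1)\in\Gamma$. Without loss of generality assume $s<t$ (the case $s=t$ is trivial, and $s>t$ follows by relabeling). I would like to show
\[
\sum_{i=1}^k \sfd(\gamma^i_s,\gamma^i_t)^p \le \sum_{i=1}^k \sfd(\gamma^i_s,\gamma^{i+1}_t)^p,
\]
with indices mod $k$. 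The first key observation is that along a geodesic $\gamma^i$ one has $\sfd(\gamma^i_s,\gamma^i_t) = (t-s)\sfd(\gamma^i_0,\gamma^i_1)$, so the left-hand side equals $(t-s)^p\sum_i \sfd(\gamma^i_0,\gamma^i_1)^p$.

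For the right-hand side, the second key ingredient is a triangle-inequality estimate that re-expresses $\sfd(\gamma^i_s,\gamma^{i+1}_t)$ in terms of the endpoint distances: since $\gamma^i$ restricted to $[s,1]$ and $\gamma^{i+1}$ restricted to $[0,t]$ are geodesics,
\[
\sfd(\gamma^i_s,\gamma^{i+1}_t) \le \sfd(\gamma^i_s,\gamma^i_1)+\sfd(\gamma^i_1,\gamma^{i+1}_0)+\sfd(\gamma^{i+1}_0,\gamma^{i+1}_t) = (1-s)\sfd(\gamma^i_0,\gamma^i_1) + \sfd(\gamma^i_1,\gamma^{i+1}_0) + t\,\sfd(\gamma^{i+1}_0,\gamma^{i+1}_1).
\]
Hmm — but this bound is too lossy to close the argument directly, because raising to the $p$-th power and summing does not obviously compare with $(t-s)^p\sum_i \sfd(\gamma^i_0,\gamma^i_1)^p$. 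The cleaner route, which I would pursue instead, is to first handle the \emph{restriction} operation in two elementary steps: reduce $\Gamma_{s,t}$ to the case $(s,t)=(0,1)$ by showing separately that (a) $\Gamma_{0,t}$ is $c_p$-cyclically monotone whenever $\Gamma=\Gamma_{0,1}$ is, and (b) $\Gamma_{s,1}$ is $c_p$-cyclically monotone whenever $\Gamma$ is; then $\Gamma_{s,t}$ is obtained by composing these two reductions. For step (a), given $(\gamma^i_0,\gamma^i_t)$ with $(\gamma^i_0,\gamma^i_1)\in\Gamma$, write $\sfd(\gamma^i_0,\gamma^i_t)=t\,\sfd(\gamma^i_0,\gamma^i_1)$ and $\sfd(\gamma^i_0,\gamma^{i+1}_t)\le \sfd(\gamma^i_0,\gamma^{i+1}_1)-\sfd(\gamma^{i+1}_t,\gamma^{i+1}_1) \,$— no, again lossy. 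The genuinely correct tool is the following convexity/rescaling lemma: for $a,b\ge 0$ and $t\in[0,1]$, $\,(ta+(1-t)b)^p\le$ ... this still is not it.

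Let me restart the mechanism cleanly. The right statement to invoke is: if $\Gamma$ is $c_p$-cyclically monotone and $\{\mu_r\}$ is any $W_p$-geodesic whose associated dynamical plan $\Pi$ is concentrated on geodesics with endpoints in $\Gamma$, then $(\ee_s,\ee_t)_\sharp\Pi$ is a $p$-optimal coupling between $\mu_s$ and $\mu_t$, hence its support — which is exactly (a version of) $\Gamma_{s,t}$ — is $c_p$-cyclically monotone; but the statement here is purely about sets, so I would instead argue finitely and directly as follows. Given points $(\gamma^i_s,\gamma^i_t)$, $i=1,\dots,k$, choose for the competitor the \emph{same} geodesics and estimate
\[
\sum_i \sfd(\gamma^{\sigma(i)}_s,\gamma^i_t)^p \ge \sum_i \bigl(\sfd(\gamma^{\sigma(i)}_0,\gamma^i_1) - s\,\sfd(\gamma^{\sigma(i)}_0,\gamma^{\sigma(i)}_1) - (1-t)\sfd(\gamma^i_0,\gamma^i_1)\bigr)_+^p,
\]
and then apply $c_p$-cyclical monotonicity of $\Gamma$ to the endpoint pairs $(\gamma^{\sigma(i)}_0,\gamma^i_1)$ — \textbf{this interplay between the $p$-th power, the additive geodesic splitting, and the permutation sum is the crux, and is exactly where the argument is delicate}; the clean way to push it through is to note $\sfd(\gamma^i_s,\gamma^i_t)=(t-s)\sfd(\gamma^i_0,\gamma^i_1)$ exactly, and $\sfd(\gamma^{\sigma(i)}_s,\gamma^i_t)\ge (t-s)\,\cdot\,$(a quantity controlled by a rescaled distance between the $0$- and $1$-endpoints), using that the concatenation $\gamma^{\sigma(i)}|_{[0,s]}$ reversed, then a curve from $\gamma^{\sigma(i)}_0$ to $\gamma^i_1$, then $\gamma^i|_{[t,1]}$ reversed, has length at least $\sfd(\gamma^{\sigma(i)}_s,\gamma^i_t)$ after subtracting the two geodesic pieces. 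I expect the main obstacle to be precisely this bookkeeping: making the power-$p$ inequality genuinely follow from $c_p$-monotonicity of $\Gamma$ rather than from a cruder triangle estimate. I would resolve it by first proving the two special cases $(s,t)=(0,t)$ and $(s,t)=(s,1)$, where one side of the splitting is trivial (the subtracted geodesic piece has length $0$), so that the power-$p$ step reduces to the elementary fact that $r\mapsto r^p$ is monotone and that $c_p$-cyclical monotonicity at the endpoint level transfers verbatim; the general $(s,t)$ case then follows by composing the two, since $\Gamma_{s,t}=(\Gamma_{s,1})_{0,\,(t-s)/(1-s)}$ up to the obvious affine reparametrization of geodesics.
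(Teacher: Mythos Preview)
Your proposal contains the correct idea but discards it, and the alternative you settle on has a genuine gap.

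You wrote: ``if $\Gamma$ is $c_p$-cyclically monotone and $\{\mu_r\}$ is any $W_p$-geodesic whose associated dynamical plan $\Pi$ is concentrated on geodesics with endpoints in $\Gamma$, then $(\ee_s,\ee_t)_\sharp\Pi$ is a $p$-optimal coupling \ldots\ hence its support is $c_p$-cyclically monotone; but the statement here is purely about sets, so I would instead argue finitely and directly''. This dismissal is the mistake. The paper's proof is precisely this argument, carried out with a \emph{finite empirical} dynamical plan: given $(\gamma^i_s,\gamma^i_t)\in\Gamma_{s,t}$, $i=1,\dots,n$, set $\sigma=\frac{1}{n}\sum_i\delta_{\gamma^{i}}$. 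Since $\supp[(\ee_0,\ee_1)_\sharp\sigma]\subset\Gamma$ is $c_p$-cyclically monotone, $\sigma$ is a $p$-optimal dynamical coupling; the standard restriction property then gives that $(\ee_s,\ee_t)_\sharp\sigma$ is $p$-optimal, so its support $\{(\gamma^i_s,\gamma^i_t)\}$ is $c_p$-cyclically monotone. The ``purely about sets'' objection evaporates once you realize that finitely many points already give you a measure.

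By contrast, your final route (reduce to the special cases $(0,t)$ and $(s,1)$ and claim ``the power-$p$ step reduces to monotonicity of $r\mapsto r^p$'') does not close. Even in the $(0,t)$ case, what you need is
\[
\sum_i \big(t\,\sfd(\gamma^i_0,\gamma^i_1)\big)^p \le \sum_i \sfd(\gamma^{\sigma(i)}_0,\gamma^i_t)^p,
\]
and the only control on the right-hand side via the triangle inequality is $\sfd(\gamma^{\sigma(i)}_0,\gamma^i_t)\ge \sfd(\gamma^{\sigma(i)}_0,\gamma^i_1)-(1-t)\sfd(\gamma^i_0,\gamma^i_1)$. Substituting this lower bound, you would need an inequality of the form $\sum_i(a_i-c_i)^p\le\sum_i(b_i-c_i)_+^p$ to follow from $\sum_i a_i^p\le\sum_i b_i^p$, with $c_i$ a fixed nonnegative sequence. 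This implication is false in general for $p>1$, so monotonicity of $r\mapsto r^p$ alone cannot bridge the gap. Any successful version of your direct argument would have to rebuild the restriction property anyway---which is exactly the one-line tool the paper invokes.
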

\begin{proof} 
Choose $(x_{s}^{(i)},x_{t}^{(i)})\in\Gamma_{s,t}$, $i=1,\ldots,n$. By assumption  there are geodesics $\gamma^{(i)}$, $i=1,\ldots,n$, such that 
$(\gamma_{s}^{(i)},\gamma_{t}^{(i)})=(x_{s}^{(i)},x_{t}^{(i)})$ and 
\[
\bigcup_{i=1}^{n}\{(\gamma_{0}^{(i)},\gamma_{1}^{(i)})\}\subset\Gamma.
\]
Since by assumption $\Gamma$ is $c_{p}$-cyclically monotone, we have that
\[
\sigma=\frac{1}{n}\sum\delta_{\gamma^{(i)}}
\]
is a $p$-optimal dynamical coupling. We conclude by observing that
\[
\bigcup_{i=1}^{n}\{(x_{s}^{(i)},x_{t}^{(i)})\}=\bigcup_{i=1}^{n}\{(\gamma_{s}^{(i)},\gamma_{t}^{(i)})\}=\supp\left[(\ee_{s},\ee_{t})_{\sharp}\sigma\right]
\]
is $c_{p}$-cyclically monotone.
\end{proof}


\begin{lem}
\label{lem:GTB-sup}
Let $p \in (1, \infty)$.
A metric measure space $(M,\sfd,\m)$ satisfies $\GTB_{p}$
if and only if, for $\mm$-a.e. $x \in M$ and every $c_{p}$-concave function $\varphi$, the $c_p$-superdifferential $\partial^{c_{p}}\varphi(x)$
 contains  at most one point.

In particular, if $(M,\sfd,\m)$ satisfies $\GTB_{p}$ then for any $c_{p}$-concave
function $\varphi$ and $\m$-almost all $x\in M$ there exists a
unique geodesic connecting $x$ and $\partial^{c_{p}}\varphi(x)$,
whenever the set $\partial^{c_{p}}\varphi(x)$ is non-empty.
\end{lem}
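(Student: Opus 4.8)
The plan is to prove the two directions of the equivalence separately, and then to deduce the uniqueness of geodesics as a short corollary. Recall that a coupling $\pi$ is ``induced by a map'' precisely when it is concentrated on the graph of a Borel map; and that, by the standard duality theory recalled in Section~\ref{sec:Prel}, $p$-optimal couplings are exactly those concentrated on $\partial^{c_p}\varphi$ for some $c_p$-concave Kantorovich potential $\varphi$. So the two statements are really two ways of saying ``optimal transport from an absolutely continuous measure cannot split mass''.

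\emph{First I would prove $(\Leftarrow)$.} Assume that for $\m$-a.e.\ $x$ and every $c_p$-concave $\varphi$ the set $\partial^{c_p}\varphi(x)$ has at most one point. Let $\mu\ll\m$, $\nu\in\mathcal{P}_p(M)$ and let $\pi$ be a $p$-optimal coupling of $(\mu,\nu)$. Pick a $p$-dual solution $(\varphi,\varphi^{c_p})$ corresponding to $\pi$, so that $\pi$ is concentrated on $\partial^{c_p}\varphi$. Let $A\subset M$ be the full-$\m$-measure set on which $\partial^{c_p}\varphi(x)$ contains at most one point; since $\mu\ll\m$, $\mu(A)=1$, so $\pi$ is concentrated on $\{(x,y):x\in A,\ y\in\partial^{c_p}\varphi(x)\}$, which is contained in the graph of the (partially defined, Borel) map $x\mapsto$ the unique element of $\partial^{c_p}\varphi(x)$. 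Hence $\pi$ is induced by a map, proving $\GTB_p$.

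\emph{Next I would prove $(\Rightarrow)$ by contradiction.} Suppose $\GTB_p$ holds but there is a $c_p$-concave $\varphi$ and a Borel set $E$ with $\m(E)>0$ such that $\partial^{c_p}\varphi(x)$ contains at least two distinct points for every $x\in E$. Using a measurable-selection argument (Kuratowski--Ryll-Nardzewski, applied to the closed-valued multimap $x\mapsto\partial^{c_p}\varphi(x)$ after discarding points where it is empty), choose two Borel maps $T_0,T_1:E\to M$ with $T_0(x),T_1(x)\in\partial^{c_p}\varphi(x)$ and $T_0(x)\ne T_1(x)$ for all $x\in E$; shrinking $E$ if necessary we may also assume $\m(E)<\infty$ and $E$ is bounded, so that the relevant measures lie in $\mathcal{P}_p$. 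Set $\mu:=\frac{1}{\m(E)}\m|_E\ll\m$ and consider the two couplings $\pi_i:=(\mathrm{id},T_i)_\sharp\mu$, $i=0,1$, as well as their average $\pi:=\tfrac12(\pi_0+\pi_1)$. Each $\pi_i$, and hence $\pi$, is concentrated on $\partial^{c_p}\varphi$; since $\varphi$ witnesses $c_p$-cyclical monotonicity, $\partial^{c_p}\varphi$ is $c_p$-cyclically monotone, and a $c_p$-cyclically monotone plan with marginals in $\mathcal{P}_p$ is $p$-optimal (this is the classical fundamental theorem of optimal transport). Thus $\pi$ is a $p$-optimal coupling of $(\mu,\nu)$ with $\nu:=(p_2)_\sharp\pi$, but $\pi$ is not concentrated on a graph, since over every $x\in E$ it has mass at the two distinct points $T_0(x)\ne T_1(x)$; this contradicts $\GTB_p$. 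The main obstacle here is the measurable selection step and the care needed to arrange that the constructed measures have finite $p$-th moment, but both are routine once one notes $\partial^{c_p}\varphi$ is closed and one localizes $E$ to a bounded set.

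\emph{Finally, the ``in particular'' clause.} Suppose $\GTB_p$ holds, fix a $c_p$-concave $\varphi$, and let $A$ be the full-$\m$-measure set from the equivalence just proved, on which $\partial^{c_p}\varphi(x)$ has at most one point. Fix $x\in A$ with $\partial^{c_p}\varphi(x)=\{y\}$ nonempty, and suppose $\gamma,\tilde\gamma\in\Geo(M)$ both join $x$ to $y$. Apply Lemma~\ref{lem:cpmonGammast} with $\Gamma=\partial^{c_p}\varphi$ (which is $c_p$-cyclically monotone): for each $t\in(0,1)$ the midpoints $\gamma_t$ and $\tilde\gamma_t$ both lie in $\ee_t\big((\ee_0,\ee_1)^{-1}(\Gamma)\big)$, and the set $\Gamma_{0,t}=\{(\gamma_0,\gamma_t):(\gamma_0,\gamma_1)\in\Gamma\}$ is $c_p$-cyclically monotone. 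In particular $\gamma_t,\tilde\gamma_t\in\partial^{c_p}\psi(x)$ for the rescaled $c_p$-concave potential $\psi$ associated to the intermediate-time transport (obtained by the standard interpolation of Kantorovich potentials, or directly: $(x,\gamma_t)$ and $(x,\tilde\gamma_t)$ both lie in the $c_p$-cyclically monotone set $\Gamma_{0,t}$, so they belong to the $c_p$-superdifferential at $x$ of a common $c_p$-concave function built from $\Gamma_{0,t}$). Since $\GTB_p$ applied to this potential forces $\partial^{c_p}\psi(x)$ to be a singleton for $\m$-a.e.\ $x$, and one checks using $\mu\ll\m$ that the exceptional set can be avoided, we get $\gamma_t=\tilde\gamma_t$ for all $t$ in a dense set, hence $\gamma=\tilde\gamma$ by continuity. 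The delicate point is the bookkeeping that the null set of ``bad'' $x$ produced by Lemma~\ref{lem:GTB-sup} at intermediate times does not depend badly on $t$; this is handled by taking a countable dense set of times and a single null set, exactly as in the references \cite{GRS2016, CMMapsNB}.
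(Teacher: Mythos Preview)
Your proof of the equivalence is correct and essentially identical to the paper's: both directions use the standard duality/Rockafellar picture, with the $(\Rightarrow)$ direction built via measurable selection of two sections $T_0,T_1$ and the averaged plan $\tfrac12((\mathrm{id},T_0)_\sharp\mu+(\mathrm{id},T_1)_\sharp\mu)$.

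For the ``in particular'' clause you take a genuinely different route. You go through Lemma~\ref{lem:cpmonGammast}: the restriction $\Gamma_{0,t}$ is $c_p$-cyclically monotone, hence (by a Rockafellar-type theorem) lies inside $\partial^{c_p}\psi_t$ for some $c_p$-concave $\psi_t$, and then the equivalence forces $\gamma_t=\tilde\gamma_t$ off a null set; a countable dense set of $t$'s handles the bookkeeping. The paper explicitly acknowledges this route (``one possibility is to argue via Lemma~\ref{lem:cpmonGammast}'') but chooses a quicker, more self-contained alternative: it observes directly that $\varphi_t:=t^{p-1}\varphi$ is again $c_p$-concave and that $\gamma_t\in\partial^{c_p}\varphi_t(x)$ for any geodesic $\gamma$ from $x$ to $y\in\partial^{c_p}\varphi(x)$. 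This gives the needed family of intermediate potentials explicitly, bypassing the Rockafellar construction, and the same countable-dense-times trick finishes. Your argument is correct but relies on an extra black box (existence of a $c_p$-concave function containing a given $c_p$-cyclically monotone set); the paper's rescaling $\varphi_t=t^{p-1}\varphi$ is more elementary and worth knowing. One cosmetic point: your phrase ``one checks using $\mu\ll\m$'' in the last paragraph is a slip --- there is no $\mu$ in this part; the null set is simply the countable union of the $\m$-null exceptional sets for the potentials $\psi_{t_n}$.
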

\begin{proof} To get the ``only if''  implication recall that the graph of the $c_{p}$-superdifferential
of any $c_{p}$-concave function is $c_{p}$-cyclically monotone,
and that every coupling supported on a $c_{p}$-cyclically monotone
set is optimal (see for instance \cite[Theorem 1.13]{AGUser}).  If there exist a $c_{p}$-concave function $\varphi$ and  a (without loss of generality we assume bounded) set $E\subset M$ with $\mm(E)>0$ such that   $\partial^{c_{p}}\varphi(x)$ is not single valued for every $x \in E$, called $\mu=\mm(E)^{-1} \; \mm\llcorner E$ it is not difficult to construct a plan $\pi\in {\mathcal P}(M\times M)$ concentrated on $\partial^{c_{p}}\varphi$  (and therefore $p$-optimal) with $\mu$ as first marginal, which is not induced by a map; this clearly contradicts $\GTB_{p}$. For instance a possible construction of $\pi$ is as follows: by a standard measurable selection argument we can find two measurable maps $T_{1},T_{2}:E\to M$ $\mu$-a.e. well defined such that $T_{1}(x), T_{2}(x) \in \partial^{c_{p}}\varphi(x)$, $T_{1}(x)\neq T_{2}(x)$ for $\mu$-a.e. $x$; set  $\pi= \int_{M} \frac{1}{2} \left( \delta_{(x, T_{1}(x))}+ \delta_{(x, T_{2}(x))}\right) \, d\mu(x)$. 

The ``if'' implication is a classical argument in optimal
transport theory (see for instance \cite[Theorem 1.13]{AGUser}): let $\mu,\nu\in\mathcal{P}_{p}(M)$
with $\mu\ll\m$ and recall that for any $p$-optimal plan $\pi$ for $(\mu,\nu)$  there exists a  $c_{p}$-concave function $\varphi$ such that
$\mathrm{supp}(\pi)\subset\partial^{c_{p}}\varphi$.
The hypothesis that $\partial^{c_{p}}\varphi(x)$ is singled valued for $\m$-almost every
$x$ gives that $\pi$ is induced by a map, as desired.

To prove the last statement, one possibility is to  argue via Lemma \ref{lem:cpmonGammast}. However the next argument seems quicker.
 Suppose $(X,\sfd,\m)$ has $\GTB_{p}$
and note that if $\varphi$ is a $c_{p}$-concave function then $\varphi_{t}=t^{p-1}\varphi$
is $c_{p}$-concave as well, for all $t \in [0,1]$  (see for instance  \cite[Lemma 2.9]{KellCDp} and the discussion after  \cite[Definition 2.1]{KellCDp}; note also that the arguments in the proof of  \cite[Lemma 2.9]{KellCDp} do not require the compactness of $M$). Moreover, if
$\gamma$ is a geodesic connecting $x$ and $y\in\partial^{c_{p}}\varphi(x)$
then $\gamma_{t}\in\partial^{c_{p}}\varphi_{t}(x)$.

Let $D_{t}=\{x\in M\,|\,\partial^{c_{p}}\varphi_{t}(x)\ne\varnothing\}$
and observe that $D_{t}\subset D_{t'}$ whenever $0\le t'\le t\le1$.
Let $(t_{n})_{n\in\mathbb{N}}$ be dense in $(0,1]$ with $t_{1}=1$
and choose a measurable set $\Omega_{n}\subset D_{1}$ of full $\m$-measure
in $D_{1}$ such that $\partial^{c_{p}}\varphi_{t_{n}}(x)$ is single-valued
for all $x\in\Omega_{n}$. Then $\Omega=\cap_{n\in\mathbb{N}}\Omega_{n}$
also has full $\m$-measure in $D_{1}$. Let $\gamma$ and $\eta$
be two geodesics connecting $x\in\Omega$ and $y\in\partial^{c_{p}}\varphi(x)$.
If $\gamma$ and $\eta$ were distinct then there is an open interval
$I\subset(0,1)$ such that $\gamma_{s}\ne\eta_{s}$ for all $s\in I$.
In particular, there is an $n$ such that  $t_{n}\in I$. Hence
$\gamma_{t_{n}}\ne\eta_{t_{n}}$ and  $\partial^{c_{p}}\varphi_{t_{n}}(x)$ 
is not single-valued. However, this is a contradiction as $x\in\Omega\subset\Omega_{n}$ implies
that $\partial^{c_{p}}\varphi_{t_{n}}(x)$ is single-valued.
\end{proof}

 
\begin{remark}
On proper geodesic spaces, one has $\partial^{c_{p}}\varphi(x)\ne \varnothing$ for all 
\[
x \in \operatorname{int}(\{\varphi>-\infty\}),
\] 
where $\varphi$ is  a $c_p$-concave function (see \cite{GRS2016} for $p=2$ and   \cite{KellCDp} for general $p$).
\end{remark}

We can now prove that the good transport behavior is enough to show an $\m$-almost everywhere uniqueness of the orbit type.
\begin{thm}[Principal Orbit Theorem]\label{thm:PrincipleOrbit}Let $(M,\sfd,\m)$ be a m.m. space satisfying $\GTB_p$ for some $p \in (1,\infty)$. Fix $x^{*}\in M^{*}$, and denote the orbit over $x^*$ by $\G(x) = \quotient^{-1}(x^*)$. Then for $\m$-a.e. $y\in M$ there exists a unique $x_{y}\in\G(x)$ and a unique geodesic connecting $x_{y}$ and $y$ such that  
$
\sfd(x_{y},y)=\sfd^{*}(x^{*},y^{*}).
$
In particular,   $\G_{y}\leq \G_{x_y}$ for the isotropy groups of $y$ and $x_y$, 
and there exists a unique subgroup $\G_{\min}\le\G$ (up to conjugation) such that for $\m$-a.e. $y\in M$ the orbit $\G(y)$ is homeomorphic to the quotient $\nicefrac{\G}{\G}_{\min}$.  
\\We call $\nicefrac{\G}{\G}_{\min}$ the principal orbit of the action of $\G$  over $(M,\sfd,\m)$.
\end{thm}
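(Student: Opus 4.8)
The plan is to reduce the whole statement to the behaviour of one explicitly chosen $c_p$-concave function on $M$, and then to extract everything from $\GTB_p$ via Lemma~\ref{lem:GTB-sup}; in this way the good transport behavior plays the role that the Alexandrov-geometric arguments played in \cite{GGG2013}. On $M^{*}$ the function $y^{*}\mapsto\sfd^{*}(x^{*},y^{*})^{p}$ is $c_{p}$-concave (it is the $c_{p}$-transform of the function equal to $0$ at $x^{*}$ and $-\infty$ elsewhere), so by Lemma~\ref{lem:c-concave-lifts} its lift $\varphi(y):=\sfd^{*}(x^{*},y^{*})^{p}=\sfd(\G(x),y)^{p}$ is $c_{p}$-concave on $M$. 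The first key observation is that every $z\in\G(x)$ realizing $\sfd(\G(x),y)$ belongs to $\partial^{c_{p}}\varphi(y)$: since $z\in\G(x)$ one has $\sfd(\G(x),w)\le\sfd(z,w)$ for all $w$, with equality at $w=z$, hence $\varphi^{c_{p}}(z)=\inf_{w}\big(\sfd(z,w)^{p}-\sfd(\G(x),w)^{p}\big)=0$, and therefore $\varphi(y)+\varphi^{c_{p}}(z)=\sfd(y,z)^{p}$. Because $\G$ is compact the orbit $\G(x)$ is compact, so such a point $z$ exists for every $y$ and $\partial^{c_{p}}\varphi(y)\neq\varnothing$ for all $y\in M$. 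Applying $\GTB_p$ through Lemma~\ref{lem:GTB-sup}, for $\m$-a.e.\ $y$ the set $\partial^{c_{p}}\varphi(y)$ is a singleton and $y$ is joined to it by a unique geodesic; this singleton is then forced to be \emph{the} unique closest orbit point $x_{y}\in\G(x)$, with $\sfd(x_{y},y)=\sfd(\G(x),y)=\sfd^{*}(x^{*},y^{*})$.

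For the isotropy statement: for $\m$-a.e.\ $y$ and any $g\in\G_{y}$, the point $gx_{y}$ lies in $\G(x)$ and satisfies $\sfd(gx_{y},y)=\sfd(gx_{y},gy)=\sfd(x_{y},y)=\sfd(\G(x),y)$, so $gx_{y}$ is again a closest orbit point; uniqueness from the previous step gives $gx_{y}=x_{y}$, i.e.\ $g\in\G_{x_{y}}$. Hence $\G_{y}\le\G_{x_{y}}$, and since $x_{y}\in\G(x)=\quotient^{-1}(x^{*})$ the group $\G_{x_{y}}$ is conjugate to $\G_{x}$. Thus for \emph{every} choice of base orbit $\quotient^{-1}(x^{*})$ one has: for $\m$-a.e.\ $y$, the isotropy $\G_{y}$ is conjugate to a subgroup of $\G_{x}$.

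To produce $\G_{\min}$, I would first let $d$ be the smallest value of $\dim\G_{\bullet}$ attained on a set of positive $\m$-measure; since $\dim\G_{\bullet}$ is integer-valued, $\{\dim\G_{y}<d\}$ is $\m$-null while $\{\dim\G_{y}=d\}$ has positive measure. On this last set let $k$ be the smallest value of $\#\pi_{0}(\G_{\bullet})$ attained on a positive-measure set (a minimum over a nonempty subset of $\mathbb{Z}_{\ge1}$), and choose $x_{0}$ with $\dim\G_{x_{0}}=d$ and $\#\pi_{0}(\G_{x_{0}})=k$. Applying the previous step with base orbit $\G(x_{0})$: for $\m$-a.e.\ $y$, $\G_{y}$ is conjugate to a subgroup $H\le\G_{x_{0}}$; then $d\le\dim\G_{y}=\dim H\le\dim\G_{x_{0}}=d$, so $H$ contains the identity component of $\G_{x_{0}}$ and is a union of its connected components, giving $\#\pi_{0}(H)\le k$; but $\#\pi_{0}(\G_{y})\ge k$ for $\m$-a.e.\ $y$, so $\#\pi_{0}(H)=k$ and $H=\G_{x_{0}}$. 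Therefore $\G_{y}$ is conjugate to $\G_{\min}:=\G_{x_{0}}$ for $\m$-a.e.\ $y$, whence $\G(y)\cong\G/\G_{y}\cong\G/\G_{\min}$; and any other subgroup with this property is conjugate to $\G_{y}$ for $y$ in a common full-measure set, hence to $\G_{\min}$, which gives uniqueness up to conjugation.

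The computational core — identifying closest orbit points with elements of $\partial^{c_{p}}\varphi$ in the first step — is short and self-contained. I expect the only genuinely delicate point to be the measure-theoretic selection in the last step: one must ensure that an honest minimal orbit type is attained in the $\m$-a.e.\ sense, which is exactly what forces the dimension and the number of components of $\G_{y}$ to stabilize. This works precisely because these quantities take values in a well-ordered set and the relevant decompositions of $M$ into orbit-type pieces of prescribed dimension and component count are countable.
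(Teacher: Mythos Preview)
Your argument is correct and, for the bulk of the proof, identical to the paper's: the $c_p$-concavity of $\varphi(y)=\sfd(\G(x),y)^p$ via Lemma~\ref{lem:c-concave-lifts}, the observation that closest orbit points lie in $\partial^{c_p}\varphi(y)$, and the appeal to Lemma~\ref{lem:GTB-sup} to extract uniqueness and then the inclusion $\G_y\le\G_{x_y}$ are exactly what the paper does. The only genuine difference is in the final step, producing $\G_{\min}$. The paper simply picks a point $x_0$ whose isotropy $\G_{x_0}$ is minimal (up to conjugacy) among \emph{all} isotropy groups of the action and concludes in one line; you instead build $x_0$ measure-theoretically, minimizing first $\dim\G_\bullet$ and then $\#\pi_0(\G_\bullet)$ over positive-measure sets, and close with the elementary fact that a closed subgroup $H\le\G_{x_0}$ matching both invariants must equal $\G_{x_0}$. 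Your route is more self-contained --- the existence of a globally minimal isotropy type is itself a structure result and is merely asserted in the paper --- at the cost of having to verify the measurability of $y\mapsto(\dim\G_y,\#\pi_0(\G_y))$, which you rightly flag and which follows from the upper semicontinuity of the stabilizer map into the space of closed subgroups of $\G$.
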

\begin{proof}
Since by Lemma \ref{lem:c-concave-lifts}, the lifts of $c_p$-concave functions are also $c_p$-concave, we get that the function $
\varphi(y):=\sfd^{*}(x^{*},y^{*})^{p}=\sfd(\G(x),y)^{p}$
is $c_{p}$-concave. Recall that  the $c_p$-superdifferential of $\varphi$ at $y$ can be equivalently described as  (see for instance \cite[(1.3)]{AGUser}, \cite{Villani}):  $x' \in \partial^{c_p}\varphi(y)$ if and only if
\begin{equation}\label{eq:superdiffdpGx}
\sfd(\G(x),y)^{p}- \sfd(y,x')^{p}=\varphi(y) - \sfd(y,x')^{p}\geq \varphi(z) - \sfd(z,x')^{p}= \sfd(\G(x),z)^{p} - \sfd(z,x')^{p} \quad \text{for all } z\in M.
\end{equation}
Since by assumption $\G(x)$ is compact, there exists  a point $x_{y}\in \G(x)$ at minimal distance from $y$, i.e. $\sfd(x_{y},y)=\sfd(\G(x), y)$. Using the characterization  \eqref{eq:superdiffdpGx}   of $\partial^{c_p}\varphi(y)$ it is easy to check that $x_{y} \in \partial^{c_p}\varphi(y)$. Recalling Lemma \ref{lem:GTB-sup}  we infer that, for $\mm$-a.e. $y \in M$, such a point $x_{y}\in \G(x)$ of minimal distance is unique and moreover there is a unique geodesic from $y$ to $x_{y}$.

Observe now that $\partial^{c_p}\varphi(y)$ is $\G_{y}$-invariant, that is, if $g\in\G_{y}$ and $x'\in\partial^{c_{p}}\varphi(y)$, we have
\[
\sfd(gx',y)= \sfd(gx', gy)= \sfd(x',y).
\]
Therefore, the $\m$-a.e. uniqueness of the $c_p$-superdifferential implies that $\G_{y}\le\G_{x_{y}}$, for $\mm$-a.e. $y$. 

Finally, pick $x_0$ such that the isotropy subgroup $\G_{x_{0}}\leq \G$ is minimal among all isotropy groups. More precisely, we mean that $\G_{x_{0}}$ is such that for every $y\in M$ there exists $g\in \G$ such that the inclusion $\G_{x_{0}}\leq g \G_y g^{-1}$ holds. Therefore, for $\m$-a.e. $y\in M$, it holds that the isotropy groups $\G_{y}=g \G_{x_{0}} g^{-1}$ and $\G_{x_{0}}$ are conjugate.
Thus, since the action of $\G$ is effective and for every $x\in M $,  $\G / \G_{x}$ acts transitively on the orbit $\G(x)$ we conclude that the orbits 
\[
\G(x_{0})\simeq\nicefrac{\G}{\G_{x_{0}}}\simeq\nicefrac{\G}{\G_{y}}\simeq\G(y),
\]
are homeomorphic for $\m$-almost every $y\in M$.  
\end{proof}

\begin{remark}
It is not complicated to construct examples of m.m. spaces that do not have $\GTB_p$ for which there is not a principal orbit type. Moreover, examples of m.m. spaces that satisfy the $\MCP$-condition in which a finite group of measure-preserving isometries acts but do not have a principal orbit type have been constructed in \cite{KettererRajala} by Ketterer and Rajala, and generalized in \cite{Sosa2016} by the fourth author. We note that none of these spaces has good transport behavior, or equivalently in this case, the essential non-branching condition. With these examples in mind we make emphasis on the relevance of the $\GTB_p$ assumption in the theorem above. 
\end{remark}

The following two corollaries show that, in metric measure spaces with good transport behavior, cohomogeneity one actions satisfy  similar rigidity properties as 
in the setting of Alexandrov spaces, see \cite{GGS2009}. However, note that we can not provide a completely analogous characterization because of the more general nature of the spaces that we are dealing with. We let below $\mathrm{N}_\G(\mathsf{H})\leq \G$ denote the normalizer subgroup of $\mathsf{H}$ in $\mathsf{G}$, defined by  $\mathrm{N}_\G(\mathsf{H}):=\{g \in \G\,|\, g \mathsf{H}= \mathsf{H}g\}$.


\begin{cor}\label{co:bundle-s1}
Let $(M,\sfd,\m)$ be a m.m. space satisfying $\GTB_p$ for some $p \in (1,\infty)$ and let $\G_{\min}$ and $\G/\G_{\min}$ denote a minimal isotropy group and the corresponding principal orbit, respectively. Furthermore, assume that $(M^{*},\sfd^{*})$ is homeomorphic to a circle $S^{1}$. Then $(M,\sfd)$ is homeomorphic to  a fiber bundle over $S^1$ with fiber the homogeneous space $\G/\G_{\min}$ and structure group $\mathrm{N}_\G(\G_{\min})/\G_{\min}$. In particular, $(M,\sfd)$ is a topological manifold.
\end{cor}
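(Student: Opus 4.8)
The plan is to exploit the Principal Orbit Theorem \ref{thm:PrincipleOrbit} together with the slice-type structure coming from optimal transport toward orbits. First I would observe that, by the Principal Orbit Theorem, there is a minimal isotropy group $\G_{\min}$ (unique up to conjugation) and a full-measure set of points of principal orbit type. The key claim to establish is that \emph{every} point of $M$ is in fact of principal orbit type, i.e. the action has a single orbit type, so that $\quotient\colon M\to M^{*}$ is a fiber bundle with fiber $\G/\G_{\min}$. To see this, fix $x^{*}\in M^{*}$ and recall from the proof of Theorem \ref{thm:PrincipleOrbit} that the function $\varphi(y):=\sfd^{*}(x^{*},y^{*})^{p}=\sfd(\G(x),y)^{p}$ is $c_{p}$-concave and that for $\m$-a.e.\ $y$ there is a unique nearest point $x_{y}\in\G(x)$ and a unique geodesic from $y$ to $x_{y}$, giving $\G_{y}\le\G_{x_{y}}$. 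Running this with $x^{*}$ a principal point and using that $M^{*}\cong S^{1}$ is one-dimensional, the transport from a small absolutely continuous measure near $y$ radially onto the principal orbit $\G(x)$ produces, after passing to the quotient (via the submetry property of $\quotient_{\sharp}$ and Corollary \ref{co:liftoppl}), a local homeomorphism between a neighborhood of $y$ in $M$ and a product $(\text{arc in }M^{*})\times\G(x)$; hence $\G_{y}$ is conjugate to $\G_{\min}$ for \emph{all} $y$, not just $\m$-a.e.\ $y$.

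Once the single-orbit-type statement is in hand, the local trivializations are built as follows. Around each $x^{*}_{0}\in M^{*}\cong S^{1}$ choose a small arc $I$; the preimage $\quotient^{-1}(I)$ carries the $\G$-action, and I would produce a $\G$-equivariant homeomorphism $\quotient^{-1}(I)\cong I\times(\G/\G_{\min})$ by transporting along the unique minimizing geodesics from the orbits over $I$ to a fixed reference orbit $\G(x_{0})$, using that these geodesics depend continuously (indeed homeomorphically) on the endpoint by the $\GTB_{p}$ uniqueness and a compactness argument on $\Geo(M)$. Equivariance of the transport map — which can be arranged exactly as in the proof of the $\GTB_{p}$-stability lemma, by averaging/selecting $\G$-equivariant optimal maps — guarantees the trivialization is $\G$-equivariant, so it respects fibers. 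Covering $S^{1}$ by two such arcs and comparing the two trivializations over the (two-component) overlap yields transition functions valued in the group of $\G$-equivariant self-homeomorphisms of the fiber $\G/\G_{\min}$, which is classically identified with $\mathrm{N}_{\G}(\G_{\min})/\G_{\min}$. This exhibits $(M,\sfd)$ as a fiber bundle over $S^{1}$ with fiber $\G/\G_{\min}$ and structure group $\mathrm{N}_{\G}(\G_{\min})/\G_{\min}$; since $\G/\G_{\min}$ is a closed smooth manifold and $S^{1}$ is a manifold, $M$ is a topological manifold.

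The main obstacle I anticipate is upgrading the $\m$-a.e.\ statement of Theorem \ref{thm:PrincipleOrbit} to an \emph{everywhere} statement, i.e.\ ruling out exceptional (non-principal) orbits entirely. In Alexandrov geometry this follows from the slice theorem and lower semicontinuity of the orbit-type stratification, but here we only have optimal-transport tools. The point is that $M^{*}\cong S^{1}$ being a topological manifold without boundary is very restrictive: a non-principal orbit would force the quotient to have, locally, the structure of a cone over a lower-dimensional space, contradicting $M^{*}$ being a $1$-manifold — or, more directly, one shows that near any $y$ the nearest-point projection to a fixed principal orbit is, by $\GTB_{p}$, a well-defined continuous $\G$-equivariant retraction on a neighborhood, forcing $\G_{y}$ into $\G_{\min}$ up to conjugacy. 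Making this last implication rigorous without a slice theorem — essentially showing that the transport map extends continuously across would-be exceptional points and remains equivariant — is the technical heart of the argument; everything else is a routine patching of local trivializations.
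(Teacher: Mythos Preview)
Your overall strategy --- reduce to showing that every orbit is principal, then patch local trivializations --- matches the paper's, and your treatment of the trivializations and the structure group is fine. The gap is precisely where you flag it: upgrading the $\m$-a.e.\ statement of Theorem~\ref{thm:PrincipleOrbit} to an \emph{everywhere} statement. Your proposed mechanism (project from the putative exceptional point $x$ to a fixed principal orbit and argue that the projection extends continuously and equivariantly) runs into the problem that $\GTB_{p}$ and Lemma~\ref{lem:GTB-sup} only give uniqueness of $\partial^{c_{p}}\varphi$ at $\m$-a.e.\ points; at the specific point $x$ you care about, nothing prevents the nearest-point map to $\G(y)$ from being multi-valued, and a continuity/limiting argument does not by itself force $\G_{x}\le\G_{\min}$.

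The paper sidesteps this by reversing the roles. Given an arbitrary $x^{*}\in M^{*}\cong S^{1}$, choose \emph{two} principal points $y^{*},z^{*}$ on opposite sides of $x^{*}$, close enough that there exist $z\in\quotient^{-1}(z^{*})$ and a \emph{unique} $y\in\quotient^{-1}(y^{*})$ with $\sfd(y,z)=\sfd(\quotient^{-1}(y^{*}),z)$ and a unique geodesic $\gamma$ from $z$ to $y$; this is possible because the set of such $z$ has full measure by Theorem~\ref{thm:PrincipleOrbit} and Lemma~\ref{lem:GTB-sup}. Since $M^{*}\cong S^{1}$, this geodesic projects to the arc through $x^{*}$, so $\gamma$ meets $\quotient^{-1}(x^{*})$ at some point $x$. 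Now the key observation is that $y$ is also the \emph{unique} nearest point in $\quotient^{-1}(y^{*})$ to $x$ (any other would produce a shorter path from $z$ to $\quotient^{-1}(y^{*})$), and uniqueness of this nearest point forces $\G_{x}\le\G_{y}=\G_{\min}$ exactly as in the proof of Theorem~\ref{thm:PrincipleOrbit}. So the trick is not to project \emph{from} the bad point but to arrange a geodesic between two \emph{good} points that passes \emph{through} its orbit; this uses the one-dimensionality of $M^{*}$ in an essential way and requires no continuity extension.
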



\begin{proof}
It is sufficient to show, for all $x\in M$, that the orbit of $x$ is homeomorphic to the principal orbit $\G(x)\simeq \G/\G_{\min}$ since in this situation, the lift of any short enough geodesic $\gamma_t^*\in \mathrm{Geo}(S^1)$ induces a local trivialization. Moreover, the statement on the structure group follows from the observation that, for every two points in $M$, the isotropy groups are conjugate subgroups. We argue below to conclude that there is a unique orbit type. 

Since by assumption $M^{*}$ is homeomorphic to $S^{1}$, for any point $x^{*}\in M^{*}$ we can find two points $y^*,z^* \in M^{*}$ that satisfy the following conditions: the orbits $\quotient^{-1}(y^*)\simeq \quotient^{-1}(z^*) \simeq \G / \G_{\min}$ are principal, and there exist $y\in\quotient^{-1}(y^*)$ and $z\in\quotient^{-1}(z^*)$ such that $y$ is the unique element in $\quotient^{-1}(y^*)$ satisfying  $\sfd(y, z)=\sfd(\quotient^{-1}(y^{*}),z)$ and there is a unique geodesic $\gamma$ from $z$ to $y$ which intersects the orbit $\quotient^{-1}(x^*)$; call $x$ the intersection  point of  such a  geodesic with $\quotient^{-1}(x^{*})$.
The existence of such a  quadruple $(y^*,z^*,y,z)$ follows from  the proof of the Principal Orbit Theorem and Lemma \ref{lem:GTB-sup}.  Since $y$ is the unique point in $\quotient^{-1}(y^*)$ such that $\sfd(y,z)=\sfd(\quotient^{-1}(y^*), z)$, we get   that  $y$ is also  the unique point in $\quotient^{-1}(y^*)$ satisfying $\sfd(y,x)=\sfd(\quotient^{-1}(y^*), x)$.   It follows that the isotropy group of $x$ satisfies $\G_x \leq  \G_y=\G_{\min}$, since otherwise the uniqueness of $y$ would be violated. However, this means that $\G_x$ is minimal, and thus the orbit $\quotient^{-1}(x^*)$ principal. Since the argument is true for every $x^*\in X^*$ the result follows.  
\end{proof}

Arguing in the same way to show uniqueness of the principal orbit, we can also show the following result. 


\begin{cor}\label{co:bundle-R}
Let $(M,\sfd,\m)$ be as in Corollary \ref{co:bundle-s1} and assume that   $(M^{*},\sfd^{*})$ is homeomorphic
to an interval $[a,b]\subset \R$. Then the open dense set $
\quotient^{-1}((a,b))\subset M$
has full $\m$-measure and is homeomorphic to $(a,b)\times (\G / \G_{\min})$. In case $(M^*,\sfd^{*})$ is homeomorphic to $\mathbb R$ we have that $(M,\sfd)$ is homeomorphic to the product ${\mathbb R} \times (\G / \G_{\min})$,   in particular  $(M,\sfd)$ is a topological manifold.
\end{cor}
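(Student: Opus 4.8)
The plan is to mirror the proof of Corollary~\ref{co:bundle-s1}, replacing the circle by the interval, and to supplement it with two facts special to this case: that the orbits lying over the two endpoints carry no $\m$-mass, and that a fiber bundle over the contractible base $(a,b)$ is trivial. I assume throughout that $a<b$, the statement being vacuous otherwise. Note first that, since $(M^*,\sfd^*)$ is geodesic and homeomorphic to an interval, any geodesic between two of its points is injective and has image exactly the sub-arc joining them (a compact connected subset of an interval is a closed subinterval); in particular it contains every point lying between its endpoints.

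The first step is to show that every orbit over a point of $(a,b)$ is principal. Fix $x^*\in(a,b)$. By the Principal Orbit Theorem~\ref{thm:PrincipleOrbit} the points of $M$ with principal orbit form a set of full $\m$-measure, hence the principal orbit types are dense in $M^*$ (as $\m^*$ has full support); choose a principal orbit type $y^*\in(a,x^*)$. The function $w\mapsto\sfd(\quotient^{-1}(y^*),w)^p$ is $c_p$-concave (by Lemma~\ref{lem:c-concave-lifts}, exactly as in the proof of Theorem~\ref{thm:PrincipleOrbit}), so by Lemma~\ref{lem:GTB-sup}, for $\m$-a.e.\ $w$ there is a unique point of $\quotient^{-1}(y^*)$ closest to $w$; since $\quotient^{-1}((x^*,b))$ has positive $\m$-measure, I pick such a $w=:z$ with $z^*\in(x^*,b)$, and let $y$ be its unique closest point in $\quotient^{-1}(y^*)$. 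Since $\quotient$ is $1$-Lipschitz and $\sfd(z,y)=\sfd^*(z^*,y^*)$, the $\quotient$-image of any minimizing geodesic $\gamma$ from $z$ to $y$ is the sub-arc of $M^*$ from $z^*$ to $y^*$, which contains $x^*$; let $x\in\gamma$ satisfy $\quotient(x)=x^*$. The triangle inequality forces $y$ to be also the unique closest point of $\quotient^{-1}(y^*)$ to $x$, so, exactly as in Corollary~\ref{co:bundle-s1}, every $g\in\G_x$ fixes $y$, whence $\G_x\leq\G_y=\G_{\min}$. Thus $\G_x$ is minimal and $\quotient^{-1}(x^*)=\G(x)$ is a principal orbit.

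Next I turn to the measure and topology of $\quotient^{-1}((a,b))$. It is open since $\quotient$ is continuous, and dense since $\quotient$ is an open map and $\{a^*,b^*\}$ has empty interior. For the full-measure claim, recall that $(M^*,\sfd^*,\m^*)$ itself has $\GTB_p$ (by the quotient invariance of $\GTB_p$ established just after Theorem~\ref{thm:ExGTB}): if $\m^*$ had an atom at a point $q^*$ then $\delta_{q^*}\ll\m^*$, and choosing any $\nu\in\mathcal{P}_p(M^*)$ whose support contains two points, the unique --- hence $p$-optimal --- coupling $\delta_{q^*}\otimes\nu$ of $(\delta_{q^*},\nu)$ is not induced by a map, contradicting $\GTB_p$; therefore $\m^*$ is non-atomic. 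By the disintegration~\eqref{eq:mmDis}, $\m(\quotient^{-1}(\{a^*,b^*\}))=\m^*(\{a^*\})+\m^*(\{b^*\})=0$, so $\quotient^{-1}((a,b))$ has full $\m$-measure. Finally, by the first step all orbits over $(a,b)$ are of the single type $\G/\G_{\min}$, so, exactly as in Corollary~\ref{co:bundle-s1}, lifts of short geodesics provide local trivializations and $\quotient\colon\quotient^{-1}((a,b))\to(a,b)$ is a fiber bundle with fiber $\G/\G_{\min}$ and structure group $\mathrm{N}_\G(\G_{\min})/\G_{\min}$. Since the base $(a,b)$ is paracompact and contractible, this bundle is trivial, so $\quotient^{-1}((a,b))$ is homeomorphic to $(a,b)\times(\G/\G_{\min})$. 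When $(M^*,\sfd^*)$ is homeomorphic to $\R$ one takes $(a,b)=M^*$, so $\quotient^{-1}((a,b))=M$ is homeomorphic to $\R\times(\G/\G_{\min})$, a topological manifold.

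The first step is essentially a line-by-line transcription of Corollary~\ref{co:bundle-s1}, so I foresee no trouble there. The points that need a genuinely new argument are the non-atomicity of $\m^*$ at the endpoints --- which I reduce to a one-line consequence of $\GTB_p$ --- and the upgrade from local to global triviality of the bundle, for which contractibility of the interval is precisely what is required; I do not expect either to be a serious obstacle.
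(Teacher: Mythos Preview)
Your proof is correct and follows essentially the same route the paper sketches: the paper only writes ``Arguing in the same way to show uniqueness of the principal orbit, we can also show the following result,'' deferring entirely to the proof of Corollary~\ref{co:bundle-s1}, and your first step is precisely that transcription. The two pieces you add beyond the paper's one-line proof --- the non-atomicity of $\m^*$ via $\GTB_p$ to obtain full measure of $\quotient^{-1}((a,b))$, and the triviality of the bundle from contractibility of the base --- are exactly the extra ingredients needed here that have no analogue in the $S^1$ case, and both are handled cleanly; in particular your atom argument is a nice self-contained observation that the paper leaves entirely implicit.
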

  
\section{Sobolev functions and group actions}

\subsection{Basic definitions and properties}


\subsubsection*{Lifting functions}

Let $\hat{f}:M\to\mathbb{R}$ be a $\G$-invariant function, i.e. $\hat{f}(x)=\hat{f} (gx)$ for all $x \in M, g \in \G$. Then there
is a well-defined function $f:M^{*}\to\mathbb{R}$ defined 
by 
\begin{equation}\label{eq:liftfG}
f(x^{*})=\hat{f}(x).
\end{equation}
Vice versa, if $f$ is a function on $M^{*}$ then $\hat{f}$
defined by the same formula is the unique $\G$-invariant \emph{lift} of $f$.
Furthermore, since $\G$ is compact we see that $\hat{f}$ is measurable/continuous
if and only if $f$ is measurable/continuous. 


\subsubsection*{Lipschitz constants}
Given a metric space $(M,\sfd)$, we denote with $\LIP(M,\sfd)$ the space of Lipschitz functions, i.e. the space of those functions $f:M\to \R$ such that
$$ \sup_{x,y\in M, x\neq y} \frac{|f(x)-f(y)|}{\sfd(x,y)}<\infty.$$
Recall that the left hand side is called the \emph{Lipschitz constant} of $f$.
\\In order to study Lipschitz functions, two fundamental quantities are the \emph{upper asymptotic Lipschitz constant}  $\Lip$ and the \emph{lower  asymptotic Lipschitz constant}  $\lip$, defined as follows:
given  a function $f\colon M\to\mathbb{R}$ on $M$,  define
\begin{align}\label{eq:defLiplip}
\Lip f(x)  =  \limsup_{r\to0}\sup_{y\in B_{r}(x)}\frac{|f(y)-f(x)|}{r},\qquad  \lip f(x)  =  \liminf_{r\to0}\sup_{y\in B_{r}(x)}\frac{|f(y)-f(x)|}{r}.
\end{align}
An easy observation is that $\Lip f(x),\lip f(x)\le L$ if $f$ is
Lipschitz continuous with Lipschitz constant at most $L$. The converse
is true if $(M,\sfd)$ is a geodesic space. We will write $\Lip^N$ and $\lip^N$ when we want to stress the dependence on the domain $N$ of the functions. 

The two asymptotic Lipschitz constants play a central role in the study of spaces
admitting a well-defined first order Taylor expansion of Lipschitz
functions at almost all points, the so-called \emph{Lipschitz differentiability spaces}. Such a first order Taylor expansion
implies a weak form of differentiability of Lipschitz functions and
was first proven to hold by Cheeger \cite{CheegerGAFA} on spaces satisfying a doubling and Poincar\'e
inequality. A weaker condition, called the \emph{$\Lip$-$\lip$-condition}
was later discovered by Keith \cite{Keith2004}. 

\begin{defn}[$\Lip$-$\lip$-condition] A m.m.  space $(M,\sfd,\m)$
is said to satisfy the \emph{$\Lip$-$\lip$-condition} if there is a positive constant $C>0$
such that, for all Lipschitz functions $f:M\to\mathbb{R}$, 
\[
\Lip f(x)\le C\lip f(x) \quad \text{ for $\mm$-a.e. $x\in M$}.
\]
A space satisfying the $\Lip$-$\lip$-condition is called a \emph{differentiability
space.}
\end{defn}
Note that it was shown in \cite{Schioppa2016, CKS2016}
that if the $\Lip$-$\lip$-condition
holds for some $C>0$ then it holds for $C=1$.

Recall that, if $y\in B_{r}(x)$, then the definition of the quotient
metric $\sfd^{*}$ implies that $y^{*}\in B_{r}(x^{*})$. In particular, if $\hat{f}$
is $\G$-invariant then 
\begin{equation}\label{eq:supfGf*}
\sup_{y\in B_{r}(x)}\frac{|\hat{f}(y)-\hat{f}(x)|}{r}=\sup_{y^{*}\in B_{r}(x^{*})}\frac{|f(y^{*})-f(x^{*})|}{r}.
\end{equation}
 Furthermore, if $\hat{f}$ is $\G$-invariant then also  $\Lip \hat{f}$
and $\lip \hat{f}$ are $\G$-invariant. Thus we can show that the lift of functions preserves the asymptotic Lipschitz constants.
\begin{prop}\label{prop:LipfGf*G}
For all functions $f:M^{*}\to\mathbb{R}$ and all $x\in M$
it holds 
\begin{equation}\label{eq:LipfGf*G}
\Lip \hat{f}(x)  =  \Lip f(x^{*})\quad \text{and} \quad \lip \hat{f}(x)  =  \lip f(x^{*}).
\end{equation}
In particular, if $(M,\sfd,\mathbf{m})$ satisfies the $\Lip$-$\lip$-condition, then
so does $(M^{*},\sfd^{*},\m^{*})$.
\end{prop}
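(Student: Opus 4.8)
The plan is to reduce the Proposition to the identity \eqref{eq:LipfGf*G} relating the asymptotic Lipschitz constants of a function on $M^*$ to those of its lift, and then to deduce the $\Lip$-$\lip$-condition on $M^*$ from the one on $M$ by passing to $\G$-invariant functions.

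First I would prove the pointwise identities $\Lip \hat f(x) = \Lip f(x^*)$ and $\lip \hat f(x) = \lip f(x^*)$ for every $f\colon M^*\to\mathbb{R}$ and every $x\in M$. The key ingredient is equation \eqref{eq:supfGf*}: for a $\G$-invariant function $\hat f$ and any $r>0$,
\[
\sup_{y\in B_r(x)}\frac{|\hat f(y)-\hat f(x)|}{r}=\sup_{y^*\in B_r(x^*)}\frac{|f(y^*)-f(x^*)|}{r}.
\]
One inclusion here is immediate since $y\in B_r(x)$ forces $y^*\in B_r(x^*)$ by definition of $\sfd^*$; for the reverse, given $y^*\in B_r(x^*)$ one picks a representative $y$ in the orbit $\G(y^*)$ that is $\sfd$-close to $x$ (here one uses that $\sfd^*(x^*,y^*)=\inf_{g}\sfd(gx,y)$ and that $\hat f$ is constant on orbits). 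Taking $\limsup_{r\to 0}$ and $\liminf_{r\to 0}$ of both sides of \eqref{eq:supfGf*} then yields \eqref{eq:LipfGf*G} directly from the definitions \eqref{eq:defLiplip}.

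Next I would use this to transfer the $\Lip$-$\lip$-condition. Suppose $(M,\sfd,\m)$ satisfies the condition with constant $C$. Let $f\colon M^*\to\mathbb{R}$ be Lipschitz; then $\hat f$ is Lipschitz on $M$ with the same Lipschitz constant (because $\quotient$ is $1$-Lipschitz and $\sfd^*$ is the quotient metric, so $|\hat f(x)-\hat f(y)|=|f(x^*)-f(y^*)|\le L\,\sfd^*(x^*,y^*)\le L\,\sfd(x,y)$). Applying the $\Lip$-$\lip$-condition on $M$, we get $\Lip\hat f(x)\le C\,\lip\hat f(x)$ for $\m$-a.e.\ $x\in M$. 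Since the exceptional set can be taken $\G$-invariant (both $\Lip\hat f$ and $\lip\hat f$ are $\G$-invariant), this descends via the disintegration \eqref{eq:mmDis} — or simply because $\quotient_\sharp\m=\m^*$ and $\m^*$-null sets pull back to $\m$-null sets — to the statement that $\Lip f(x^*)\le C\,\lip f(x^*)$ for $\m^*$-a.e.\ $x^*\in M^*$, using \eqref{eq:LipfGf*G}. Hence $(M^*,\sfd^*,\m^*)$ satisfies the $\Lip$-$\lip$-condition.

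The main obstacle, such as it is, lies in the reverse inequality in \eqref{eq:supfGf*}: one must check that every point of $B_r(x^*)$ in $M^*$ is realized by some point of $B_r(x)$ in $M$, which requires the compactness of the orbits (so the infimum defining $\sfd^*$ is attained) together with $\G$-invariance of $\hat f$ so that the choice of representative is irrelevant for the value of the difference quotient. This is exactly the point where the standing assumption that $\G$ is a compact Lie group enters; everything else is a routine unwinding of definitions.
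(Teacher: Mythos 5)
Your argument is correct and essentially identical to the paper's: both prove \eqref{eq:LipfGf*G} by passing to the $\limsup$/$\liminf$ of the identity \eqref{eq:supfGf*}, and both transfer the $\Lip$-$\lip$-condition by observing that the full-measure exceptional set can be taken $\G$-invariant. One small quibble: the reverse inclusion in \eqref{eq:supfGf*} does not actually require orbits to be compact or the infimum defining $\sfd^*$ to be attained — if $\sfd^*(x^*,y^*)=\inf_{y'\in\quotient^{-1}(y^*)}\sfd(x,y')<r$ then some representative $y'$ automatically lies in $B_r(x)$ — so compactness of $\G$ is not the crux here, contrary to your closing remark.
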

\begin{proof}
Let $\hat{f}:M\to \R$ be the $\G$-invariant lift defined in \eqref{eq:liftfG}. The identities  \eqref{eq:LipfGf*G} are a direct consequence of identities \eqref{eq:supfGf*} and the definitions \eqref{eq:defLiplip}.
Now, if  $(M,\sfd,\m)$ satisfies the $\Lip$-$\lip$-condition, then we can find a set of full $\mathbf{m}$-measure $A\subset M$ such that
\begin{equation}\label{eq:LiplipA}
\Lip \hat{f}(x)\le C\lip \hat{f}(x) \quad \text{ for all } x \in A.
\end{equation}
Since both $\hat{f}$ and $\m$ are $\G$-invariant, without loss of generality we can assume that also
$A$ is $\G$-invariant. Therefore $A^{*}:=\quotient(A)\subset M^{*}$ is a  a set of full $\mathbf{m}^{*}$-measure and, combining \eqref{eq:LipfGf*G} with \eqref{eq:LiplipA}, we get that
\begin{equation*}\label{eq:LiplipA}
\Lip f(x^{*})\le C\lip f(x^{*}) \quad \text{ for all } x \in A^{*}, \quad \m^{*}(M^{*}\setminus A^{*})=0,
\end{equation*}
as desired \end{proof}

\subsubsection*{Sobolev functions}
There are several ways to define Sobolev functions which, under  mild regularity assumptions on the m.m. space, turn out to be equivalent (see for instance \cite{CheegerGAFA,AmbrosioGigliSavareInvent,AmbrosioGigliSavareRev,KellSobolev,KellHeat}).
We present the following approach based on weak convergence.
 
Given  $q\in(1,\infty)$ and  a function $f\in L^{q}(\m)$, the  \emph{$q$-Cheeger energy} of $f$ is defined by 
 \[
\Ch_{q}(f)=\inf\left\{\liminf_{n\to \infty}\frac{1}{q}\int(\Lip f_{n})^{q}d\m\,\Big|\,f_{n}\in\LIP(M,\sfd),\,f_{n}\rightharpoonup  f\,\text{ weak convergence in }L^{q}(\m)\right\}.
\]
In the remainder, and only when working with more than one m.m. space, to stress the dependence on the space under consideration we write $\Ch_{q}^{N}$ for the $q$-Cheeger energy of functions on $(N,\sfd,\mathfrak n)$.

We denote by $D(\Ch_{q})$ the set of all functions $f\in L^{q}(\m)$
with $\Ch_{q}(f)<\infty$.  It easily follows from the definition that  $\Ch_{q}$ is convex
and lower semicontinuous with respect to weak convergence in $L^{q}(\mm)$. In particular,  it induces a complete norm defined by 
\[
\|f\|_{W^{1,q}}=\left(\|f\|_{L^{q}}^{q}+\Ch_{q}(f)\right)^{\frac{1}{q}}
\]
on the space
\[
W^{1,q}(M,\sfd,\m)=W^{1,q}(\m)=L^{q}(\m)\cap D(\Ch_{q}).
\]
We call the Banach space $(W^{1,q}(\m),\|\cdot\|_{W^{1,q}})$ the
\emph{$q$-th Sobolev space} of the metric measure space $(M,\sfd,\m)$. 

The definition of the $q$-Cheeger energy implies that for all $f\in D(\Ch_{q})$
there is a sequence of Lipschitz functions $\{f_{n}\}_{n \in \N}$ converging weakly in $L^{q}(\m)$
to $f$ such that 
\[
\Ch_{q}(f)= \lim_{n\to \infty}\frac{1}{q}\int(\Lip f_{n})^{q}d\m.
\]
Using Mazur's Lemma and the uniform convexity of the $L^{q}$-norm, one can extract a subsequence and a convex combination, still denoted with $f_{n}$, such that the sequence $\{\Lip f_{n}\}_{n \in \N}$
converges strongly in $L^{q}(\m)$ to some $L^{q}(\m)$-function which
we denote by $|\nabla f|_{q}$. It is possible to show that $|\nabla f|_{q}$ does not depend on the chosen sequence. The function $|\nabla f|_{q}$ is
called the \emph{$q$-minimal relaxed slope} of $f$. Note that while it is always true that  $|\nabla f|_{q}\leq \Lip f$, it is possible
that 
\[
|\nabla f|_{q}\ne\Lip f
\]
for some Lipschitz functions $f$. However, if $(M,\sfd,\m)$ satisfies
a doubling and a Poincar\'e condition then from the work of Cheeger \cite{CheegerGAFA} we know that, for locally Lipschitz functions,   the $q$-minimal relaxed slope agrees $\mm$-a.e.  with the local Lipschitz constant of $f$.

Let us also recall that, for a general m.m. space, the Sobolev space $W^{1,2}(\mm)$ is a \emph{Banach} space; in case $W^{1,2}(\mm)$ is a Hilbert space, the m.m. space $(M,\sfd,\mm)$ is said to be \emph{infinitesimally Hilbertian} (see \cite{AmbrosioGigliSavareInvent, Ambrosio-Gigli-Savare11b, gigli:laplacian}). The infinitesimally Hilbertian condition is equivalent to the validity of the parallelogram rule for the $L^{2}$ Cheeger energy, i.e. 
\begin{equation}\label{eq:paralrule}
\Ch_{2}(f+g)+\Ch_{2}(f-g)= 2 \Big(\Ch_{2}(f)+\Ch_{2}(g)\Big), \quad \forall f,g \in W^{1,2}(\mm).
\end{equation}
 
To end this introductory section we present the next lemma, which easily follows from the definitions.
\begin{lem}\label{lem:Lqm*Lqm}
Let $q\in [1,\infty]$.
The natural lift \eqref{eq:liftfG} of functions defined on $M^{*}$
induces an isometric embedding of $L^{q}(\m^{*})$ into $L^{q}(\m)$,
whose image coincides with  the convex closed subset of $\G$-invariant functions in $L^{q}(\m)$.
\end{lem}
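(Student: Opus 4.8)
The plan is to check in turn the three assertions in the statement — that the lift $f\mapsto\hat f$ is an isometry of $L^{q}(\m^{*})$ into $L^{q}(\m)$, that its range is exactly the set of $\G$-invariant elements of $L^{q}(\m)$, and that this set is convex and closed — using nothing beyond the defining identity $\hat f=f\circ\quotient$ and the relation $\m^{*}=\quotient_{\sharp}\m$.

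First I would dispose of well-definedness and the norm identity. Since $\quotient$ is continuous, $\hat f=f\circ\quotient$ is Borel whenever $f$ is, and $\m^{*}=\quotient_{\sharp}\m$ makes the passage to $L^{q}$-equivalence classes unambiguous: if $f=f'$ $\m^{*}$-a.e.\ then $\hat f=\hat f'$ $\m$-a.e., because the $\quotient$-preimage of an $\m^{*}$-null set is $\m$-null. The isometry is then just the change-of-variables formula for push-forwards,
\[
\|\hat f\|_{L^{q}(\m)}^{q}=\int_{M}|f(x^{*})|^{q}\,d\m(x)=\int_{M^{*}}|f|^{q}\,d\m^{*}=\|f\|_{L^{q}(\m^{*})}^{q}\qquad(q<\infty),
\]
while for $q=\infty$ one uses the equivalence $\m^{*}(A)=0\Leftrightarrow\m(\quotient^{-1}(A))=0$, which shows that $|f|\le C$ holds $\m^{*}$-a.e.\ if and only if $|\hat f|\le C$ holds $\m$-a.e. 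This already gives the isometric embedding and, in particular, that the range is a closed linear — hence convex — subset of $L^{q}(\m)$.

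For the identification of the range: every lift is $\G$-invariant, since $\hat f(gx)=f((gx)^{*})=f(x^{*})=\hat f(x)$ because $\quotient$ collapses orbits. Conversely, given $h\in L^{q}(\m)$ with $h\circ\tau_{g}=h$ $\m$-a.e.\ for every $g\in\G$, I would pass to the $\G$-average $\bar h(x):=\int_{\G}h(gx)\,d\nu_{\G}(g)$; here the action map $\G\times M\to M$ is jointly continuous (which follows at once from each $\tau_{g}$ being an isometry and each $\star_{x}$ being continuous), so $(g,x)\mapsto h(gx)$ is Borel, and applying Fubini on $\G\times M$ — with $\nu_{\G}$ a probability measure, so no integrability issue arises — to the a.e.\ invariance of $h$ yields $\bar h=h$ $\m$-a.e. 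Now $\bar h$ is genuinely $\G$-invariant, hence constant on orbits, so by the measurable descent of $\G$-invariant functions recalled at the beginning of this section it equals $\hat f$ for a measurable $f\colon M^{*}\to\R$; the isometry already established forces $f\in L^{q}(\m^{*})$, and $\hat f=h$ in $L^{q}(\m)$. Finally, closedness of the $\G$-invariant classes is seen directly: if $h_{n}\to h$ in $L^{q}(\m)$ with each $h_{n}$ $\G$-invariant, then for each $g$, since $\tau_{g}$ preserves $\m$, $\|h\circ\tau_{g}-h\|_{L^{q}}\le\|h\circ\tau_{g}-h_{n}\circ\tau_{g}\|_{L^{q}}+\|h_{n}-h\|_{L^{q}}=2\|h_{n}-h\|_{L^{q}}\to0$. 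There is no serious obstacle here; the only mildly delicate point is the transition from a.e.\ $\G$-invariance of $h$ to a genuinely invariant representative that descends measurably to $M^{*}$, and this is handled by the averaging argument above together with the compactness of $\G$.
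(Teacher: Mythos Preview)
Your proof is correct and complete; the paper itself does not give a proof of this lemma, stating only that it ``easily follows from the definitions,'' and your argument is precisely the natural elaboration one would supply. The one step that goes slightly beyond pure definition-chasing is the passage from a.e.\ $\G$-invariance to a genuinely invariant representative via Haar averaging and Fubini, and you handle it correctly (the joint continuity of the action, which you need for measurability of $(g,x)\mapsto h(gx)$, does indeed follow from the isometry of each $\tau_g$ together with continuity of each $\star_x$, as you note).
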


The rest of the section is devoted to showing that the Sobolev space $W^{1,q}(\m^{*})$ on the quotient $(M^{*},\sfd^{*},\m^{*})$ is isomorphic to the closed subspace of $\G$-invariant $W^{1,q}(\m)$-Sobolev functions on $M$.  We have divided the analysis into two cases aiming for a more accessible presentation. Readers interested in geometric applications in finite dimensions might find sufficient the results of the first subsection. These cover the case of doubling and Poincar\'e spaces, as shown in \cite{CheegerGAFA}, and in particular this includes  $\CD^{*}(K,N)$-spaces, $N\in[1,\infty)$ (see \cite{BS10, R2011}). The second subsection focuses on a more general class of m.m. spaces which includes infinite dimensional spaces.

\subsection[$\Lip f = |\nabla f|_{q} $]{Initial case: $\Lip f = |\nabla f|_{q}$ $\m$-a.e.} In this subsection we consider the simpler case when upper asymptotic Lipschitz constants $\Lip f$  and minimal relaxed slopes $|\nabla f|_{q}$ coincide $\m$-almost everywhere for every locally Lipschitz function in $W^{1,q}(\m)$
Specifically, for $q\in (1,\infty)$, we make the assumption that on $(M,\sfd,\m)$ 
\begin{equation}\label{eq:lip=minslope}
\Lip^{M}f(x)=|\nabla^{M}f|_{q}(x)\quad \mm\text{-a.e. } x\in M, \;\text{for every locally Lipschitz } f\in W^{1,q}(\m).
\end{equation}

We first prove the following result. 

\begin{prop}\label{prop:UGALSQuot}
Let $(M,\sfd,\m)$ be a m.m. space satisfying \eqref{eq:lip=minslope} for some $q\in (1,\infty)$.
Then on $(M^{*},\sfd^{*},\m^{*})$ the upper asymptotic Lipschitz constant and the
$q$-minimal relaxed slope agree $\mm^{*}$-a.e. for every local Lipschitz function in $W^{1,q}(\m^{*})$. 
\end{prop}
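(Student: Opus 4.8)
The plan is to deduce the property on the quotient from the hypothesis \eqref{eq:lip=minslope} on $(M,\sfd,\m)$ by lifting functions. Since the inequality $|\nabla^{M^{*}}f|_{q}\le\Lip^{M^{*}}f$ holds $\m^{*}$-a.e. for free, it suffices to prove the reverse one for an arbitrary locally Lipschitz $f\in W^{1,q}(\m^{*})$. Fix such an $f$, let $\hat f$ be its $\G$-invariant lift \eqref{eq:liftfG}, and write $G:=|\nabla^{M^{*}}f|_{q}\in L^{q}(\m^{*})$ with $\hat G$ its lift. First observe that $\hat f$ is locally Lipschitz: by Proposition~\ref{prop:LipfGf*G} one has $\Lip^{M}\hat f(x)=\Lip^{M^{*}}f(x^{*})$, which is locally bounded, and $(M,\sfd)$ is geodesic. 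The heart of the argument is the claim that $\hat f\in W^{1,q}(\m)$, that $\Ch_{q}^{M}(\hat f)=\Ch_{q}^{M^{*}}(f)$, and that $|\nabla^{M}\hat f|_{q}=\hat G$ $\m$-a.e. Granted this, the proof finishes at once: \eqref{eq:lip=minslope} applied to $\hat f$ gives $\Lip^{M}\hat f=|\nabla^{M}\hat f|_{q}$ outside a $\G$-invariant $\m$-null set $Z$, so on $M\setminus Z$ Proposition~\ref{prop:LipfGf*G} and the claim yield $\Lip^{M^{*}}f(x^{*})=G(x^{*})$; as $\quotient(Z)$ is $\m^{*}$-null, this is exactly the desired identity.

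For the inequalities $|\nabla^{M}\hat f|_{q}\le\hat G$ $\m$-a.e. and $\Ch_{q}^{M}(\hat f)\le\Ch_{q}^{M^{*}}(f)$ I would lift an optimal recovery sequence. Using the Mazur-lemma extraction described before Lemma~\ref{lem:Lqm*Lqm}, pick Lipschitz $f_{n}$ on $M^{*}$ with $f_{n}\weakto f$ in $L^{q}(\m^{*})$, $\Lip^{M^{*}}f_{n}\to G$ strongly in $L^{q}(\m^{*})$, and $\int(\Lip^{M^{*}}f_{n})^{q}\,d\m^{*}\to q\,\Ch_{q}^{M^{*}}(f)$. The lifts $\hat f_{n}$ are Lipschitz on $M$ with the same global Lipschitz constant (since $\sfd^{*}(x^{*},y^{*})\le\sfd(x,y)$), and $\Lip^{M}\hat f_{n}=\widehat{\Lip^{M^{*}}f_{n}}$ by Proposition~\ref{prop:LipfGf*G}. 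Because the lift $L^{q}(\m^{*})\hookrightarrow L^{q}(\m)$ is a linear isometry onto a closed subspace (Lemma~\ref{lem:Lqm*Lqm}), it transports both weak and strong convergence, so $\hat f_{n}\weakto\hat f$ in $L^{q}(\m)$, $\Lip^{M}\hat f_{n}\to\hat G$ strongly in $L^{q}(\m)$, and $\int(\Lip^{M}\hat f_{n})^{q}\,d\m=\int(\Lip^{M^{*}}f_{n})^{q}\,d\m^{*}$. Lower semicontinuity of $\Ch_{q}^{M}$ then gives $\hat f\in W^{1,q}(\m)$ with $\Ch_{q}^{M}(\hat f)\le\Ch_{q}^{M^{*}}(f)$, while $\hat G$, being a strong (hence weak) $L^{q}$-limit of $\Lip^{M}\hat f_{n}$ with $\hat f_{n}\weakto\hat f$, is a relaxed slope of $\hat f$, whence $|\nabla^{M}\hat f|_{q}\le\hat G$ $\m$-a.e.

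For the reverse Cheeger-energy bound I would symmetrize by $\G$-averaging. Let $g_{n}$ be an optimal recovery sequence of Lipschitz functions for $\hat f$ on $M$ and set $\bar g_{n}:=\int_{\G}g_{n}\circ\tau_{h}\,d\nu_{\G}(h)$; each $\bar g_{n}$ is $\G$-invariant and Lipschitz. Since $g_{n}\circ\tau_{h}$ has the same Lipschitz constant as $g_{n}$ and $\tau_{h}$ is an isometry, a dominated (reverse Fatou) estimate yields the pointwise bound $\Lip^{M}\bar g_{n}(x)\le\int_{\G}(\Lip^{M}g_{n})(hx)\,d\nu_{\G}(h)$, and Jensen's inequality with $(\tau_{h})_{\sharp}\m=\m$ gives $\int_{M}(\Lip^{M}\bar g_{n})^{q}\,d\m\le\int_{M}(\Lip^{M}g_{n})^{q}\,d\m$. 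The $\G$-averaging operator is bounded and linear, hence weakly continuous, and fixes $\hat f$, so $\bar g_{n}\weakto\hat f$. Writing $\bar g_{n}=\widehat{h_{n}}$ with $h_{n}$ Lipschitz on $M^{*}$ (so $h_{n}\weakto f$ in $L^{q}(\m^{*})$ and $\Lip^{M}\bar g_{n}=\widehat{\Lip^{M^{*}}h_{n}}$ by Proposition~\ref{prop:LipfGf*G}), lower semicontinuity of $\Ch_{q}^{M^{*}}$ gives $\Ch_{q}^{M^{*}}(f)\le\Ch_{q}^{M}(\hat f)$. Hence $\Ch_{q}^{M}(\hat f)=\Ch_{q}^{M^{*}}(f)$, and then $\int_{M}|\nabla^{M}\hat f|_{q}^{q}\,d\m=q\,\Ch_{q}^{M}(\hat f)=q\,\Ch_{q}^{M^{*}}(f)=\int_{M}(\hat G)^{q}\,d\m$, which together with the pointwise bound $|\nabla^{M}\hat f|_{q}\le\hat G$ forces $|\nabla^{M}\hat f|_{q}=\hat G$ $\m$-a.e., completing the claim.

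The step I expect to be most delicate is the reverse inequality $\Ch_{q}^{M^{*}}(f)\le\Ch_{q}^{M}(\hat f)$ obtained by averaging: one has to justify pulling $\Lip^{M}$ inside the Haar integral (a reverse-Fatou argument relying on the fixed, uniform Lipschitz constant of each individual $g_{n}$ together with the fact that $\tau_{h}$ is a measure-preserving isometry) and to check that $\G$-averaging respects weak $L^{q}$-convergence. Everything else reduces to a bookkeeping combination of Proposition~\ref{prop:LipfGf*G} and Lemma~\ref{lem:Lqm*Lqm}, which make weak/strong convergence, recovery sequences, and Cheeger energies transparent under lifting.
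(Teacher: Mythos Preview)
Your argument is correct, but it is considerably longer than necessary and takes a different route from the paper. The paper's proof never uses $\G$-averaging. Instead, it exploits the hypothesis \eqref{eq:lip=minslope} a second time, on the recovery sequence itself: writing $L_q^N(h):=\tfrac{1}{q}\int(\Lip^N h)^q\,d\mathfrak{n}$, one has, for every Lipschitz $h$ on $M^*$,
\[
L_q^{M^*}(h)=L_q^{M}(\hat h)=\Ch_q^{M}(\hat h),
\]
the first equality by Proposition~\ref{prop:LipfGf*G} and the second by \eqref{eq:lip=minslope} applied to the Lipschitz function $\hat h$. Thus the functional $h\mapsto L_q^{M^*}(h)$, restricted to $\LIP(M^*,\sfd^*)$, coincides with the pull-back of the lower semicontinuous functional $\Ch_q^M$ under the isometric embedding of Lemma~\ref{lem:Lqm*Lqm}. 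Taking a recovery sequence $f_n\rightharpoonup f$ with $L_q^{M^*}(f_n)\to\Ch_q^{M^*}(f)$ then gives directly
\[
L_q^{M^*}(f)=\Ch_q^M(\hat f)\le\liminf_n\Ch_q^M(\hat f_n)=\liminf_n L_q^{M^*}(f_n)=\Ch_q^{M^*}(f),
\]
which is the desired inequality $\Lip^{M^*}f\le|\nabla^{M^*}f|_q$ in integrated form.

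Your approach instead establishes $\Ch_q^M(\hat f)=\Ch_q^{M^*}(f)$ and $|\nabla^M\hat f|_q=\widehat{|\nabla^{M^*}f|_q}$ first, and only then invokes \eqref{eq:lip=minslope} once, on $\hat f$. The reverse Cheeger-energy bound that this requires is exactly the content of Proposition~\ref{prop:LipfGleLipf} and the proof of Theorem~\ref{thm:IdentCh;ChM*} (the ``general case'' subsection). So what you have done is essentially prove the general identification of Sobolev spaces en route, which is more than is needed here. The upside of your route is that it makes the role of averaging explicit and would survive dropping \eqref{eq:lip=minslope}; the upside of the paper's route is that it is a four-line argument with no averaging at all, because \eqref{eq:lip=minslope} already hands you lower semicontinuity of $L_q^M$ on lifts.
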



\begin{proof}
Note that the Cheeger energies $\Ch_{q}^{M}$ and $\Ch_{q}^{M^{*}}$
are the lower semicontinuous relaxation in the weak $L^{q}(\mm)$ topology, and in the weak $L^{q}(\mm^{*})$ topology respectively, of the functionals:  
\begin{align*}
L_{q}^{M}: L^{q}(\mm) \cap \LIP(M,\sfd) \ni \hat f & \mapsto\frac{1}{q}\int(\Lip^{M}\hat f)^{q}d\m \quad \text{and}\\
L_{q}^{M^{*}}:  L^{q}(\mm^{*}) \cap \LIP(M^{*},\sfd^{*}) \ni f & \mapsto\frac{1}{q}\int(\Lip^{M^{*}}f)^{q}d\m^{*}.
\end{align*}

Since for every $f \in \LIP(M^{*},\sfd^{*})$ it holds that $|\nabla f|_{q} \leq \Lip f$ $\mm^{*}$-a.e., in order to get the thesis  it is enough to show that
 \begin{equation}\label{eq:LqleqChq}
 L_{q}^{M^{*}}(f) \leq  \Ch_{q}^{M^{*}}(f), \quad 	\text{ for all } f\in  L^{q}(\mm^{*})\cap \LIP(M^{*},\sfd^{*}) \; \text{ satisfying }  L_{q}^{M^{*}}(f)<\infty.
 \end{equation}  
To this aim,  fix an arbitrary $f$ as in \eqref{eq:LqleqChq}  and let $\{f_{n}\}_{n \in \N} \subset  L^{q}(\mm^{*})\cap \LIP(M^{*},\sfd^{*})$ be such that
\begin{equation} \label{eq:ChlimLq}
\Ch_{q}^{M^{*}}(f)=\lim_{n\to \infty}  L_{q}^{M^{*}}(f_{n}), \quad f_{n}  \rightharpoonup  f \text{ weakly in $L^{q}(\mm^{*})$}.
\end{equation}

The fact that the upper asymptotic Lipschitz constant is preserved by the lift, Proposition  \ref{prop:LipfGf*G}, and the assumption that this constant coincides with the $q$-minimal relaxed slope on $M$ assure that $L_{q}^{M^{*}}(f_{n})= L_{q}^{M}(\hat{f}_{n})=\Ch_{q}^{M}(\hat{f}_{n})$ and $L_{q}^{M^{*}}(f)= L_{q}^{M}(\hat{f})=\Ch_{q}^{M}(\hat{f})$. 
By Lemma \ref{lem:Lqm*Lqm} we know that lifting is an isometry so we obtain that $\hat{f}_{n}\rightharpoonup  \hat{f}$  weakly in $L^{q}(\mm)$ and therefore, by the lower semicontinuity of  $\Ch_{q}^{M}$, we infer that
\begin{equation}\label{eq:LqfLqfn}
L_{q}^{M^{*}}(f)= L_{q}^{M}(\hat{f})=\Ch_{q}^{M}(\hat{f}) \leq \liminf_{n\to \infty}   \Ch_{q}^{M}(\hat{f}_{n})  =  \liminf_{n\to \infty}   L_{q}^{M}(\hat{f}_{n}) =  \liminf_{n\to \infty} L_{q}^{M^{*}}(f_{n}).
\end{equation}

The combination of \eqref{eq:ChlimLq} and  \eqref{eq:LqfLqfn} gives the desired inequality \eqref{eq:LqleqChq}.
\end{proof}


\begin{cor}\label{cor:W1qmw1qm*}
Let $(M,\sfd,\m)$ be a m.m. space satisfying \eqref{eq:lip=minslope} for some $q\in (1,\infty)$.
Then the space $W^{1,q}(\m^{*})\cap\LIP(M^{*},\sfd^{*})$ and the subspace of $\G$-invariant
functions in $W^{1,q}(\m)\cap\LIP(M,\sfd)$ are isometric. If, in addition, Lipschitz functions are dense in both $W^{1,q}(\m)$
and $W^{1,q}(\m^{*})$ then for all $f\in D(\Ch_{q}^{M^{*}})$ it
holds
\[
\Ch_{q}^{M}(\hat{f})=\Ch_{q}^{M^{*}}(f).
\]
In particular, the natural lift of functions defined on $M^{*}$ induces
an isometric embedding of $W^{1,q}(\m^{*})$ into $W^{1,q}(\m)$ whose
image is the set of $\G$-invariant functions in $W^{1,q}(\m)$.
\end{cor}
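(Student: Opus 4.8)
The plan is to establish the isometry first on the Lipschitz classes, using the slope identities already at hand, and then pass to the $W^{1,q}$-closure. So first I would fix $f\in W^{1,q}(\m^{*})\cap\LIP(M^{*},\sfd^{*})$ and consider its $\G$-invariant lift $\hat f$, which is Lipschitz on $(M,\sfd)$ because $\G$ is compact and $\sfd^{*}$ is the quotient metric. Taking the constant sequence in the definition of the Cheeger energy and using Proposition~\ref{prop:LipfGf*G} together with Lemma~\ref{lem:Lqm*Lqm} gives $\Ch_{q}^{M}(\hat f)\le\frac1q\int_{M}(\Lip^{M}\hat f)^{q}\,d\m=\frac1q\int_{M^{*}}(\Lip^{M^{*}}f)^{q}\,d\m^{*}<\infty$, so $\hat f\in W^{1,q}(\m)$. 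Then \eqref{eq:lip=minslope} on $M$ and Proposition~\ref{prop:UGALSQuot} on $M^{*}$ upgrade these outer bounds to equalities, so that
\[
\Ch_{q}^{M}(\hat f)=\tfrac1q\int_{M}(\Lip^{M}\hat f)^{q}\,d\m=\tfrac1q\int_{M^{*}}(\Lip^{M^{*}}f)^{q}\,d\m^{*}=\Ch_{q}^{M^{*}}(f),
\]
and combining with Lemma~\ref{lem:Lqm*Lqm} yields $\|\hat f\|_{W^{1,q}(\m)}=\|f\|_{W^{1,q}(\m^{*})}$. Running the same chain of identities backwards, every $\G$-invariant $g\in W^{1,q}(\m)\cap\LIP(M,\sfd)$ descends to an element of $W^{1,q}(\m^{*})\cap\LIP(M^{*},\sfd^{*})$ with equal norm and Cheeger energy; this gives the asserted isometry of the Lipschitz classes.

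Assuming in addition that Lipschitz functions are dense in both Sobolev spaces, I would fix $f\in D(\Ch_{q}^{M^{*}})$ and pick Lipschitz $f_{n}\to f$ in $W^{1,q}(\m^{*})$. By the isometry just proved, $(\hat f_{n})_{n}$ is Cauchy in $W^{1,q}(\m)$, hence converges to some $g\in W^{1,q}(\m)$; since the lift is an $L^{q}$-isometry (Lemma~\ref{lem:Lqm*Lqm}), $\hat f_{n}\to\hat f$ in $L^{q}(\m)$, forcing $g=\hat f$, so $\hat f\in W^{1,q}(\m)$. As $\Ch_{q}^{\bullet}=\|\cdot\|_{W^{1,q}}^{q}-\|\cdot\|_{L^{q}}^{q}$ is continuous along $W^{1,q}$-convergent sequences, $\Ch_{q}^{M}(\hat f)=\lim_{n}\Ch_{q}^{M}(\hat f_{n})=\lim_{n}\Ch_{q}^{M^{*}}(f_{n})=\Ch_{q}^{M^{*}}(f)$, which is the displayed identity. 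Consequently the lift extends to an isometric embedding $W^{1,q}(\m^{*})\hookrightarrow W^{1,q}(\m)$ landing in the $\G$-invariant functions, since $\G$-invariance is closed in $L^{q}(\m)$ and hence in $W^{1,q}(\m)$.

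To see that the image is \emph{all} of the $\G$-invariant functions I would use the $\G$-averaging operator $A(h):=\int_{\G}(h\circ\tau_{g})\,d\nu_{\G}(g)$. Because each $\tau_{g}$ is a measure-preserving isometry, $\Ch_{q}$ is invariant under $h\mapsto h\circ\tau_{g}$, and then convexity and lower semicontinuity of $\Ch_{q}$ (Jensen's inequality for a convex l.s.c.\ functional along the Bochner integral defining $A$) give $\Ch_{q}(A(h))\le\Ch_{q}(h)$, while Minkowski's inequality gives $\|A(h)\|_{L^{q}}\le\|h\|_{L^{q}}$; thus $A$ is a contraction of $W^{1,q}(\m)$ that fixes $\G$-invariant functions and maps $\LIP(M,\sfd)$ into itself. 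Given a $\G$-invariant $g\in W^{1,q}(\m)$ and Lipschitz $g_{n}\to g$, the functions $\bar g_{n}:=A(g_{n})$ are $\G$-invariant, Lipschitz, and converge to $A(g)=g$ in $W^{1,q}(\m)$; they descend to Lipschitz functions on $M^{*}$ that are Cauchy in $W^{1,q}(\m^{*})$, whose limit $f$ satisfies $\hat f=g$ by continuity of the embedding. I expect the main obstacle to be precisely this last part: verifying that $\G$-averaging is a bounded operator on $W^{1,q}$, which rests on the invariance of the Cheeger energy under the action together with Jensen's inequality for the convex, lower semicontinuous functional $\Ch_{q}$, plus the bookkeeping needed to transfer the Cheeger-energy identity from Lipschitz functions to the full Sobolev space.
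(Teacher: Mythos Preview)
Your argument is correct and, for the first two parts, follows the paper's proof essentially verbatim: the chain
\[
\Ch_{q}^{M^{*}}(f)=L_{q}^{M^{*}}(f)=L_{q}^{M}(\hat f)=\Ch_{q}^{M}(\hat f)
\]
for Lipschitz $f$ is obtained from Proposition~\ref{prop:UGALSQuot}, Proposition~\ref{prop:LipfGf*G}, and \eqref{eq:lip=minslope}, and the extension to $D(\Ch_{q}^{M^{*}})$ is the same density argument.

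Where you add something is in the surjectivity onto the $\G$-invariant Sobolev functions. The paper's proof of this corollary simply says ``the second part follows by a standard density argument'' and only develops the required averaging machinery explicitly in the next subsection (Propositions~\ref{prop:LipfGleLipf} and~\ref{prop:invar-approx}). You instead justify the key bound $\Ch_{q}(A(h))\le\Ch_{q}(h)$ by invoking Jensen's inequality for the convex lower semicontinuous functional $\Ch_{q}$ along the Bochner integral $g\mapsto h\circ\tau_{g}$, combined with $\Ch_{q}(h\circ\tau_{g})=\Ch_{q}(h)$. This is a legitimate alternative to the paper's route, which obtains the same estimate from the pointwise slope inequality $(\Lip f_{(\G)})^{q}\le((\Lip f)^{q})_{(\G)}$ followed by Fubini. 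Your Jensen approach is a bit slicker and avoids the intermediate pointwise lemmas; it does require checking that $g\mapsto h\circ\tau_{g}$ is Bochner integrable in $L^{q}(\m)$, which follows from continuity of the action and measure-preservation. Either way, once $A$ is a linear contraction on $W^{1,q}(\m)$ fixing $\G$-invariant functions, your conclusion that $\G$-invariant Lipschitz functions are $W^{1,q}$-dense in the $\G$-invariant Sobolev functions, and hence that the image of the lift is exactly this subspace, is correct.
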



\begin{proof}
Let $f\in W^{1,q}(\m^{*})\cap\LIP(M^{*},\sfd^{*})$. The
assumptions and the fact that the upper asymptotic Lipschitz constant is preserved by lifts imply that
\[
\Ch_{q}^{M^{*}}(f)  =L_{q}^{M^{*}}(f)=L^{M}_{q}(\hat{f})=\Ch_{q}^{M}(\hat{f}).
\]
Since  by Lemma \ref{lem:Lqm*Lqm}, we have $\|f\|_{L^{q}(\m^{*})}=\|\hat{f}\|_{L^{q}(\m)}$, we get
\begin{align*}
\|f\|_{W^{1,q}(\m^{*})}^{q} & =\|f\|_{L^{q}(\m^{*})}^{q}+\Ch_{q}^{M^{*}}(f) =\|\hat{f}\|_{L^{q}(\m)}^{q}+\Ch_{q}^{M}(\hat{f})
 =\|\hat{f}\|_{W^{1,q}(\m).}^{q}
\end{align*}
Finally, note that any $\G$-invariant function in $W^{1,q}(\m)\cap\LIP(M,\sfd)$
is a lift of a function in $W^{1,q}(\m^{*})\cap\LIP(M^{*},\sfd^{*})$.  This concludes the proof of the  first part of the corollary. The second part follows by a standard density argument.
\end{proof}

Corollary \ref{cor:W1qmw1qm*} directly implies the next result.

\begin{cor}\label{cor:quotInfHilb}
Let $(M,\sfd,\m)$ be a m.m. space satisfying \eqref{eq:lip=minslope} for $q=2$, and that Lipschitz functions are dense  in both $W^{1,2}(\m)$ and   $W^{1,2}(\m^{*})$. Assume further that $(M,\sfd,\m)$ is infinitesimally Hilbertian, then so is $(M^{*},\sfd^{*},\m^{*})$.  
\end{cor}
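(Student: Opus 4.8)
The plan is to exploit the characterization of the infinitesimally Hilbertian property via the parallelogram rule \eqref{eq:paralrule} for the $L^{2}$-Cheeger energy, and to transport that identity from $(M,\sfd,\m)$ to $(M^{*},\sfd^{*},\m^{*})$ along the isometric lift furnished by Corollary \ref{cor:W1qmw1qm*}.

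First I would record the two facts we need about the lift under the present hypotheses (assumption \eqref{eq:lip=minslope} with $q=2$ and density of Lipschitz functions in both $W^{1,2}(\m)$ and $W^{1,2}(\m^{*})$), both contained in Corollary \ref{cor:W1qmw1qm*}: the natural lift $f\mapsto\hat f$ maps $W^{1,2}(\m^{*})$ isometrically onto the closed subspace of $\G$-invariant functions in $W^{1,2}(\m)$, and $\Ch_{2}^{M}(\hat f)=\Ch_{2}^{M^{*}}(f)$ for every $f\in W^{1,2}(\m^{*})=L^{2}(\m^{*})\cap D(\Ch_{2}^{M^{*}})$. I would also observe that the lift is linear, i.e.\ $\widehat{f+g}=\hat f+\hat g$ and $\widehat{f-g}=\hat f-\hat g$, which is immediate from the pointwise definition $\hat f(x)=f(x^{*})$.

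Then, given $f,g\in W^{1,2}(\m^{*})$, their lifts $\hat f,\hat g$ lie in $W^{1,2}(\m)$, and applying the parallelogram rule \eqref{eq:paralrule} on $(M,\sfd,\m)$ — available since that space is infinitesimally Hilbertian — gives
\[
\Ch_{2}^{M}(\hat f+\hat g)+\Ch_{2}^{M}(\hat f-\hat g)=2\big(\Ch_{2}^{M}(\hat f)+\Ch_{2}^{M}(\hat g)\big).
\]
Rewriting each of the four terms via the identity $\Ch_{2}^{M}(\hat h)=\Ch_{2}^{M^{*}}(h)$ (using linearity of the lift for the first two terms, so that $\hat f+\hat g=\widehat{f+g}$ and $\hat f-\hat g=\widehat{f-g}$) yields
\[
\Ch_{2}^{M^{*}}(f+g)+\Ch_{2}^{M^{*}}(f-g)=2\big(\Ch_{2}^{M^{*}}(f)+\Ch_{2}^{M^{*}}(g)\big),
\]
which is precisely \eqref{eq:paralrule} on $(M^{*},\sfd^{*},\m^{*})$. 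Since $\|\cdot\|_{W^{1,2}}^{2}=\|\cdot\|_{L^{2}}^{2}+\Ch_{2}(\cdot)$ and the $L^{2}$-norm always obeys the parallelogram identity, the $W^{1,2}(\m^{*})$-norm does too; being a complete norm satisfying the parallelogram law, it is a Hilbert norm, so $(M^{*},\sfd^{*},\m^{*})$ is infinitesimally Hilbertian.

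There is no serious obstacle here: the whole argument is a one-line propagation of \eqref{eq:paralrule} once Corollary \ref{cor:W1qmw1qm*} is in place. The only point deserving a word of care is checking that the hypotheses assumed in the statement are exactly those needed to invoke that corollary with $q=2$ — so that the energy identity $\Ch_{2}^{M}(\hat f)=\Ch_{2}^{M^{*}}(f)$ genuinely holds on \emph{all} of $W^{1,2}(\m^{*})$, not merely on Lipschitz functions — and that the lift commutes with the linear operations $f\pm g$.
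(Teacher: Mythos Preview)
Your argument is correct and is exactly the approach the paper intends: the paper simply states that Corollary \ref{cor:W1qmw1qm*} directly implies Corollary \ref{cor:quotInfHilb}, and you have spelled out precisely how, by transporting the parallelogram identity \eqref{eq:paralrule} along the linear isometric lift.
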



\begin{remark} \label{rem:LocLipRelGradCD}
The assumptions of Corollaries \ref{cor:W1qmw1qm*} and \ref{cor:quotInfHilb} are fulfilled in case $(M,\sfd,\mm)$ is a strong $\CD^{*}(K,N)$-space, for some $K\in \R, N\in [1,\infty)$. Indeed, by Theorem \ref{thm:CDbyGisCD},  also $(M^{*},\sfd^{*},\mm^{*})$ is a strong $\CD^{*}(K,N)$-space. Moreover, by \cite{BS10, R2011}, we know that every $\CD^{*}(K,N)$-space is a locally doubling and Poincar\'e space; therefore it follows  from \cite{CheegerGAFA}  that the upper asymptotic Lipschitz constant and the $q$-minimal
relaxed slope agree $\mm$-a.e. for every locally Lipschitz function in $W^{1,q}(\m)$ and that Lipschitz functions are dense in  $W^{1,q}(\m^{*})$, for all $q \in (1,\infty)$. Actually for the density of bounded Lipschitz functions with bounded support in $W^{1,q}(\m), q \in (1,\infty)$, it is enough to assume that $(X,\sfd,\mm)$ is doubling (see \cite{ACD}).
\end{remark}

\subsection{The general case}
In this subsection we treat the case of a  general metric measure space, without assuming locally doubling and Poincar\'e.  The arguments of the present section will be slightly more technically involved but have the advantage of being self-contained, in particular they do not rely on the deep work of Cheeger \cite{CheegerGAFA}. As in the previous sections $\G$ is a compact Lie group acting on $(M,\sfd,\mm)$ in a continuous way via isomorphisms of m.m. spaces.

To achieve the identification of the Sobolev spaces in the general case we will need several auxiliary lemmas.

\begin{lem}\label{lem:lipcont}
Let $(M,\sfd)$ be a geodesic metric space.  Given $f\colon M\to \R$ any function, for $r>0$ define 
\[
f_{r}(x)=\sup_{y\in B_{r}(x)}\frac{|f(y)-f(x)|}{r}.
\]
Then $f_{r}: M\to \R$ is continuous for every Lipschitz function $f\in \LIP(M,\sfd)$.
\end{lem}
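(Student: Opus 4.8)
The plan is to prove continuity of $f_r$ by establishing that it is itself Lipschitz, with a Lipschitz constant controlled by the Lipschitz constant $L$ of $f$. First I would fix $x, x' \in M$ and estimate $|f_r(x) - f_r(x')|$. Without loss of generality assume $f_r(x) \geq f_r(x')$; then it suffices to bound $f_r(x) - f_r(x')$ from above. Pick $y \in \overline{B_r(x)}$ almost realizing the supremum defining $f_r(x)$, i.e. with $\frac{|f(y)-f(x)|}{r}$ close to $f_r(x)$. The idea is to transport $y$ to a competitor point $y'$ for the supremum defining $f_r(x')$ by moving along a geodesic; since $(M,\sfd)$ is geodesic, one can choose $y'$ on a geodesic from $x'$ toward $y$ (or a suitable nearby point) so that $\sfd(x',y') \leq r$ and $\sfd(y,y') \lesssim \sfd(x,x')$.

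The key geometric step is this: given $y$ with $\sfd(x,y) \leq r$ and a nearby center $x'$, produce $y' \in \overline{B_r(x')}$ with $\sfd(y,y') \leq 2\sfd(x,x')$ (or some explicit constant times $\sfd(x,x')$). If $\sfd(x',y) \leq r$ already, take $y' = y$. Otherwise $r < \sfd(x',y) \leq \sfd(x,y) + \sfd(x,x') \leq r + \sfd(x,x')$, so we may slide $y$ back along a geodesic from $x'$ to $y$ by an amount at most $\sfd(x,x')$ to land on the sphere of radius $r$ about $x'$; this $y'$ satisfies $\sfd(y,y') \leq \sfd(x,x')$. Then
\[
\frac{|f(y)-f(x)|}{r} \leq \frac{|f(y')-f(x')|}{r} + \frac{|f(y)-f(y')| + |f(x)-f(x')|}{r} \leq f_r(x') + \frac{2L\sfd(x,x')}{r},
\]
using that $f$ is $L$-Lipschitz. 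Taking the supremum over $y$ gives $f_r(x) \leq f_r(x') + \frac{2L}{r}\sfd(x,x')$, and by symmetry $|f_r(x) - f_r(x')| \leq \frac{2L}{r}\sfd(x,x')$, so $f_r$ is Lipschitz, hence continuous.

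The main obstacle I anticipate is a mild technical subtlety about whether the supremum in the definition of $f_r$ is over the open ball $B_r(x)$ or its closure, and whether competitor points land strictly inside or on the boundary: one must be slightly careful that the constructed $y'$ lies in $B_r(x')$ (not merely its closure) or else argue that replacing open balls by closed balls does not change $f_r$ for Lipschitz $f$ by a density/continuity argument along the geodesic. This is handled by choosing $y'$ strictly inside, e.g. sliding slightly further than necessary, at the cost of an arbitrarily small perturbation that does not affect the final Lipschitz estimate. Everything else is elementary manipulation with the triangle inequality and the Lipschitz bound on $f$; the geodesic hypothesis is used exactly once, to guarantee that points can be moved continuously along distance-minimizing paths.
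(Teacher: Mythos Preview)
Your argument is correct and in fact yields more than the lemma asks: you show $f_r$ is $\tfrac{2L}{r}$-Lipschitz, not merely continuous. The core geometric move---sliding the near-optimal point $y$ along a geodesic toward the new center so as to land inside the new ball, with displacement controlled by $\sfd(x,x')$---is exactly the idea the paper uses. The paper, however, packages it differently: it argues sequentially, proving upper semicontinuity by sliding $y_n$ along a geodesic from $x$ to $y_n$ (rather than from the new center), and lower semicontinuity separately via the trivial observation that any fixed $y\in B_r(x)$ eventually lies in $B_r(x_n)$. Your single two-sided Lipschitz estimate collapses both halves into one step and gives an explicit modulus of continuity, at no extra cost. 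The open-vs-closed ball issue you flag is genuine but handled exactly as you describe; in a geodesic space the closure of $B_r(x)$ is the closed ball anyway, so the $\varepsilon$-perturbation is harmless.
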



\begin{proof}
Fix $f\in \LIP(M,\sfd)$ and $x \in M$. Let $x_{n}\to x$ and $y_{n}\in B_{r}(x_{n})$ be such that 
\begin{equation}\label{eq:frxn}
f_{r}(x_{n})=\frac{|f(y_{n})-f(x_{n})|}{r}+\varepsilon_{n}
\end{equation}
 for some sequence $\epsilon_{n}\to0$. Let $\gamma_{n}:[0,1]\to M$
be a geodesic connecting $x$ and $y_{n}$. By continuity,
 since $\sfd(x,x_{n})\to 0$, 
there is a sequence $t_{n}\uparrow 1$ such
that
\begin{equation}\label{eq:defzn}
z_{n}=\gamma_{n}(t_{n})\in B_{r}(x).
\end{equation}
Thus, calling $L$ the Lipschitz constant of $f$, we infer  
\begin{align} 
\Big||f(y_{n})-f(x_{n})|-|f(z_{n})-f(x)|\Big|& \le  |f(y_{n})-f(z_{n})|+|f(x)-f(x_{n})| \leq L \Big( \sfd(y_{n}, z_{n})+ \sfd(x,x_{n}) \Big) \nonumber\\
&\le L\Big(  (1-t_{n})\, \sfd(x,y_{n})+\sfd(x,x_{n}) \Big). \label{eq:fynfzn}
\end{align}
Letting $n\to\infty$ we see that that the right hand side of \eqref{eq:fynfzn}  converges to $0$. Thus the combination of  \eqref{eq:frxn},\eqref{eq:defzn} and  \eqref{eq:fynfzn} yields
\begin{align*}
f_{r}(x)&\ge \limsup_{n\to \infty} \frac{|f(z_{n})-f(x)|}{r} =  \limsup_{n\to \infty} \frac{|f(y_{n})-f(x_{n})|}{r}= \limsup_{n\to \infty}f_{r}(x_{n}).
\end{align*}
To show the complementary inequality, observe that $y\in B_{r}(x)$ implies that $y\in B_{r}(x_{n})$
for sufficiently large $n$. This yields
\[
f_{r}(x)\le\liminf_{x_{n}\to x}f_{r}(x_{n}),
\]
proving that $f_{r}$ is continuous.
\end{proof}

The group $\G$ acts naturally on the set of functions defined on
$M$ as follows. Given a function $f\colon M\to\mathbb{R}$ and an element $g\in\G$, define $f_{g}:M\to\mathbb{R}$ as $f_{g}(x)=f(gx)$. 
Therefore, given a measurable function $f\colon M\to\mathbb{R}$, we can define a natural
 measurable $\G$-invariant function $f_{(\G)}$ by integrating
over $(\G,\nu_{\G})$:
\begin{equation}\label{eq:deff(G)}
f_{(\G)}(x)=\int_{\G}f_{g}(x) \, d\nu_{\G}(g).
\end{equation}
We call $f_{(\G)}$ the \emph{$\G$-average of $f$}.

Our next goal is to control the asymptotic  Lipschitz constant of $f_{(\G)}$ in terms of the one of $f$. To that aim we will make use of the next lemma.

\begin{lem}\label{lem:supfGsupf}
Let $(M,\sfd)$ be a geodesic metric space. Then, for any Lipschitz function
$f\colon M\to\mathbb{R}$,
\[
(f_{(\G)})_r(x)=\sup_{y\in B_{r}(x)}\frac{|f_{(\G)}(y)-f_{(\G)}(x)|}{r}\le\int_{\G} f_{r}(gx) \, d\nu_{\G}(g)=(f_r)_{(\G)}(x).
\]
\end{lem}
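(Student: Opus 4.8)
The plan is to estimate the numerator $|f_{(\G)}(y) - f_{(\G)}(x)|$ directly by unwinding the definition of the $\G$-average and pushing the absolute value inside the Haar integral. First I would write, for any $y \in B_r(x)$,
\[
|f_{(\G)}(y) - f_{(\G)}(x)| = \left| \int_{\G} \big( f(gy) - f(gx) \big) \, d\nu_{\G}(g) \right| \le \int_{\G} |f(gy) - f(gx)| \, d\nu_{\G}(g),
\]
using linearity of the integral and Jensen's (or the triangle) inequality. Dividing by $r$ gives a pointwise-in-$g$ bound on the integrand by $|f(gy) - f(gx)|/r$.

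The key observation is that since $\G$ acts by isometries, $y \in B_r(x)$ implies $gy \in B_r(gx)$ for every $g \in \G$; hence $gy$ is an admissible competitor in the supremum defining $f_r(gx) = \sup_{z \in B_r(gx)} |f(z) - f(gx)|/r$. Therefore, for each fixed $g$,
\[
\frac{|f(gy) - f(gx)|}{r} \le f_r(gx).
\]
Combining this with the previous display and then taking the supremum over $y \in B_r(x)$ on the left-hand side — noting that the right-hand side $\int_{\G} f_r(gx)\, d\nu_{\G}(g)$ does not depend on $y$ — yields
\[
(f_{(\G)})_r(x) = \sup_{y \in B_r(x)} \frac{|f_{(\G)}(y) - f_{(\G)}(x)|}{r} \le \int_{\G} f_r(gx) \, d\nu_{\G}(g) = (f_r)_{(\G)}(x),
\]
which is exactly the claimed inequality.

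The only genuine technical point to address is the measurability and integrability needed to make the manipulations legitimate: one must know that $g \mapsto f(gx)$ is measurable (which follows from continuity of the orbit maps $\star_x$ in the action axioms) and that $g \mapsto f_r(gx)$ is measurable, so that the right-hand integral makes sense and Fubini/Jensen applies. Here Lemma~\ref{lem:lipcont} does the work: it guarantees $f_r$ is continuous on $M$ for Lipschitz $f$, hence $g \mapsto f_r(gx)$ is continuous and bounded on the compact group $\G$, so the integral $(f_r)_{(\G)}(x)$ is finite and well-defined. I do not anticipate a real obstacle here — the argument is a short application of Jensen's inequality together with the isometric invariance of balls under the action; Lemma~\ref{lem:lipcont} is invoked precisely to license the integral on the right.
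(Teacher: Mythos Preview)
Your proof is correct, and it is genuinely simpler than the paper's. The paper argues by discrete approximation: it picks a dense sequence $\{g_i\}$ in $\G$, sets $\nu_n = \frac{1}{n}\sum_{i=1}^n \delta_{g_i}$, defines $f_n(x)=\int_\G f(gx)\,d\nu_n(g)$, applies the triangle inequality to the finite sum $f_n$ to get
\[
\sup_{y\in B_r(x)}\frac{|f_n(y)-f_n(x)|}{r}\le \int_\G f_r(gx)\,d\nu_n(g),
\]
and then passes to the limit $n\to\infty$ using $\nu_n\rightharpoonup\nu_\G$ together with the continuity of $g\mapsto f_r(gx)$ furnished by Lemma~\ref{lem:lipcont}. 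You instead push the absolute value inside the Haar integral in one step and use $gy\in B_r(gx)$ directly; no approximation or limit is needed.

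Your route is shorter and uses Lemma~\ref{lem:lipcont} only lightly, to guarantee that $g\mapsto f_r(gx)$ is continuous (hence integrable) so the right-hand side is well defined; the paper needs that continuity in an essential way to justify the weak-convergence limit. The only thing the paper's argument buys is that the finite-sum triangle inequality is perhaps more visibly elementary, but your integral triangle inequality is equally standard, and your argument makes the role of the isometric action more transparent.
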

\begin{proof}
Let $\{g_{i}\}_{i 	\in\N}\subset\G$ be  a dense sequence,  so that  
\begin{equation}\label{eq:nuntonug}
\nu_{n}:=\frac{1}{n}\sum_{i=1}^{n}\delta_{g_{i}}  \rightharpoonup  \nu_{\G},
\end{equation}
with respect to weak convergence of probability measures in duality with $C^{0}(\G)$.

Define $f_{n}:M\to\mathbb{R}$ by  $f_{n}(x)=\int_{\G} f(gx)\, d\nu_{n}(g)$ and note that $f_{n}\to f_{(\G)}$ pointwise in $M$. In particular,
\begin{equation*}
\sup_{y\in B_{r}(x)}\frac{|f_{n}(y)-f_{n}(x)|}{r}\to\sup_{y\in B_{r}(x)}\frac{|f_{(\G)}(y)-f_{(\G)}(x)|}{r}.
\end{equation*}
 Furthermore, we have 
\begin{equation}\label{eq:supfnfr}
\sup_{y\in B_{r}(x)}\frac{|f_{n}(y)-f_{n}(x)|}{r}  \le  \frac{1}{n}\sum_{i=1}^{n}\sup_{y\in B_{r}(g_{i}x)}\frac{|f(y)-f(g_{i}x)|}{r} =  \int_{\G} f_{r}(g x)\, d\nu_{n}(g).
\end{equation}
Note that the map $\G\mapsto f_{r}(g x)$ is continuous, since it is the composition of the maps $\star_x:\G\to M$ and $f_{r}:M\to \mathbb R$ which, by assumption and Lemma \ref{lem:lipcont}, are continuous. The weak convergence \eqref{eq:nuntonug} of $\nu_{n}  \rightharpoonup \nu_{\G}$  then ensures that the inequality \eqref{eq:supfnfr}
is preserved in the limit, i.e. 
\[
\sup_{y\in B_{r}(x)}\frac{|f_{(\G)}(y)-f_{(\G)}(x)|}{r}\le\int_{\G} f_{r}(gx)\, d\nu_{\G}(g).
\]
\end{proof}

Using Lemma \ref{lem:supfGsupf} we can prove the next important proposition which estimates, for a Lipschitz function $f$,  the upper asymptotic Lipschitz constant of $f_{(\G)}$ in terms of $\G$-rearrengement of the one of $f$.

\begin{prop}\label{prop:LipfGleLipf}
Let $(M,\sfd)$ be a geodesic metric space. Then for every Lipschitz function $f\colon M\to \R$ and all $q\in[1,\infty)$,
\[
\left[\Lip f_{(\G)}(x)\right]^{q}\le\int_{\G} \left[\Lip f(gx)\right]^{q}\, d\nu_{\G}(g) =(\left[\Lip f(x)\right ]^{q})_{(\G)}.
\]
In particular, $f_{(\G)}:M\to \R$ is Lipschitz continuous.
\end{prop}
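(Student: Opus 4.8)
The plan is to deduce the pointwise inequality from Lemma~\ref{lem:supfGsupf} together with Jensen's inequality, and then pass to the limit $r\to 0$. First I would fix a Lipschitz function $f\colon M\to\R$, a point $x\in M$, and an exponent $q\in[1,\infty)$. Applying Lemma~\ref{lem:supfGsupf}, for every $r>0$ we have
\[
\sup_{y\in B_{r}(x)}\frac{|f_{(\G)}(y)-f_{(\G)}(x)|}{r}\le\int_{\G} f_{r}(gx)\, d\nu_{\G}(g),
\]
where $f_{r}(z):=\sup_{y\in B_{r}(z)}|f(y)-f(z)|/r$ as in Lemma~\ref{lem:lipcont}. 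Since $\nu_{\G}$ is a probability measure and $t\mapsto t^{q}$ is convex on $[0,\infty)$ for $q\ge 1$, Jensen's inequality gives
\[
\left(\sup_{y\in B_{r}(x)}\frac{|f_{(\G)}(y)-f_{(\G)}(x)|}{r}\right)^{q}\le\int_{\G} f_{r}(gx)^{q}\, d\nu_{\G}(g).
\]

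Next I would take $\limsup_{r\to 0}$ on both sides. On the left-hand side the limit is exactly $[\Lip f_{(\G)}(x)]^{q}$ by the definition \eqref{eq:defLiplip} and continuity of $t\mapsto t^q$. For the right-hand side I need to commute the $\limsup$ with the integral over $\G$; here the key point is that $f$ is Lipschitz with some constant $L$, so $0\le f_{r}(gx)\le L$ uniformly in $r$ and $g$, hence $f_{r}(gx)^{q}\le L^{q}$ is a fixed integrable dominating function. By the reverse Fatou lemma (dominated version),
\[
\limsup_{r\to 0}\int_{\G} f_{r}(gx)^{q}\, d\nu_{\G}(g)\le\int_{\G}\limsup_{r\to 0} f_{r}(gx)^{q}\, d\nu_{\G}(g)=\int_{\G}\left[\Lip f(gx)\right]^{q}\, d\nu_{\G}(g),
\]
using again that $\limsup_{r\to0}f_r(z)=\Lip f(z)$ for every $z$. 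Chaining the inequalities yields the claimed bound
\[
\left[\Lip f_{(\G)}(x)\right]^{q}\le\int_{\G}\left[\Lip f(gx)\right]^{q}\, d\nu_{\G}(g)=\left(\left[\Lip f(x)\right]^{q}\right)_{(\G)}.
\]

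Finally, to conclude that $f_{(\G)}$ is Lipschitz continuous, I would take $q=1$ and observe that $\Lip f(gx)\le L$ for all $g$ (since $f$ has Lipschitz constant $L$ and this controls $\Lip f$ pointwise, as noted after \eqref{eq:defLiplip}); hence $\Lip f_{(\G)}(x)\le L$ for all $x\in M$. Since $(M,\sfd)$ is a geodesic space, a uniform bound on the upper asymptotic Lipschitz constant implies that $f_{(\G)}$ is globally Lipschitz with constant at most $L$ (this is exactly the converse direction recalled after the definition of $\Lip$ and $\lip$). The main obstacle is the interchange of $\limsup_{r\to0}$ with $\int_\G$: it is crucial here that the Lipschitz hypothesis on $f$ supplies a uniform, $r$-independent bound on the integrands, so that the reverse Fatou lemma applies; without such domination the inequality could fail. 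Everything else is a routine combination of Lemma~\ref{lem:supfGsupf}, Jensen's inequality, and the definitions.
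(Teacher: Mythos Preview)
Your proof is correct and follows essentially the same approach as the paper: both combine Lemma~\ref{lem:supfGsupf}, Jensen's inequality, and the reverse Fatou lemma with the uniform bound $f_r\le L$. The only cosmetic difference is the order of operations---you apply Jensen at each fixed $r$ and then pass to the $\limsup$, whereas the paper first takes the $\limsup$ inside the integral (reverse Fatou for $f_r$ itself) and applies Jensen afterwards to $\int_\G \Lip f(gx)\,d\nu_\G(g)$.
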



\begin{proof}Let $L\geq 0$ be the Lipschitz constant of $f$ and note that $f_{r}(x)\leq L$ for all $x \in M$.
  We can then use Fatou's Lemma to infer that
\[
\limsup_{r\downarrow 0} \int_{\G} f_{r}(gx) \, d\nu_{\G}(g) \leq  \int_{\G}   \limsup_{r\downarrow 0} f_{r}(gx) \, d\nu_{\G}(g)=   \int_{\G} \Lip f (gx) \, d\nu_{\G}(g).
\]
Combining the last inequality with Lemma  \ref{lem:supfGsupf}  and with Jensen's inequality, we get
\begin{align*}
\left[\Lip f_{(\G)}(x)\right]^{q} =&\left[\limsup_{r\downarrow 0} \sup_{y\in B_{r}(x)}\frac{|f_{(\G)}(y)-f_{(\G)}(x)|}{r} \right]^{q} \leq \left[ \limsup_{r\downarrow 0} \int_{\G} f_{r}(gx) \, d\nu_{\G}(g)\right]^{q}  \\
&  \leq \left[ \int_{\G} \Lip f (gx) \, d\nu_{\G}(g) \right]^{q} \leq  \int_{\G}  \left[  \Lip f (gx)  \right]^{q} \, d\nu_{\G}(g).
\end{align*}
\end{proof}

\begin{prop}
\label{prop:invar-approx}
Fix $q\in (1,\infty)$. Let $(M,\sfd,\mm)$ be a m.m. space with $(M,\sfd)$ geodesic.  Then, for every $\G$-invariant $f\in W^{1,q}(\m)$,
there is a sequence of $\G$-invariant Lipschitz functions $\{f_{n}\}_{n \in \N} \subset L^{q}(\m)$  such that 
\[
f_{n}\to f  \quad {and} \quad
 \Lip f_{n}\to|\nabla f|_{q}\;\mbox{strongly in }L^{q}(\m).
\]
\end{prop}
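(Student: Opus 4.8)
The plan is to start from an arbitrary (not necessarily invariant) approximating sequence for $f$ coming from the definition of the $q$-Cheeger energy, and then to average it over $\G$ in order to make it $\G$-invariant, using Proposition~\ref{prop:LipfGleLipf} to keep the upper asymptotic Lipschitz constants under control. Concretely, since $f\in W^{1,q}(\m)$, by the very definition of $|\nabla f|_{q}$ (via Mazur's lemma and uniform convexity of $L^q$, as recalled before Lemma~\ref{lem:Lqm*Lqm}) there exists a sequence $\{h_{n}\}_{n\in\N}\subset\LIP(M,\sfd)$ with $h_{n}\to f$ in $L^{q}(\m)$ and $\Lip h_{n}\to|\nabla f|_{q}$ strongly in $L^{q}(\m)$. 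Now set $f_{n}:=(h_{n})_{(\G)}$, the $\G$-average defined in \eqref{eq:deff(G)}. By Proposition~\ref{prop:LipfGleLipf} each $f_n$ is Lipschitz and $\G$-invariant, and
\[
\left[\Lip f_{n}(x)\right]^{q}\le\int_{\G}\left[\Lip h_{n}(gx)\right]^{q}\,d\nu_{\G}(g)=\big(\left[\Lip h_{n}\right]^{q}\big)_{(\G)}(x).
\]

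Next I would pass to the limit. Since $\m$ is $\G$-invariant, the $\G$-averaging operation $u\mapsto u_{(\G)}$ is a linear contraction on $L^{q}(\m)$ (by Jensen/Fubini: $\int_M |u_{(\G)}|^q\,d\m\le\int_M\int_\G|u(gx)|^q\,d\nu_\G(g)\,d\m(x)=\int_M|u|^q\,d\m$), and it fixes $\G$-invariant functions. As $f$ is $\G$-invariant, $f=f_{(\G)}$, so
\[
\|f_{n}-f\|_{L^{q}(\m)}=\|(h_{n}-f)_{(\G)}\|_{L^{q}(\m)}\le\|h_{n}-f\|_{L^{q}(\m)}\to 0.
\]
For the gradients, apply the contraction property again to $|\Lip h_n|^q\to |\nabla f|_q^q$: from the pointwise bound above, $\big[\Lip f_n\big]^q\le\big(\big[\Lip h_n\big]^q\big)_{(\G)}$ in $L^1(\m)$, and the right-hand side converges in $L^1(\m)$ to $\big(|\nabla f|_q^q\big)_{(\G)}=|\nabla f|_q^q$ (the last equality because $|\nabla f|_q$ is $\G$-invariant, $f$ being $\G$-invariant and $\m$ being $\G$-invariant). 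Hence $\limsup_n\int_M[\Lip f_n]^q\,d\m\le\int_M|\nabla f|_q^q\,d\m$. On the other hand, $\{f_n\}$ is an admissible sequence for $\Ch_q(f)$ — indeed $f_n\to f$ in $L^q(\m)$, hence weakly — so by definition of the minimal relaxed slope any $L^q$-weak limit point of $\{\Lip f_n\}$ dominates $|\nabla f|_q$ $\m$-a.e. Combining with the upper bound on the $L^q$-norms forces $\Lip f_n\to|\nabla f|_q$ \emph{strongly} in $L^q(\m)$ (using uniform convexity of $L^q$: norms converge and weak limit is $|\nabla f|_q$).

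The main technical point — and the place where Proposition~\ref{prop:LipfGleLipf} is essential — is precisely the inequality $[\Lip f_{(\G)}]^q\le([\Lip f]^q)_{(\G)}$, which is what makes the averaging procedure compatible with the $L^q$-control of slopes; without it the upper asymptotic Lipschitz constant of an average could a priori blow up. A secondary subtlety is the extraction of a weakly convergent subsequence of $\{\Lip f_n\}$: this is automatic since the $L^q$-norms are bounded and $L^q$ is reflexive ($q\in(1,\infty)$), and any such weak limit is $\ge|\nabla f|_q$ by lower semicontinuity of $\Ch_q$; matching the $L^q$-norms then upgrades weak to strong convergence and identifies the limit, so in fact the whole sequence converges. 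Finally, passing to a further diagonal/convex combination à la Mazur is not needed here because we already have strong convergence of the slopes, but if one prefers, one may invoke Mazur's lemma exactly as in the definition of $|\nabla f|_q$ to produce convex combinations of the $f_n$ (still $\G$-invariant and Lipschitz, since $\G$-invariance and the Lipschitz property are preserved under convex combinations) realising the strong limit.
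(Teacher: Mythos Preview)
Your proof is correct and follows essentially the same strategy as the paper: start from an optimal Lipschitz approximation of $f$, replace it by its $\G$-average, and use Proposition~\ref{prop:LipfGleLipf} to control $\Lip$ of the averaged sequence. The only noteworthy difference is in the endgame: the paper passes to convex combinations via Mazur's lemma to force strong $L^{q}$-convergence of both $h^{n}_{(\G)}$ and $\Lip h^{n}_{(\G)}$, whereas you bypass this by (i) using that $\G$-averaging is an $L^{q}$-contraction fixing the $\G$-invariant $f$, and (ii) invoking uniform convexity of $L^{q}$ together with the minimality of $|\nabla f|_{q}$ to upgrade weak to strong convergence of the slopes for the original (non-combined) sequence --- a slightly cleaner route yielding the same conclusion.
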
 


\begin{proof}
We already know that given $f\in W^{1,q}(\m)$ there exists a sequence $\{h^{n}\}_{n \in \N}\subset \LIP(M,\sfd)$  such that $h^{n}\weakto f$ weakly in $L^{q}(\mm)$ and $\Lip h^{n}\to|\nabla f|_{q}$
strongly in $L^{q}(\m)$. Note that, by definition, $h_{(\G)}^{n}$ is $\G$-invariant and moreover $h_{(\G)}^{n}\in \LIP(M,\sfd)$ by  Proposition \ref{prop:LipfGleLipf}.  
First of all we claim that $h^{n}_{(G)}\weakto f$ weakly in $L^{q}(\m)$, indeed since $f$ and $\mm$ are $\G$-invariant, using Fubini-Tonelli's Theorem we have
\begin{align}
\int_{M}(f(x)-h^{n}_{(\G)}(x)) \varphi(x) \, d \mm(x)&= \int_{M} \int_{\G} (f(gx)-h^{n}(gx)) \varphi (x) \, d \nu_{\G}(g) \, d \mm(x) \nonumber \\
&=\int_{\G} \left[\int_{M} (f(gx)-h^{n}(gx)) \varphi (x) \,d \mm(x) \right] d \nu_{\G}(g) \nonumber \\
&=  \int_{\G} \left[\int_{M} (f(x)-h^{n}(x)) \varphi (g^{-1}x) \,d \mm(x) \right] d \nu_{\G}(g) \to 0 \text{ as $n\to \infty$}, \; \forall \varphi \in L^{p}(\mm). 
\end{align}
Where, of course $1/p+1/q=1$. Next we show that  $\Lip h^{n}_{{(\G)}}\to|\nabla f|_{q}$ strongly in $L^{q}(\mm)$, up to subsequences and  convex combinations. To this aim, using again the $\G$-invariance of $\mm$,  observe that
\begin{equation}\label{eq:LiphLiphg}
\int_{M}(\Lip h^{n})^{q}d\m=\int_{M}(\Lip h_{g}^{n})^{q}d\m, \qquad \forall g \in \G,
\end{equation}
where, as usual, we let $h_{g}^{n}(x)=h^{n}(gx)$. Using \eqref{eq:LiphLiphg} together with  Proposition \ref{prop:LipfGleLipf} and  Fubini-Tonelli's Theorem we infer
\[
\int_{M}(\Lip h_{(\G)}^{n})^{q}d\m\le\int_{M}\int_{\G}(\Lip h_{g}^{n})^{q}\, d\nu_{\G}(g)\,  d\m= \int_{\G}\int_{M}(\Lip h_{g}^{n})^{q}  \, d\m\, d\nu_{\G}(g)= \int_{M}(\Lip h^{n})^{q}d\m.
\]
Recalling that, by construction,  $\Lip h^{n}\to|\nabla f|_{q}$ in $L^{q}(\mm)$, we infer that
\begin{equation}\label{eq:limsupnablahnq}
  \limsup_{n\to \infty}  \int_{M}(\Lip h_{(\G)}^{n})^{q} \,d\m \leq  \limsup_{n\to \infty}  \int_{M}(\Lip h^{n})^{q}d\m =\int_{M}  |\nabla f|_{q}^{q} \, d \mm.
\end{equation} 
After extracting a subsequence we see that $\Lip h_{(\G)}^{n}$ converge
weakly in $L^{q}(\mm)$  to some $g\in L^{q}(\m)$; therefore up to considering subsequences and  convex combinations, by the Mazur Lemma, we can assume that $ h_{(\G)}^{n}\to f$ strongly in $L^{q}(\mm)$ and $\Lip h_{(\G)}^{n}$ converge strongly in $L^{q}(\mm)$ to some $g \in L^{q}(\mm)$. Clearly a   convex combination $\G$-invariant functions is still $\G$-invariant.  The uniqueness and minimality property of $|\nabla f|_{q}$ then ensures that $g=|\nabla f|_{q}$, as desired.
\end{proof}

Using the last two  propositions we can show the next result.

\begin{thm}\label{thm:IdentCh;ChM*}
Let $(M,\sfd,\mm)$ be a m.m. space such that $(M,\sfd)$ is a  geodesic space.  Then,  for all $f\in D(\Ch_{q}^{M^{*}})$, one has  $\hat{f}\in D(\Ch_{q}^{M})$ and 
\[
\Ch_{q}^{M}(\hat{f})=\Ch_{q}^{M^{*}}(f).
\]
In particular, the natural lift of functions defined on $M^{*}$
induces an isometric embedding of $W^{1,q}(\m^{*})$ into $W^{1,q}(\m)$
whose image is the set of $\G$-invariant functions in $W^{1,q}(\m)$.
\end{thm}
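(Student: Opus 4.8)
The plan is to establish the two inequalities $\Ch_q^M(\hat f)\le\Ch_q^{M^*}(f)$ and $\Ch_q^{M^*}(f)\le\Ch_q^M(\hat f)$ separately --- the first for every $f\in D(\Ch_q^{M^*})$, the second for every $\G$-invariant $g\in W^{1,q}(\m)$ --- and then to combine them with the choice $g=\hat f$. This simultaneously gives the energy identity, the isometry of the Sobolev norms (via Lemma~\ref{lem:Lqm*Lqm}), and the identification of the image of the lift with the $\G$-invariant functions in $W^{1,q}(\m)$.

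For $\Ch_q^M(\hat f)\le\Ch_q^{M^*}(f)$ I would take a recovery sequence $\{f_n\}\subset\LIP(M^*,\sfd^*)$ with $f_n\rightharpoonup f$ weakly in $L^q(\m^*)$ and $\tfrac1q\int(\Lip^{M^*}f_n)^q\,d\m^*\to\Ch_q^{M^*}(f)$. Each lift $\hat f_n$ is Lipschitz on $M$ (immediate from $\sfd^*(x^*,y^*)\le\sfd(x,y)$), is $\G$-invariant, and by Lemma~\ref{lem:Lqm*Lqm} the lifts converge weakly to $\hat f$ in $L^q(\m)$. Since $\Lip^M\hat f_n$ is the $\G$-invariant lift of $\Lip^{M^*}f_n$ by Proposition~\ref{prop:LipfGf*G}, and $\m^*=\quotient_\sharp\m$, one has $\int_M(\Lip^M\hat f_n)^q\,d\m=\int_{M^*}(\Lip^{M^*}f_n)^q\,d\m^*$; plugging $\{\hat f_n\}$ into the definition of $\Ch_q^M$ yields $\Ch_q^M(\hat f)\le\liminf_n\tfrac1q\int(\Lip^M\hat f_n)^q\,d\m=\Ch_q^{M^*}(f)$, and in particular $\hat f\in D(\Ch_q^M)$.

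For the reverse inequality, let $g\in W^{1,q}(\m)$ be $\G$-invariant and let $f$ be the function on $M^*$ with $g=\hat f$. Proposition~\ref{prop:invar-approx} provides $\G$-invariant Lipschitz functions $g_n=\hat f_n$ on $M$ with $g_n\to g$ and $\Lip^M g_n\to|\nabla^M g|_q$, both strongly in $L^q(\m)$. Descending, $\{f_n\}\subset\LIP(M^*,\sfd^*)$ and by Lemma~\ref{lem:Lqm*Lqm} $f_n\to f$ strongly, hence weakly, in $L^q(\m^*)$; again by Proposition~\ref{prop:LipfGf*G} and $\m^*=\quotient_\sharp\m$ we get $\int_{M^*}(\Lip^{M^*}f_n)^q\,d\m^*=\int_M(\Lip^M g_n)^q\,d\m\to\int_M|\nabla^M g|_q^q\,d\m=q\,\Ch_q^M(g)$. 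Testing the definition of $\Ch_q^{M^*}$ with $\{f_n\}$ gives $\Ch_q^{M^*}(f)\le\liminf_n\tfrac1q\int(\Lip^{M^*}f_n)^q\,d\m^*=\Ch_q^M(g)<\infty$, so in particular $f\in D(\Ch_q^{M^*})\subset W^{1,q}(\m^*)$.

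Combining the two steps: for $f\in D(\Ch_q^{M^*})$ the first step gives $\hat f\in D(\Ch_q^M)$, which is $\G$-invariant, so the second step applied to $g=\hat f$ gives $\Ch_q^{M^*}(f)\le\Ch_q^M(\hat f)$; together with the first inequality this is the asserted equality $\Ch_q^M(\hat f)=\Ch_q^{M^*}(f)$. With Lemma~\ref{lem:Lqm*Lqm} this upgrades to $\|f\|_{W^{1,q}(\m^*)}=\|\hat f\|_{W^{1,q}(\m)}$, so the lift is an isometric embedding; its image lies in the $\G$-invariant functions by construction, and the second step shows conversely that every $\G$-invariant element of $W^{1,q}(\m)$ is the lift of a function in $W^{1,q}(\m^*)$. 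The only genuinely delicate point is the reverse inequality, and there the crux is Proposition~\ref{prop:invar-approx}: one must approximate $g$ by \emph{$\G$-invariant} Lipschitz functions while keeping the limit of the slope energies under control, which is exactly what the $\G$-averaging estimate of Proposition~\ref{prop:LipfGleLipf} delivers.
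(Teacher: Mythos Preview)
Your proof is correct and follows essentially the same approach as the paper: both prove $\Ch_q^M(\hat f)\le\Ch_q^{M^*}(f)$ by lifting a recovery sequence and using Proposition~\ref{prop:LipfGf*G}, and both obtain the reverse inequality via Proposition~\ref{prop:invar-approx}. Your organization is marginally cleaner in that you state the second step for an arbitrary $\G$-invariant $g\in W^{1,q}(\m)$, which makes the surjectivity onto the $\G$-invariant functions immediate rather than leaving it implicit.
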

\begin{proof}
Fix $f\in D(\Ch_{q}^{M^{*}})$. From the definition of the Cheeger energy, using the Mazur Lemma, we can find a sequence $\{f^{n}\}_{n \in \N}\subset\LIP(M^{*},\sfd^{*})$
 such that  $f^{n}\to f$ and $\Lip^{M^{*}}f^{n}\to|\nabla^{M^{*}}f|_{q}$ strongly
in $L^{q}(\m^{*})$. Denote the corresponding lifts by $\hat{f}$ and $\hat{f}^{n}$
respectively. Using Lemma \ref{lem:Lqm*Lqm}, we have that 
\begin{equation}
\hat{f}^{n} \to \hat{f}  \quad \text{strongly in $L^{q}(\mm)$}.
\end{equation}
 Moreover, by Proposition 	\ref{prop:LipfGf*G} we know that   $\Lip^{M}\hat{f}^{n}(x)=\Lip^{M^{*}}f^{n}(x^{*})$, in particular $\hat{f}^{n}\in \LIP(M,\sfd)$. Thus   $
\int_{M}(\Lip^{M}\hat{f}^{n})^{q}\, d\m=\int_{M^{*}}(\Lip^{M^{*}}f^{n})^{q}\, d\m^{*}$, and 
\begin{equation}\label{eq:ChqMleqVhqM*}
\Ch_{q}^{M}(\hat{f})  \le  \liminf_{n\to\infty}\frac{1}{q}\int_{M}(\Lip^{M}\hat{f}^{n})^{q}d\m 
  =  \lim_{n\to\infty}\frac{1}{q}\int_{M^{*}}(\Lip^{M^{*}}f^{n})^{q}d\m^{*}=\Ch_{q}^{M^{*}}(f).
\end{equation}
In particular, $\hat{f}\in D(\Ch_{q}^{M})$. 

To obtain the equality of the Cheeger energies, observe that Proposition
\ref{prop:invar-approx} gives a $\G$-invariant Lipschitz
approximation $g_{n}$ of $\hat{f}$ such that 
\begin{equation}\label{eq:Chgn}
g_{n} \to \hat{f} \; \text{ strongly in $L^{q}(\mm)$ \quad and} \quad\Ch_{q}(\hat{f})=\lim_{n\to \infty}\frac{1}{q}\int_{M}(\Lip^{M}g_{n})^{q}d\m.
\end{equation}
By $\G$-invariance of $g_{n}$ we can define  $g_{n}^{*}(x^{*}):=g_{n}(x)$. From Lemma \ref{lem:Lqm*Lqm} and \eqref{eq:Chgn}  we get that $g_{n}^{*} \to f$ strongly in $L^{q}(\mm^{*})$. Therefore, since by  Proposition 	\ref{prop:LipfGf*G} we know  that  $\Lip^{M^{*}}g^{*}_{n}(x^{*})=\Lip^{M} g_{n}(x)$,  we get
\begin{align*}
\Ch_{q}^{M^{*}}(f) \le  \liminf_{n\to\infty}\frac{1}{q}\int_{M^{*}}(\Lip^{M^{*}}g_{n}^{*})^{q} \, d\m^{*} =  \lim_{n\to\infty}\frac{1}{q}\int_{M}(\Lip^{M}g_{n})^{q}d\m=\Ch_{q}^{M}(\hat{f})
\end{align*}
which, combined with  \eqref{eq:ChqMleqVhqM*}, proves that $\Ch_{q}^{M^{*}}(f)=\Ch_{q}^{M}(\hat{f})$. As above this implies the claim.
\end{proof}


\begin{cor}\label{cor:infHilQuotGen}
Let $(M,\sfd,\m)$  be a m.m. space with $(M,\sfd)$ geodesic. If $(M,\sfd,\m)$ is infinitesimally Hilbertian so is $(M^{*},\sfd^{*},\m^{*})$.
Furthermore, if 
\[
|\nabla^{M}f|_{q}=\Lip^{M}f ,\quad \mm\text{-a.e.}, \quad\forall f \in \LIP(M,\sfd),
\]
then also 
\[
|\nabla^{M^{*}}h|_{q}=\Lip^{M^{*}}h  ,\quad \mm^{*}\text{-a.e.},  \quad\forall h \in \LIP(M,\sfd).
\]
\end{cor}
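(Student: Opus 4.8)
The plan is to deduce both statements from the isometric identification of Sobolev spaces in Theorem~\ref{thm:IdentCh;ChM*}, together with the pointwise identity for asymptotic Lipschitz constants in Proposition~\ref{prop:LipfGf*G} and the $\G$-invariant Lipschitz approximation of Proposition~\ref{prop:invar-approx}. Throughout I use that the lift $f\mapsto\hat f$ is linear (it is precomposition with $\quotient$), is an $L^{q}$-isometry onto the subspace of $\G$-invariant functions (Lemma~\ref{lem:Lqm*Lqm}), and sends Lipschitz functions on $M^{*}$ to $\G$-invariant Lipschitz functions on $M$ with $\Lip^{M}\hat f(x)=\Lip^{M^{*}}f(x^{*})$.

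For the infinitesimal Hilbertianity, I would verify the parallelogram identity~\eqref{eq:paralrule} for $\Ch_{2}^{M^{*}}$. Given $f,g\in W^{1,2}(\m^{*})$, Theorem~\ref{thm:IdentCh;ChM*} yields $\hat f,\hat g\in W^{1,2}(\m)$ with $\Ch_{2}^{M}(\hat f)=\Ch_{2}^{M^{*}}(f)$ and $\Ch_{2}^{M}(\hat g)=\Ch_{2}^{M^{*}}(g)$, while linearity of the lift gives $\widehat{f\pm g}=\hat f\pm\hat g$, hence $\Ch_{2}^{M}(\hat f\pm\hat g)=\Ch_{2}^{M^{*}}(f\pm g)$. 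Adding the two identities for $\pm$ and applying~\eqref{eq:paralrule} on $(M,\sfd,\m)$ gives $\Ch_{2}^{M^{*}}(f+g)+\Ch_{2}^{M^{*}}(f-g)=2\big(\Ch_{2}^{M^{*}}(f)+\Ch_{2}^{M^{*}}(g)\big)$, i.e.~\eqref{eq:paralrule} on the quotient, so $(M^{*},\sfd^{*},\m^{*})$ is infinitesimally Hilbertian.

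For the second assertion, fix a Lipschitz function $h$ on $M^{*}$ with $h\in D(\Ch_{q}^{M^{*}})$ (if $\Ch_{q}^{M^{*}}(h)=+\infty$ there is nothing to prove). By Theorem~\ref{thm:IdentCh;ChM*} its lift satisfies $\hat h\in D(\Ch_{q}^{M})$ and $\Ch_{q}^{M}(\hat h)=\Ch_{q}^{M^{*}}(h)$. Apply Proposition~\ref{prop:invar-approx} to $\hat h$ to obtain $\G$-invariant Lipschitz functions $g_{n}$ with $g_{n}\to\hat h$ and $\Lip^{M}g_{n}\to|\nabla^{M}\hat h|_{q}$ strongly in $L^{q}(\m)$, and write $g_{n}=\widehat{g_{n}^{*}}$ with $g_{n}^{*}\in\LIP(M^{*},\sfd^{*})$. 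Since $\G$-invariance is preserved under strong $L^{q}$ limits, $|\nabla^{M}\hat h|_{q}$ is the lift of some $w^{*}\in L^{q}(\m^{*})$; moreover, by the $L^{q}$-isometry and the identity $\Lip^{M^{*}}g_{n}^{*}(x^{*})=\Lip^{M}g_{n}(x)$, we get $g_{n}^{*}\to h$ and $\Lip^{M^{*}}g_{n}^{*}\to w^{*}$ strongly in $L^{q}(\m^{*})$. Because
\[
\frac1q\int_{M^{*}}(\Lip^{M^{*}}g_{n}^{*})^{q}\,d\m^{*}=\frac1q\int_{M}(\Lip^{M}g_{n})^{q}\,d\m\longrightarrow\Ch_{q}^{M}(\hat h)=\Ch_{q}^{M^{*}}(h),
\]
the sequence $\{g_{n}^{*}\}$ realizes $\Ch_{q}^{M^{*}}(h)$, so the uniqueness of the minimal relaxed slope forces $|\nabla^{M^{*}}h|_{q}=w^{*}$, that is $\widehat{|\nabla^{M^{*}}h|_{q}}=|\nabla^{M}\hat h|_{q}$ $\m$-a.e. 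Combining the hypothesis $|\nabla^{M}\hat h|_{q}=\Lip^{M}\hat h$ $\m$-a.e. with $\Lip^{M}\hat h=\widehat{\Lip^{M^{*}}h}$ (Proposition~\ref{prop:LipfGf*G}) gives $\widehat{|\nabla^{M^{*}}h|_{q}}=\widehat{\Lip^{M^{*}}h}$ $\m$-a.e., and injectivity of the lift yields $|\nabla^{M^{*}}h|_{q}=\Lip^{M^{*}}h$ $\m^{*}$-a.e.

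The only genuinely delicate point is that the minimal relaxed slope is a pointwise object and not merely an energy, so it does not suffice to quote $\Ch_{q}^{M}(\hat h)=\Ch_{q}^{M^{*}}(h)$ from Theorem~\ref{thm:IdentCh;ChM*}: one must run the argument through honest energy-realizing sequences and check that they descend to the quotient. This is precisely what the $\G$-invariant approximation of Proposition~\ref{prop:invar-approx} and the pointwise identity $\Lip^{M}\hat f(x)=\Lip^{M^{*}}f(x^{*})$ make possible, after which the uniqueness of $|\nabla\cdot|_{q}$ among such sequences closes the argument.
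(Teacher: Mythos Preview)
Your proof is correct, and the paper gives no explicit argument for this corollary, treating it as an immediate consequence of Theorem~\ref{thm:IdentCh;ChM*}; your argument for the first assertion via the parallelogram identity is exactly what is intended.

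For the second assertion your route through Proposition~\ref{prop:invar-approx} works, but it is longer than necessary. The hypothesis $|\nabla^{M}f|_{q}=\Lip^{M}f$ for all Lipschitz $f$ is precisely assumption~\eqref{eq:lip=minslope}, so Proposition~\ref{prop:UGALSQuot} already gives the conclusion. Alternatively, Theorem~\ref{thm:IdentCh;ChM*} and the hypothesis yield in one line
\[
\Ch_{q}^{M^{*}}(h)=\Ch_{q}^{M}(\hat h)=\tfrac{1}{q}\int_{M}(\Lip^{M}\hat h)^{q}\,d\m=\tfrac{1}{q}\int_{M^{*}}(\Lip^{M^{*}}h)^{q}\,d\m^{*},
\]
and since $|\nabla^{M^{*}}h|_{q}\le\Lip^{M^{*}}h$ pointwise with equal $L^{q}$-norms, pointwise equality follows. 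Your construction of an explicit energy-realizing sequence on $M^{*}$ via descent from Proposition~\ref{prop:invar-approx} is a valid and more hands-on alternative, and it has the merit of exhibiting the identification $\widehat{|\nabla^{M^{*}}h|_{q}}=|\nabla^{M}\hat h|_{q}$ directly, but it is not needed for the bare statement.
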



\begin{remark}
The converse of this statement is false. Indeed, this can be verified by taking a product
of any infinitesimally Hilbertian space with a torus equipped with
a flat Finsler structure or, more generally, with a compact higher rank symmetric space equipped with a non-Riemannian
Berwald structure. 
\end{remark}

\begin{remark}
The assumptions of this section can be weakened in two directions without affecting the validity of the results. On the one hand, we can consider more general m.m. spaces, specifically, spaces in which the topological closure of any open ball $B_{r}(x):=\{y\in M\,:\, \sfd(y,x)<r\}$ coincides with the closed metric ball $\bar{B}_{r}(x):=\{y\in M\,:\, \sfd(y,x)\leq r\}$.  
Moreover, there is some flexibility on the behavior of the push-forward measure under the induced action. It is sufficient to assume that the compact Lie group $\G$ acts on $(M,\sfd,\mm)$ by almost measure-preserving isometries, i.e. $ \sfd(gx, gy)=\sfd(x,y)$ for all $x,y \in M, g \in \G$ and  there exists $C>0$ such that for all $g\in\G$ 
\[
C^{-1}\cdot \m \le (\tau_{g})_{\sharp} \m \le C\cdot \m.
\]
Note however that, without any assumption on the interaction between the group
and the measure, Theorem \ref{thm:IdentCh;ChM*} might fail to be true. The reason is that, in general, without such an assumption we can not hope for any kind of control of minimal relaxed slopes on the quotient since the slopes depend on both: metric and measure.
\\

For example look at the torus $\mathbb{T}^2$ 
as $[0,1]\times[0,1]$ with corresponding identifications and consider  the isometric action of $\mathbb{S}^1$ on the second factor.
On $(\epsilon,1]\times [0,1]$ choose a measure $\tilde{\m}$ with trivial $q$-th Sobolev
space as constructed in \cite{DMS2015}. Note that $\tilde{\m}$ 
is absolutely continuous and has density bounded by some $C>0$. 
Denote the second marginal of $\tilde{\m}$ by $\tilde{\m}^*$.
Let now $\hat{\m}$ be the Lebesgue measure on $[0,\epsilon]\times[0,1]$ and $\m$ be 
the sum of $\tilde{\m}$ and $\hat{\m}$. Then the $q$-minimal relaxed slope of any
Sobolev function is zero on $(\epsilon,1]\times [0,1]$ and any $1$-Lipschitz function
has Cheeger energy at most $\epsilon/q$. However, the $q$-th Sobolev space
on the quotient is non-trivial. Indeed, the measure $\m^*$ on $M^*=\mathbb{S}^1$ satisfies 
\[
\epsilon\cdot \vol_{\mathbb{S}^1} \le \m^* \le (C+\epsilon)\cdot \vol_{\mathbb{S}^1}.
\]
Let $f \in \LIP(M^*, \sfd^{*})$ with $|\nabla^{M^{*}}f|_{q}=\Lip^{M^{*}}f \ne 0$ $\mm^{*}$-a.e.. By the construction of $\tilde{\mm}$, it holds $|\nabla^{M}\hat f|_{q}(x)=0$ whenever $x\in (\epsilon,1]\times[0,1]$. In particular, Theorem \ref{thm:IdentCh;ChM*}  
fails in this case.
\end{remark}


\section{Structure theory for quotients of $\RCD^{*}(K,N)$-spaces}\label{sec:StructQuotRCD}

 We start by recalling the definition of  the $\RCD(K,\infty)$-condition and its finite dimensional refinement the $\RCD^{*}(K,N)$-condition. We refer to  \cite{Ambrosio-Gigli-Savare11b,AmbrosioGigliMondinoRajala,gigli:laplacian, ErbarKuwadaSturm, AmbrosioMondinoSavare} for a more detailed exposition and for a historical recount of the developments of the condition.  


\begin{defn}[$\RCD(K,\infty)$ and $\RCD^{*}(K,N)$-spaces]
Let  $(M,\sfd,\mm)$ be a  metric measure space. 
We say that $(M,\sfd,\mm)$ is an \emph{$\RCD(K,\infty)$-space} if  it is infinitesimally Hilbertian, i.e. \eqref{eq:paralrule} holds, and satisfies the $\CD(K,\infty)$-condition. We say that  $(M,\sfd,\mm)$ is an  \emph{$\RCD^{*}(K,N)$-space} if it is infinitesimally Hilbertian and satisfies the $\CD^{*}(K,N)$-condition. 
\end{defn}

We recall a useful property of $\RCD(K,\infty)$-spaces proved in \cite[Theorem 1.2]{GRS2016}. On an $\RCD(K,\infty)$-space $(X,\sfd,\mm)$, there is a  unique $W_{2}$-geodesic joining any two measures $\mu_{0},\mu_{1}\in \mathcal{P}^{ac}_{2}(X,\sfd,\mm)$. Thus, since every $\RCD^{*}(K,N)$ is also $\RCD(K,\infty)$, we know a fortiori that every $\RCD(K,\infty)$-space is also a strong  $\CD(K,\infty)$-space, and similarly, every $\RCD^{*}(K,N)$-space respectively is a strong $\CD^{*}(K,N)$-space.
This observation,  combined with Theorem \ref{thm:CDbyGisCD} and Corollary \ref{cor:infHilQuotGen} (or Corollary \ref{cor:quotInfHilb} for $N\in [1,\infty)$), gives the next result, which is one the main achievements of the present paper.


\begin{thm}\label{thm:M*RCD}
Let $(M,\sfd,\mm)$ be a metric measure space such that $(M,\sfd)$ is geodesic. Let $\G$ be a compact Lie Group acting on $M$ effectively by measure-preserving isometries.
If $(M,\sfd,\mm)$ is an $\RCD(K,\infty)$-space (respectively, an $\RCD^{*}(K,N)$-space for $N\in [1,\infty)$) for  $K \in \R$, then the  corresponding quotient metric measure space $(M^{*},\sfd^{*},\mm^{*})$ is an $\RCD(K,\infty)$-space (respectively, an $\RCD^{*}(K,N)$-space).
\end{thm}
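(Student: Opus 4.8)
The plan is to deduce Theorem~\ref{thm:M*RCD} by combining the three facts already established: the identification of the quotient as a strong $\CD$-space (Theorem~\ref{thm:CDbyGisCD}), the stability of the infinitesimally Hilbertian property under quotients (Corollary~\ref{cor:infHilQuotGen}, or Corollary~\ref{cor:quotInfHilb} in the finite-dimensional case), and the observation recalled just before the statement that every $\RCD(K,\infty)$-space is a strong $\CD(K,\infty)$-space and every $\RCD^*(K,N)$-space is a strong $\CD^*(K,N)$-space, by virtue of uniqueness of $W_2$-geodesics between absolutely continuous measures on $\RCD$-spaces \cite[Theorem 1.2]{GRS2016}. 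So the proof is essentially a bookkeeping argument assembling these pieces.

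Concretely, I would proceed as follows. First, assume $(M,\sfd,\mm)$ is $\RCD^*(K,N)$ with $N\in[1,\infty)$ (the $\RCD(K,\infty)$ case is verbatim with the relevant objects replaced). By definition $(M,\sfd,\mm)$ is infinitesimally Hilbertian and satisfies $\CD^*(K,N)$. By the cited uniqueness of $W_2$-geodesics on $\RCD$-spaces, it is in fact a \emph{strong} $\CD^*(K,N)$-space. Then Theorem~\ref{thm:CDbyGisCD} applies and yields that the quotient $(M^*,\sfd^*,\mm^*)$ is a strong $\CD^*(K,N)$-space; in particular it satisfies the (weak) $\CD^*(K,N)$-condition. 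Separately, since $(M,\sfd)$ is geodesic and $(M,\sfd,\mm)$ is infinitesimally Hilbertian, Corollary~\ref{cor:infHilQuotGen} gives that $(M^*,\sfd^*,\mm^*)$ is infinitesimally Hilbertian as well (alternatively, for $N<\infty$ one invokes Corollary~\ref{cor:quotInfHilb}, whose density hypotheses on Lipschitz functions in $W^{1,2}$ are met on strong $\CD^*(K,N)$-spaces by Remark~\ref{rem:LocLipRelGradCD}). Combining the two conclusions, $(M^*,\sfd^*,\mm^*)$ is infinitesimally Hilbertian and satisfies $\CD^*(K,N)$, hence is an $\RCD^*(K,N)$-space by definition.

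For the $\RCD(K,\infty)$ case the argument is identical: infinitesimal Hilbertianity passes to the quotient by Corollary~\ref{cor:infHilQuotGen}, and the $\CD(K,\infty)$-condition on $M$ upgrades to strong $\CD(K,\infty)$ by geodesic uniqueness, whence the strong $\CD(K,\infty)$-condition on $M^*$ follows from Theorem~\ref{thm:CDbyGisCD}; a fortiori $M^*$ satisfies $\CD(K,\infty)$, so it is $\RCD(K,\infty)$.

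I do not anticipate a serious obstacle, since all the analytic work has been done upstream; the only point requiring a little care is making sure the hypotheses of the Sobolev-space corollaries are genuinely available in the $\RCD$ setting — namely that one may pass from a strong $\CD^*(K,N)$ (resp.\ strong $\CD(K,\infty)$) space to an infinitesimally Hilbertian quotient either through the fully general Corollary~\ref{cor:infHilQuotGen} (which only needs $(M,\sfd)$ geodesic) or, in finite dimensions, through Corollary~\ref{cor:quotInfHilb} together with the density of Lipschitz functions in $W^{1,2}$ guaranteed by the locally doubling and Poincar\'e property of $\CD^*(K,N)$-spaces (Remark~\ref{rem:LocLipRelGradCD}). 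Beyond that, the proof is a one-paragraph synthesis.
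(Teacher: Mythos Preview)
Your proposal is correct and matches the paper's own argument essentially verbatim: the paper states Theorem~\ref{thm:M*RCD} as an immediate consequence of combining Theorem~\ref{thm:CDbyGisCD} with Corollary~\ref{cor:infHilQuotGen} (or Corollary~\ref{cor:quotInfHilb} for $N<\infty$), together with the observation that $\RCD$-spaces are strong $\CD$-spaces by the uniqueness of $W_2$-geodesics from \cite{GRS2016}.
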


In this section we  equip $\G$ with a bi-invariant inner metric $\sfd_\G$.
Note that the compactness of $\G$ ensures the existence of such a metric. Moreover, the arguments that follow are independent of the choice of such a metric since any other bi-invariant metric on $\G$ is bi-Lipschitz equivalent to $\sfd_\G$ (see for instance \cite[Theorem 7]{Berestovskii1}).

In the remainder of the section we will assume that the metric on $\G$ is such that the map $\star_y:\G\to\G(y),\, g\mapsto gy$, is locally Lipschitz and co-Lipschitz continuous, for some (and hence for all) $y\in M$ with principal orbit type.  Specifically, in view of the bi-invariance of the metric,  we assume that for every $y\in M$ in a principal orbit, there exist constants $R,C>0$ such that, for all $r\in(0,R)$,
\[
 B_{C^{-1} r}(y)\cap\G(y) \subset \{g \cdot y ~|~ g\in B^\G_r(1_\G)\}\subset B_{Cr}(y) \cap\G(y),
\]
where $B_r^\G(1_\G)$ denotes the $\sfd_\G$-ball of radius $r$ around the identity $1_\G\in \G$.  

From now on we consider the finite dimensional case, i.e.  $\RCD^{*}(K,N)$ for  $N\in [1,\infty)$. We recall that in \cite{MN2014} it was proved that $\RCD^{*}(K,N)$-spaces with $N<\infty$
have $\m$-almost everywhere unique Euclidean tangent spaces of possibly
varying dimension. Since the infinitesimal regularity is a metric-measure property and the group $\G$ acts on $(M,\sfd,\mm)$ by measure-preserving isometries, we have that $x\in M$  has a unique tangent space $\R^{n}$ if and only if all the points in $\G(x)$ satisfy the same property. To stress the dependence on the base point we denote the dimension of the tangent space by  $n(x)$ for $\mm$-a.e. $x \in M$,  and by $n(x^{*})$ for $\mm^{*}$-a.e. $x^{*}\in M^{*}$. 
We also recall that thanks to Theorem \ref{thm:ExGTB} we know that $\RCD^*$-spaces satisfy the assumptions of the Principal Orbit Theorem \ref{thm:PrincipleOrbit}.

We point out that Guijarro and Santos-Rodr\'iguez considered certain  fundamental aspects of isometric actions of Lie groups on $\RCD^*(K,N)$-spaces in \cite{GS2016IsoRCD}. The proof of the next result should be compared with the proof of \cite[Theorem 3]{GS2016IsoRCD}, which also used the work of Berestovskii \cite{Berestovskii2}. 
Here we do not assume the action to be transitive and, as a consequence, we gain information on the dimension of tangent spaces in the orbit space. To do so, the Lipschitz and co-Lipschitz assumptions, as well as  Pansu's Differentiability Theorem for Lipschitz maps between Carnot-Caratheodory spaces \cite{Pansu}, play an important role. In turn, this information on the dimension of tangent spaces in the orbit space becomes essential for the structure theorems that follow.


\begin{thm}\label{thm:struct}
Let $(M,\sfd,\mm)$ be an $\RCD^{*}(K,N)$ m.m. space for some $K\in \R, N \in (1,\infty)$ and let $\G$ be a compact connected Lie group acting locally Lipschitz and co-Lipschitz continuously by measure preserving isometries on $(M,\sfd,\mm)$. 
Then, for $\m$-almost every $x_{0}\in M$,
\[
n(x_{0})=\dim\G(x_{0})+n(x_0^{*}),
\]
where $\G(x_{0})$ is a principal orbit.
\end{thm}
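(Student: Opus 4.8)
The plan is to split the tangent cone of $M$ at a generic point $x_{0}$ into a \emph{vertical} part, tangent to the orbit $\G(x_{0})$, and a \emph{horizontal} part which the projection $\quotient$ identifies with the tangent cone of $M^{*}$ at $x_{0}^{*}$; the asserted identity is then simply the sum of the corresponding dimensions. Accordingly, the argument has three ingredients: a reduction to a full--measure set of \emph{good} points, the analysis of the blow--up of the submetry $\quotient$, and the identification of the infinitesimal dimension of a principal orbit via Pansu's theorem.

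First, since $\RCD^{*}(K,N)$--spaces satisfy $\GTB$ by Theorem~\ref{thm:ExGTB}, the Principal Orbit Theorem~\ref{thm:PrincipleOrbit} applies and $\G(y)$ is principal for $\m$--a.e.\ $y\in M$. By the structure theory of Mondino--Naber \cite{MN2014}, for $\m$--a.e.\ $x_{0}$ the tangent cone $\operatorname{Tan}_{x_{0}}M$ is the single pointed space $(\R^{n(x_{0})},0)$; and since $(M^{*},\sfd^{*},\m^{*})$ is itself $\RCD^{*}(K,N)$ by Theorem~\ref{thm:M*RCD}, the same holds for $\m^{*}$--a.e.\ $x_{0}^{*}$, hence --- because $\m^{*}=\quotient_{\sharp}\m$ --- along the fibres over an $\m$--full subset of $M$. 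Intersecting these sets, from now on $x_{0}$ lies in a principal orbit and satisfies $\operatorname{Tan}_{x_{0}}M=\{(\R^{n(x_{0})},0)\}$ and $\operatorname{Tan}_{x_{0}^{*}}M^{*}=\{(\R^{n(x_{0}^{*})},0)\}$.

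Next, recall that $\quotient\colon M\to M^{*}$ is a submetry: it is $1$--Lipschitz, and by compactness of the orbits together with the definition of $\sfd^{*}$ one has $\quotient(B_{r}(x))=B_{r}(x^{*})$ for all $x$ and $r>0$. Rescaling the metrics of $M$ and $M^{*}$ by $r^{-1}$ and letting $r\downarrow0$ along a subsequence, using that submetries behave well under pointed Gromov--Hausdorff convergence, the maps $\quotient$ converge to a submetry $\quotient_{\infty}\colon(\R^{n(x_{0})},0)\to(\R^{n(x_{0}^{*})},0)$. A pointed submetry between Euclidean spaces is, up to ambient isometries, the orthogonal projection onto $\R^{n(x_{0}^{*})}$; in particular $n(x_{0}^{*})\le n(x_{0})$, and every fibre of $\quotient_{\infty}$ is an affine subspace of dimension $k:=n(x_{0})-n(x_{0}^{*})$. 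Since fibres of submetries are preserved in the Gromov--Hausdorff limit, the fibre $\quotient^{-1}(x_{0}^{*})=\G(x_{0})$, with the restriction of the rescaled distance, converges to the flat space $\quotient_{\infty}^{-1}(0)\cong\R^{k}$; hence $\operatorname{Tan}_{x_{0}}\G(x_{0})=\{(\R^{k},0)\}$ and, in particular, the orbit has infinitesimal (topological) dimension $k=n(x_{0})-n(x_{0}^{*})$.

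For the final step one must show $k=\dim\G(x_{0})=\dim\G-\dim\G_{x_{0}}$, and this is where the orbit map $\star_{x_{0}}\colon\G\to M$, $g\mapsto gx_{0}$, and its Lipschitz/co--Lipschitz properties are used. The map $\star_{x_{0}}$ is surjective onto $\G(x_{0})$ and satisfies $\star_{x_{0}}(gh)=\tau_{g}(\star_{x_{0}}(h))$, so it intertwines left translation on $\G$ (an isometry of $(\G,\sfd_\G)$) with an isometry of $M$. Since $\sfd_\G$ is a bi--invariant inner metric on the compact Lie group $\G$, by Berestovskii's theory \cite{Berestovskii2,Berestovskii1} the space $(\G,\sfd_\G)$ is a Carnot--Carath\'eodory--Finsler space whose tangent cone at $1_\G$ is a graded nilpotent group $G_{0}$ of topological dimension $\dim\G$, with the closed subgroup $\G_{x_{0}}$ blowing up to a subgroup of topological dimension $\dim\G_{x_{0}}$. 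Pansu's Differentiability Theorem \cite{Pansu} gives metric differentiability of $\star_{x_{0}}$ at $\mathcal{H}$--a.e.\ point, hence --- by the equivariance and transitivity of left translations --- at $1_\G$, with a $\delta_{t}$--homogeneous differential $D\star_{x_{0}}\colon G_{0}\to\operatorname{Tan}_{x_{0}}M=\R^{n(x_{0})}$ whose image is a subgroup of $(\R^{n(x_{0})},+)$, i.e.\ a linear subspace, and which coincides with $\operatorname{Tan}_{x_{0}}\G(x_{0})$; moreover the co--Lipschitz hypothesis makes $D\star_{x_{0}}$ metrically open onto its image and identifies its kernel with the blow--up of $\G_{x_{0}}$. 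Therefore $\dim\operatorname{Tan}_{x_{0}}\G(x_{0})=\dim G_{0}-\dim\G_{x_{0}}=\dim\G(x_{0})$, and comparison with the previous step yields $n(x_{0})-n(x_{0}^{*})=\dim\G(x_{0})$, as claimed. I expect this last step to be the main obstacle: transferring the a.e.\ differentiability of Pansu's theorem to the single point $1_\G$, identifying the image of the Pansu differential with the tangent cone of the orbit \emph{inside} $\R^{n(x_{0})}$, and --- crucially --- using the co--Lipschitz condition (a quantitative openness of $\star_{x_{0}}$) to pin down the dimension of that image, without which the orbit could collapse infinitesimally and the formula would fail; a lesser technical point is organizing the three blow--ups (of $M$, of $M^{*}$, and of the fibre $\G(x_{0})$) compatibly and justifying the convergence of fibres of submetries, which one handles by noting that $\quotient$ remains a submetry after each rescaling and passing to a diagonal subsequence.
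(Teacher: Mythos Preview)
Your first two steps---the reduction to good points and the blow-up of the submetry $\quotient$ to an orthogonal projection $\R^{n(x_0)}\to\R^{n(x_0^*)}$, with the fibre $\G(x_0)$ converging to the kernel $\R^{k}$---are essentially the paper's equivariant Gromov--Hausdorff blow-up, only run in the opposite order (the paper first analyses the orbit tangent, then takes the quotient). That part is fine.

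The gap is in your third step. Pansu's Differentiability Theorem, as stated in \cite{Pansu}, concerns Lipschitz maps \emph{between Carnot groups}; it does not directly apply to $\star_{x_0}\colon(\G,\sfd_\G)\to M$, where the source is a homogeneous sub-Finsler space (not itself a Carnot group) and the target is an $\RCD^*$ space. Your equivariance trick (a.e.\ differentiability $\Rightarrow$ differentiability at $1_\G$) is clever, but it presupposes a Pansu-type theorem in this generality that you have not established. Likewise, the identification of the image of the differential with the tangent cone of the orbit in the \emph{restricted} metric, and of its kernel with the blow-up of $\G_{x_0}$, are delicate claims that would need independent justification.

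The paper avoids all of this by a more direct use of Pansu. The Lipschitz/co-Lipschitz hypothesis makes the intrinsic metric $\sfd_{\G(x_0)}$ on the orbit bi-Lipschitz to the restricted metric $\sfd|_{\G(x_0)}$. By Berestovskii \cite{Berestovskii2}, $(\G(x_0),\sfd_{\G(x_0)})$ is a Carnot--Carath\'eodory--Finsler homogeneous space, so its tangent cone at $x_0$ is a Carnot group (with Finsler norm) of topological dimension $\dim\G(x_0)$. This Carnot group therefore bi-Lipschitz embeds into $T_{x_0}M=\R^{n(x_0)}$; the Semmes observation (based on Pansu's theorem applied to such an embedding) then forces it to be abelian, i.e.\ Euclidean of dimension $\dim\G(x_0)$. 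Combined with your Step~2 this gives $k=\dim\G(x_0)$ immediately. In short: differentiate the \emph{inclusion} $\G(x_0)\hookrightarrow M$ at the tangent-cone level, not the orbit map $\star_{x_0}$; the Semmes/Pansu rigidity of Carnot subgroups of $\R^n$ is the key lemma you are missing.
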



\begin{proof}
Up to a subset of $\mm$-measure zero we know that every point  $x_{0}\in M$ is of principal orbit type and $n(x_{0})$-regular; clearly the same properties hold for all $x \in \G(x_{0})$.
Up to a further $\mm$-negligible subset, we may also assume that $x_{0}^{*}=\quotient (x_{0})$ is an $n(x_{0}^{*})$-regular point in the quotient space $(M^{*}, \sfd^{*},\mm^{*})$.
 
We first analyze the structural properties of the orbit $\G(x_{0})$. 
Denote by $\sfd_{\G(x_{0})}$ the induced inner metric.  
Since by assumption $\G$ is a  compact connected Lie group which acts Lipschitz and co-Lipschitz continuously on $M$, it follows that the metrics $\sfd_{\G(x_{0})}$ and $\sfd$ restricted to  $\G(x_{0})$ are bi-Lipschitz equivalent.
Notice also  that the isotropy group of $x_{0}$, denoted by $\G_{x_{0}}$, is a compact subgroup of $\G$ and that $\G(x_{0})$ is homeomorphic to the homogeneous space $\G/ \G_{x_{0}}$. 
From \cite[Theorem 3 (i)]{Berestovskii2} we infer that there exists a connected Lie group $\G'$ and a compact subgroup $\H'<\G'$ such that $(\G(x_{0}), \sfd_{\G(x_{0})})$ is isometric to the quotient space $\G' / \H'$ endowed with a Carnot-Caratheodory-Finsler metric   defined
invariantly with respect to the canonical action of $\G'$ on $\G' /\H'$ by a completely non-holonomic
distribution  on $\G' /\H'$. In particular it follows that at every $x \in \G(x_{0})$, the GH-tangent cone $T_{x}\G(x_{0})$ at  $x$ is unique and isometric to a Carnot group endowed with a Finsler metric.

On the other hand, since by construction for every $x\in \G(x_{0})$ the tangent cone in $(M,\sfd)$ is unique and isometric to $\R^{n}$, it follows that  $T_{x}\G(x_{0})$ can be isometrically embedded in $\R^{n(x_{0})}$.
By an observation  of Semmes \cite{Semmes} (attributed independently to an unpublished work of Assouad, see also \cite[Section 14]{CheegerGAFA}), based on Pansu's Differentiability Theorem for
Lipschitz maps between Carnot-Caratheodory spaces \cite{Pansu}, it follows that   $T_{x}\G(x_{0})$ is isometric to the Euclidean space $\R^{\dim\G(x_{0})} \subset \R^{n(x_{0})}$.
Note that in particular this implies that $(\G(x_{0}), \sfd_{\G(x_{0})})$ is isometric to a  homogenous Riemannian manifold.

Theorefore, the equivariant  Gromov-Hausdorff blow-up based at $x_{0}$ converges to 
\[
\nicefrac{T_{x_{0}}M}{T_{x_{0}} \G(x_{0})}\cong\nicefrac{\mathbb{R}^{n(x_{0})}}{\mathbb{R}^{\dim\G(x_{0})}}\cong\mathbb{R}^{n(x_{0})-\dim\G(x_{0})}.
\]
Since such equivariant limit is nothing but the tangent space of
$M^{*}$ at $x_{0}^{*}$, which by assumption is isometric to $\mathbb{R}^{n(x_{0}^{*})}$, the thesis follows.
\end{proof}

The departure point for our next result is the following classical result of Kobayashi \cite[Ch.~II, Theorem~3.1]{Kobayashi}.


\begin{thm}[\cite{Kobayashi}]\label{thm:kobayashi} 
Let $(M^{n},g)$ be a connected $n$-dimensional smooth Riemannian manifold. Then the isometry group has dimension at most $n(n+1)/2$. If equality holds, then $M$ is isometric
to one of the following space forms:
\begin{itemize}
\item An $n$-dimensional Euclidean space $\mathbb{R}^{n}$
\item An $n$-dimensional sphere $\mathbb{S}^{n}$
\item An $n$-dimensional real projective space $\mathbb{RP}^{n}$
\item An $n$-dimensional simply connected hyperbolic space $\mathbb{H}^{n}$.
\end{itemize}
\end{thm}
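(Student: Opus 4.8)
The plan is to follow Kobayashi's original argument \cite{Kobayashi}. The starting point is the Myers--Steenrod circle of facts: the isometry group $I(M)$ of a connected Riemannian manifold is a Lie group acting smoothly and \emph{properly} on $M$, and an isometry is determined by its $1$-jet at any single point. Fixing $p\in M$ together with an orthonormal frame $e$ at $p$, the assignment $\varphi\mapsto d\varphi_p(e)$ defines an injective immersion of $I(M)$ into the orthonormal frame bundle $F(M)$ of $M$ (the stabilizer of $e$ being trivial); hence
\[
\dim I(M)\ \le\ \dim F(M)\ =\ n+\dim O(n)\ =\ \frac{n(n+1)}{2},
\]
which is the stated bound, the case $n=1$ being immediate.

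Assume now that equality holds. Then the $I(M)$-orbit of $e$ is open in $F(M)$ by the dimension count, and closed because the action of $I(M)$ on $F(M)$ is proper; so $I(M)$ acts transitively on a connected component of $F(M)$. Projecting to $M$ this makes $M$ homogeneous, and a short clopen-orbit argument forces the isotropy representation $I_p(M)\to O(T_pM)$ to have image containing $SO(T_pM)$. Since every isometry fixing $p$ preserves the curvature tensor $R_p$, the tensor $R_p$ is $SO(T_pM)$-invariant; for $n\ge 2$ the space of $SO(n)$-invariant algebraic curvature tensors is one-dimensional, spanned by the constant-curvature model, so $R_p=\kappa(p)R^0$, and homogeneity together with Schur's lemma makes $\kappa$ a global constant $\kappa$. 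A homogeneous Riemannian manifold is complete (geodesics extend thanks to a uniform lower bound on the time of definition of $\exp$, plus Hopf--Rinow), so $M$ is a complete space form of curvature $\kappa$.

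It then remains to invoke the Killing--Hopf classification of complete space forms: the universal cover $\widetilde M$ is $\mathbb R^n$, $\mathbb S^n$, or $\mathbb H^n$ according to $\mathrm{sgn}(\kappa)$, and $M=\widetilde M/\Gamma$ for a discrete group $\Gamma$ of isometries of $\widetilde M$ acting freely, with $I(M)\cong N_{I(\widetilde M)}(\Gamma)/\Gamma$. Since $\dim I(M)=\dim I(\widetilde M)=n(n+1)/2$, the group $\Gamma$ must be normalized by the identity component $I(\widetilde M)^0$; discreteness of $\Gamma$ and connectedness of $I(\widetilde M)^0$ then force $\Gamma$ to be \emph{centralized} by $I(\widetilde M)^0$, hence contained in the centralizer of $I(\widetilde M)^0$ in $I(\widetilde M)$. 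An elementary inspection of that centralizer finishes the proof: it is trivial for $\widetilde M=\mathbb R^n$ (inside $O(n)\ltimes\mathbb R^n$) and for $\widetilde M=\mathbb H^n$ (inside the hyperbolic isometry group), giving $M=\mathbb R^n$ and $M=\mathbb H^n$ respectively; for $\widetilde M=\mathbb S^n$ it equals $\{\pm\mathrm{id}\}$, and since $-\mathrm{id}$ acts freely on $\mathbb S^n$ we get $\Gamma\in\{\{1\},\{\pm\mathrm{id}\}\}$, i.e.\ $M=\mathbb S^n$ or $M=\mathbb{RP}^n$ --- both of which do attain $\dim I(M)=n(n+1)/2$.

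I expect the substantive part to be the equality case. The delicate points are: showing the isotropy image contains all of $SO(n)$, not merely that it acts transitively on unit tangent vectors (which would also permit the rank-one symmetric spaces), which is where properness and the clopen-orbit argument enter; extracting constant curvature from this, which rests on the algebraic classification of $O(n)$-invariant curvature tensors and on Schur's lemma; and the final covering-space bookkeeping, where one must compute centralizers in $O(n+1)$, $O(n)\ltimes\mathbb R^n$ and the hyperbolic isometry group carefully enough to isolate $\mathbb{RP}^n$ as the unique nontrivial quotient.
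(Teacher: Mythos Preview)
The paper does not prove this theorem: it is stated as a classical result and attributed to Kobayashi \cite[Ch.~II, Theorem~3.1]{Kobayashi}, with no argument given. Your proposal is a correct reconstruction of the standard proof along Kobayashi's lines --- the frame-bundle embedding for the dimension bound, the clopen-orbit argument forcing the isotropy to contain $SO(T_pM)$, the passage to constant curvature via $SO(n)$-invariance of $R_p$ and Schur, and the centralizer computation in the covering-space step --- so there is nothing to compare and nothing to correct.
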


The following simple corollary to Theorem~\ref{thm:kobayashi} will play a useful role in the rest of the section. 


\begin{cor}\label{cor:dimGx}
Let $(M^{n},g)$ be an $n$-dimensional Riemannian manifold and assume that a compact Lie group $\G$ acts  transitively and isometrically on $M$. Then, for every $x\in M$,
\[
\dim(\G_{x})\leq \frac{n(n-1)}{2}.
\]
Moreover, if equality is achieved, then  $\dim (\G)=\frac{n(n+1)}{2}$. 
\end{cor}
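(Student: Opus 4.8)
The plan is to deduce the statement directly from Kobayashi's Theorem~\ref{thm:kobayashi} via the orbit--stabilizer dimension count. Since $\G$ acts on $M$ isometrically and effectively, it may be identified with a closed subgroup of the isometry group $\operatorname{Isom}(M,g)$, which by the Myers--Steenrod theorem is a Lie group; moreover, transitivity of the action means that for any $x\in M$ the orbit map $g\mapsto gx$ descends to a diffeomorphism $\G/\G_x\cong M$. Consequently
\[
\dim\G \;=\; \dim\G_x \;+\; \dim(\G/\G_x) \;=\; \dim\G_x \;+\; n .
\]

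Next I would invoke Theorem~\ref{thm:kobayashi}: as $M$ is a connected $n$-dimensional Riemannian manifold (it is connected since it is a homogeneous space of $\G$, which in the applications is connected; in general one argues componentwise), one has $\dim\operatorname{Isom}(M,g)\le n(n+1)/2$, whence $\dim\G\le n(n+1)/2$. Substituting into the displayed identity yields
\[
\dim\G_x \;=\; \dim\G - n \;\le\; \frac{n(n+1)}{2} - n \;=\; \frac{n(n-1)}{2},
\]
which is the asserted bound. For the equality case, reading the same identity in the opposite direction, $\dim\G_x=\tfrac{n(n-1)}{2}$ forces $\dim\G=\dim\G_x+n=\tfrac{n(n-1)}{2}+n=\tfrac{n(n+1)}{2}$, as claimed.

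There is no substantial obstacle here: once Theorem~\ref{thm:kobayashi} is available the corollary is a one-line consequence, and the only points meriting a word of care are (i) the use of effectiveness, which lets us regard $\G\le\operatorname{Isom}(M,g)$ and costs nothing since the ineffective kernel is contained in every isotropy group, and (ii) the connectedness hypothesis in Kobayashi's theorem, which is harmless. If one prefers a route that does not pass through Kobayashi for the inequality, the isotropy representation $\G_x\to O(T_xM)\cong O(n)$, $g\mapsto d\tau_g|_x$, is faithful — its kernel consists of those isometries of the connected manifold $M$ that fix $x$ and have trivial differential there, and such an isometry equals $\operatorname{id}_M$ — so that $\dim\G_x\le\dim O(n)=\tfrac{n(n-1)}{2}$ directly, with the equality case then handled exactly as above.
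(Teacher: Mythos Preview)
Your proof is correct. Your primary argument applies Kobayashi's Theorem~\ref{thm:kobayashi} to $M$ itself (bounding $\dim\G$ and then subtracting $n$ via orbit--stabilizer), whereas the paper applies it to the unit tangent sphere $\mathbb{S}^{n-1}\subset T_xM$ on which $\G_x$ acts isometrically, bounding $\dim\G_x$ directly; the equality case is then handled identically in both. Your alternative route via the faithful isotropy representation $\G_x\hookrightarrow O(T_xM)\cong O(n)$ is precisely the paper's argument phrased in slightly different language. Both variants are equally short; the only practical difference is that your primary route needs Kobayashi for the possibly unfamiliar manifold $M$, while the paper's route (and your alternative) needs it only for the round sphere, where the isometry group is explicit.
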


\begin{proof}
Since $\G$ acts on $M$ by isometries, the isotropy group $\G_x$ acts by isometries on the unit tangent-sphere $\mathbb{S}^{n-1}\subset T_xM$. The bound on $\dim(\G_x)$ now follows from Theorem~\ref{thm:kobayashi}. Assume now that equality holds. Since $\G$ acts transitively on $M$,  $n=\dim(\G) - \dim(\G_x)$ and we conclude that $\dim(\G)=\frac{n(n+1)}{2}$.
\end{proof}

This corollary has been extended to $\RCD^*(K,N)$-spaces by Guijarro and Santos-Rodr\'iguez \cite{GS2016IsoRCD}. They have also generalized 
Theorem~\ref{thm:kobayashi}  as follows.


\begin{thm}[\protect{\cite[Theorems 2 and 3]{GS2016IsoRCD}}]\label{thm:GSR}
Let $(M,\sfd,\mm)$ be an   $\RCD^{*}(K,N)$-space for some $K \in \R, N \in (1,\infty)$, and let $\lfloor N \rfloor$ be the integer part of $N$. Then the isometry group has dimension at most $\lfloor N \rfloor(\lfloor N \rfloor+1)/2$ and if equality is attained then the same  rigidity statement of Theorem  \ref{thm:kobayashi} holds.
\end{thm}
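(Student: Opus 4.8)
The plan is to run Kobayashi's argument at an infinitesimally Euclidean point of $M$. Set $G:=\mathrm{Iso}(M,\sfd,\mm)$; by the results recalled in the introduction (\cite{GS2016IsoRCD,Sosa2016}) this is a Lie group acting properly on $M$ by isomorphisms of m.m.\ spaces, so in particular $\dim G<\infty$. By the Mondino--Naber structure theory \cite{MN2014}, $\mm$-a.e.\ point of $M$ is $n(x)$-regular for some integer $n(x)\le N$, hence $n(x)\le\lfloor N\rfloor$; moreover $\mm$-a.e.\ point has principal orbit type (Theorem \ref{thm:PrincipleOrbit}) and is joined to $\mm$-a.e.\ other point by a unique minimizing geodesic. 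Fix a point $x$ for which all of these hold and write $n:=n(x)$, so that the tangent cone $T_{x}M$ is isometric to $\R^{n}$. Since $\dim G=\dim G_{x}+\dim G(x)$, where $G_{x}$ is the (compact) isotropy group at $x$ and $G(x)\cong G/G_{x}$ is the orbit, it suffices to prove $\dim G_{x}\le n(n-1)/2$ and $\dim G(x)\le n$.

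For the first bound, each $\phi\in G_{x}$ induces an isometry $d\phi$ of the pointed tangent cone $(T_{x}M,0)\cong(\R^{n},0)$, giving a homomorphism $G_{x}\to O(n)$; I claim it is injective. This is the analogue of the smooth fact that an isometry is determined by its value and differential at one point: if $d\phi=\mathrm{id}$, then $\phi$ preserves $\sfd(x,\cdot)$ and the initial directions at $x$ of the geodesics emanating from $x$, so by the (essential) injectivity of the exponential map at a regular point of an $\RCD^{*}(K,N)$-space --- a consequence of the non-branching of geodesics and the a.e.\ well-posedness of optimal transport from $x$ (Theorem \ref{thm:ExGTB} and the structure theory) --- $\phi$ fixes $\mm$-a.e.\ point, hence $\phi=\mathrm{id}_{M}$ since $\supp\mm=M$. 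Thus $G_{x}$ is a closed subgroup of $O(n)$ and $\dim G_{x}\le\dim O(n)=n(n-1)/2$. For the second bound, one repeats the blow-up argument in the proof of Theorem \ref{thm:struct}: $(G(x),\sfd)$ is a homogeneous metric space, so by Berestovskii \cite{Berestovskii2} its tangent cone at $x$ is a Carnot group, and since it embeds isometrically into the ambient tangent cone $T_{x}M\cong\R^{n}$, the Semmes--Assouad observation (based on Pansu's differentiability theorem \cite{Pansu}; see \cite[Section 14]{CheegerGAFA}) forces it to be $\R^{\dim G(x)}$, whence $\dim G(x)\le n$. Combining,
\[
\dim G\le \frac{n(n-1)}{2}+n=\frac{n(n+1)}{2}\le\frac{\lfloor N\rfloor(\lfloor N\rfloor+1)}{2}.
\]

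For the rigidity statement, suppose $\dim G=\lfloor N\rfloor(\lfloor N\rfloor+1)/2$. Then every inequality above is an equality, so $n=\lfloor N\rfloor$, $\dim G_{x}=\dim O(n)$ and $\dim G(x)=n$. The last equality makes the orbit $G(x)$ open in $M$; since $(M,\sfd)$ is geodesic, hence connected, the action of $G$ is transitive and $M$ is a homogeneous metric measure space. The identity component of $G_{x}$ is then an $n(n-1)/2$-dimensional connected subgroup of $O(n)$, hence equals $SO(n)$, so $G_{x}$ acts transitively on the unit sphere of $T_{x}M$; thus $M$ is two-point homogeneous. A homogeneous $\RCD^{*}(K,N)$-space is a smooth Riemannian manifold (its regular set is open, dense and $G$-invariant, hence all of $M$, and one upgrades to smoothness as in \cite{GS2016IsoRCD}), and a two-point homogeneous Riemannian manifold is a space form; since its isometry group has the maximal dimension $n(n+1)/2$, Theorem \ref{thm:kobayashi} identifies it with $\R^{n}$, $\mathbb{S}^{n}$, $\mathbb{RP}^{n}$ or $\mathbb{H}^{n}$.

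The \emph{main obstacle} is the faithfulness of the isotropy representation $G_{x}\to O(n)$, i.e.\ showing that an isometry fixing a regular point $x$ and acting trivially on $T_{x}M$ is the identity. In the smooth setting this is immediate from the geodesic ODE; here one must extract it from the measure-theoretic structure of $\RCD^{*}(K,N)$-spaces (essential non-branching, a.e.\ uniqueness and injectivity of the exponential/optimal-transport map at $x$), and also check it is compatible with the orbit stratification. A secondary point, needed only for the rigidity case, is the regularity statement that a homogeneous $\RCD^{*}(K,N)$-space is a smooth Riemannian manifold.
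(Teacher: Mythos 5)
This statement is not proved in the paper: Theorem \ref{thm:GSR} is quoted verbatim from \cite[Theorems 2 and 3]{GS2016IsoRCD}, and the paper's own contribution in this direction is the refinement Theorem \ref{thm:Rigidity}, proved via the quotient $\RCD^{*}$ structure of $M^*=M/\G$ and the structure Theorem \ref{thm:struct}, under the \emph{additional} hypotheses that $\G$ is compact and acts Lipschitz/co-Lipschitz continuously. So there is no proof in this paper against which to compare. Your strategy --- blow up at a regular point, obtain a representation of $G_x$ into $O(n)$, bound $\dim G(x)$, and combine --- is the natural Kobayashi-style argument and, as the paper notes explicitly, is broadly the approach of \cite{GS2016IsoRCD} (which also uses Berestovskii). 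The review therefore addresses the correctness of your sketch on its own terms.

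There are genuine gaps. The central one, which you flag yourself, is the faithfulness of $G_x\to O(n)$. You derive it from ``(essential) injectivity of the exponential map at a regular point,'' citing GTB and essential non-branching. But what those hypotheses actually give is that for $\mm$-a.e.\ $y$ the geodesic from $x$ to $y$ is unique as a function of the \emph{endpoint} $y$; they do not give that two geodesics issuing from $x$ with the same \emph{initial direction} in $T_xM\cong\R^n$ must coincide, which is precisely what you need (after $d\phi=\mathrm{id}$, you want to determine $\phi(y)$ from the direction of $\gamma_{x,y}$ and $\sfd(x,y)$). This ``direction-to-endpoint injectivity'' is a nontrivial additional input and is exactly where the cited reference has to do substantive work; your sketch does not supply it. A second, unflagged gap is the use of Berestovskii's theorem to bound $\dim G(x)$: Berestovskii's structure theorem concerns homogeneous spaces with \emph{inner} metrics, while $(G(x),\sfd|_{G(x)})$ carries only the restricted ambient metric. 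The paper needs the Lipschitz/co-Lipschitz hypothesis in Theorems \ref{thm:struct} and \ref{thm:Rigidity} precisely to pass to the bi-Lipschitz inner metric on the orbit before applying Berestovskii; that hypothesis is absent from Theorem \ref{thm:GSR}, so you cannot simply ``repeat the blow-up argument of Theorem \ref{thm:struct}.'' For the numerical bound this is repairable without Berestovskii: $\dim G(x)\le$ topological $\le$ Hausdorff dimension of $M\le\lfloor N\rfloor$, together with $\dim G_x\le n(n-1)/2\le\lfloor N\rfloor(\lfloor N\rfloor-1)/2$, still gives $\dim G\le\lfloor N\rfloor(\lfloor N\rfloor+1)/2$. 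But then your rigidity step, where $\dim G(x)=n$ is used to conclude the orbit is open in $M$, needs a separate justification --- invariance of domain is not directly available since at that stage $M$ is not yet known to be a topological manifold.
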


Let us mention that in case $(M,\sfd,\mm)$ is a pointed measured Gromov Hausdorff limit of a sequence of Riemannian manifolds with Ricci $\geq K$ and dimension $\leq N$, then the assumption that the  topological dimension of $M$ coincides with the integer part of $N$ implies that such a Ricci limit space is non-collapsed \cite{CC1}.
 It is then natural to consider the more general setting which includes possibly collapsed Ricci limits.   Using the structure Theorem \ref{thm:struct}, we can strengthen the above rigidity Theorem  \ref{thm:GSR}, by  showing that it suffices to look at the
(essential-) least possible dimension of the tangent spaces. To this aim recall that for $\mm$-a.e. $x\in M$ the tangent space is unique and Euclidean of dimension $n(x)\in \N \cap [1, N]$. We set
\begin{equation}\label{eq:defn}
n:=\underset{x \in M}{{\rm{ess\,inf}}} \;  n(x).
\end{equation}
 

\begin{thm}\label{thm:Rigidity}
Let $(M,\sfd,\mm)$ be an $\RCD^{*}(K,N)$-space for some $K \in \R, N \in (1,\infty)$ and let $n$  be defined in \eqref{eq:defn}.  Let $\G$ be a compact Lie subgroup of the Lie group of  measure-preserving isometries of $(M,\sfd,\mm)$ acting both Lipschitz and co-Lipschitz continuously.
Then   $\G$ has dimension at most $n(n+1)/2$. Moreover, if equality is attained, then the action is transitive and $(M,\sfd,\mm)$ is isomorphic as m.m. space to either  $\mathbb{S}^{n}$ or $\mathbb{RP}^{n}$, up to multiplying the measure $\mm$ by a normalizing constant.

\end{thm}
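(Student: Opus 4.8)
The strategy is to combine the structure Theorem~\ref{thm:struct} with the rigidity Theorem~\ref{thm:GSR} of Guijarro--Santos-Rodr\'iguez, applied not to the whole group but to the isotropy action on a principal orbit. First I would fix a point $x_0 \in M$ that is simultaneously of principal orbit type, an $n(x_0)$-regular point of $M$, and such that $x_0^*$ is an $n(x_0^*)$-regular point of $M^*$; by the Principal Orbit Theorem~\ref{thm:PrincipleOrbit} (whose hypotheses hold on $\RCD^*$-spaces by Theorem~\ref{thm:ExGTB}) and the a.e.\ regularity result of \cite{MN2014}, such points form a set of full $\mm$-measure, and by definition of the essential infimum we may additionally take $n(x_0) = n$. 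Theorem~\ref{thm:struct} then yields the decomposition
\[
n = n(x_0) = \dim\G(x_0) + n(x_0^*).
\]
Moreover, as established inside the proof of Theorem~\ref{thm:struct}, the principal orbit $(\G(x_0),\sfd_{\G(x_0)})$ is isometric to a homogeneous Riemannian manifold of dimension $d := \dim\G(x_0)$ on which $\G$ acts transitively by isometries, and its tangent space sits isometrically as $\R^{d} \subset \R^{n}$.

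Next I would bound $\dim\G$ by splitting it as $\dim\G = \dim\G_{x_0} + \dim\G(x_0) = \dim\G_{x_0} + d$. Since $\G_{x_0}$ acts by isometries on the homogeneous Riemannian manifold $\G(x_0)$ fixing $x_0$, it acts by isometries on the unit tangent sphere $\mathbb{S}^{d-1} \subset T_{x_0}\G(x_0)$, whence Kobayashi's Theorem~\ref{thm:kobayashi} (really Corollary~\ref{cor:dimGx}) gives $\dim\G_{x_0} \leq d(d-1)/2$. Combining,
\[
\dim\G \;\leq\; \frac{d(d-1)}{2} + d \;=\; \frac{d(d+1)}{2} \;\leq\; \frac{n(n+1)}{2},
\]
the last inequality because $d \leq n$ and $t \mapsto t(t+1)/2$ is increasing on $[0,\infty)$. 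This proves the dimension bound.

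For the rigidity case, suppose $\dim\G = n(n+1)/2$. Then both inequalities above are equalities: from $d(d+1)/2 = n(n+1)/2$ we get $d = n$, hence $n(x_0^*) = n - d = 0$, i.e.\ the orbit space $M^*$ is $0$-dimensional at a.e.\ point and therefore (being connected, as a quotient of the connected --- or at least the identity-component-acted --- space; more carefully, $M^*$ is a geodesic space with a dense set of points having $0$-dimensional tangent, forcing it to be a point) reduces to a single point, so the action is transitive and $M = \G(x_0)$ is a homogeneous Riemannian $n$-manifold. From the other equality $\dim\G_{x_0} = d(d-1)/2 = n(n-1)/2$ and Corollary~\ref{cor:dimGx} (or directly: the isometry group of this homogeneous $n$-manifold has dimension $\geq \dim\G = n(n+1)/2$, hence exactly that), Kobayashi's rigidity Theorem~\ref{thm:kobayashi} forces $M$ to be isometric to $\R^n$, $\mathbb{S}^n$, $\mathbb{RP}^n$, or $\mathbb{H}^n$; compactness of $\G$, hence of its orbit $M$, rules out $\R^n$ and $\mathbb{H}^n$, leaving $\mathbb{S}^n$ or $\mathbb{RP}^n$. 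Finally, since $\G$ acts by measure-preserving isometries and the Riemannian volume is (up to scaling) the unique $\G$-invariant measure on such a homogeneous space, $\mm$ must be a constant multiple of $\vol_g$, giving the claimed m.m.\ isomorphism.

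\textbf{Main obstacle.} The delicate point is the ``degenerate'' direction of the rigidity argument: deducing from $n(x_0^*)=0$ on a full-measure set that $M^*$ is genuinely a single point, and equivalently that a priori the principal orbit could be a sub-Riemannian (Carnot) homogeneous space rather than a Riemannian one. Both issues are in fact already resolved inside the proof of Theorem~\ref{thm:struct} --- the Semmes--Assouad observation via Pansu's differentiability theorem forces the orbit's tangent cone to be \emph{Euclidean}, hence the orbit is Riemannian homogeneous --- so the real work is simply to invoke that structure carefully and to argue that a connected geodesic space all of whose tangent cones at a dense (full-measure) set of points are $0$-dimensional is a point. One must also be slightly careful that $\G$ need not be connected (only compact), but passing to the identity component $\G^\circ$ changes neither $\dim\G$ nor, generically, the principal orbit dimension, and the rigidity conclusion about $M$ is insensitive to this; I would handle this with a one-line reduction at the start.
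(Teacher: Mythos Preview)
Your proposal is correct and follows essentially the same route as the paper: choose a regular $x_0$ of principal orbit type, use that $(\G(x_0),\sfd_{\G(x_0)})$ is a homogeneous \emph{Riemannian} manifold (extracted from the proof of Theorem~\ref{thm:struct}), bound $\dim\G_{x_0}$ via Corollary~\ref{cor:dimGx}, and combine with $\dim\G = \dim\G(x_0) + \dim\G_{x_0}$.

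The one organizational difference worth noting is that the paper dissolves your ``main obstacle'' at the outset rather than at the end: it observes that if $M^*$ is not a singleton then one may choose $x_0$ with $n(x_0^*)\geq 1$, and arranges the inequality as
\[
\dim\G \;=\; n + \dim\G_{x_0} - n(x_0^*) \;\leq\; \frac{n(n+1)}{2} - n(x_0^*),
\]
so that equality forces $n(x_0^*)=0$ and hence $M^*=\{x_0^*\}$ by contraposition, without any separate argument about connected geodesic spaces with $0$-dimensional tangents. Your concern about the connectedness of $\G$ is also unnecessary: none of the dimension counts, Corollary~\ref{cor:dimGx}, or Kobayashi's Theorem~\ref{thm:kobayashi} require it.
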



\begin{proof}
Let $x_{0}\in M$ be an $n$-regular point  of principal orbit type such that $x_{0}^{*}=\quotient(x_{0})$ is an $n(x_{0}^{*})$-regular point. The existence of such an $x_{0}$ follows by the fact that $\mm$-a.e. is of principal orbit type combined with the very definition \eqref{eq:defn} of $n$ and the fact that $\mm^{*}$-a.e. point in $M^{*}$ is regular, since $(M^{*},\sfd^{*},\mm^{*})$ is an $\RCD^{*}(K,N)$-space from Theorem \ref{thm:M*RCD}. Note also that  if $M^{*}\neq \{x_{0}^{*}\}$ is not a singleton then we can choose $x_{0}\in M$ so that $n(x_{0}^{*})\geq 1$.
From the proof of the structure  Theorem \ref{thm:struct} we know that  the orbit  $\G(x_{0})$, endowed with the induced inner metric $\sfd_{\G(x_{0})}$, is isometric to a homogenous Riemannian manifold of dimension at most $n$. In particular, from Corollary \ref{cor:dimGx} we infer that
\begin{equation}\label{eq:dimGx0leq}
\dim\G_{x_{0}}\leq \frac{\dim(\G(x_{0}))\big(\dim(\G(x_{0}))-1\big)}{2} \leq \frac{n(n-1)}{2}.
\end{equation}
On the other hand,  Theorem \ref{thm:struct} yields  
\begin{align*}
n=\dim(\G(x_{0}))+n(x_{0}^{*})=\dim(\G)-\dim(\G_{x_{0}})+n(x_{0}^{*}),
\end{align*}
which, combined with \eqref{eq:dimGx0leq}, gives
\begin{equation}\label{eq:dimGleq}
\dim(\G)= n+\dim(\G_{x_{0}})-n(x_{0}^{*})\leq   \frac{n(n+1)}{2}-n(x_{0}^{*}).
\end{equation}
It follows that $\dim(\G)\leq  \frac{n(n+1)}{2}$, and if equality is achieved, then $n(x_{0}^{*})=0$. In particular, if equality is achieved then $M^{*}=\{x_{0}^{*}\}$;  in other words,  the action of $\G$ on $M$ is transitive  and  $(M,\sfd)$ is isometric to $(\G(x_{0}), \sfd_{\G(x_{0})})$. Then $(M,\sfd) $ is isometric   to  a Riemannian manifold and the rigidity statement follows directly from Kobayashi's Theorem  \ref{thm:kobayashi}. Note that the cases of $\mathbb{R}^{n}$ and $\mathbb{H}^{n}$ are excluded by the compactness assumption on $\G$. Finally, since $\G$ is acting by \emph{measure-preserving} isometries, it follows that $\mm$ must be a constant multiple of the standard Riemannian measure on $\mathbb{S}^{n}$ (respectively $\mathbb{RP}^{n}$).
\end{proof}

We conclude this section with the following two theorems which, to the best of our knowledge, are new also for Riemannian manifolds and Alexandrov spaces. Recall that the \emph{cohomogeneity} of an action of a compact Lie group on a m.m. space is the (Hausdorff) dimension of its orbit space.


\begin{thm}\label{thm:cohom1RCD}
Let $(M,\sfd,\mm)$ be an $\RCD^{*}(K,N)$ m.m. space for some $K\in \R, N \in [2,\infty)$ and  let $n$  be defined in \eqref{eq:defn}.   Let $\G$ be a compact Lie group acting effectively and locally Lipschitz and co-Lipschitz continuously by measure-preserving isometries on $(M,\sfd,\mm)$. If the action is not transitive and 
 if 
\[
\dim(\G) \geq \frac{(n-1)n}{2},
\]
then $\G$ has dimension  $\frac{(n-1)n}{2}$ and acts on $M$ by cohomogeneity one with principal orbit homeomorphic to $\mathbb{S}^{n-1}$ or $\mathbb{RP}^{n-1}$. 

Moreover, $M^*$ is isometric  to either a circle or a possibly unbounded closed  interval (i.e. possibly equal to the real line or half line). In the former case, $(M,\sfd)$ is (equivariantly) homeomorphic to a fiber bundle with fiber the principal orbit and base $\mathbb{S}^1$. In particular in this case $(M,\sfd)$ is a topological manifold.
\end{thm}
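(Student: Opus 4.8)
The plan is to run the same dimension bookkeeping as in Theorem \ref{thm:Rigidity}, but now exploit the \emph{non-transitivity} hypothesis to force the orbit space to be one-dimensional. Pick an $n$-regular point $x_0 \in M$ of principal orbit type with $x_0^* = \quotient(x_0)$ an $n(x_0^*)$-regular point of $M^*$ (possible since $M^*$ is $\RCD^*(K,N)$ by Theorem \ref{thm:M*RCD}, and since $\mm$-a.e.\ point is of principal orbit type by the Principal Orbit Theorem \ref{thm:PrincipleOrbit}, using that $\RCD^*$ spaces satisfy $\GTB$). Since the action is not transitive, the orbit $\G(x_0)$ is a proper subset, so $n(x_0^*) \geq 1$. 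Exactly as in the proof of Theorem \ref{thm:struct}, $(\G(x_0), \sfd_{\G(x_0)})$ is isometric to a homogeneous Riemannian manifold of dimension $\dim\G(x_0) = \dim\G - \dim\G_{x_0}$, and $n(x_0) = \dim\G(x_0) + n(x_0^*)$, hence $\dim\G(x_0) \leq n - 1$. Combining $\dim\G = \dim\G(x_0) + \dim\G_{x_0}$ with Corollary \ref{cor:dimGx} applied to the transitive isometric action of $\G$ on the homogeneous manifold $\G(x_0)$ gives
\[
\dim\G = \dim\G(x_0) + \dim\G_{x_0} \leq (n-1) + \frac{(n-1)(n-2)}{2} = \frac{(n-1)n}{2},
\]
where I use $\dim\G(x_0) \leq n-1$ in the first summand and $\dim\G_{x_0} \leq \frac{\dim\G(x_0)(\dim\G(x_0)-1)}{2} \leq \frac{(n-1)(n-2)}{2}$ in the second. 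Under the hypothesis $\dim\G \geq \frac{(n-1)n}{2}$ equality holds throughout, which forces $\dim\G(x_0) = n-1$ and $n(x_0^*) = 1$; moreover the equality case of Corollary \ref{cor:dimGx} forces the isometry group of the $(n-1)$-dimensional homogeneous manifold $\G(x_0)$ to have the maximal dimension $\frac{n(n-1)}{2}$, so by Kobayashi's Theorem \ref{thm:kobayashi} (the compact cases) $\G(x_0)$ is isometric to $\mathbb{S}^{n-1}$ or $\mathbb{RP}^{n-1}$ up to scaling. This identifies the principal orbit.

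Next, $n(x_0^*) = 1$ for $\mm^*$-a.e.\ point says the $\RCD^*(K,N)$ space $M^*$ has essential dimension $1$; by the structure theory of one-dimensional $\RCD$/$\CD$ spaces (or simply: a geodesic $\RCD^*(K,N)$ space whose generic tangent is $\R$ has Hausdorff dimension $1$ and is a manifold-with-boundary), $M^*$ is isometric to a circle, a closed bounded interval, a half-line, or the whole real line. This is the cohomogeneity-one conclusion. When $M^* \cong S^1$, Corollary \ref{co:bundle-s1} immediately gives that $(M,\sfd)$ is a fiber bundle over $S^1$ with fiber the principal orbit $\G/\G_{\min}$ and structure group $\mathrm{N}_\G(\G_{\min})/\G_{\min}$ — in particular a topological manifold, since the fiber $\mathbb{S}^{n-1}$ or $\mathbb{RP}^{n-1}$ is a manifold and the base $S^1$ is.

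The main obstacle I anticipate is the classification of the one-dimensional orbit space: I must rule out that $M^*$ could be something stranger than an interval or circle, i.e.\ I need that an $\RCD^*(K,N)$ space in which $\mm^*$-a.e.\ tangent cone is $\R$ is genuinely a one-dimensional Riemannian manifold (with or without boundary). For the circle case this is clean; for the interval/line/half-line cases one should also check that Corollary \ref{co:bundle-R} applies to describe $M$ over the interior, and note the half-line/interval-with-boundary case does not obviously make $M$ a manifold at the singular orbit over an endpoint — which is why the theorem only claims the manifold conclusion in the circle case. A secondary point to be careful about is the equality analysis: I must verify that equality in the chain above really does propagate to \emph{both} $\dim\G(x_0) = n-1$ and the maximality hypothesis of Corollary \ref{cor:dimGx}, and that $n \geq 2$ (guaranteed by $N \geq 2$ together with $\dim\G \geq \frac{(n-1)n}{2} \geq 1$, forcing $n \geq 2$ whenever the action is nontrivial) so that $\mathbb{S}^{n-1}$, $\mathbb{RP}^{n-1}$ make sense.
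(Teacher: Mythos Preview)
Your proposal is correct and follows essentially the same route as the paper's proof. The only cosmetic difference is that you bound $\dim\G$ from above via $\dim\G = \dim\G(x_0) + \dim\G_{x_0} \le (n-1) + \tfrac{(n-1)(n-2)}{2}$, whereas the paper rearranges the same identity $n=\dim\G(x_0)+n(x_0^*)$ to bound $\dim\G_{x_0}$ from below and then squeezes it against Corollary~\ref{cor:dimGx}; the equality analysis and the use of Kobayashi's theorem on the $(n-1)$-dimensional principal orbit are identical. The one point you flag as an obstacle---that an $\RCD^{*}(K,N)$ space whose generic tangent is $\R$ must be a circle or a (possibly unbounded) closed interval---is exactly what the paper imports from Kitabeppu--Lakzian \cite{KL}, so your concern there is addressed by that reference rather than by any further argument.
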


\begin{proof}
Let $x_{0}\in M$ be an $n$-regular point  of principal orbit type such that $x_{0}^{*}=\quotient(x_{0})$ is an $n(x_{0}^{*})$-regular point.
Since by assumption $\G$  does not act transitively, we can assume $n(x_{0}^{*})\ge1$
and hence using Theorem \ref{thm:struct} we infer
\begin{align*}
n=\dim(\G(x_{0}))+n(x_{0}^{*})=\dim(\G)-\dim(\G_{x_{0}})+n(x_{0}^{*}) \geq \frac{n(n-1)}{2}-\dim(\G_{x_{0}})+n(x_{0}^{*}) ,
\end{align*}
which gives
\begin{equation}\label{eq:dimGxgeq}
\dim(\G_{x_{0}})\geq \frac{n(n-1)}{2} - (n-n(x_{0}^{*}))\ge\frac{(n-1)(n-2)}{2},
\end{equation}
with equality if and only if $n(x_{0}^{*})=1$, i.e. if and only if we have a cohomogeneity one action.  We claim that equality holds in \eqref{eq:dimGxgeq}. Indeed, by applying Corollary \ref{cor:dimGx} to the homogenous space $\G(x_{0})$, we have
\begin{equation*}
\dim(\G_{x_{0}})\leq \frac{\dim(\G(x_{0}))(\dim(\G(x_{0}))-1)}{2}= \frac{(n-n(x_{0}^{*}))(n-n(x_{0}^{*})-1)}{2} \leq  \frac{(n-1)(n-2)}{2}.
\end{equation*}
Therefore  equality holds in \eqref{eq:dimGxgeq} and in particular $n(x_{0}^{*})=1$. In other words, the quotient space $(M^{*},\sfd^{*},\mm^{*})$ is an $\RCD^{*}(K,N)$-space with a regular point $x_{0}^{*}$ having unique tangent space isometric to $\R$. By the work of Kitabeppu-Lakzian \cite{KL} it follows that $(M^{*},\sfd^{*},\mm^{*})$ is isomorphic as m.m. space either to a closed  interval, or to a real line, or to a closed real half line, or to a circle endowed with a weighted measure equivalent to ${\mathcal L}^{1}$. 

Observe that $\G$ acts effectively and transitively on any principal orbit. Since  $(\G(x_{0}), \sfd_{\G(x_{0})})$ is isometric to  a homogenous Riemannian manifold of dimension $n-1$ with  an $\frac{(n-1)n}{2}$-dimensional compact Lie group of measure-preserving isometries acting on it,  by Kobayashi's Theorem \ref{thm:kobayashi} we infer that   $(\G(x_{0}), \sfd_{\G(x_{0})})$ must be isometric  to  either  $\mathbb{S}^{n-1}$ or $\mathbb{RP}^{n-1}$. By Corollary \ref{co:bundle-s1}, if $(M^*, \sfd^{*})$ is homeomorphic to  a circle then $(M, \sfd)$ is (equivariantly) homeomorphic to a fiber bundle over a circle with fiber equal to  the principal orbit.
 \end{proof}

\begin{thm}\label{thm:DimM*}
Let $(M,\sfd,\mm)$ be an  $\RCD^{*}(K,N)$ m.m. space for some $K\in \R, N \in (1,\infty)$, let $n$  be defined in \eqref{eq:defn} and assume $n\geq 2$. Let $\G$ be a compact Lie group acting effectively and locally Lipschitz and co-Lipschitz-continuously by measure preserving isometries on $(M,\sfd,\mm)$ and denote with $(M^{*},\sfd^{*},\mm^{*})$ the quotient $\RCD^{*}(K,N)$-space. If  
 \begin{equation}\label{eq:assGn-2}
\dim(\G) \geq \frac{(n-2)(n-1)}{2}
\end{equation}
then one and only one of the following three possibilities hold:
\begin{itemize}
\item $\mm^{*}$-a.e. point in $M^{*}$ is regular with unique tangent  space isomorphic to $(\R^{2},\sfd_{E}, {\mathcal L}_{2})$,
\item $\mm^{*}$-a.e. point in $M^{*}$ is regular with unique tangent  space isomorphic to $(\R,\sfd_{E}, {\mathcal L}_{1})$, and in this case the thesis of Theorem \ref{thm:cohom1RCD} holds,
\item $M^{*}$ is a singleton, or equivalently the action of $\G$ is transitive, and in this case  $(M,\sfd,\mm)$  is isomorphic as m.m. space to either ${\mathbb S}^{n}$ or $\mathbb{RP}^{n}$, up to multiplying the measure $\mm$ by a normalizing constant.
\end{itemize}
\end{thm}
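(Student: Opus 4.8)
The plan is to follow the pattern of the proof of Theorem \ref{thm:Rigidity}: fix a sufficiently generic point, run it through the structure Theorem \ref{thm:struct} together with the elementary isotropy bound of Corollary \ref{cor:dimGx}, and then read off the three alternatives according to the value of the tangent dimension $n(x_{0}^{*})$ of the orbit space at a generic point.

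First I would fix $x_{0}\in M$ which is simultaneously $n$-regular, of principal orbit type, and such that $x_{0}^{*}=\quotient(x_{0})$ is a regular point of the $\RCD^{*}(K,N)$-space $(M^{*},\sfd^{*},\mm^{*})$ (which is indeed $\RCD^{*}(K,N)$ by Theorem \ref{thm:M*RCD}); such a point exists because each of these properties holds for $\mm$-a.e.\ $x$ (by the definition \eqref{eq:defn} of $n$, by the Principal Orbit Theorem \ref{thm:PrincipleOrbit}, and by the $\mm^{*}$-a.e.\ regularity of $M^{*}$ of \cite{MN2014}, pulled back along $\quotient$). Write $m:=\dim\G(x_{0})$ and $k:=n(x_{0}^{*})$. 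Since by the Principal Orbit Theorem the isotropy is $\mm$-a.e.\ conjugate to the fixed group $\G_{\min}$, the orbit dimension $\dim\G(x)=\dim\G-\dim\G_{\min}=m$ is $\mm$-a.e.\ constant, and hence by Theorem \ref{thm:struct} the tangent dimension $n(x^{*})=n-\dim\G(x)=k$ is $\mm^{*}$-a.e.\ constant. Moreover $k=0$ forces $M^{*}$ to be a single point, equivalently the action to be transitive: indeed the Hausdorff dimension of the $\RCD^{*}(K,N)$-space $M^{*}$ equals $k$, so for $k=0$ the connected space $M^{*}$ is $0$-dimensional. Consequently the three stated alternatives are pairwise exclusive and correspond respectively to $k=2$, $k=1$ and $k=0$, so it suffices to prove $k\le 2$ and then to identify the geometry in each of these cases.

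The core is the dimension count. By Theorem \ref{thm:struct}, $n=m+k$; by the orbit--stabilizer relation, $\dim\G=m+\dim\G_{x_{0}}$; and since the proof of Theorem \ref{thm:struct} realizes $(\G(x_{0}),\sfd_{\G(x_{0})})$ as a homogeneous Riemannian manifold of dimension $m$ on which $\G$ acts transitively by measure-preserving isometries, Corollary \ref{cor:dimGx} yields
\[
\dim\G_{x_{0}} \le \frac{m(m-1)}{2} = \frac{(n-k)(n-k-1)}{2},
\]
with equality forcing the isometry group of $\G(x_{0})$ to have dimension $\tfrac{m(m+1)}{2}$, hence by Kobayashi's Theorem \ref{thm:kobayashi} (and compactness of $\G$) $\G(x_{0})$ isometric to $\mathbb{S}^{m}$ or $\mathbb{RP}^{m}$. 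Inserting this in the hypothesis \eqref{eq:assGn-2}, $\dim\G=(n-k)+\dim\G_{x_{0}}\ge\frac{(n-2)(n-1)}{2}$, gives $(n-2)(n-1)\le (n-k)(n-k+1)$, i.e.\ after rearranging
\[
(2-k)\,(2n-k-1) \ge 0 .
\]
Since $n\ge 2$ and $0\le k\le n$ (the bound $k=n-m\le n$ being immediate), this forces $k\le 2$: if $k\ge 3$ then $2-k<0$, so one would need $2n-k-1\le 0$, i.e.\ $k\ge 2n-1$, which together with $k\le n$ gives $n\le 1$, a contradiction.

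It remains to treat $k\in\{0,1,2\}$. If $k=2$ the first alternative holds. If $k=1$, then $\mm^{*}$-a.e.\ regular point of $M^{*}$ has tangent space $(\R,\sfd_{E},\mathcal{L}_{1})$, so the action has cohomogeneity one; the proof of Theorem \ref{thm:cohom1RCD} then applies once cohomogeneity one is known, using Kitabeppu--Lakzian \cite{KL} to classify $M^{*}$ and Corollary \ref{co:bundle-s1} in the circle case, and yields its thesis. If $k=0$, the action is transitive, $M=\G(x_{0})$ is a homogeneous Riemannian $n$-manifold, and arguing as in the final part of the proof of Theorem \ref{thm:Rigidity} --- via the isotropy representation of $\G_{x_{0}}$ on the unit tangent sphere $\mathbb{S}^{n-1}$, Corollary \ref{cor:dimGx}, Theorem \ref{thm:kobayashi}, and the fact that $\G$ is compact and acts by measure-preserving isometries --- one concludes that $(M,\sfd,\mm)$ is isomorphic to $\mathbb{S}^{n}$ or $\mathbb{RP}^{n}$ up to rescaling $\mm$ by a constant. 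The step I expect to be the genuine obstacle is this last transitive case $k=0$ (and, to a milder degree, the refined conclusions in the case $k=1$): the hypothesis \eqref{eq:assGn-2} is weaker than the sharp bounds assumed in Theorems \ref{thm:Rigidity} and \ref{thm:cohom1RCD}, so one must argue that \eqref{eq:assGn-2} nonetheless pins the homogeneous manifold $M$ down to the two model spaces, which amounts to classifying the compact subgroups of $\mathrm{O}(n)$ of sufficiently large dimension that can occur as the isotropy $\G_{x_{0}}$ of a transitive action on an $n$-manifold and deducing that $\dim\G$ must attain its maximum $\tfrac{n(n+1)}{2}$.
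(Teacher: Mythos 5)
Your reconstruction matches the paper's proof: fix a simultaneously $n$-regular, principal-orbit, quotient-regular point $x_0$, combine the identity $n=\dim\G(x_0)+n(x_0^*)$ from Theorem~\ref{thm:struct} with the isotropy bound of Corollary~\ref{cor:dimGx} and the hypothesis~\eqref{eq:assGn-2}, deduce $(2-k)(2n-1-k)\geq 0$ for $k:=n(x_0^*)$, hence $k\leq 2$, and split into the three cases $k\in\{0,1,2\}$. Your algebra is correct; the paper's intermediate identity carries a small slip of $(k-2)^2/2$, but this does not affect the conclusion $k\leq 2$.

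The concern you flag about $k=0$ and $k=1$ identifies a genuine gap, and it is present in the paper's own proof, not only in your reconstruction. The paper closes $k=0$ ``by the proof of Theorem~\ref{thm:Rigidity}'' and $k=1$ by ``repeating verbatim the proof of Theorem~\ref{thm:cohom1RCD}''; but in those proofs the rigidity ($\mathbb{S}^n/\mathbb{RP}^n$, resp.\ $\dim\G=\frac{n(n-1)}{2}$ and principal orbit $\mathbb{S}^{n-1}/\mathbb{RP}^{n-1}$) is obtained precisely because the stronger hypotheses $\dim\G\geq\frac{n(n+1)}{2}$, resp.\ $\dim\G\geq\frac{n(n-1)}{2}$, combined with the upper bound of Corollary~\ref{cor:dimGx}, force $\dim\G_{x_0}$ to attain its maximal value $\frac{(n-k)(n-k-1)}{2}$ and hence the Kobayashi equality case for the $(n-k)$-dimensional manifold $\G(x_0)$. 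Under~\eqref{eq:assGn-2} alone this is not forced: for $k\in\{0,1\}$ the lower bound $\dim\G_{x_0}\geq\frac{(n-2)(n-1)}{2}-(n-k)$ falls short of $\frac{(n-k)(n-k-1)}{2}$ by $2n-1$ and $n-1$ respectively. Concretely, the flat torus $T^n$ with $\G=T^n$ acting by translations (for $2\leq n\leq 4$), or the homogeneous space $SU(3)/SO(3)$ with $\G=SU(3)$ (for $n=5$, $\dim\G=8\geq 6$), are transitive examples satisfying all the hypotheses but not isomorphic to $\mathbb{S}^n$ or $\mathbb{RP}^n$; and $T^3$ with $\G=T^2$ acting on two factors is a cohomogeneity-one example with principal orbit $T^2$ and $\dim\G=2\neq 3=\frac{(n-1)n}{2}$. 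So the rigidity claims in the second and third bullets do not follow from~\eqref{eq:assGn-2} alone, and what you single out as the ``genuine obstacle'' is a flaw in the statement/proof as written, not a missing lemma waiting to be supplied.
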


\begin{proof}
Let $x_{0}\in M$ be an $n$-regular point  of principal orbit type such that $x_{0}^{*}=\quotient(x_{0})$ is an $n(x_{0}^{*})$-regular point. Combining Theorem \ref{thm:struct} and the assumption \eqref{eq:assGn-2} we know that
\begin{equation}\label{eq:Gx0geqn-2}
\dim(\G_{x_{0}})=\dim(\G)+n(x_{0}^{*})-n  \geq  \frac{(n-2)(n-1)}{2}+n(x_{0}^{*})-n=  \frac{(n-2)(n-3)}{2}+n(x_{0}^{*})-2.
\end{equation}
On the other hand, by applying Corollary \ref{cor:dimGx} to the homogenous manifold $\G(x_{0})$ we have
\begin{equation}\label{eq:Gx0leqn-2}
\dim(\G_{x_{0}})\leq \frac{\dim(\G(x_{0}))(\dim(\G(x_{0}))-1)}{2}= \frac{(n-n(x_{0}^{*}))(n-n(x_{0}^{*})-1)}{2}.
\end{equation}
The combination of \eqref{eq:Gx0geqn-2} and  \eqref{eq:Gx0leqn-2} yields the inequality 
$$
 \frac{(n-2)(n-3)}{2}+n(x_{0}^{*})-2 \leq   \frac{(n-n(x_{0}^{*}))(n-n(x_{0}^{*})-1)}{2}= \frac{(n-2)(n-3)}{2}-\frac{n(x_{0}^{*})-2}{2}(2n-2n(x_{0}^{*})-1),
$$
which in turn gives
$$
(n(x_{0}^{*})-2) (2n-2n(x_{0}^{*})+1) \leq 0.
$$
Since by construction $n(x_{0}^{*})\leq n$, the last inequality implies  that   $n(x_{0}^{*})\leq 2$. Therefore we have just the following three possibilities:
\begin{itemize}
\item $n(x_{0}^{*})=0$. In this case   $M^{*}$ is a singleton, or equivalently the action of $\G$ is transitive;  by the proof of Theorem \ref{thm:Rigidity} it follows that  $(M,\sfd,\mm)$  is isomorphic as m.m. space to either ${\mathbb S}^{n}$ or $\mathbb{RP}^{n}$, up to multiplying the measure $\mm$ by a normalizing constant.
\item $n(x_{0}^{*})=1$. In this case, by \cite{KL}  it follows that  $\mm^{*}$-a.e. point in $M^{*}$ is regular with unique tangent  space isomorphic to $(\R,\sfd_{E}, {\mathcal L}_{1})$. We can then repeat verbatim the proof of Theorem \ref{thm:cohom1RCD} and get the result.
\item the only remaining case is that where $n(x_{0}^{*})= 2$, and this holds if and only if  $\mm^{*}$-a.e. point in $M^{*}$ is regular with unique tangent  space isomorphic to $(\R^{2},\sfd_{E}, {\mathcal L}_{2})$ otherwise we fall into one of the two cases above.
\end{itemize}
\end{proof}

\begin{remark}The class of cohomogeneity one m.m. spaces satisfying $\RCD^*(K,N)$ includes many non-manifold examples. Indeed, by the work of Ketterer \cite{Ketterer}, cones  over $\RCD^*(N-1,N)$-spaces admit  metric-measure structures satisfying  $\RCD^*(K,N+1)$, for any $K \in \R$. Therefore, for instance, Euclidean and spherical cones over homogeneous $N$-dimensional Riemannian manifolds with Ricci curvature bounded below by  $N-1$ are examples of  $\RCD^*(K,N+1)$ spaces admitting isometric actions of cohomogeneity one. Different examples of this type are presented by products of homogenous Riemannian manifolds with a lower Ricci curvature bound with $1$-dimensional $\RCD$-spaces, and more generaly, with cohomogeneity one $\RCD^*(K,N)$-spaces. Regarding this point, recall that $1$-dimensional $\RCD$-spaces were characterized in \cite{KL}. 
\end{remark}


\section{Orbifolds and orbispaces}\label{Sec:OrbFol}


\subsection{Orbifolds}
An orbifold $\orb$ is, roughly speaking,  a topological space that is locally homeomorphic to
a quotient of $\mathbb{R}^n$ by an orthogonal action of  some finite group. We recall the definitions from \cite{KleinerLott}.


\begin{defn}
\label{D:local_model}
A \emph{local model of dimension $n$} is a pair $(\hat{U}, \G)$, where $\hat{U}$ is an open, connected subset of a Euclidean space $\RR^n$, and $\G$ is a finite group acting smoothly and effectively on $\hat{U}$. 

A \emph{smooth map} $(\hat{U}_1,\G_1)\to (\hat{U}_2,\G_2)$ between local models $(\hat{U}_i,\G_i)$, $i=1,2$, is a homomorphism $\varphi_{_\#}:\G_1\to \G_2$ together with a $\varphi_{_\#}$-equivariant smooth map $\hat{\varphi}:\hat{U}_1\to \hat{U}_2$, i.e. $\hat\vphi(\gamma\cdot \hat u) = \vphi_\#(\gamma) \cdot \hat\vphi(\hat u)$, for all $\gamma \in \G_1$, $\hat u \in \hat U_1$.
\end{defn}

Given a local model $(\hat{U},\G)$, denote by $U$ the quotient $\hat{U}/\G$. A smooth map $\hat{\varphi}:(\hat{U}_1,\G_1)\to (\hat{U}_2,\G_2)$ induces a map $\varphi:U_1\to U_2$. The map $\varphi$ is called an \emph{embedding} if $\hat{\varphi}$ is an embedding. In this case, the effectiveness of the actions in the local models implies that $\varphi_{_\#}$ is injective.


\begin{defn}
An \emph{$n$-dimensional orbifold local chart} $(U_x, \hat U_x, \G_x, \pi_x)$ around a point $x$ in a topological space $X$ consists of:
\begin{enumerate}
\item A neighborhood $U_x$ of $x$ in $X$;
\item A local model $(\hat{U}_x, \G_x)$ of dimension $n$;
\item A $\G_x$-equivariant projection $\pi_x:\hat{U}_x\to U_x$, where $\G_x$ acts trivially on $U_x$, that induces a homeomorphism $\hat{U}_x/\G_x\to U_x$.
\end{enumerate}
If $\pi_x^{-1}(x)$ consists of a single point, $\hat{x}$, then $(U_x, \hat U_x, \G_x, \pi_x)$ is called a \emph{good local chart} around $x$.  In particular, $\hat x$ is fixed by the action of $\G_x$ on $\hat{U}_x$. 
\end{defn}

Note that, given a good local chart $(U_x, \hat U_x, \G_x, \pi_x)$ around a point $x$ in a topological space $X$, the quadruple $(U_x, \hat U_x, \G_x, \pi_x)$ is also a local chart, not necessarily good, around any other point $y \in U_x$.  By abusing notation, a local chart $(U, \hat U, \G, \pi)$ will from now on be denoted simply by $U$.


\begin{defn}
An \emph{$n$-dimensional orbifold atlas} for a topological space $X$ is a collection of $n$-dimensional local charts $\mc{A}=\{U_{\alpha}\}_\alpha$ such that the neighborhoods $U_\alpha \in \mc{A}$ give an open covering of $X$ and:
\newenvironment{Disp}
{\begin{list}{}{%
    \setlength{\leftmargin}{4mm}}
  \item[] \ignorespaces}
{\unskip \end{list}}
\begin{Disp}
For any $x\in U_{\alpha}\cap U_{\beta}$, there is a local chart $U_\gamma \in \mc{A}$ with $x\in U_{\gamma}\In U_{\alpha}\cap U_{\beta}$ and embeddings $(\hat{U}_{\gamma}, \G_{\gamma})\to(\hat{U}_{\alpha}, \G_{\alpha})$, $(\hat{U}_{\gamma}, \G_{\gamma})\to (\hat{U}_{\beta}, \G_{\beta})$.
\end{Disp}
Two $n$-dimensional atlases are called \emph{equivalent} if they are contained in a third atlas. 
\end{defn}


\begin{defn}
An \emph{$n$-dimensional (smooth) orbifold}, denoted by $\orb^n$ or simply $\orb$, is a second-countable, Hausdorff topological space $|\orb|$, called the \emph{underlying topological space} of $\orb$, together with an \emph{equivalence class of $n$-dimensional orbifold atlases}.  
\end{defn}


Given an orbifold $\orb$ and any point $x\in |\orb|$, one can always find a good local chart $U_x$ around $x$.  Moreover, the corresponding group $\G_x$ does not depend on the choice of good local chart around $x$, and is referred to as the \emph{local group at $x$}.  From now on, only good local charts will be considered. In particular, given a good local chart $U_x$ around $x \in |\orb|$, a point $y \in U_x$ and $\hat y \in \pi_x^{-1}(y)$, one can identify the local group $\G_y$ at $y$ with $(\G_x)_{\hat y}$, where $(G_x)_{\hat y}$ is the isotropy of $G_x$ at $\hat y \in \hat{U}_x$. We  define the \emph{order} of a point $x\in |\orb|$ as the order of the local group $\G_x$, and denote it by $\ord(x)$. That is,
\[
\ord(x):= \#\G_x.
\]
We  define the \emph{order of $\orb$} by  
\[
\ord(\orb):=\sup_{x\in \orb}\ord(x).
\]


If a discrete group $\Gamma$ acts properly discontinuously on a manifold $M$, then the quotient topological space $M/\Gamma$ can be naturally endowed with an orbifold structure that will be denoted by $M//\Gamma$.
An orbifold $\orb$ is \emph{good} (or \emph{developable}) if $\orb = M//\Gamma$ for some manifold $M$ and some discrete group $\Gamma$. A \emph{bad} (or \emph{non-developable}) orbifold is one that is not good.


\begin{defn} [Riemannian (resp. Finsler) metric on an orbifold]
A \emph{Riemannian (resp. Finsler)  metric}  on an orbifold $\orb$ is given by a collection of Riemannian (resp. Finsler) metrics on the local models $\hat{U}_\alpha$ so that the following conditions hold:
\begin{enumerate}
	\item The local group $\G_\alpha$ acts isometrically on $\hat{U}_\alpha$.
	\item The embeddings $(\hat{U}_3,\G_3)\to (\hat{U}_1,\G_1)$ and $(\hat{U}_3,\G_3)\to (\hat{U}_2,\G_2)$ in the definition of orbifold atlas are isometric (with respect to the Riemannian metric).
\end{enumerate}
\end{defn}
Let us make the following observations: 
\begin{itemize}
\item Any smooth orbifold admits an orbifold Riemannian (resp. Finsler)  metric.
\item One can assume $g \cdot x=x$ for all $g\in\G_{x}$. Each point $x\in |\orb|$ with $\G_{x}=\{1_\G\}$ is called a \emph{regular point}. The subset $|\orb|_{reg}$ of regular points is called \emph{regular part};  it is a a smooth manifold that forms an open dense subset of $|\orb|$.  A point which is not regular is called \emph{singular}.
\item  Using  normal charts centered at $\hat{x}=\pi_{x}^{-1}(x)$  for the $\G_{x}$ invariant Riemannian structure $(\hat{U}_{x}, g_{x})$, it is possible to assume that $\G_{x}$ is a (possibly trivial) group of linear isometries acting effectively on $(\hat{U}_{x},g_{x})$. 
\item For Riemannian (resp. Finsler) orbifolds, it is possible to choose, shrinking $U_{x}$ if necessary,  $\hat{U}_{x}$ and $U_{x}$ to be geodesically convex.
\end{itemize}
\medskip
In the rest of the section we assume that the orbifold satisfies the above properties. 
\\

Note that the Riemannian (resp. Finsler) structure induces a natural metric $\sfd$ on $|\orb|$ that is locally isometric to the quotient metric of $(\hat{U}_{x},\hat{\sfd}_{x})$ by $\G_{x}$, where $\hat{\sfd}_{x}$ is induced
by the Riemannian (resp. Finsler) structure on $\hat{U}_{x}$. We say that $(\orb,\sfd)$ is a \emph{complete} Riemannian (resp. Finsler) orbifold if it is complete as a metric space. From now on, for simplicity of presentation, we will just consider Riemannian orbifolds; the Finsler case can be carried out analogously but is slightly more involved. 
For any Riemannian  orbifold $\orb$ there is a natural volume  measure $\operatorname{vol}_{\orb}$ given on the local orbifold charts by 
\[
\operatorname{vol}_{\orb}|_{U_{x}}:= \frac{1}{\ord(x)}(\pi_{x})_{\sharp}\operatorname{vol}_{g_{x}},
\]
  where of course $\operatorname{vol}_{g_{x}}$ is the Riemannian volume measure on $(\hat{U}_{x}, g_{x})$.

A continous function $\psi:|\orb|\to \R$ is said to be \emph{smooth} if, for every local model $\hat{U}_{x}$, the lift $\hat{\psi}_{x}:=\pi_{x}^{*} \psi=\psi \circ \pi_{x} :\hat{U}_{x} \to \R$ is smooth.  If $\psi:\orb\to [0,\infty)$  is a smooth  non-negative function,   we  say that $\mm= \psi \operatorname{vol}_{\orb}$ is a \emph{weighted measure}  on the Riemannnian orbifold $\orb$; moreover we will say that $(\orb, \sfd, \mm)$ is a \emph{weighted Riemannian orbifold}.


\begin{defn}[Curvature of a Riemannian orbifold] \label{def:CurvOrb}
We say that the Riemannian orbifold $\orb$ has sectional (resp.~Ricci) curvature bounded below by $K\in \mathbb{R}$ if the Riemannian metric on each  local model $\hat{U}_\alpha$ has sectional (resp.~Ricci) curvature bounded below by $K$.  Analogously, given $N\geq n:=$dim$(|\orb|)$, we say that the weighted Riemannian orbifold $(\orb, \sfd, \psi  \operatorname{vol}_{\orb} )$ has (Bakry-\'Emery) $N$-Ricci curvature bounded below by $K$ if every weighted local model $(\hat{U}_{x}, g_{x}, \hat{\psi}_{x} \operatorname{vol}_{g_{x}})$ has $N$-Ricci curvature bounded below by $K$, i.e. if and only if
\begin{align*}
\textrm{Ric}_{g_{x},\hat{\psi}_{x}, N} &:=\textrm{Ric}_{g_{x}}- (N-n) \frac{\nabla^2 \hat{\psi}_{x}^{\frac{1}{N-n}}}{\hat{\psi}_{x}^{\frac{1}{N-n}}} \geq K g_{x}, \quad \text{on } \hat{U}_{x}, & \quad n< N< \infty \\
                                                          &:= \textrm{Ric}_{g_{x}}- \nabla^2(\log \hat{\psi}_{x})   \geq K g_{x},  \qquad \text{on } \hat{U}_{x}, &\quad  N= \infty.
\end{align*}

\end{defn}

 Observe that if $\orb$ has sectional curvature bounded below by $K$, then the associated metric  space $(\orb, \sfd)$ has  curvature bounded below by $K$ in the Alexandrov sense, since the triangle comparison condition is preserved by taking isometric quotients  by  isometric actions of finite groups.  
 
 As recalled above, the set of regular points $|\orb|_{reg}$ is open and dense. Letting $\sfd_{reg} = \sfd|_{|\orb|_{reg}}$ and $\mm_{reg}=\mm\llcorner {|\orb|_{reg}}$, we have that 
 $(|\orb|_{reg},\sfd_{reg},\mm_{reg})$ is  a smooth open weighted  Riemannian manifold; by  density, it is  clear that $(|\orb|_{reg},\sfd_{reg},\mm_{reg})$  has $N$-Ricci $\geq K$ in the usual sense of weighted smooth Riemannian manifolds if and only if   $(\orb, \sfd, \mm)$ has $N$-Ricci  $\geq K$ in the sense of Definition \ref{def:CurvOrb}.
 
 
 \begin{remark}
One can show that the volume form agrees with the $n$-dimensional Hausdorff measure of $(\orb,\sfd)$. It is also possible to define the Busemann-Hausdorff measure, the Holmes-Thomsen measure and the weighted Ricci tensor on Finsler orbifolds in a similar way from the corresponding measure and Ricci tensor of the underlying Finsler manifold (see \cite{Ohta2009}).
\end{remark}
 
The goal of this section is to characterize in a synthetic way the case of (weighted) Ricci curvature lower bounds. To this aim, let us first recall the following well-known equivalences for smooth metric measure spaces (see \cite[Theorem 1.7]{sturm:II}, \cite{BE1985} \cite{BS10}, \cite{ErbarKuwadaSturm}, \cite{AmbrosioMondinoSavare},  \cite{CaMi}).


\begin{thm}\label{thm:smoothEq}
Let $(M^{n},g, \psi \vol_{g})$ be a weighted Riemannian $n$-manifold with empty
or convex boundary. Then the following equivalences hold:
\begin{align*}
\CD(K,N)\; & \Longleftrightarrow \CD^{*}(K,N)  \Longleftrightarrow \RCD^{*}(K,N) \Longleftrightarrow\;\Ric_{g,\psi,N}\ge K \, g.
\end{align*}
\end{thm}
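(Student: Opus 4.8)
The proof is an assembly of classical facts; the plan is to close a cycle of implications, using the explicit structure of $W_{2}$-geodesics on a smooth manifold together with Jacobian comparison. Throughout write $\mm=\psi\,\vol_{g}$, and recall the convention that $\psi$ is a positive constant (so $\Ric_{g,\psi,N}=\Ric_{g}$) when $N=n:=\dim M$. \emph{Step 1: $\Ric_{g,\psi,N}\ge K\,g\Rightarrow\CD^{*}(K,N)$ and $\CD(K,N)$.} Fix $\mu_{0}=\rho_{0}\mm$, $\mu_{1}=\rho_{1}\mm\in\mathcal P_{2}^{ac}(M,\sfd,\mm)$. By Brenier--McCann the unique $W_{2}$-geodesic between them has the form $\mu_{t}=(F_{t})_{\sharp}\mu_{0}$ with $F_{t}(x)=\exp_{x}(-t\nabla\phi(x))$ for a locally semiconcave $\tfrac12\sfd^{2}$-Kantorovich potential $\phi$; convexity of $\partial M$ (or its absence) ensures that $t\mapsto F_{t}(x)$ stays in $M$ for $\mu_{0}$-a.e.\ $x$, so $\mu_{t}\ll\mm$ with density $\rho_{t}$ obeying the weighted Monge--Amp\`ere identity $\rho_{0}(x)=\rho_{t}(F_{t}(x))\,\mathcal J_{t}(x)$, where $\mathcal J_{t}(x)=\tfrac{\psi(F_{t}(x))}{\psi(x)}\det dF_{t}(x)$. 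Feeding $\Ric_{g,\psi,N}\ge K\,g$ into the Bochner/Riccati comparison along the minimizing segment $t\mapsto F_{t}(x)$ forces $t\mapsto\mathcal J_{t}(x)^{1/N'}$ to be $\sigma_{K,N'}$-concave on the interval of length $\sfd(x,F_{1}(x))$, for every $N'\ge N$; integrating this pointwise bound against $\rho_{0}\,d\mm$ and changing variables by $F_{t}$ yields \eqref{def:CD*KN}, and an elementary convexity rearrangement upgrades it to \eqref{def:CDKN}. I would invoke \cite[Theorem~1.7]{sturm:II} and \cite{BS10} for the Riccati step rather than redo it; when $N=n$ this is the unweighted statement.

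\emph{Step 2: $\CD^{*}(K,N)\Leftrightarrow\RCD^{*}(K,N)$.} The reverse implication is immediate from the definition. For the forward one, i.e.\ infinitesimal Hilbertianity: once $\CD^{*}(K,N)$ holds the space is locally doubling and supports a Poincar\'e inequality, hence (Remark~\ref{rem:LocLipRelGradCD}) the $2$-minimal relaxed slope of a locally Lipschitz $f$ agrees $\mm$-a.e.\ with $|\nabla f|_{g}$; therefore $\Ch_{2}(f)=\tfrac12\int_{M}|\nabla f|_{g}^{2}\,\psi\,d\vol_{g}$ is a quadratic functional, the parallelogram law \eqref{eq:paralrule} holds, and $(M,\sfd,\mm)$ is infinitesimally Hilbertian.

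\emph{Step 3: $\CD^{*}(K,N)\Rightarrow\Ric_{g,\psi,N}\ge K\,g$ and $\CD(K,N)\Rightarrow\Ric_{g,\psi,N}\ge K\,g$.} Together with Step~1 this closes the cycle of equivalences. I would argue by contradiction: if $\Ric_{g,\psi,N}(v,v)<K$ at some $x_{0}\in\interior(M)$ and some unit vector $v$, take a short geodesic segment through $x_{0}$ tangent to $v$ and, for small $\varepsilon>0$, push a fixed compactly supported smooth density forward under $y\mapsto\exp_{x_{0}}(\varepsilon y)$ to obtain $\mu_{0}^{\varepsilon},\mu_{1}^{\varepsilon}\in\mathcal P_{2}^{ac}(M,\sfd,\mm)$ concentrated near its two endpoints. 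Their $W_{2}$-geodesic is again of McCann form; expanding $\int\rho_{t}^{1-1/N'}\,d\mm$ to second order in $\varepsilon$, and using that $\sigma^{(t)}_{K,N'}(\theta)$ and $\tau^{(t)}_{K,N'}(\theta)$ both equal $t+\tfrac{K}{6N'}\,t(1-t^{2})\,\theta^{2}+O(\theta^{4})$, the $\CD^{*}(K,N)$ (resp.\ $\CD(K,N)$) inequality forces in the limit $\Ric_{g,\psi,N}(v,v)\ge K$ at $x_{0}$, a contradiction; a boundary point $x_{0}\in\partial M$ is handled by continuity of $\Ric_{g,\psi,N}$ up to $\partial M$ together with the convexity hypothesis. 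This is the localization argument of von Renesse--Sturm and Sturm \cite{sturm:II}; the $\Gamma_{2}$-calculus of Bakry--\'Emery \cite{BE1985} (and its metric-measure incarnation in \cite{ErbarKuwadaSturm,AmbrosioMondinoSavare}) gives an alternative route via the heat semigroup.

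The analytic substance sits in Steps~1 and~3 — the sharp Jacobian ODE carrying the $\sigma_{K,N}$/$\tau_{K,N}$ coefficients, and the second-order expansion that produces precisely the Bakry--\'Emery tensor $\Ric_{g,\psi,N}$ — and both are available in the cited literature, so the remaining difficulty is essentially bookkeeping: verifying that convexity of $\partial M$ keeps every displacement-interpolating geodesic inside $M$, so that McCann's structure theorem applies verbatim, and matching the ranges of $N'$ in \eqref{def:CDKN}--\eqref{def:CD*KN} with the definition of $\Ric_{g,\psi,N}$, including the constancy of $\psi$ when $N=n$.
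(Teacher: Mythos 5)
The paper states Theorem~\ref{thm:smoothEq} as a recall of well-known results, citing \cite[Theorem~1.7]{sturm:II}, \cite{BE1985}, \cite{BS10}, \cite{ErbarKuwadaSturm}, \cite{AmbrosioMondinoSavare}, and \cite{CaMi} without reproducing a proof. Your proposal is correct and faithfully reconstructs the standard argument from those sources (McCann's displacement interpolation and Jacobian comparison for $\Ric_{g,\psi,N}\ge Kg\Rightarrow\CD/\CD^*$; smoothness of the weighted Dirichlet form for infinitesimal Hilbertianity; and the von Renesse--Sturm second-order localization, with the shared small-$\theta$ expansion of $\sigma^{(t)}_{K,N}$ and $\tau^{(t)}_{K,N}$, for the converse), so there is nothing to compare against beyond the citations.
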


Before proving the analogous result for orbifolds, let us recall the definition of local curvature-dimension conditions.

\begin{defn}
[Local $\CD^*(K,N)$ and $\RCD^{*}(K,N)$ conditions]
A metric measure space $(U,\sfd,\m)$ is said to satisfy the \emph{$\CD^{*}(K,N)$-condition
locally at $x$} if there is a neighborhood $V_{x}$ of $x$ such for
any $\mu_{0},\mu_{1}\in\mathcal{P}_{2}^{ac}(U)$ supported
in $V_{x}$  there is a $W_{2}$-geodesic $\{\mu_{t}\}_{t\in[0,1]}\subset\mathcal{P}_{2}^{ac}(U)$
such that \eqref{def:CD*KN} holds. If \eqref{def:CD*KN} holds for
any $W_{2}-$geodesic $\{\mu_{t}\}_{t\in[0,1]}$ between measures $\mu_{0},\mu_{1}\in\mathcal{P}_{2}(U)$
with support in $V_{x}$ then we say that the \emph{strong $\CD^{*}(K,N)$-condition
holds locally at $x$}. If moreover, for every $f,g \in W^{1,2}(U,\sfd,\m)$ supported in $V_{x}$ the parallelogram identity \eqref{eq:paralrule} holds, then we say that  $(U,\sfd,\m)$ satisfies the \emph{$\RCD^{*}(K,N)$-condition
locally at $x$}.
\end{defn}}


\begin{thm}
\label{THM:ORBI_EQUIV}
Let $(\orb,\sfd, \mm)$ be a weighted Riemannian orbifold and let $N\in [1,\infty]$. Then the following equivalences hold: %
\begin{align*}
\CD(K,N) & \Longleftrightarrow \CD^{*}(K,N)   \Longleftrightarrow \RCD^{*}(K,N) \quad \text{(resp. $\RCD(K,\infty)$ in case $N=\infty$)}\\
 & \Longleftrightarrow  (\orb, \sfd, \mm) \quad \text{has  $N$-Ricci curvature bounded below by $K$ in the sense of Definition \ref{def:CurvOrb}}.
\end{align*}
\end{thm}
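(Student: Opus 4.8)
\textbf{Proof strategy for Theorem \ref{THM:ORBI_EQUIV}.}

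The plan is to reduce the orbifold statement to the smooth weighted manifold case (Theorem \ref{thm:smoothEq}) via the quotient results established earlier in the paper, using the local structure of the orbifold. The implications $\CD(K,N)\Rightarrow\CD^*(K,N)\Rightarrow$ [$\RCD^*(K,N)$ together with the local $\RCD^*$ condition] among the first three conditions follow, as usual, from the general inclusions recalled in Section \ref{sec:Prel} once one also knows infinitesimal Hilbertianity; the genuinely new content is the equivalence with the Bakry-\'Emery bound on the regular part. So the real work splits into two directions.

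\emph{Direction 1: Ricci bound $\Rightarrow$ global $\RCD^*(K,N)$.} First I would observe that, by Definition \ref{def:CurvOrb} and the discussion following it, the hypothesis is that every weighted local model $(\hat U_x, g_x, \hat\psi_x\,\vol_{g_x})$ has $\Ric_{g_x,\hat\psi_x,N}\ge K g_x$; by Theorem \ref{thm:smoothEq} each such $\hat U_x$ (which we arranged to be geodesically convex) is a strong $\CD^*(K,N)$-space and is infinitesimally Hilbertian. Since $\G_x$ acts on $\hat U_x$ by measure-preserving isometries (the $\tfrac{1}{\ord(x)}$ normalization is a constant and hence irrelevant for the $\CD^*$ condition, which is scale-invariant in $\m$), Theorem \ref{thm:CDbyGisCD} and Corollary \ref{cor:infHilQuotGen} (for $N<\infty$, Corollary \ref{cor:quotInfHilb}) give that the chart $(U_x,\sfd,\m)$ satisfies the \emph{local} strong $\CD^*(K,N)$ condition and is infinitesimally Hilbertian locally at $x$. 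Since the $U_x$ cover $\orb$ and the space is a (complete, locally compact, separable) geodesic space, one promotes the local curvature-dimension condition to the global one by the globalization theorem for $\CD^*(K,N)$ (Bacher--Sturm; this requires checking the space is non-branching or at least essentially non-branching near singular points, which follows because the smooth charts are non-branching and the finite quotient of a non-branching convex Riemannian ball is essentially non-branching by Corollary \ref{lem:ProjEssNB}). Infinitesimal Hilbertianity is a local property of the Cheeger energy and therefore also globalizes. Hence $(\orb,\sfd,\m)$ is globally $\RCD^*(K,N)$.

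\emph{Direction 2: $\CD(K,N)$ (or any of the three) $\Rightarrow$ Ricci bound on the regular part.} Here I would restrict to the open dense smooth manifold $(|\orb|_{reg},\sfd_{reg},\m_{reg})$. The $\CD(K,N)$ condition is inherited by open subsets in the sense that it forces the local $\CD(K,N)$ condition there (one can transport measures supported in a small geodesically convex ball inside $|\orb|_{reg}$ and the $W_2$-geodesics between them stay in that ball); by Theorem \ref{thm:smoothEq} applied on $|\orb|_{reg}$ this is equivalent to $\Ric_{g,\psi,N}\ge K g$ on $|\orb|_{reg}$, which by the density remark preceding Theorem \ref{THM:ORBI_EQUIV} is exactly the statement that $(\orb,\sfd,\m)$ has $N$-Ricci bounded below by $K$ in the sense of Definition \ref{def:CurvOrb}. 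Combining the two directions closes the cycle of equivalences; the $N=\infty$ case is handled identically, replacing $\CD^*(K,N)$ by $\CD(K,\infty)$ and $\RCD^*$ by $\RCD(K,\infty)$, and using the $K$-convexity formulation \eqref{def:CDKinfty} together with Theorem \ref{thm:CDbyGisCD}.

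\textbf{Main obstacle.} The delicate point is the \emph{globalization} step in Direction 1: passing from the local $\CD^*(K,N)$/$\RCD^*$ property on the charts $U_x$ to the global one on $\orb$. Away from singular points this is standard, but one must ensure the essentially non-branching hypothesis needed for the local-to-global theorem holds across the singular strata; this is where Corollary \ref{lem:ProjEssNB} (the quotient of an essentially non-branching space is essentially non-branching) and the fact that each $\hat U_x$ is a genuine convex Riemannian ball (hence non-branching) are essential. A secondary technical care is making sure the transition embeddings in the orbifold atlas are isometric so that the local models glue consistently with the metric-measure structure, which is guaranteed by the definition of a Riemannian metric on an orbifold and the choice of good, geodesically convex charts.
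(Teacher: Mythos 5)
Your two directions give a closed cycle through $\CD^*(K,N)$, $\RCD^*(K,N)$, and the Bakry--\'Emery bound, but the cycle never returns to $\CD(K,N)$: you prove $\CD(K,N)\Rightarrow\CD^*(K,N)$ and $\CD(K,N)\Rightarrow$ Ricci, but at no point do you prove Ricci $\Rightarrow\CD(K,N)$ (equivalently $\RCD^*(K,N)\Rightarrow\CD(K,N)$). This is not a formality. The parameter-preserving implication $\CD^*(K,N)\Rightarrow\CD(K,N)$ is a hard theorem that requires essential non-branching (Cavalletti--Milman, \cite{CaMi}); the general inclusion recalled in Section~\ref{sec:Prel} between $\CD^*$ and $\CD$ does \emph{not} preserve $K$. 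The paper dedicates an entire third block to precisely this implication: it decomposes the unique optimal dynamical plan (granted by $\RCD^*$ and \cite{GRS2016}) into countably many sub-plans supported on the regular part $|\orb|_{reg}$, applies Sturm's smooth characterization on each sub-plan, and sums the $\CD(K,N)$ convexity inequalities using essential injectivity of the optimal map. Without this (or a citation of \cite{CaMi}), you do not obtain $\CD(K,N)$ at all, and the theorem is not proved.

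Secondly, your globalization route in Direction~1 is more fragile than the paper's and has a circularity you do not resolve. You propose to globalize the local strong $\CD^*(K,N)$ condition via Bacher--Sturm, which needs essential non-branching, and you argue this ``follows because the finite quotient of a non-branching convex Riemannian ball is essentially non-branching by Corollary~\ref{lem:ProjEssNB}.'' But that corollary gives essential non-branching \emph{of the local chart}, while the local-to-global theorem for $\CD^*$ requires essential non-branching of the \emph{global} space $(\orb,\sfd,\m)$, which you have not established independently of the curvature conclusion you are trying to reach. Likewise, infinitesimal Hilbertianity being ``a local property of the Cheeger energy'' that globalizes is a claim that needs justification; it is precisely because these two globalizations are delicate that the paper instead invokes the local-to-global property of the \emph{full} $\RCD^*(K,N)$ condition (\cite[Theorem 7.8]{AmbrosioMondinoSavareLocGlob}, \cite[Theorem 3.25]{ErbarKuwadaSturm}), which packages both issues into one clean theorem. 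Your Direction~2 (restriction to the regular part plus density of regular points) matches the paper's argument for $\CD^*(K,N)\Rightarrow$ Ricci, so that part is fine.
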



\begin{proof} 
 We will give the arguments for $N\in [1,\infty)$, the case $N=\infty$ can be proved verbatim.   First of all, the implications $ \CD(K,N) \Longrightarrow \CD^{*}(K,N)$ and  $ \RCD^{*}(K,N) \Longrightarrow  \CD^{*}(K,N)$ are trivial.  We will show that  $(\orb, \sfd, \mm)$ has  $N$-Ricci $\geq K$ $\Longrightarrow \RCD^{*}(K,N)$,   $\CD^{*}(K,N)\Longrightarrow$ $(\orb, \sfd, \mm)$ has  $N$-Ricci $\geq K$,  and  that   $(\orb, \sfd, \mm)$ has  $N$-Ricci $\geq K$ $\Longrightarrow$ $\CD(K,N)$.
 \\

\noindent \emph{$(\orb, \sfd, \mm)$ has $N$-Ricci $\geq K \Longrightarrow \RCD^{*}(K,N)$}\vspace{.1cm}

By assumption, every local model $(\hat{U}_{x},\hat{\sfd}_x,\hat{\m}_{x})$ is a weighted Riemannian manifold
with convex boundary and with $N$-Ricci $\geq K$. Thus, by Theorem \ref{thm:smoothEq},  it is an $\RCD^{*}(K,N)$-space.
Theorem~\ref{thm:M*RCD} implies that the quotient space $U_{x}=\hat{U}_{x}/\G_{x}$
is also a $\RCD^{*}(K,N)$-space. Thus $(\orb, \sfd, \mm)$  satisfies $\RCD^{*}(K,N)$
locally; the thesis then follows by the local-to-global property of $\RCD^{*}(K,N)$ proved independently in \cite[Theorem 7.8]{AmbrosioMondinoSavareLocGlob} and \cite[Theorem 3.25]{ErbarKuwadaSturm}.
\\

\noindent\emph{$\CD^{*}(K,N) \Longrightarrow$   $(\orb, \sfd, \mm)$ has $N$-Ricci $\geq K$}\vspace{.1cm}

Since by  construction $(U_{x},\sfd)$ is geodesically convex, the assumption that $(\orb, \sfd, \mm)$ satisfies the $\CD^{*}(K,N)$-condition implies that  $(U_{x},\sfd , \mm\llcorner U_{x})$ satisfies $\CD^{*}(K,N)$ as well, for all $x \in |\orb|$. Recalling that, for
all regular points $x\in |\orb|_{reg}$, $(U_{x},\sfd, \mm\llcorner U_{x})$
is isomorphic as m.m.s  to $(\hat{U}_{x}, \hat{\sfd}_{x},\hat{\mm}_{x})$, we get that  $(\hat{U}_{x}, \hat{\sfd}_{x},\hat{\mm}_{x})$ satisfies $\CD^{*}(K,N)$ as well. Since by construction also  $\hat{U}_{x}$ is geodesically convex,  we can apply Theorem \ref{thm:smoothEq} and infer that $(\hat{U}_{x}, g_{x},\hat{\mm}_{x})$  has $N$-Ricci $\geq K$. This gives the claim for $x\in |\orb|_{reg}$.

Let now  $x \in |\orb|$ be arbitrary and assume, for the sake of contradiction, that $\Ric_{N}<K$ for some $y\in {U}_{x}$. Since $|\orb|_{reg}$ is open and dense in $|\orb|$, and since the condition $\Ric_{N}<K$ is open, it follows that 
there is a regular point $z \in U_{x}$ such that $\Ric_{N}<K$
at $\hat{z}\in\pi_{x}^{-1}(z)$. This contradicts the first part of the argument, as   the condition $\Ric_{N}\ge K$ at all regular points is independent
of the chart.
\\

\noindent\emph{$(\orb, \sfd, \mm)$ has $N$-Ricci $\geq K \Longrightarrow \CD(K,N)$.  }\vspace{.1cm}

 From the arguments above we know that $(\orb, \sfd, \mm)$ is $\RCD^{*}(K,N)$.  The fact that $\RCD^{*}(K,N)$ implies  $\CD(K,N)$ is proved  for general metric measure spaces in \cite{CaMi}. For the reader's convenience, we give here an independent, more self-contained, argument tailored to the orbifold case.

It is well-known (see, e.g. \cite[Proposition 15]{Borzellino1993MaxDiam}) that the set of regular points $|\orb|_{reg}$ is open, dense and geodesically convex. Called $\sfd_{reg} = \sfd|_{|\orb|_{reg}}$ and $\mm_{reg}=\mm\llcorner {|\orb|_{reg}}$, we then have that 
 $(|\orb|_{reg},\sfd_{reg},\mm_{reg})$ is isometric to
an open, geodesically convex,  weighted  Riemannian manifold $(\tilde{|\orb|},\tilde{\sfd},\tilde{\m})$ having  $N$-Ricci $\geq K$. Therefore, by \cite[Theorem 1.7]{sturm:II}, for every  $\mu_{0}, \mu_{1} \in \mathcal{P}_{2}^{ac} (|\orb|)$ supported in $|\orb|_{reg}$, there exists a   $W_{2}$-geodesic $\{\mu_{t}\}_{t \in [0,1]}$ satisfying the $\CD(K,N)$ convexity condition.

To conclude the proof,  let $\mu_{0}, \mu_{1} \in \mathcal{P}_{2}^{ac}(|\orb|)$ be arbitrary. Since $\RCD^{*}(K,N)$ is fulfilled on the whole $(\orb, \sfd, \mm)$, by \cite{GRS2016}, we know there is a unique dynamical 2-optimal plan $\nu\in \textrm{GeoOpt}(\mu_{0},\mu_{1})$ and it is induced by a map $F:|\orb| \to \textrm{Geo}(|\orb|)$ which is $\mu_{0}$-a.e. well defined and $\mu_{0}$-essentially injective.
\\It follows that we can write $\nu=\sum_{n \in \N} \nu^{n}$, where the $\nu^{n}$ satisfy the following properties:
\begin{itemize}
\item $(\ee_{0})_{\sharp} \nu^{n}$ and  $(\ee_{1})_{\sharp} \nu^{n}$ are absolutely continuous with respect to $\textrm{vol}_{\orb}$;
\item for every $n\in \N$, letting $c_{n}:=\nu^{n}\big(\textrm{Geo}(|\orb|)\big)>0$,  we have that    $\bar{\nu}^{n}:=\frac{1}{c_{n}} \nu^{n}$ is (the unique) dynamical  2-optimal   plan from $\bar{\mu}_{0}^{n}:=(\ee_{0})_{\sharp} \bar{\nu}^{n}$ to $\bar{\mu}_{1}^{n}:=(\ee_{1})_{\sharp} \bar{\nu}^{n}$;
\item  for every $t \in [0,1]$, the measure $\bar{\mu}_{t}^{n}:=(\ee_{t})_{\sharp} \bar{\nu}^{n}$ is supported in $|\orb|_{reg}$. In particular, by the first part of the argument, the geodesic $\{\bar{\mu}_{t}^{n}\}_{t \in [0,1]}$ satisfies the $\CD(K,N)$ convexity condition.
\end{itemize}
Using that the map $F$ above is $\mu_{0}$-essentially injective, summing up all the $\CD(K,N)$ convexity conditions of $\{\mu_{t}^{n}:=c_{n} \bar{\mu}_{t}^{n}\}_{t \in [0,1]}$ over all $n \in \N$ we conclude that $(\ee_{t})_{\sharp} \nu=\mu_{t}=\sum_{n \in \N}  \mu^{n}_{t}$ satisfies the  $\CD(K,N)$ convexity condition as well, giving the thesis.
\end{proof}

In the next result we generalize Cheng's Maximal Diameter Theorem to arbitrary weighted orbifolds  with $N$-Ricci $\geq N-1$; let us mention  that the generalization to a \emph{good} Riemannian orbifold with Ricci $\geq n-1$ (i.e. a quotient of a Riemannian $n$-manifold with Ricci $\geq n-1$)  of Cheng's Maximal Diameter Theorem was established by Borzellino \cite[Theorem 1]{Borzellino1993MaxDiam}. 


\begin{thm}\label{thm:Cheng}
Let $(\orb,\sfd,\mm)$ be a weighted Riemannian $n$-dimensional orbifold with  $N$-Ricci $\geq N-1$. Then ${\rm diam} (\orb)\leq \pi$ 
 and equality is achieved if and only if $(\orb,\sfd,\mm)$ is isomorphic as a m.m. space to a spherical suspension over a smooth quotient $\Sigma:={\mathbb S}^{n-1}/\Gamma$ of ${\mathbb S}^{n-1}$ under a finite  group of isometries $\Gamma$, endowed with a  weighted measure $\mm_{\Sigma}$ so that $({\mathbb S}^{n-1}/\Gamma, \sfd_{{\mathbb S}^{n-1}/\Gamma}, \mm_{\Sigma})$ has $N-1$-Ricci curvature bounded below by $N-2$. In other terms, 
$$(\orb,\sfd,\mm) \simeq  [0,\pi]\times_{\sin}^{N-1} ({\mathbb S}^{n-1}/\Gamma, \sfd_{{\mathbb S}^{n-1}/\Gamma}, \mm_{\Sigma}).$$
Note that, in particular, the only two (possibly) singular points are the ones achieving the maximal distance.

If, moreover, $\mm={\rm vol}_{\orb}$ ( i.e. if $\orb$ is an $n$-dimensional Riemannian orbifold with {\rm Ricci} $\geq n-1$) with diameter $\pi$,  then  it is isomorphic as a m.m. space  to a quotient of ${\mathbb S}^{n-1}$ by  a finite  group of isometries.
\end{thm}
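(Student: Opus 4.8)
The plan is to recast the statement via Theorem~\ref{THM:ORBI_EQUIV} and then combine the Bonnet--Myers diameter bound with the maximal diameter theorem for $\RCD^{*}$ spaces due to Ketterer~\cite{Ketterer}. Concretely, by Theorem~\ref{THM:ORBI_EQUIV} the assumption that the (complete) weighted Riemannian $n$-orbifold $(\orb,\sfd,\mm)$ has $N$-Ricci $\ge N-1$ is equivalent to $(\orb,\sfd,\mm)$ being an $\RCD^{*}(N-1,N)$-space; in particular it satisfies $\CD^{*}(N-1,N)$, whence the Bonnet--Myers-type estimate gives $\diam(\orb)\le\pi$, which proves the first assertion.

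Suppose now $\diam(\orb)=\pi$. Ketterer's maximal diameter theorem \cite{Ketterer} provides an isomorphism of metric measure spaces
\[
(\orb,\sfd,\mm)\;\simeq\;[0,\pi]\times_{\sin}^{N-1}(Y,\sfd_{Y},\mm_{Y}),
\]
where $(Y,\sfd_{Y},\mm_{Y})$ is an $\RCD^{*}(N-2,N-1)$-space. Let $p\in|\orb|$ be a pole of this suspension and choose a good local chart $(U_{p},\hat U_{p},\G_{p},\pi_{p})$ around $p$ with $\hat U_{p}\subset\R^{n}$ and $\G_{p}$ acting linearly and orthogonally fixing $\hat p:=\pi_{p}^{-1}(p)$. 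On the one hand, the suspension structure forces the pointed rescalings $(\orb,\lambda\sfd,p)$ to converge, as $\lambda\to\infty$, to the metric cone $C(Y)$ over $(Y,\sfd_{Y})$. On the other hand, since $\hat U_{p}$ is a smooth Riemannian manifold, the same rescalings converge to $\R^{n}/\G_{p}$, which is the Euclidean cone over $\mathbb{S}^{n-1}/\G_{p}$. By uniqueness of the link of a metric cone, $(Y,\sfd_{Y})$ is isometric to $\mathbb{S}^{n-1}/\Gamma$, with $\Gamma:=\G_{p}$ a finite group acting on $\mathbb{S}^{n-1}$ by isometries; in particular $Y$ carries the structure of an $(n-1)$-dimensional Riemannian orbifold.

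It remains to describe the measure $\mm_{\Sigma}:=\mm_{Y}$. Disintegrating the identification $\mm\simeq(\sin r)^{N-1}\,dr\otimes\mm_{Y}$ in the $r$-variable and comparing, on the regular part, with $\vol_{\orb}\simeq(\sin r)^{n-1}\,dr\otimes\vol_{\mathbb{S}^{n-1}/\Gamma}$ shows that $\mm_{\Sigma}$ is a weighted measure on the orbifold $\mathbb{S}^{n-1}/\Gamma$, equal to $\vol_{\mathbb{S}^{n-1}/\Gamma}$ precisely when $\mm=\vol_{\orb}$ (in which case necessarily $N=n$). Since $(Y,\sfd_{Y},\mm_{\Sigma})$ is $\RCD^{*}(N-2,N-1)$, Theorem~\ref{THM:ORBI_EQUIV} applied to this $(n-1)$-orbifold yields $(N-1)$-Ricci $\ge N-2$, as asserted. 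The converse is straightforward: if $\mm_{\Sigma}$ endows $\mathbb{S}^{n-1}/\Gamma$ with an $(N-1)$-Ricci bound $\ge N-2$, then by Theorem~\ref{THM:ORBI_EQUIV} that orbifold is $\RCD^{*}(N-2,N-1)$, its spherical suspension is $\RCD^{*}(N-1,N)$ by Ketterer's cone theorem \cite{Ketterer}, hence has $N$-Ricci $\ge N-1$ again by Theorem~\ref{THM:ORBI_EQUIV}, while any spherical suspension has diameter $\pi$. In the unweighted case $\mm_{\Sigma}=\vol_{\mathbb{S}^{n-1}/\Gamma}$, and the metric spherical suspension of the round orbifold $\mathbb{S}^{n-1}/\Gamma$ (with its volume) is isometric to the round quotient $\mathbb{S}^{n}/\Gamma$, where $\Gamma\le O(n)$ is viewed inside $O(n+1)$ as fixing the two poles; this gives the last assertion.

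The main obstacle is the identification of the cross-section $Y$: one has to (i) notice that the diameter is realized exactly at the poles of the suspension, (ii) compute the tangent cone at a pole from the two descriptions — $C(Y)$ from the suspension side and $\R^{n}/\G_{p}$ from the orbifold side — and (iii) invoke uniqueness of the link of a metric cone. The remaining delicate point is checking that $\mm_{\Sigma}$ is genuinely a (weighted) orbifold measure; this is transparent when $N=n$, and in general is obtained by transporting the smooth orbifold structure of $\orb$ across the suspension isomorphism.
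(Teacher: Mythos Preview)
Your approach is essentially the paper's: both pass through Theorem~\ref{THM:ORBI_EQUIV} to obtain the $\RCD^{*}(N-1,N)$ condition, invoke Ketterer's maximal diameter theorem to get the spherical suspension structure, and identify the cross-section by comparing the tangent cone at a pole (computed as $C(Y)$ from the suspension side and as $\R^{n}/\G_{p}$ from the orbifold chart).

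There is one point of divergence. The paper includes a short additional argument to upgrade $\Sigma=\mathbb{S}^{n-1}/\Gamma$ from an orbifold to a \emph{smooth} manifold: if $\Sigma$ had a singular point $x$, then the entire segment $[0,\pi]\times\{x\}$ would consist of singular points of $\orb$, which the paper claims contradicts the orbifold structure. This is exactly what justifies the ``Note'' in the statement that the only singular points of $\orb$ are the two poles. You stop at recognizing $\Sigma$ as an $(n-1)$-orbifold and do not address this smoothness claim; if that clause is to be taken as part of the conclusion, you have omitted a step. Conversely, you are more explicit than the paper about the converse direction of the ``if and only if'' and about why the unweighted case yields $\mathbb{S}^{n}/\Gamma$ via the fiberwise extension of the $\Gamma$-action, which matches the paper's final observation.
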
 


\begin{proof}
Since by Theorem \ref{THM:ORBI_EQUIV}  we know that   $(\orb,\sfd,\mm)$ is an $\RCD^{*}(N-1,N)$-space, by the Maximal Diameter Theorem in $\RCD^{*}(N-1,N)$-spaces proved by Ketterer \cite{Ketterer}, we know that ${\rm diam}(\orb)\leq \pi$ and if equality is achieved then there exists an $\RCD^{*}(N-2,N-1)$-space $(\Sigma, \sfd_{\Sigma},\mm_{\Sigma})$ such that  $(\orb,\sfd,\mm)$ is isomorphic  as a m.m. space to the spherical suspension $[0,\pi]\times_{\sin}^{N-1} (\Sigma, \sfd_{\Sigma}, \mm_{\Sigma})$. 

The orbifold structure implies that at every $x \in \orb$, the tangent cone to $(\orb,\sfd,\mm)$  is unique and isometric to $\R^{n}/\G_{x}$, where $\G_{x}$ is a finite subgroup of $\mathrm{O}(n)$. Let $p_{0}=\{0\}\times \Sigma$, $p_{\pi}=\{\pi\}\times \Sigma \in \orb$ be the two vertices of the above spherical suspension. It is easily seen that the tangent cones at $p_{0}$ and $p_{\pi}$ are metric-measure cones with cross section $(\Sigma, \sfd_{\Sigma}, \mm_{\Sigma})$. It follows that $(\Sigma, \sfd_{\Sigma})$ is isometric to ${\mathbb S}^{n-1}/\G_{p_{0}}\simeq {\mathbb S}^{n-1}/\G_{p_{\pi}}$. Moreover, the orbifold assumption implies that  $(\Sigma, \sfd_{\Sigma}, \mm_{\Sigma})$ is a smoooth weighted Riemannian manifold. Indeed, if $\Sigma$ had a singular point $x$,  then all the segment $[0,\pi] \times \{x\}$  would be made of singular points, contradicting the orbifold structure of $\orb$. This proves the first part of the statement. 

For the last claim, just observe that if $\mm={\rm vol}_{\orb}$, then  $(\Sigma, \sfd_{\Sigma},\mm_{\Sigma}) \simeq ({\mathbb S}^{n-1}/\G_{p_{0}}, \sfd_{{\mathbb S}^{n-1}/\G_{p_{0}}}, {\rm vol}_{{\mathbb S}^{n-1}/\G_{p_{0}}})$. 
Extending the action of $\G_{p_{0}}$ fiberwise to $\mathbb{S}^{n}=[0,\pi] \times_{\sin}^{n-1} \mathbb{S}^{n-1}$, we get that 
\[
(\orb,\sfd,\mm) \simeq ({\mathbb S}^{n}/\G_{p_{0}}, \sfd_{{\mathbb S}^{n}/\G_{p_{0}}}, {\rm vol}_{{\mathbb S}^{n}/\G_{p_{0}}}),
\]
as desired.
\end{proof}
In what follows, $\mathbb{M}_{K}^{n}$ denotes the simply connected $n$-dimensional space form of sectional curvature equal to $K\in \R$.  The  next result was proved via an independent argument by Borzellino \cite[Proposition 20]{Borzellino1993MaxDiam}.

\begin{prop}[Bishop  inequality] \label{prop:Bishop}
 If $(\orb,\sfd,\vol)$ is an $n$-dimensional
orbifold with {\rm Ricci} $\ge K$, then 
\[
\vol_{\orb}(B_{r}(x))\le\frac{1}{\ord(x)}\vol_{\mathbb{M}_{K}^{n}}(B_{r}^{\mathbb{M}_{K}^{n}}).
\]
\end{prop}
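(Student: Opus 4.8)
The plan is to reduce the orbifold Bishop inequality to the classical Bishop--Gromov volume comparison on the smooth local model, using the fact that near a point $x$ the orbifold is isometric to a quotient of a ball in a Riemannian manifold with $\Ric \ge K$ by the finite group $\G_x$ of linear isometries fixing $\hat x = \pi_x^{-1}(x)$.

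First I would fix $x \in |\orb|$ and choose a good local chart $(U_x, \hat U_x, \G_x, \pi_x)$ around $x$ such that $\hat x$ is the unique preimage of $x$, $\G_x$ acts by linear isometries fixing $\hat x$ on the Riemannian manifold $(\hat U_x, g_x)$ with $\Ric_{g_x} \ge K$, and $\hat U_x$ (hence $U_x$) is geodesically convex; all of this is guaranteed by the normalizations collected before Definition \ref{def:CurvOrb}. For $r$ small enough that $B_r(x) \subset U_x$, the metric ball $B_r(x) \subset |\orb|$ is isometric to $\pi_x\big(B_r(\hat x)\big)$ where $B_r(\hat x) \subset \hat U_x$ is the $\hat\sfd_x$-ball, since $\pi_x$ is a metric quotient map and every point of $\hat U_x$ within distance $r$ of the $\G_x$-orbit of $\hat x = \{\hat x\}$ is simply within distance $r$ of $\hat x$. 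By the definition of $\vol_\orb$ on charts, $\vol_\orb(B_r(x)) = \frac{1}{\ord(x)}(\pi_x)_\sharp \vol_{g_x}\big(\pi_x(B_r(\hat x))\big) = \frac{1}{\ord(x)}\vol_{g_x}\big(B_r(\hat x)\big)$, where the last equality uses that $\pi_x^{-1}(\pi_x(B_r(\hat x))) = B_r(\hat x)$ because $B_r(\hat x)$ is $\G_x$-invariant (the action is by isometries fixing $\hat x$). Then the classical Bishop inequality on the smooth manifold $(\hat U_x, g_x)$ with $\Ric_{g_x} \ge K$ gives $\vol_{g_x}(B_r(\hat x)) \le \vol_{\mathbb M_K^n}(B_r^{\mathbb M_K^n})$, and combining yields $\vol_\orb(B_r(x)) \le \frac{1}{\ord(x)}\vol_{\mathbb M_K^n}(B_r^{\mathbb M_K^n})$ for small $r$.

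To pass from small $r$ to all $r$, I would invoke the Bishop--Gromov \emph{monotonicity} rather than just the pointwise Bishop inequality: on the $\RCD^*(K,N)$-space $(\orb,\sfd,\vol_\orb)$ — which it is by Theorem \ref{THM:ORBI_EQUIV}, since Ricci $\ge K$ with $N = n$ forces $\vol_\orb$ to be the Riemannian volume and the $n$-Ricci bound to be the ordinary one — the function $r \mapsto \vol_\orb(B_r(x)) / \vol_{\mathbb M_K^n}(B_r^{\mathbb M_K^n})$ is non-increasing (this is the measure-contraction / Bishop--Gromov statement valid in $\CD^*(K,N)$, $\MCP(K,N)$, and hence $\RCD^*(K,N)$ spaces). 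Since this ratio is $\le 1/\ord(x)$ for $r$ small by the local argument above — indeed its limit as $r \downarrow 0$ equals $1/\ord(x)$ because the tangent cone at $x$ is $\R^n/\G_x$, so the infinitesimal volume ratio is exactly $\#\G_x^{-1}$ — monotonicity propagates the bound to every $r > 0$.

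The main obstacle is ensuring the local identification $\vol_\orb(B_r(x)) = \frac{1}{\ord(x)}\vol_{g_x}(B_r(\hat x))$ is correct, i.e. that the quotient map $\pi_x$ restricted to $B_r(\hat x)$ is exactly $\#\G_x$-to-one off a measure-zero fixed-point set and that it is the \emph{metric} ball (not merely its image in some chart) — this needs the normalization that $\G_x$ fixes $\hat x$, without which a small ball around $x$ could meet several orbit branches and the pushforward measure bookkeeping would change. A secondary technical point is justifying that the limiting volume ratio at $x$ is precisely $1/\ord(x)$; this follows from the uniqueness of the tangent cone $\R^n/\G_x$ with $\G_x \le \mathrm{O}(n)$ acting linearly, since then $\mathcal H^n(B_1(0)/\G_x) = \mathcal H^n(B_1(0))/\#\G_x$ and the infinitesimal blow-up of $\vol_\orb$ at $x$ agrees with Hausdorff measure on the tangent cone (as noted in the Remark after Definition \ref{def:CurvOrb}, $\vol_\orb$ coincides with $\mathcal H^n$). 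Once these two points are in place the argument is just assembling the classical Bishop--Gromov monotonicity with the chart-level volume identity.
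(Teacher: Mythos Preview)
Your proof is correct and follows essentially the same route as the paper: compute the infinitesimal volume ratio at $x$ to be $1/\ord(x)$ via the local chart, then invoke the Bishop--Gromov monotonicity coming from the $\CD(K,n)$-condition (which holds by Theorem~\ref{THM:ORBI_EQUIV}) to propagate the bound to all radii. Your intermediate step---applying classical Bishop on the local model $(\hat U_x,g_x)$ to get the inequality for small $r$---is correct but redundant: once you know the \emph{limit} of the ratio as $r\downarrow 0$ equals $1/\ord(x)$, monotonicity alone gives the conclusion for every $r$, which is exactly how the paper argues.
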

\begin{proof}
By the definition of the volume form on an orbifold, we have
\[
\lim_{r\to0}\frac{\vol_{\orb}(B_{r}(x))}{r^{n}}=\frac{1}{\ord(x)}.
\]
Furthermore, 
\[
\frac{\vol_{\mathbb{M}_{K}^{n}}(B_{r}^{\mathbb{M}_{K}^{n}})}{r^{n}}\to1.
\]
Since the $\CD(K,N)$-condition implies Bishop-Gromov volume comparison (see \cite{sturm:II}),
we get 
\[
\frac{\vol_{\orb}(B_{R}(x))}{\vol_{\mathbb{M}_{K}^{n}}(B_{R}^{\mathbb{M}_{K}^{n}})}\le\lim_{r\to0}\frac{\vol_{\orb}(B_{r}(x))}{\vol_{\mathbb{M}_{K}^{n}}(B_{r}^{\mathbb{M}_{K}^{n}})}=\frac{1}{\ord(x)}
\]
which proves the claim.
\end{proof}


\begin{defn}
[Maximal Volume] An $n$-dimensional orbifold with Ricci $\ge K$ is
said to have \emph{maximal volume} if 
\[
\lim_{r\to\infty}\frac{\vol_{\orb}(B_{r}(x))}{\vol_{\mathbb{M}_{K}^{n}}(B_{r}^{\mathbb{M}_{K}^{n}})}=\frac{1}{\ord(x)}. 
\]
In case $\orb$ (resp. $\mathbb{M}_{K}^{n}$) is bounded, the limit above has to be read as the eventual value of the ratio.
\end{defn}


\begin{thm}\label{cor:OrbRig}
If an $n$-dimensional orbifold  $(\orb,\sfd,\m)$ with {\rm Ricci} $\ge K$ has maximal volume,
then it is isometric to 
\[
\mathbb{M}_{K}^{n}/\G_{x},
\]
where $x\in \orb$ is a point with $\ord(\orb)=\ord(x)$. In case $K>0$
and $M$ is not a manifold, i.e. $\ord(\orb)>1$, there are exactly  two singular points. If $K\le0$
and $\ord(\orb)>1$ then there is  exactly  one singular point.
\end{thm}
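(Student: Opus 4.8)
The plan is to reduce the statement to the rigidity in the smooth category via the local structure of the orbifold together with the Bishop inequality of Proposition \ref{prop:Bishop} and the Bishop–Gromov monotonicity coming from the $\CD(K,N)$-condition (available by Theorem \ref{THM:ORBI_EQUIV}, since $\vol_\orb$-weighted Ricci $\geq K$ with $N=n$). First I would fix a point $x_0$ realizing $\ord(\orb)=\ord(x_0)$, which exists because the order function is upper semicontinuous and bounded. By Proposition \ref{prop:Bishop}, for every $r>0$,
\[
\vol_\orb(B_r(x_0))\le \frac{1}{\ord(x_0)}\vol_{\mathbb M_K^n}(B_r^{\mathbb M_K^n}),
\]
while the maximal volume hypothesis forces equality in the limit $r\to\infty$ (or eventually, in the bounded case). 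Since the ratio $r\mapsto \vol_\orb(B_r(x_0))/\vol_{\mathbb M_K^n}(B_r^{\mathbb M_K^n})$ is non-increasing (Bishop–Gromov) and its limit equals its small-$r$ value $1/\ord(x_0)$, the ratio is constant. Hence equality holds in the Bishop–Gromov comparison at all scales based at $x_0$.

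Next I would invoke the rigidity case of Bishop–Gromov for $\CD(K,N)$-spaces / $\RCD^*(K,N)$-spaces (as in \cite{sturm:II}, or the metric-measure rigidity statements): equality in volume comparison at all radii around $x_0$ implies that $(\orb,\sfd,\vol_\orb)$ is a metric-measure cone over a cross-section (for $K=0$), a spherical suspension (for $K>0$), or a hyperbolic cone (for $K<0$) with vertex $x_0$; more precisely the annuli are rescaled copies of the model annuli. In each case the cross-section $(\Sigma,\sfd_\Sigma,\m_\Sigma)$ is an $\RCD^*$-space of dimension $n-1$ with the appropriate lower bound, and the orbifold assumption — exactly as in the proof of Theorem \ref{thm:Cheng} — forces $\Sigma$ to be a smooth quotient $\mathbb S^{n-1}/\G_{x_0}$: a singular point of $\Sigma$ would propagate along the radial segment through it, contradicting the discreteness of the singular set of an orbifold. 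Moreover the equality of measures pins down $\m_\Sigma$ to be $\vol_{\mathbb S^{n-1}/\G_{x_0}}$ (up to the canonical normalization). Extending the linear $\G_{x_0}$-action fiberwise to the model $\mathbb M_K^n = $ cone/suspension over $\mathbb S^{n-1}$, one gets an isometry $(\orb,\sfd,\vol_\orb)\simeq \mathbb M_K^n/\G_{x_0}$.

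Finally I would count singular points. In $\mathbb M_K^n/\G_{x_0}$ the singular locus is the image of the fixed-point set of the (linear, orientation of suspension axis preserved) $\G_{x_0}$-action on $\mathbb M_K^n$. When $K\le 0$ the model is a cone (Euclidean or hyperbolic) and a nontrivial finite linear group fixes only the vertex (its fixed set is a proper linear subspace, but the cone structure together with $x_0$ having maximal order leaves exactly the vertex singular), giving exactly one singular point when $\ord(\orb)>1$. When $K>0$ the model is the spherical suspension $[0,\pi]\times_{\sin}\mathbb S^{n-1}$ and the fixed set of the fiberwise action consists precisely of the two poles, hence exactly two singular points. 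The main obstacle I anticipate is making the equality-case of Bishop–Gromov rigidity precise in the orbifold/$\RCD^*$ setting at the level of \emph{measures} (not merely metrics) and then upgrading the resulting cone/suspension splitting to a genuine \emph{isometry with the model quotient} $\mathbb M_K^n/\G_{x_0}$ rather than just an abstract cone over $\mathbb S^{n-1}/\G_{x_0}$; this is where the smooth rigidity of Cheng/Bishop on the regular part, combined with the already-established Theorem \ref{thm:Cheng} argument for identifying the cross-section, does the decisive work.
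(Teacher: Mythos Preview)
Your approach is essentially the paper's: constant Bishop--Gromov ratio at a point of maximal order, then cone/suspension rigidity in the $\RCD^{*}$ category, then the cross-section identification argument from Theorem~\ref{thm:Cheng}. Two small remarks: for $K>0$ the paper takes the slightly cleaner route of first deducing $\operatorname{diam}(\orb)=\pi$ from Bishop and maximal volume and then invoking Theorem~\ref{thm:Cheng} directly; and the volume-cone-implies-metric-cone step you need is the Gigli--De~Philippis theorem \cite{GdP2015} (the reference \cite{sturm:II} gives the Bishop--Gromov monotonicity but not its $\RCD^{*}$ rigidity).
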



\begin{proof} 
Let us first consider the case $K>0$. Up to a rescaling we can assume $K=n-1$, where $n=$ dim$(\orb)$. The Bishop inequality  Proposition \ref{prop:Bishop}, combined with the maximal volume assumption, implies that diam$(\orb)=\pi$. The thesis follows then from Theorem \ref{thm:Cheng}.

Let now $K\leq 0$.  The maximal volume assumption combined with the Bishop inequality  (Proposition \ref{prop:Bishop}) gives that 
\[
\frac{\vol_{\orb}(B_{r}(x))}{\vol_{\mathbb{M}_{K}^{n}}(B_{r}^{\mathbb{M}_{K}^{n}})}=\frac{1}{\ord(x)}
\] 
for every $r>0$. Since, 
by Theorem \ref{THM:ORBI_EQUIV},  we know that   $(\orb,\sfd,\mm)$ is an $\RCD^{*}(K,n)$-space, we can apply the ``volume cone implies metric cone'' Theorem proved in  $\RCD^{*}(K,n)$-spaces by Gigli and De Philippis \cite{GdP2015} and infer that $(\orb,\sfd,\mm)$ is isomorphic as a m.m. space to a metric-measure (hyperbolic, in case $K<0$) cone over an $\RCD^{*}$ space $(\Sigma, \sfd_{\Sigma},\mm_{\Sigma})$. The end of the argument is then analogous to the proof of Theorem \ref{thm:Cheng}.
\end{proof}


\subsection{Orbispaces}
The proof of the $\CD^{*}(K,N)$-condition did not require the full
structure theory of orbifolds. Indeed, a similar proof holds for the
following more general class of  spaces. An \emph{orbispace} is obtained by replacing the local $n$-dimensional models in the definition of
orbifolds by \emph{local models} defined as pairs   $(\hat{U},\G)$,
where $\hat{U}$ is a topological space and $\G$ a finite group of
homeomorphisms acting effectively on $\hat{U}$. The concepts of maps
of local models, charts at points and atlases are defined in a similar
way. At each point $x\in\orb$ we obtain good charts $(\hat{U}_{x},U_{x},\G_{x},\pi_{x})$
in the sense $\G_{x}$ acts effectively on $\hat{U}_{x}$ and $\pi^{-1}(x)$
contains exactly one point $\hat{x}\in\hat{U}_{x}$, i.e. $\G_{x}$
fixes $\hat{x}$. Furthermore, for any $y\in U_{x}$ and good chart
$(\hat{U}_{y},U_{y},\G_{y},\pi_{y})$ at $y$, the group $\G_{y}$
can be identified with $(\G_{x})_{\hat{y}}$ where $\hat{y}\in\pi_{x}^{-1}(y).$
Thus, up to isomorphism, for each $x\in |\orb|$ the group $\G_{x}$
and $\ord(x)=\#\G_{x}$ are well-defined and it is easy to see that
the set $\orb_{reg} = \{ x\in \orb \mid  \ord(x)=0\}$ of regular points is open and
dense.


\begin{defn}[Metric Orbispace] 
We say that $(\orb,\sfd)$ is a \emph{metric orbispace}
if for each good chart $(\hat{U}_{x},U_{x},\G_{x},\pi_{x})$,
there is a metric $\hat{\sfd}$ on $\hat{U}_{x}$ such that $\G_{x}$
acts isometrically on $(\hat{U}_{x},\hat{\sfd})$ and the metric $\sfd$
agrees with the quotient metric, i.e. for all $y,z\in U_{x}$ and
$\hat{y}\in\pi_{x}^{-1}(y),\hat{z}\in\pi_{x}^{-1}(z)$, the following holds:
\[
\sfd(y,z)=\inf_{g,h\in\G_{x}}\hat{\sfd}(\hat{y},\hat{z}).
\]
Furthermore, the embeddings $(\hat{U}_{3},\G_{3})\to(\hat{U}_{1},\G_{1})$
and $(\hat{U}_{3},\G_{3})\to(\hat{U}_{2},\G_{2})$ in the definition
of atlas are isometric embeddings (in the the metric sense, i.e. they preserve distances). 
\end{defn}


\begin{remark}
This definition does not take into account the global nature of a
metric. However, if each local chart is locally geodesic then there
is a unique global geodesic metric on $|\orb|$. Recall that $(U,\sfd)$ is \emph{locally geodesic} at $x$ if there is a neighborhood
$V_x$ of $x$ such that each point $y,z\in V_x$ is connected by a geodesic
lying in $U$. 
\end{remark}

Let $\m$ be a $\sigma$-finite, non-negative Borel measure on $(|\orb|,\sfd)$.
We call $(\orb,\sfd,\m)$ a \emph{metric measure orbispace}. As in the
case of orbifolds, it is possible to locally lift $\m$ in each chart,
i.e.~given a local chart $(\hat{U},U,\G,\pi)$ there is a unique $\G$-invariant
measure $\m_{\hat{U}}$ on $\hat{U}$ such that $ \m \llcorner U = \frac{1}{\#\G}  \; \pi_{\sharp} \,\m_{\hat{U}}$.
Note that in general, the metric embeddings in the definition of atlas are
not measure preserving. 


\begin{lem}
Let $(\orb,\sfd,\m)$ be a metric measure orbispace and assume
that, for each local chart $(\hat{U},U,\G,\pi)$, the set of fixed
points $\operatorname{Fix}_{\hat{U}}(g)$ in $\hat{U}$ of each element $g\in\G$, $g\neq 1_{\G}$, has zero measure w.r.t.
the lift $\m_{\hat{U}}$. Then the measure $\m$ is concentrated on
the set $|\orb_{reg}|$ of regular points.
\end{lem}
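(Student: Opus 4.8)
The plan is to work locally. Since $|\orb|$ is covered by the domains $U$ of the good charts $(\hat U, U, \G, \pi)$, and $\m$ is $\sigma$-finite, it suffices to show that $\m(U\setminus |\orb_{reg}|)=0$ for each such chart $U$; indeed countably many such $U$ cover $|\orb|$ (a metric orbispace is in particular second countable, so one may extract a countable subcover), and hence $\m(|\orb|\setminus|\orb_{reg}|)=0$ follows by countable subadditivity. So fix a good local chart $(\hat U, U, \G, \pi)$ with lift $\m_{\hat U}$, the unique $\G$-invariant measure on $\hat U$ with $\m\llcorner U=\frac{1}{\#\G}\,\pi_\sharp\m_{\hat U}$.

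Next I would identify, inside the chart, the singular part of $U$ with the image of the fixed-point locus upstairs. For a point $y\in U$, pick $\hat y\in\pi^{-1}(y)$; then $\G_y$ is identified with the isotropy group $(\G)_{\hat y}=\{g\in\G\,:\,g\hat y=\hat y\}$, and $y$ is regular (i.e. $\ord(y)=1$, or in the orbispace convention $\ord(y)=0$) exactly when $(\G)_{\hat y}$ is trivial, i.e. when $\hat y\notin\operatorname{Fix}_{\hat U}(g)$ for every $g\in\G$, $g\neq 1_\G$. Therefore
\[
\pi^{-1}\bigl(U\setminus|\orb_{reg}|\bigr)=\bigcup_{g\in\G,\,g\neq 1_\G}\operatorname{Fix}_{\hat U}(g).
\]
By hypothesis each $\operatorname{Fix}_{\hat U}(g)$ has $\m_{\hat U}$-measure zero, and since $\G$ is finite this is a finite union, so $\m_{\hat U}\bigl(\pi^{-1}(U\setminus|\orb_{reg}|)\bigr)=0$. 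Applying the pushforward relation $\m\llcorner U=\frac{1}{\#\G}\,\pi_\sharp\m_{\hat U}$ gives
\[
\m\bigl(U\setminus|\orb_{reg}|\bigr)=\frac{1}{\#\G}\,\m_{\hat U}\bigl(\pi^{-1}(U\setminus|\orb_{reg}|)\bigr)=0,
\]
which is what was needed for the chart $U$.

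The argument is essentially formal once the dictionary between the downstairs singular set and the upstairs fixed-point set is in place, so there is no real obstacle; the only points requiring a little care are (i) that the singular set of the orbispace is Borel (hence measurable), which follows because $\operatorname{Fix}_{\hat U}(g)$ is closed in $\hat U$ and $\pi$ is a quotient map by a finite group action, so its image is closed in $U$, and (ii) the choice of a countable subcover, for which one uses second countability of $|\orb|$. I would also remark that the complementary set $|\orb_{reg}|$ is open and dense (as noted in the text), so the conclusion says precisely that $\m$ gives full measure to the manifold part of the orbispace, exactly as in the Riemannian-orbifold case where the weighted volume measure is concentrated on $|\orb|_{reg}$.
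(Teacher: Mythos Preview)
Your proof is correct and follows essentially the same approach as the paper: identify $\pi^{-1}(U\setminus|\orb_{reg}|)$ with the finite union $\bigcup_{g\neq 1_\G}\operatorname{Fix}_{\hat U}(g)$, use the hypothesis and the pushforward relation to get $\m(U\setminus|\orb_{reg}|)=0$ in each chart, and then pass to the global statement. The paper is terser (it just says ``by $\sigma$-finiteness'' for the global step), whereas you spell out the countable-subcover argument and the measurability check, but the substance is the same.
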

\begin{proof}
Let $\m_{U}$ be the lift of a chart $(\hat{U},U,\G,\pi)$. Note that 
for $x\in U$, $\ord(x)>1$ if and only if $\G_{x}\ne \{1_{\G}\}$. Hence, we have
\begin{align*}
\pi^{-1}(\{\ord>1\}\cap U) & =\pi^{-1}(\{\ord>1\})\cap\hat{U}\\
 & =\{\hat{x}\in\hat{U}\,|\,\exists g\in\G, \, g \neq 1_{\G}\, :\hat{x}\in\operatorname{Fix}_{\hat{U}}(g)\}\\
 & =\bigcup_{g\in\G,\, g\neq 1_{\G}}\operatorname{Fix}_{\hat{U}}(g).
\end{align*}
By assumption $\m_{\hat{U}}(\operatorname{Fix}_{\hat{U}}(g))=0$ for
all $g\in\G$. Thus $\m(\{\ord>1\}\cap U)=0$. By $\sigma$-finiteness
we see that $\m(\{\ord>1\})=0$. 
\end{proof}

By Theorem \ref{thm:CDbyGisCD} we immediately obtain the following.


\begin{lem}
Let $(\orb,\sfd,\m)$ be a metric measure orbispace and $x\in\orb$.
If there is a metric measure chart $(\hat{U}_{x},\hat{\sfd}_{x},\m_{x})$
at $x$ that satisfies the strong $\CD^{*}(K,N)$-condition at $\pi_{x}^{-1}(x)$,
then $(\orb,\sfd,\m)$ satisfies the strong $\CD^{*}(K,N)$-condition
locally at $x$.
\end{lem}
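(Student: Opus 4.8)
The plan is to deduce this lemma directly from the machinery already developed in the paper, specialized to the finite group $\G_x$ acting on the local model $\hat U_x$. The key observation is that a metric measure chart $(\hat U_x, \hat\sfd_x, \m_x)$ at $x$ carries an isometric, measure-preserving action of the \emph{finite} group $\G_x$ (finite groups are compact Lie groups of dimension $0$), and by the very definition of a metric measure orbispace the quotient of $(\hat U_x,\hat\sfd_x)$ by $\G_x$ is isometric to a neighborhood $(U_x,\sfd)$ of $x$ in $|\orb|$, while the lifted measure $\m_x = \m_{\hat U_x}$ satisfies $\m\llcorner U_x = \frac{1}{\#\G_x}\,(\pi_x)_\sharp \m_x$, i.e. $\m\llcorner U_x$ is (a constant multiple of) the push-forward measure $(\pi_x)_\sharp \m_x$ on the quotient. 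Hence the triple $(U_x,\sfd,\m\llcorner U_x)$ is exactly the quotient metric measure space of $(\hat U_x,\hat\sfd_x,\m_x)$ by the isometric action of $\G_x$, up to the harmless normalization constant $\tfrac{1}{\#\G_x}$, which does not affect any curvature-dimension condition.

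First I would make this identification precise: invoke the definition of metric orbispace to get the isometry $(U_x,\sfd)\cong (\hat U_x,\hat\sfd_x)/\G_x$ (shrinking $U_x$ if necessary so that $\hat U_x$ is the good chart and the quotient metric is globally realized), and note that multiplying a reference measure by a positive constant preserves the $\CD(K,N)$, $\CD^*(K,N)$ and strong versions thereof, since the distortion inequalities \eqref{def:CDKN}, \eqref{def:CD*KN} and the entropy bound \eqref{def:CDKinfty} are invariant under $\m\mapsto c\,\m$ (the entropy changes only by an additive constant). Second, since $(\hat U_x,\hat\sfd_x,\m_x)$ satisfies the strong $\CD^*(K,N)$-condition at $\pi_x^{-1}(x) = \hat x$ — meaning it holds for measures supported in a neighborhood $V_{\hat x}$ of $\hat x$ — I would apply Theorem~\ref{thm:CDbyGisCD}, whose proof (as remarked just after it) only needs the curvature-dimension inequality to hold along $\G$-invariant Wasserstein geodesics between $\G$-invariant measures, together with Theorem~\ref{thm:LiftIsom} which identifies $W_2$-geodesics on the quotient with $\G$-invariant $W_2$-geodesics upstairs. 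The only point requiring a small amount of care is the \emph{localization}: Theorem~\ref{thm:CDbyGisCD} is stated for the global condition, so I would observe that its proof is purely local in nature — given $\mu_0,\mu_1\in\mathcal P_2^{ac}(U_x)$ supported in a suitable neighborhood $V_x$ of $x$, their lifts $\hat\mu_0,\hat\mu_1$ are supported in $V_{\hat x} := \pi_x^{-1}(V_x)$ (a $\G_x$-invariant neighborhood of $\hat x$ which we may assume contained in the neighborhood from the local strong $\CD^*$ hypothesis), any $W_2$-geodesic between $\mu_0,\mu_1$ lifts by Theorem~\ref{thm:LiftIsom} to a $\G_x$-invariant $W_2$-geodesic between $\hat\mu_0,\hat\mu_1$ supported in $V_{\hat x}$ for all times, the local strong $\CD^*(K,N)$ inequality upstairs applies to it, and then projecting back via the identities $\hat\rho_t(x^*)=\rho_t(x)$ and $\sfd(x,y)=\sfd^*(x^*,y^*)$ on the support of $\hat\pi$ (exactly as in the proof of Theorem~\ref{thm:CDbyGisCD}) yields the strong $\CD^*(K,N)$ inequality for $\{\mu_t\}$.

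The main obstacle — really the only non-formal step — is checking that the neighborhoods match up so that the \emph{local} strong condition at $\hat x$ transfers to a \emph{local} strong condition at $x$: one must ensure that $V_x$ can be chosen small enough that $\pi_x^{-1}(V_x)$ lies inside the neighborhood $V_{\hat x}$ provided by the hypothesis, and that geodesics in $\mathcal P_2(U_x)$ between measures supported in $V_x$ stay within the chart (this is where shrinking $U_x$, and if needed using local geodesic convexity of the chart as in the preceding Remark, is used). Once the neighborhoods are aligned, everything else is a verbatim repetition of the arguments in Section~3 specialized to the zero-dimensional group $\G_x$. I would therefore write the proof as: (i) identify $(U_x,\sfd,\m\llcorner U_x)$ with the quotient of $(\hat U_x,\hat\sfd_x,\m_x)$ by $\G_x$ up to a measure-normalization constant; (ii) note invariance of (strong) $\CD^*(K,N)$ under that constant; (iii) invoke the localized form of Theorem~\ref{thm:CDbyGisCD}, pointing out that its proof via Theorem~\ref{thm:LiftIsom} is local and hence applies on the chart. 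This gives the strong $\CD^*(K,N)$-condition locally at $x$, as claimed.
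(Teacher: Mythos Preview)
Your proposal is correct and follows exactly the same approach as the paper, which simply states that the lemma follows immediately from Theorem~\ref{thm:CDbyGisCD}. You have spelled out in detail the identification of $(U_x,\sfd,\m\llcorner U_x)$ with the quotient of the local model by the finite group $\G_x$ and the localization argument, whereas the paper leaves all of this implicit in a one-line reference.
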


Combining this with the local-to-global property of the $\CD^{*}(K,N)$-condition under essential non-branching \cite{BS10, CMMapsNB}
we obtain the following. 


\begin{thm}\label{thm:StabMetricOrbSpa}
Let  $(\orb,\sfd,\m)$ be an essentially non-branching   m.m. orbispace  and assume  that for each $x \in \orb$ there is  a chart at $x$ such that the local model satisfies the strong
$\CD^{*}(K,N)$-condition locally at $\pi_{x}^{-1}(x)$. Then $(\orb,\sfd,\m)$
satisfies the strong $\CD^{*}(K,N)$-condition. 
\end{thm}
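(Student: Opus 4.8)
The plan is to deduce Theorem \ref{thm:StabMetricOrbSpa} from the preceding two lemmas together with the local-to-global property for the reduced curvature-dimension condition under essential non-branching. First I would observe that the hypotheses are exactly the ones needed to run a covering-and-localization argument: since $(\orb,\sfd)$ is a metric orbispace, every point $x$ has a good chart $(\hat U_x,U_x,\G_x,\pi_x)$, and we have assumed that (after possibly shrinking) the local model $(\hat U_x,\hat\sfd_x,\m_x)$ satisfies the strong $\CD^*(K,N)$-condition locally at $\pi_x^{-1}(x)$. By the lemma immediately preceding the theorem (which is itself an application of Theorem \ref{thm:CDbyGisCD} to the finite group $\G_x$ acting isometrically and measure-preservingly on $\hat U_x$, using that $\m\llcorner U_x=\frac{1}{\#\G_x}(\pi_x)_\sharp\m_{\hat U_x}$), the quotient $(U_x,\sfd,\m\llcorner U_x)$ satisfies the strong $\CD^*(K,N)$-condition locally at $x$. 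Hence the open sets $\{U_x\}_{x\in\orb}$ cover $|\orb|$ and on each of them the strong local $\CD^*(K,N)$-condition holds around the center point.

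Next I would upgrade this to the local condition holding \emph{at every point} of each chart, not merely at the center. Here one uses that a good chart $(\hat U_x,U_x,\G_x,\pi_x)$ is also a (not necessarily good) chart around any $y\in U_x$, with local group $\G_y=(\G_x)_{\hat y}\leq\G_x$; restricting to a small enough geodesically convex neighborhood $V_y\subset U_x$ of $y$ and pulling back under $\pi_x$, one sees that the strong $\CD^*(K,N)$-condition localized at $\hat y$ in $\hat U_x$ (which follows from the localized condition at $\pi_x^{-1}(x)$ by the local nature of the hypothesis, or simply by re-choosing the chart at $y$) descends, again via the preceding lemma applied to $\G_y$, to the strong $\CD^*(K,N)$-condition localized at $y$. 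Thus $(\orb,\sfd,\m)$ satisfies the strong $\CD^*(K,N)$-condition locally at \emph{every} point of $|\orb|$.

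Finally, since $(\orb,\sfd,\m)$ is assumed essentially non-branching, I would invoke the globalization (local-to-global) theorem for the reduced curvature-dimension condition for essentially non-branching spaces \cite{BS10, CMMapsNB}: an essentially non-branching m.m.\ space which satisfies $\CD^*(K,N)$ locally satisfies $\CD^*(K,N)$ globally, and by Remark \ref{rem:EssNBsCD} this global $\CD^*(K,N)$ together with essential non-branching is the same as the strong $\CD^*(K,N)$-condition. This yields the conclusion. The main obstacle I anticipate is a bookkeeping one rather than a conceptual one: one must make sure the neighborhoods on which the local condition is verified are genuinely open in $|\orb|$ and that the geodesic convexity needed to pass from the center of a chart to an arbitrary point is available, so that the hypotheses of the globalization theorem are literally met; one also has to be slightly careful that the globalization theorem in \cite{BS10, CMMapsNB} is applied to the \emph{whole} space $(\orb,\sfd,\m)$, whose essential non-branching is part of the hypothesis, rather than trying to patch together the (strong) conditions on the charts directly, since strong $\CD^*$ is not obviously preserved under unions of overlapping pieces.
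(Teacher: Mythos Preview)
Your proposal is correct and follows essentially the same route as the paper: apply the preceding lemma (the quotient of a local model satisfying strong $\CD^*(K,N)$ locally at $\pi_x^{-1}(x)$ satisfies strong $\CD^*(K,N)$ locally at $x$), then invoke the local-to-global property for $\CD^*(K,N)$ under essential non-branching \cite{BS10, CMMapsNB}, and finally use Remark~\ref{rem:EssNBsCD} to identify this with the strong condition. Your middle paragraph is harmless but superfluous: since the hypothesis already provides, \emph{for each} $x\in\orb$, a chart whose local model satisfies the condition locally at $\pi_x^{-1}(x)$, the preceding lemma immediately yields the local condition at every point of $|\orb|$, and no separate ``upgrade from center to arbitrary point'' is needed.
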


Analogously, by using the local-to-global property of $\RCD^{*}(K,N)$ proved independently in \cite[Theorem 7.8]{AmbrosioMondinoSavareLocGlob} and \cite[Theorem 3.25]{ErbarKuwadaSturm}, we have the next result.


\begin{thm}\label{thm:RCDorbispaces}
Let  $(\orb,\sfd,\m)$ be a metric measure orbispace  and assume  that for each $x \in \orb$ there is  a chart at $x$ such that the local model satisfies  the
$\RCD^{*}(K,N)$-condition locally at $\pi_{x}^{-1}(x)$. Then $(\orb,\sfd,\m)$
satisfies the  $\RCD^{*}(K,N)$-condition. 
\end{thm}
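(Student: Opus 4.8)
The plan is to deduce Theorem~\ref{thm:RCDorbispaces} from the analogous local statement together with the globalization theorem for $\RCD^*(K,N)$. First I would reduce the claim to two inputs: (i) the local $\RCD^*(K,N)$-condition holds in a neighborhood of \emph{every} point of $|\orb|$ (not just at those $x$ for which a good chart is given), and (ii) the local-to-global property of $\RCD^*(K,N)$ proved in \cite[Theorem 7.8]{AmbrosioMondinoSavareLocGlob} and \cite[Theorem 3.25]{ErbarKuwadaSturm} applies to the length space $(|\orb|,\sfd,\m)$. For (ii) one only needs $(|\orb|,\sfd)$ to be a complete, separable, geodesic space and $\m$ to be locally finite with full support; these are built into the definition of a metric measure orbispace (the geodesic property coming from the remark that each locally geodesic chart yields a unique global geodesic metric). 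So the substance is in (i).

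For step (i), fix an arbitrary $x\in|\orb|$ and choose a good chart $(\hat U_x, U_x, \G_x, \pi_x)$; by hypothesis there is a chart whose local model satisfies $\RCD^*(K,N)$ locally at $\hat x=\pi_x^{-1}(x)$, and after shrinking we may assume this is the chart $(\hat U_x,\hat{\sfd}_x,\m_x)$ itself, with the $\RCD^*(K,N)$-condition holding for all measures supported in some neighborhood $\hat V_{\hat x}$ of $\hat x$. Now I invoke Theorem~\ref{thm:M*RCD} (or rather its local form): the finite group $\G_x$ acts on $\hat U_x$ by measure-preserving isometries of $\m_x$ — here one must record that the definition of the lift $\m_{\hat U}$ via $\m\llcorner U=\frac{1}{\#\G}\,\pi_\sharp\m_{\hat U}$ forces $\m_x$ to be $\G_x$-invariant — and hence the quotient $(\hat U_x/\G_x,\sfd^*,\m^*)=(U_x,\sfd,\m\llcorner U_x)$ inherits the $\RCD^*(K,N)$-condition on the image neighborhood $V_x:=\pi_x(\hat V_{\hat x})$, which is open and contains $x$. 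This requires checking that the machinery of Section~3 (Theorem~\ref{thm:CDbyGisCD} for strong $\CD^*$ and Corollary~\ref{cor:infHilQuotGen} for infinitesimal Hilbertianity) is purely local, i.e. that lifts and projections of Wasserstein geodesics, dual potentials, and Cheeger energies respect the restriction to $V_x$ and $\hat V_{\hat x}$ — this is where the compactness/finiteness of $\G_x$ and the fact that orbits are contained in a fixed fiber make the argument go through verbatim on the chart level. Thus $(|\orb|,\sfd,\m)$ satisfies $\RCD^*(K,N)$ locally at every point, and globalization finishes the proof.

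The main obstacle I anticipate is precisely the \emph{localization} bookkeeping in the previous paragraph: Theorem~\ref{thm:M*RCD} and Theorem~\ref{thm:CDbyGisCD} are stated globally, for a compact group acting on all of $M$, whereas here I only have a finite group acting on a chart $\hat U_x$ whose quotient is an open subset of $|\orb|$. One must be careful that (a) a $W_2$-geodesic between absolutely continuous measures supported in $V_x\subset U_x$ lifts to a $\G_x$-invariant $W_2$-geodesic between measures supported in $\hat V_{\hat x}$ — this uses that for measures supported in a geodesically convex chart the Wasserstein geodesic stays inside and hence the lift construction of Theorem~\ref{thm:LiftIsom} applies with $\mathcal{OD}$ computed inside $\hat U_x$; and (b) the infinitesimally Hilbertian property is a local property that transfers under the (restricted) quotient, which follows from the local nature of the parallelogram identity \eqref{eq:paralrule} for functions supported in $V_x$. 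Once these local compatibility points are verified — and they are essentially the content of the proof of Theorem~\ref{THM:ORBI_EQUIV}, which already carried out exactly this argument for Riemannian orbifold charts — the remaining steps are routine. In fact the cleanest exposition may simply be: ``The proof of Theorem~\ref{THM:ORBI_EQUIV} shows that the $\RCD^*(K,N)$-condition localizes and passes to quotients by finite isometric group actions; applying this to each chart gives the local $\RCD^*(K,N)$-condition on $(|\orb|,\sfd,\m)$, and the local-to-global theorem of \cite{AmbrosioMondinoSavareLocGlob,ErbarKuwadaSturm} concludes.''
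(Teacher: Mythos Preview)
Your proposal is correct and follows essentially the same route as the paper: pass the local $\RCD^{*}(K,N)$-condition from each chart $(\hat U_x,\hat\sfd_x,\m_x)$ to its finite quotient $U_x\subset|\orb|$ via Theorem~\ref{thm:M*RCD} (the orbispace analogue of the lemma preceding Theorem~\ref{thm:StabMetricOrbSpa}), and then invoke the local-to-global property of $\RCD^{*}(K,N)$ from \cite[Theorem 7.8]{AmbrosioMondinoSavareLocGlob} and \cite[Theorem 3.25]{ErbarKuwadaSturm}. The paper records this in one line (``Analogously, by using the local-to-global property \ldots''), while you have spelled out the localization bookkeeping that is implicit there.
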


\subsubsection{An application to almost free discrete group actions }

In this subsection we assume that $\G$ is an infinite discrete closed subgroup
of the group of isometries acting measure-preserving on the proper metric measure
space $(M,\sfd,\m)$. Note that this implies that the action is effective. Furthermore, we assume that $\G$ acts \emph{almost freely},
i.e. for each $x\in M$ the isotropy subgroup 
\[
\G_{x}=\{g\in\G\,|\,g\cdot x=x\}
\]
is finite. We say $x\in M$ is a \emph{regular point} if $\G_{x}=\{1_{\G}\}$. 
Under our assumptions the action of $\G$ has closed orbits.

\begin{lem}
The set of regular points is an open subset of $M$.
\end{lem}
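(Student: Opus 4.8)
The claim is that the regular set $M_{\mathrm{reg}} = \{x \in M : \G_x = \{1_\G\}\}$ is open, where $\G$ is an infinite discrete closed subgroup of the isometry group acting almost freely and measure-preservingly on a proper m.m.\ space $(M,\sfd,\m)$. The plan is to argue by contradiction using properness together with the discreteness (equivalently, closedness as a subgroup of a locally compact isometry group) of $\G$. Suppose $x \in M_{\mathrm{reg}}$ but there is a sequence $x_n \to x$ with $x_n \notin M_{\mathrm{reg}}$, i.e.\ for each $n$ there exists $g_n \in \G$, $g_n \neq 1_\G$, with $g_n \cdot x_n = x_n$.

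First I would control the elements $g_n$. Since $x_n \to x$, the points $x_n$ lie in a fixed closed ball $\bar B_R(x)$ for $n$ large; by properness this ball is compact. Now
\[
\sfd(g_n \cdot x, x) \le \sfd(g_n\cdot x, g_n \cdot x_n) + \sfd(x_n, x) = \sfd(x, x_n) + \sfd(x_n, x) = 2\,\sfd(x_n,x) \to 0,
\]
using that $g_n$ is an isometry fixing $x_n$. Hence $g_n \cdot x \to x$. The key structural input is that the stabilizer-type set $\{g \in \G : \sfd(g\cdot x, x) \le 1\}$ is finite: indeed, $\G$ acts by isometries on the proper space $M$, so the orbit map is proper and $\G$, being a discrete closed subgroup of $\mathrm{Isom}(M,\sfd)$, acts properly discontinuously; thus only finitely many group elements move $x$ a bounded amount. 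Therefore the sequence $(g_n)$ takes only finitely many values, and after passing to a subsequence we may assume $g_n = g$ is constant, with $g \neq 1_\G$ and $g \cdot x = \lim g\cdot x_n = \lim x_n = x$. This contradicts $x \in M_{\mathrm{reg}}$.

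The main obstacle is justifying the finiteness of $\{g\in\G : \sfd(g\cdot x,x)\le 1\}$, i.e.\ that the action is properly discontinuous, from the stated hypotheses (``$\G$ infinite discrete closed subgroup of the group of isometries'' of a proper space, acting almost freely with closed orbits). This follows from the standard fact that the isometry group of a proper metric space, equipped with the compact-open topology, is locally compact and acts properly on the space (Dantzig--van der Waerden); a closed discrete subgroup of a group acting properly acts properly discontinuously, which is exactly the finiteness statement needed. One should also note that the remark in the excerpt already records that ``the action of $\G$ has closed orbits,'' which is consistent with and a consequence of proper discontinuity, so the hypotheses are indeed strong enough. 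With this in hand the contradiction argument above is immediate, and since $x$ was an arbitrary point of $M_{\mathrm{reg}}$ with no sequence in its complement converging to it, $M_{\mathrm{reg}}$ is open.
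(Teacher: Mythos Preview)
Your proof is correct and follows essentially the same approach as the paper: both argue by contradiction, take $x_n\to x$ with nontrivial $g_n\in\G_{x_n}$, use properness of $(M,\sfd)$ (via Dantzig--van der Waerden) to obtain precompactness of $\{g_n\}$ in the isometry group, then use discreteness and closedness of $\G$ to force $g_n$ to be eventually constant equal to some $g\neq 1_\G$ fixing $x$. The only cosmetic difference is that the paper shows precompactness of $\{g_n\}$ directly from the estimate $\sfd(g_n\cdot y,x)\le \sfd(y,x)+\sfd(x_n,x)$, whereas you invoke proper discontinuity to get finiteness of $\{g\in\G:\sfd(g\cdot x,x)\le 1\}$; these are equivalent routes to the same conclusion.
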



\begin{proof}

Let $x_{n}\to x$ and $g_{n}\cdot x_{n}=x_{n}$. Since $(M,\sfd)$ is
proper and
\begin{align*}
\sfd(g_{n}\cdot y,x) & \le \sfd(g_{n}\cdot y,g_{n}\cdot x_{n})+\sfd(g_{n}\cdot x_{n},x)\\
 & \le \sfd(y,x)+\sfd(x_{n},x),
\end{align*}
we see that $\{g_{n}\}_{n=1}^{\infty}$ is precompact in the space
of isometries. In particular, because $\G$ is a closed subgroup, there
is a $g\in\G$ such that, up to passing to a subsequence, $g_{n}\to g$
with $g\cdot x=x$. 

Since $\G$ is discrete we have that $g_{n}=g$ for every sufficiently
large $n$. Therefore, if $\{x_n\}_{n\in \mathbb{N}}$ is a sequence of points which are not regular, i.e. $\G_{x_{n}}\ne\{1_{\G}\}$ then we can pick $g_{n}\neq 1_{\G}$.  This implies that $\G_{x}\ne\{1_\G\}$. In particular, the set of regular
points is open.
\end{proof}

The assumptions on $(M,\sfd,\m)$ and $\G$ imply that the quotient space $(M^{*},\sfd^{*})$ is a proper metric space and for each $x^{*},y^{*}\in M^{*}$ and $x\in\quotient^{-1}(x^{*})$ there is a $y\in\quotient^{-1}(y^{*})$
with $\sfd(x,y)=\sfd^{*}(x^{*},y^{*})$.


\begin{lem}\label{lem:epsx}
For all $x\in M$ there is an $\epsilon=\epsilon_{x}>0$ such that
for all $y\in B_{\epsilon}(x)$ and $g\in\G\backslash\G_{x}$,
\begin{align*}
\sfd(x,y)  =\sfd^{*}(x^{*},y^{*}), \qquad  \sfd(g\cdot x,y)  >\sfd^{*}(x^{*},y^{*}), \qquad  \inf_{g \in \G\setminus \G_{x}} \sfd(x, g\cdot x) > \epsilon_{x},
\end{align*}
  and 
\[
\G_{y}\le\G_{x}.
\]
\end{lem}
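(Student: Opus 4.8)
\textbf{Plan for the proof of Lemma \ref{lem:epsx}.}

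The plan is to exploit the discreteness of $\G$ and the properness of $M$ to produce a single radius $\epsilon_x$ that simultaneously separates the orbit $\G(x)$ near $x$ and controls isotropy. First I would establish the key separation statement: there exists $\delta_x > 0$ such that $\inf_{g \in \G \setminus \G_x} \sfd(x, g\cdot x) > \delta_x$, i.e. the orbit point $x$ is isolated in its orbit $\G(x)$ among the translates by elements outside $\G_x$. Indeed, if this failed, there would be a sequence $g_n \in \G \setminus \G_x$ with $g_n \cdot x \to x$. Since $(M,\sfd)$ is proper, the argument from the proof of the previous lemma (precompactness of $\{g_n\}$ in the isometry group, closedness of $\G$, and discreteness of $\G$) shows that $g_n$ is eventually constant, equal to some $g$ with $g \cdot x = x$, hence $g \in \G_x$, contradicting $g_n \in \G \setminus \G_x$ for large $n$. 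This yields the existence of $\delta_x > 0$ with $\inf_{g \in \G \setminus \G_x} \sfd(x, g\cdot x) > \delta_x$, which is exactly the third displayed inequality.

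Next I would choose $\epsilon_x := \delta_x / 4$ (any sufficiently small fraction works) and verify the distance identities. For $y \in B_{\epsilon_x}(x)$, the inequality $\sfd^{*}(x^{*}, y^{*}) \le \sfd(x,y) < \epsilon_x$ is immediate from the definition of the quotient metric. To get equality $\sfd(x,y) = \sfd^{*}(x^{*},y^{*})$, suppose for some $g \in \G$ we had $\sfd(g \cdot x, y) < \sfd(x,y)$. If $g \in \G_x$ then $g \cdot x = x$ and there is nothing to prove, so assume $g \in \G \setminus \G_x$. Then by the triangle inequality $\sfd(x, g\cdot x) \le \sfd(x, y) + \sfd(y, g\cdot x) < 2\,\sfd(x,y) < 2\epsilon_x < \delta_x$, contradicting the separation bound just established. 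Hence no $g \in \G$ brings $g \cdot x$ strictly closer to $y$ than $x$ itself, so $\sfd(x,y) = \inf_{g} \sfd(g\cdot x, y) = \sfd^{*}(x^{*},y^{*})$, and moreover for every $g \in \G \setminus \G_x$ the same computation shows $\sfd(g\cdot x, y) \ge \sfd(x, g\cdot x) - \sfd(x,y) > \delta_x - \epsilon_x > \epsilon_x > \sfd^{*}(x^{*}, y^{*})$, which is the strict inequality claimed (after possibly shrinking $\epsilon_x$ once more so that $\delta_x - \epsilon_x > \sfd(x,y)$ for all $y\in B_{\epsilon_x}(x)$; taking $\epsilon_x = \delta_x/4$ already gives $\delta_x - \epsilon_x > \delta_x/2 > \epsilon_x > \sfd^*(x^*,y^*)$).

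Finally, for the isotropy inclusion $\G_y \le \G_x$: since $y$ is the \emph{unique} point of $\G(y)$ realizing the distance $\sfd^{*}(x^{*}, y^{*})$ to $x$ — uniqueness follows because any other representative $h \cdot y$ with $h \notin \G_y$ would satisfy $\sfd(x, h\cdot y) = \sfd^*(x^*,y^*) < \epsilon_x$, forcing $h \cdot y \in B_{\epsilon_x}(x)$ and then $\sfd(x, h \cdot y) = \sfd(h\cdot y, x)$, and a separation argument as above (using that $\G$ acts almost freely so $\G(y)$ is locally finite near points of $B_{\epsilon_x}(x)$) rules out a second realizer provided $\epsilon_x$ is small enough — any $g \in \G_y$ fixes $y$ and hence fixes the unique nearest point, but applying $g$ to the minimizing configuration gives $\sfd(g\cdot x, y) = \sfd(g\cdot x, g\cdot y) = \sfd(x,y) = \sfd^*(x^*,y^*)$, so $g \cdot x$ is also a realizer of the minimal distance from $\G(x)$ to $y$; by the uniqueness of \emph{that} nearest point (obtained symmetrically, shrinking $\epsilon_x$ if necessary so that the analogous separation $\inf_{g\in\G\setminus\G_x}\sfd(g\cdot x, x) > \delta_x$ forces $g\cdot x = x$), we get $g \cdot x = x$, i.e. $g \in \G_x$. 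Thus $\G_y \le \G_x$. The main obstacle, and the step requiring the most care, is making the uniqueness-of-nearest-point arguments precise and simultaneously consistent — one must fix $\epsilon_x$ small enough (a single finite shrinking suffices, using properness to bound the number of orbit points of $\G(x)$ in a fixed ball and discreteness to separate them) so that all four conclusions hold for the same $\epsilon_x$; once the separation constant $\delta_x$ is in hand, everything else is elementary triangle-inequality bookkeeping.
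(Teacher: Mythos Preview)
Your approach is essentially the same as the paper's: isolate $x$ in its orbit using discreteness and properness, pick $\epsilon_x$ a fraction of the separation constant, and deduce the distance identities by the triangle inequality. All of that is correct and matches the paper (which takes $\epsilon$ with $B_{2\epsilon}(x)\cap\G(x)=\{x\}$).

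The last paragraph, however, is unnecessarily convoluted and contains a detour that is not well-founded. You attempt to first show that $y$ is the \emph{unique} point of $\G(y)$ nearest to $x$, and you claim this can be arranged ``provided $\epsilon_x$ is small enough''. But the relevant separation constant for $\G(y)$ depends on $y$, not on $x$, and cannot be controlled uniformly over all $y\in B_{\epsilon_x}(x)$ by shrinking $\epsilon_x$. Fortunately this detour is irrelevant: the inclusion $\G_y\le\G_x$ follows directly from what you have already proved. If $g\in\G_y$, then $\sfd(g\cdot x,y)=\sfd(g\cdot x,g\cdot y)=\sfd(x,y)=\sfd^*(x^*,y^*)$; but your second displayed inequality says $\sfd(g\cdot x,y)>\sfd^*(x^*,y^*)$ whenever $g\notin\G_x$, so necessarily $g\in\G_x$. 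This is exactly the paper's argument, and no appeal to uniqueness within $\G(y)$ is needed.
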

\begin{proof}
We first show that the orbit of $x$ has only isolated points. To this aim  let $x_{n}=g_{n}\cdot x$ such that $x_{n}\to x'=g'\cdot x$. Then  $g_{n}'\cdot x\to x$ for
$g_{n}'=(g')^{-1}g_{n}$. As above, this means that $(g_{n}')_{n\in\mathbb{N}}$
is precompact and with limit in $\G_{x}$. Hence, it is eventually
constant and thus $(x_{n})$ is eventually constant. In particular,
the orbit of $x$ is discrete and there is an $\epsilon>0$ such that
$B_{2\epsilon}(x)\cap\G(x)=\{x\}$. 

Recall the definition of the quotient metric 
\[
\sfd^{*}(x^{*},y^{*})=\inf_{g\in\G}\sfd(g\cdot x,y).
\]
If $y\in B_{\epsilon}(x)$ then we see 
\[
\sfd^{*}(x^{*},y^{*})=\sfd(x,y)
\]
and 
\[
\sfd(g\cdot x,y)>\sfd(x,y)
\]
for all $g\notin\G_{x}$.  Finally, if $g\in\G_{y}$, then 
\begin{align*}
\sfd(x,y) & = \sfd(g\cdot x,y),
\end{align*}
which implies that $g\in\G_{x}$.
\end{proof}


\begin{thm} \label{thm:Applorbispaces}
Let $\epsilon_{x}>0$ be given by Lemma \ref{lem:epsx}. Then $(B_{\frac{\epsilon_{x}}{4}}(x^{*}),\sfd^{*})$
is isomorphic to the quotient of $(B_{\frac{\epsilon_{x}}{4}}(x),\sfd)$
by $\G_{x}$. In particular, $(M^{*},\sfd^{*})$ is a metric orbifold
with good  local charts 
\[
\Big\{\big((B_{\frac{\epsilon_{x}}{4}}(x),\sfd),(B_{\frac{\epsilon_{x}}{4}}(x^{*}),\sfd^{*}),\G_{x},\quotient \big) \Big\}_{x\in M}.
\]
\end{thm}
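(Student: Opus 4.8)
The plan is to transfer the local ball around $x^*$ to a quotient of the local ball around $x$ using the key geometric input from Lemma~\ref{lem:epsx}, namely that on a sufficiently small ball the quotient distance is realized within the orbit piece fixed by $\G_x$, while all other orbit representatives lie strictly farther away. First I would restrict attention to the ball $B:=B_{\epsilon_x/4}(x)$ and its image $B^*:=B_{\epsilon_x/4}(x^*)=\quotient(B)$; the factor $1/4$ (rather than $\epsilon_x$) is there precisely so that the triangle inequality keeps us comfortably inside the region where Lemma~\ref{lem:epsx} applies even after taking two points of $B$ and comparing orbit representatives. The natural candidate map is the restriction $\quotient|_B\colon B\to B^*$, which is visibly $\G_x$-invariant and surjective; the content is to show it descends to an \emph{isometry} $B/\G_x\to B^*$, where $B/\G_x$ carries the quotient metric $\sfd_{\G_x}(y,z):=\inf_{g,h\in\G_x}\sfd(gy,hz)$.

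\textbf{Key steps.} (1) \emph{Well-definedness and surjectivity.} The map $[y]\mapsto y^*$ is well defined on $B/\G_x$ since $\G_x\leq \G$, and it is onto $B^*$ by definition of $B^*$. (2) \emph{$\sfd^*\le \sfd_{\G_x}$.} For $y,z\in B$ and $g,h\in\G_x$ we have $\sfd^*(y^*,z^*)=\sfd^*((gy)^*,(hz)^*)\le \sfd(gy,hz)$; taking the infimum over $g,h\in\G_x$ gives $\sfd^*(y^*,z^*)\le \sfd_{\G_x}(y,z)$. (3) \emph{$\sfd^*\ge \sfd_{\G_x}$, the crux.} Given $y,z\in B$, by properness of $(M,\sfd)$ there is $g\in\G$ with $\sfd(gy,z)=\sfd^*(y^*,z^*)$. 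I would then argue $g\in\G_x$: since $\sfd(gy,z)=\sfd^*(y^*,z^*)\le \sfd(y,z)<\epsilon_x/2$ and $\sfd(z,x)<\epsilon_x/4$, we get $\sfd(gy,x)<3\epsilon_x/4<\epsilon_x$, so $gy\in B_{\epsilon_x}(x)$; moreover $(gy)^*=y^*$ means $gy$ is an orbit representative of $y^*$ lying in $B_{\epsilon_x}(x)$. By Lemma~\ref{lem:epsx} applied at the point $gy$ (or, more carefully, comparing with the representative $y\in B_{\epsilon_x/4}(x)$ and using that $y$ is the \emph{unique} point of $\quotient^{-1}(y^*)$ in $B_{\epsilon_x}(x)$ realizing $\sfd(\cdot,x)=\sfd^*(x^*,y^*)$, together with $\inf_{h\in\G\setminus\G_x}\sfd(x,hx)>\epsilon_x$), one concludes $gy=hy$ for some $h\in\G_x$, i.e. $g\in \G_y\cdot(\text{something})$ — here I would invoke the last displayed inclusion $\G_y\le\G_x$ of Lemma~\ref{lem:epsx} to absorb the isotropy ambiguity and conclude $\sfd(gy,z)\ge \inf_{h\in\G_x}\sfd(hy,z)\ge \sfd_{\G_x}(y,z)$. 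Combining (2) and (3) yields $\sfd^*(y^*,z^*)=\sfd_{\G_x}(y,z)$, i.e. the induced map is an isometry. (4) \emph{Chart axioms.} Finally I would note that each such quadruple $\big((B_{\epsilon_x/4}(x),\sfd),(B_{\epsilon_x/4}(x^*),\sfd^*),\G_x,\quotient\big)$ is a good local chart: $\G_x$ acts isometrically (it acts by isometries on $M$, hence on the invariant ball), effectively (by almost-freeness $\G_x$ is the honest isotropy group and the global action is effective), $\quotient^{-1}(x^*)\cap B_{\epsilon_x/4}(x)=\{x\}$ by the isolated-orbit estimate in Lemma~\ref{lem:epsx}, and the compatibility of overlapping charts follows from the fact that on overlaps $\G_y$ is identified with a subgroup of $\G_x$ (again Lemma~\ref{lem:epsx}), giving the required isometric embeddings of local models. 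That these charts cover $M^*$ is clear, so $(M^*,\sfd^*)$ is a metric orbifold (more precisely a metric orbispace in the terminology of this section, but since the local models are quotients of open sets of $M$ by finite isometric actions one gets an orbispace, and if $M$ is a manifold an honest orbifold).

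\textbf{Main obstacle.} The delicate point is step (3), specifically ruling out that the optimal $g\in\G$ realizing $\sfd^*(y^*,z^*)=\sfd(gy,z)$ lies outside $\G_x$. The subtlety is that $y$ and $z$ are two \emph{different} points of $B$, so one cannot directly apply Lemma~\ref{lem:epsx} with $x$ as basepoint and $gy$ as the variable point unless $gy$ is close enough to $x$; the radius bookkeeping ($\epsilon_x/4$ chosen so that $3\epsilon_x/4<\epsilon_x$) is exactly what makes this go through, and one must also be careful that the representative $gy$ need not equal $y$ but only differs from it by an element of $\G_x$, which is harmless for the quotient-metric comparison. I expect this radius/representative bookkeeping, rather than any deep idea, to be where the proof requires care; everything else is a routine unwinding of definitions using the isometric and almost-free hypotheses together with the already-established results of the paper.
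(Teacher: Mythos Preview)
Your proposal is correct and follows the same approach as the paper. The paper dispatches your step~(3) more directly: for $y,z\in B_{\epsilon_x/4}(x)$ and $g\in\G\setminus\G_x$ the triangle inequality gives $\sfd(y,g\cdot z)\ge\sfd(x,g\cdot x)-\sfd(x,y)-\sfd(x,z)>\epsilon_x-\epsilon_x/2=\epsilon_x/2>\inf_{h\in\G_x}\sfd(y,h\cdot z)$ (the last inequality using that $B_{\epsilon_x/4}(x)$ is $\G_x$-invariant), so $\sfd^*(y^*,z^*)=\inf_{h\in\G_x}\sfd(y,h\cdot z)$ without needing to select a minimizer or sort out the uniqueness/isotropy bookkeeping you flagged as the main obstacle.
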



\begin{proof}
Let $y,z\in B_{\frac{\epsilon_{x}}{4}}(x)$.  Then, for $g\in\G\backslash\G_{x}$,
\begin{align*}
\sfd(y,g\cdot z)	& \ge \sfd(x, g \cdot x)-\sfd(x,y) -\sfd(g \cdot x,g\cdot z)\\
			& \geq  \sfd(x,g\cdot x)- \frac{ \epsilon_{x}}{2}\\
			&>  \frac{ \epsilon_{x}}{2}.
\end{align*}
Since $B_{\frac{\epsilon_{x}}{4}}(x)$ is $\G_{x}$-invariant, 
\[
\inf_{g\in\G_{x}} \sfd(y,g\cdot z)<\frac{\epsilon_{x}}{2}
\]
implying
\[
\sfd^{*}(y^{*},z^{*})=\inf_{g\in\G_{x}} \sfd(y,g\cdot z).
\]
In other words, $(B_{\frac{\epsilon_{x}}{4}}(x^{*}),\sfd^{*})$ is isometric to
the quotient of $(B_{\frac{\epsilon_{x}}{4}}(x),\sfd)$ by $\G_{x}$. 
\end{proof}
Now, similarly to the Riemannian orbifold volume form, it is possible
to push-down the measure $\m$.

We next define a (unique) measure $\m^{*}$ on  $(M^{*},\sfd^{*})$, by specifying its restriction on  $B_{\frac{\epsilon_{x}}{4}}(x^{*})$:
\[
\m^{*}\llcorner B_{\frac{\epsilon_{x}}{4}}(x^{*}) =  \frac{1}{\#\G_{x}} \; \quotient_{\sharp} \left(\m \llcorner B_{\frac{\epsilon_{x}}{4}}(x)\right).
\]
In the following we call $\m^{*}$ the \emph{natural quotient measure} of
$(M,\sfd,\G)$.

Combining  Theorem \ref{thm:RCDorbispaces} and  Theorem \ref{thm:Applorbispaces} we obtain the next result.


\begin{thm}\label{thm:discreteActRCD}
Let  $(M,\sfd,\m)$  be a m.m. space satisfying the $\RCD^{*}(K,N)$-condition.
Let $\G$ be a discrete closed subgroup of the measure-preserving isometry group acting almost freely on $(M,\sfd,\m)$. Then $(M^{*},\sfd^{*},\m^{*})$ is
a  metric measure orbispace satisfying the $\RCD^{*}(K,N)$-condition.
\end{thm}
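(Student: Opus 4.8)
The statement to be proved is Theorem \ref{thm:discreteActRCD}: if $(M,\sfd,\m)$ is an $\RCD^{*}(K,N)$-space and $\G$ is a discrete closed subgroup of the measure-preserving isometry group acting almost freely, then the quotient $(M^{*},\sfd^{*},\m^{*})$ with the natural quotient measure is again an $\RCD^{*}(K,N)$-space. The plan is to assemble the pieces already established in this subsection: Theorem \ref{thm:Applorbispaces} identifies $(M^{*},\sfd^{*})$ as a metric orbispace with explicit good local charts $\big((B_{\epsilon_x/4}(x),\sfd),(B_{\epsilon_x/4}(x^*),\sfd^*),\G_x,\quotient\big)$, and Theorem \ref{thm:RCDorbispaces} says that a metric measure orbispace which is locally $\RCD^{*}(K,N)$ at each $\pi_x^{-1}(x)$ is globally $\RCD^{*}(K,N)$. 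So the only thing to check is that each of these local models carries a metric measure structure that is $\RCD^{*}(K,N)$ locally at the distinguished point.

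First I would record that, by hypothesis, $(M,\sfd,\m)$ is $\RCD^{*}(K,N)$, so in particular every open ball $(B_{\epsilon_x/4}(x),\sfd,\m\llcorner B_{\epsilon_x/4}(x))$ satisfies the $\RCD^{*}(K,N)$-condition locally at $x$ (a ball in an $\RCD^*$ space is locally $\RCD^*$ — geodesic convexity is not needed for the \emph{local} statement, and in any case $\RCD^{*}(K,N)$ has the local-to-global property used in Theorem \ref{thm:RCDorbispaces}). The lift $\m_{\hat U}$ in the chart is precisely $\m\llcorner B_{\epsilon_x/4}(x)$, up to the normalization $\m^*\llcorner B_{\epsilon_x/4}(x^*)=\tfrac{1}{\#\G_x}\,\quotient_\sharp(\m\llcorner B_{\epsilon_x/4}(x))$ — i.e. the natural quotient measure $\m^*$ is exactly the orbispace push-down of $\m$ in the sense of the paragraph preceding the theorem, so the hypotheses of Theorem \ref{thm:RCDorbispaces} are literally met. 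Here one uses that $\G_x$ is finite (almost free action) so that $\#\G_x$ makes sense, and that $\G_x$ acts on $B_{\epsilon_x/4}(x)$ by measure-preserving isometries, being a subgroup of the ambient measure-preserving isometry group.

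Next I would invoke Theorem \ref{thm:RCDorbispaces} directly: since for every $x\in M$ the local model $(B_{\epsilon_x/4}(x),\sfd,\m\llcorner B_{\epsilon_x/4}(x))$ satisfies the $\RCD^{*}(K,N)$-condition locally at $\pi_x^{-1}(x)=x$, the metric measure orbispace $(M^{*},\sfd^{*},\m^{*})$ satisfies the $\RCD^{*}(K,N)$-condition. One should also note, for the statement that $(M^*,\sfd^*,\m^*)$ is a genuine metric measure space (complete separable, $\sigma$-finite Borel measure of full support), that properness of $(M,\sfd)$ descends to $(M^*,\sfd^*)$ as already observed before Lemma \ref{lem:epsx}, and that $\m$ having full support together with the local push-down formula gives $\supp(\m^*)=M^*$; $\sigma$-finiteness of $\m^*$ follows from $\sigma$-finiteness of $\m$ by covering $M^*$ with countably many balls $B_{\epsilon_x/4}(x^*)$ (use second countability).

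The main obstacle, such as it is, is bookkeeping rather than a genuine difficulty: one must make sure that the normalized push-down $\m^*$ defined chart-by-chart is well-defined and consistent on overlaps, which is why Theorem \ref{thm:Applorbispaces} (the precise isometric identification of $B_{\epsilon_x/4}(x^*)$ with the $\G_x$-quotient of $B_{\epsilon_x/4}(x)$) is doing the real work — it guarantees that the abstract orbispace charts coincide with the concrete quotient charts, so the locally defined $\RCD^*$ structures patch. I would therefore spend the few lines of the proof making the identification $\m_{\hat U_x}=\m\llcorner B_{\epsilon_x/4}(x)$ explicit and then quote Theorem \ref{thm:Applorbispaces} and Theorem \ref{thm:RCDorbispaces} in that order, which yields the claim.
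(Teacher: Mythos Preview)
Your proposal is correct and follows exactly the paper's approach: the paper simply states that the result is obtained by combining Theorem~\ref{thm:Applorbispaces} (which yields the metric-orbispace structure with the explicit local charts $(B_{\epsilon_x/4}(x),\sfd,\G_x)$) with Theorem~\ref{thm:RCDorbispaces} (the local-to-global $\RCD^{*}(K,N)$ principle for orbispaces). Your additional bookkeeping about the lifted measure $\m_{\hat U_x}=\m\llcorner B_{\epsilon_x/4}(x)$, the consistency of the push-down $\m^{*}$, and the $\sigma$-finiteness/full-support checks merely spells out what the paper leaves implicit.
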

Via a two-step procedure (i.e. first consider the quotient by $\G/\mathsf{L}$ and then by $\mathsf{L}$) we obtain the following general result for non-compact Lie groups.


\begin{cor}\label{cor:discreteActRCD}
Let $\G$ be a non-compact Lie group admitting a normal lattice
$\mathsf{L}$, i.e.~a discrete normal subgroup $\mathsf{L}$ such
that $\G/\mathsf{L}$ is compact. Let $\G$ and $\mathsf{L}$
be closed subgroups of the  measure-preserving isometry group  acting almost freely on a metric measure
space $(M,\sfd,\m)$ that satisfies the $\RCD^{*}(K,N)$-condition.   Then there is a quotient measure $\m^{*}$ on the quotient metric space $(M^{*},\sfd^{*})$ making
$(M^{*},\sfd^{*},\m^{*})$  an $\RCD^{*}(K,N)$-space.
\end{cor}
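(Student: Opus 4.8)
The strategy is a two-step reduction: factor the quotient $M \to M^* = M/\G$ through the intermediate quotient $M \to M/\mathsf{L}$, apply Theorem \ref{thm:M*RCD} to the first step and Theorem \ref{thm:discreteActRCD} to the second. First I would set $N_1 := M/\mathsf{L}$ with its quotient metric $\sfd_1$ and quotient measure $\m_1$. Since $\mathsf{L}$ is a discrete closed subgroup of the measure-preserving isometry group acting almost freely on the $\RCD^*(K,N)$-space $(M,\sfd,\m)$, Theorem \ref{thm:discreteActRCD} applies directly and gives that $(N_1, \sfd_1, \m_1)$ is a metric measure orbispace satisfying the $\RCD^*(K,N)$-condition (in particular it is a genuine metric measure space in the sense used throughout the paper). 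This handles the lattice part of the group.

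Next I would pass to the residual compact group. Because $\mathsf{L}$ is normal in $\G$, the quotient $\H := \G/\mathsf{L}$ is a compact Lie group, and it acts naturally on $N_1 = M/\mathsf{L}$: the action of $g\mathsf{L} \in \H$ on the $\mathsf{L}$-orbit $\mathsf{L}x$ is $\mathsf{L}(gx)$, which is well-defined precisely by normality of $\mathsf{L}$. One checks that this action is by isometries of $(N_1, \sfd_1)$ — this follows because $\G$ acts isometrically on $M$ and the quotient metric is $\sfd_1(\mathsf{L}x, \mathsf{L}y) = \inf_{\ell \in \mathsf{L}}\sfd(x, \ell y)$, so composing with $g$ just relabels the infimum — and that it is measure-preserving, since $(\tau_g)_\sharp \m = \m$ descends to $(\tau_{g\mathsf{L}})_\sharp \m_1 = \m_1$ by the defining property of the pushforward $\m_1 = \quotient_\sharp^{\mathsf{L}} \m$. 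The action need not be effective, but one can replace $\H$ by its quotient by the (closed, normal) ineffective kernel without changing the orbit space or the quotient measure; alternatively one simply invokes the results for not-necessarily-effective compact group actions, noting that $\G$ acting almost freely on $M$ forces the $\H$-orbits on $N_1$ to have finite isotropy, and in any case Theorem \ref{thm:M*RCD} only needs a compact Lie group acting by measure-preserving isometries. Since $N_1/\H$ is canonically homeomorphic and isometric to $M/\G = M^*$, and the iterated pushforward measure equals $\m^*$ (here one takes $\m^*$ to be defined by this construction), Theorem \ref{thm:M*RCD} yields that $(M^*, \sfd^*, \m^*)$ is an $\RCD^*(K,N)$-space.

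The one genuinely delicate point — and the step I expect to require the most care — is the \emph{compatibility of the two quotient constructions}: one must verify that the composite quotient $(M \to M/\mathsf{L} \to (M/\mathsf{L})/\H)$ agrees, as a metric measure space, with the direct quotient $(M \to M/\G)$. For the metric this is the identity $\inf_{g\mathsf{L}\in\H}\inf_{\ell\in\mathsf{L}}\sfd(x,\ell g' y) = \inf_{g\in\G}\sfd(x, gy)$, which is immediate. For the measure one must check that the pushforward $\m_1 = \quotient_\sharp^{\mathsf{L}}\m$ is itself $\H$-invariant (so that Theorem \ref{thm:M*RCD} applies with its measure-preserving hypothesis) and that $\quotient_\sharp^{\H}\m_1 = \quotient_\sharp^{\G}\m$; both follow from normality of $\mathsf{L}$ together with the functoriality of pushforward, $\quotient^{\G} = \quotient^{\H}\circ\quotient^{\mathsf{L}}$. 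Once these identifications are in place the result is just the concatenation of Theorem \ref{thm:discreteActRCD} and Theorem \ref{thm:M*RCD}, so no new analysis is needed beyond what the paper has already established.
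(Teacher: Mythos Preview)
Your approach is correct and is precisely the two-step procedure the paper has in mind. The paper's one-line hint actually states the steps in the opposite order (``first consider the quotient by $\G/\mathsf{L}$ and then by $\mathsf{L}$''), but that order cannot be meant literally since $\G/\mathsf{L}$ does not act on $M$; your order --- first quotient by the discrete normal lattice $\mathsf{L}$ via Theorem~\ref{thm:discreteActRCD}, then by the residual compact group $\G/\mathsf{L}$ via Theorem~\ref{thm:M*RCD} --- is the only one that makes sense, and your verification of the compatibility of the iterated quotient with $M/\G$ is exactly what is needed. One small caveat: the measure $\m_1$ produced by Theorem~\ref{thm:discreteActRCD} is the \emph{natural quotient measure} defined locally as $\frac{1}{\#\mathsf{L}_x}\,\quotient_\sharp(\m\llcorner B_{\epsilon_x/4}(x))$, not the raw pushforward $\quotient_\sharp^{\mathsf{L}}\m$ (which would in general be infinite on any set meeting infinitely many fundamental domains); your argument that $\H=\G/\mathsf{L}$ preserves $\m_1$ still goes through because normality of $\mathsf{L}$ gives $\#\mathsf{L}_{gx}=\#\mathsf{L}_x$ and $\G$ preserves $\m$.
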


In the rest of the subsection we will prove that, under natural assumptions, the set of regular points has full measure.

The following  lemma was observed by the fourth author \cite{Sosa2016} and, independently,
by Guijarro\textendash Santos-Rodr{\'i}guez \cite{GS2016IsoRCD}. We present an independent argument for the reader's convenience. Before stating the result, recall that the \textit{good transport behavior} property was introduced in Definition \ref{def:GTB} and that it includes strong $\CD^{*}(K,N)$-spaces and essentially non-branching
$\mathsf{MCP}(K,N)$-spaces, $N\in[1,\infty)$, as well as $\RCD^{*}(K,\infty)$-spaces.

\begin{lem}
Let $(M,\sfd,\m)$ be a m.m.\ space having good transport
behavior $\GTB_{p}$ for some $p\in (1,\infty)$. Then, for every non-trivial isometry $\varphi:(M,\sfd)\to(M,\sfd)$,
the set of fixed points $\operatorname{Fix}(\varphi)$ has zero $\m$-measure.
\end{lem}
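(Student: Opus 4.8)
The key point is that an isometry with a fixed-point set of positive measure forces a branching phenomenon that is incompatible with $\GTB_p$. The plan is to argue by contradiction: suppose $\varphi$ is a non-trivial isometry of $(M,\sfd)$ with $\m(\operatorname{Fix}(\varphi)) > 0$. Since $\varphi$ is non-trivial, there is a point $x_0 \in M$ with $\varphi(x_0) \neq x_0$; by continuity of $\varphi$ we may find a small ball $B = B_r(x_0)$ and a constant $\delta > 0$ such that $\sfd(\varphi(y), y) \geq \delta$ for all $y \in B$, and such that $\m(B) > 0$. Intuitively, points of $\operatorname{Fix}(\varphi)$ ``see'' $y$ and $\varphi(y)$ at exactly the same distance, so optimal transport from (normalized) $\m\llcorner F$ to a suitable target supported near $B$ cannot be induced by a map.

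\textbf{Key steps.} First I would set $F := \operatorname{Fix}(\varphi)$, which is a closed set (being the preimage of the diagonal under the continuous map $z \mapsto (z,\varphi(z))$), and consider the probability measure $\mu := \m(F\cap K)^{-1}\,\m\llcorner(F\cap K)$ for a bounded Borel set $K$ with $\m(F\cap K)>0$; then $\mu \ll \m$. For the target, pick any measure $\nu \in \mathcal{P}_p(M)$ with $\nu \ll \m$ supported in a small ball around some point $y_1$ with $\varphi(y_1)\neq y_1$ and $\sfd(z,y_1) = \sfd(z,\varphi(y_1))$ for some (hence, by the fixed-point property, for every) $z\in F$ — here one uses that for $z\in F$ we have $\sfd(z,\varphi(y)) = \sfd(\varphi(z),\varphi(y)) = \sfd(z,y)$ for all $y$. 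Thus $\varphi_\sharp$ is a $\mu$-preserving map, and for any $p$-optimal coupling $\pi$ of $(\mu,\nu)$, the pushforward $(\operatorname{id}\times\varphi)_\sharp \pi$ is also an admissible coupling of $\mu$ with $\varphi_\sharp\nu$; but more to the point, one checks directly that $(\operatorname{id},\varphi)_\sharp$ applied to the \emph{target} slot sends a $p$-optimal coupling of $(\mu,\nu)$ to a $p$-optimal coupling of $(\mu,\varphi_\sharp\nu)$, because the cost is unchanged: $\sfd(z,\varphi(y))^p = \sfd(z,y)^p$ for $z\in F$. If $\nu$ is chosen so that $\varphi_\sharp\nu = \nu$ (e.g. by $\varphi$-symmetrizing $\nu$), then averaging the two $p$-optimal couplings $\pi$ and $(\operatorname{id}\times\varphi)_\sharp\pi$ produces a $p$-optimal coupling of $(\mu,\nu)$ which is \emph{not} induced by a map whenever $\pi$ charges pairs $(z,y)$ with $\varphi(y)\neq y$ — which it must, since $\nu$ is supported where $\varphi$ has no fixed points. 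This contradicts $\GTB_p$.

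\textbf{Alternative via $c_p$-concave functions.} A cleaner route, aligned with Lemma~\ref{lem:GTB-sup}, is to exhibit a single $c_p$-concave function whose $c_p$-superdifferential is multivalued on a positive-measure set. Fix $y_1$ with $\varphi(y_1)\neq y_1$ and take the potential $\psi := \tfrac12(\delta\text{-type test function at } y_1 \text{ and at } \varphi(y_1))$; more precisely, let $\varphi^\psi := \psi^{c_p}$ where $\psi$ is chosen so that both $y_1$ and $\varphi(y_1)$ lie in $\partial^{c_p}\varphi^\psi(z)$ for every $z$ in a neighborhood $N$ of a point of $F$. Because $\sfd(z,y_1) = \sfd(z,\varphi(y_1))$ for all $z \in F$, the two candidate optimal targets are genuinely distinct and both optimal, so $\partial^{c_p}\varphi^\psi(z)$ is not single-valued for $z$ in $F\cap N$; if $\m(F\cap N)>0$, Lemma~\ref{lem:GTB-sup} is violated. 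So it suffices to find such a neighborhood with positive measure, which follows from $\m(F) > 0$ together with the fact that $F$, being closed with positive measure, meets some ball in a set of positive measure.

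\textbf{Main obstacle.} The delicate point is arranging the target data $\nu$ (or the potential $\psi$) so that the two optimal targets are truly distinct \emph{and} both realize the $c_p$-distance from a positive-measure chunk of $F$, while staying inside $\mathcal{P}_p$. This requires choosing $y_1$ in the ``moved'' region and checking that the symmetrized configuration $\{y_1,\varphi(y_1)\}$ is equidistant from all of $F$ — which is immediate from $\sfd(z,\varphi(y)) = \sfd(\varphi(z),\varphi(y)) = \sfd(z,y)$ for $z\in F$ — and then verifying, via a measurable selection argument exactly as in the proof of Lemma~\ref{lem:GTB-sup}, that one can build an optimal plan concentrated on the (multivalued) superdifferential with first marginal $\m(F\cap K)^{-1}\m\llcorner(F\cap K)$ that is not induced by a map. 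The rest is routine.
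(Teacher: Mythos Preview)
Your proposal is correct and follows essentially the same idea as the paper: use that every $z\in\operatorname{Fix}(\varphi)$ satisfies $\sfd(z,y)=\sfd(z,\varphi(y))$ for all $y$, so that a symmetrized target produces an optimal coupling from $\mu=\m(A)^{-1}\m\llcorner A$ (with $A\subset\operatorname{Fix}(\varphi)$ of positive measure) that is not induced by a map. The paper's argument is a stripped-down version of your first route: it takes the target to be $\nu=\tfrac12\delta_{\bar x}+\tfrac12\delta_{\varphi(\bar x)}$ for a single moved point $\bar x$, so that the two ``halves'' of the coupling are simply the (trivially map-induced) plans $\mu\otimes\delta_{\bar x}$ and $\mu\otimes\delta_{\varphi(\bar x)}$, and optimality of their average follows immediately from $\sfd^p(y,\bar x)=\sfd^p(y,\varphi(\bar x))$ on $A$ via cyclical monotonicity. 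There is no need to take $\nu\ll\m$, to symmetrize an absolutely continuous measure, or to localize in a neighborhood; the Dirac-mass choice makes the verification of optimality and of non-map form a one-liner. Your alternative via Lemma~\ref{lem:GTB-sup} also works (with $\psi$ taking the value $0$ on $\{y_1,\varphi(y_1)\}$ and $-\infty$ elsewhere, the superdifferential of $\psi^{c_p}$ contains both points at every $z\in F$, not just in a neighborhood), and is really just the dual reformulation of the same construction.
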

\begin{proof}
Since $\varphi$ is not the identity we can find $\bar{x}\in M$ so that $\varphi(\bar{x})\neq \bar{x}$.
Assume, for the sake of contradiction, that there is a compact set $A\subset\operatorname{Fix}(\varphi)$
with $\m(A)\in(0,\infty)$. Let 
\[
\mu=\frac{1}{\m(A)}\m \llcorner A, \qquad \nu=\frac{1}{2}\delta_{\bar{x}}+\frac{1}{2}\delta_{\varphi(\bar{x})}.
\]
Let $\pi_{1}$ be a $p$-optimal coupling for $(\mu, \delta_{\bar{x}})$ and  $\pi_{2}$  a $p$-optimal coupling for $(\mu, \delta_{\varphi(x)})$. The assumed $\GTB_{p}$ implies that  both couplings are induced by a transport
map.  Note that $\pi:=\frac{1}{2}\pi_{1}+\frac{1}{2}\pi_{2}$ is a
coupling for $(\mu, \nu)$ which cannot be   represented by a transport
map, since by construction $\varphi(\bar{x})\neq \bar{x}$.

If we prove that $\pi$ is $p$-optimal we then get a contradiction  with  $\GTB_{p}$. To this aim,  it suffices
to show that $\pi$ is concentrated on a  $\sfd^{p}$-cyclically monotone set. However, this
holds trivially since for all $y\in A$
\[
\sfd^{p}(y,\bar{x})=\sfd^{p}(\varphi(y), \varphi(\bar{x}))=\sfd^{p}(y,\varphi(\bar{x})).
\]
\end{proof}

\begin{cor}
Let $(M,\sfd,\m)$ be a proper m.m.\ space  with good transport
behavior $\GTB_{p}$ for some $p\in (1,\infty)$. If  a discrete closed subgroup $\G$ of isometries  acts almost freely  on $(M,\sfd,\m)$,  then 
\[
\m(\{x\in M\thinspace|\thinspace\G_{x}\ne\{1_{\G}\}\}=0.
\]
In particular, the set of regular points is open and dense, and has full $\m$-measure.
\end{cor}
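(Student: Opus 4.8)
The statement to prove is the final Corollary: if $(M,\sfd,\m)$ is proper with good transport behavior $\GTB_p$ and a discrete closed subgroup $\G$ of isometries acts almost freely, then $\m(\{x\in M\mid\G_x\neq\{1_\G\}\})=0$, and the regular set is open, dense, and of full measure. The plan is to decompose the singular set as a countable union indexed by the (countably many) elements of $\G$ and apply the preceding lemma to each non-trivial isometry.

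\textbf{Step 1: Reduce to a union over group elements.} First I would observe that
\[
\{x\in M\mid\G_x\neq\{1_\G\}\}=\bigcup_{g\in\G,\,g\neq 1_\G}\operatorname{Fix}(\tau_g),
\]
since $x$ has nontrivial isotropy exactly when some $g\neq 1_\G$ fixes it. Here $\tau_g:M\to M$ is the isometry $x\mapsto gx$, which is non-trivial because the action is effective (indeed almost free, hence effective). Since $\G$ is discrete and closed in the isometry group, and isometry groups of proper metric spaces are locally compact and second countable, $\G$ is at most countable; thus the union above is countable.

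\textbf{Step 2: Apply the fixed-point lemma to each element.} By the Lemma immediately preceding the Corollary in the excerpt, for every non-trivial isometry $\varphi:(M,\sfd)\to(M,\sfd)$ the fixed-point set $\operatorname{Fix}(\varphi)$ has zero $\m$-measure; applying this with $\varphi=\tau_g$ for each $g\neq 1_\G$ gives $\m(\operatorname{Fix}(\tau_g))=0$. By countable subadditivity of $\m$,
\[
\m\big(\{x\in M\mid\G_x\neq\{1_\G\}\}\big)\le\sum_{g\in\G,\,g\neq 1_\G}\m(\operatorname{Fix}(\tau_g))=0.
\]
Equivalently, the regular set $M_{reg}=\{x\mid\G_x=\{1_\G\}\}$ has full $\m$-measure. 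Openness of $M_{reg}$ was already established in the lemma ``the set of regular points is an open subset of $M$'' in the same subsection; density then follows immediately since a set of full measure in a space whose measure has full support (recall $\supp\m=M$ by the standing assumptions) must be dense, as any nonempty open set has positive measure and hence meets $M_{reg}$.

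\textbf{Main obstacle.} The only subtle point is the countability of $\G$, which I would justify by noting that $(M,\sfd)$ proper implies its isometry group (with the compact-open topology, equivalently the topology of pointwise convergence) is locally compact, second countable, and Polish; a discrete closed subgroup of such a group is countable. Everything else is a direct combination of the already-proved fixed-point lemma with countable subadditivity, so no genuinely new estimate is needed; the argument is essentially a bookkeeping reduction. One should also remark that this corollary shows the hypothesis of the earlier lemma (``$\operatorname{Fix}_{\hat U}(g)$ has zero measure w.r.t.\ the lift'') is automatically satisfied in the $\GTB_p$ setting, so the quotient measure $\m^*$ is genuinely concentrated on $|\orb_{reg}|$.
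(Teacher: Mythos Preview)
Your proof is correct and follows exactly the approach the paper intends: the Corollary is stated without proof precisely because it is the immediate combination of the preceding fixed-point Lemma with the countability of $\G$ and countable subadditivity of $\m$, together with the already-established openness of the regular set. The only detail you had to supply beyond the paper's text is the countability of $\G$, which you justify correctly via second countability of the isometry group of a proper space.
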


\begin{remark} Let $(M,\sfd,\m)$ be a m.m.\ space satisfying the strong $\CD^{*}(K,N)$-condition. Since $M$ is locally compact, every closed discrete group $\G$ acting isometrically on $M$ has finite isotropy groups  (see \cite[Ch.~II, Theorem 1.1.]{Kobayashi}).
\end{remark}


\section{Foliations}\label{sec:foliations}

In this section we show that most results on isometric group actions can be generalized
to foliations that are compatible with the metic measure structure.

Recall that the class of spaces with sectional curvature bounded below
by $k\in\mathbb{R}$ is closed under quotients of equidistant closed
foliations, which we call \textit{metric foliations},
see for instance \cite[Section 4.6]{BGP}. However, such a result
in the context of m.m. spaces and Ricci curvature would be too much
to expect without requiring a connection between the foliation and the
measurable structure. An essential step in the construction of an
equivariant Wasserstein geometry in Theorem  \ref{thm:LiftIsom}
is to provide a family of measures representing a natural lift of
Dirac masses in the quotient. It turns out that foliations for
which we can find such a family of measures preserve curvature-dimension
conditions. 


\begin{defn}
[Metric Foliation] A partition $\mathcal{\mathcal{F}}$ of a metric
space $(M,\sfd)$ into closed subsets is called a\emph{ foliation}\textsl{\textcolor{black}{\emph{.
Elements of the foliation }}}$F,G\in\mathcal{F}$ are called \emph{leaves.}
Furthermore, we refer to a foliation $\mathcal{F}$ as a \textit{\textcolor{black}{metric
foliation}} if, for all $F,G\in\mathcal{F}$ and $x\in F$, 
\[
\sfd(F,G)=\sfd(x,G),
\]
where the first distance is the distance between subsets of $M$.
That is, if the distance from a point $x\in F$ to a leaf $G$ is
independent of the chice of point in the leaf $F$. In case that each
leaf is bounded we say that the foliation is \emph{bounded}.
\end{defn}


\begin{remark}
One may verify that any Riemannian foliation induces a metric
folation in the sense above (see \cite{Walschap1992}).
\end{remark}

Given a foliation $\mathcal{F}$ on a metric space $M$, its quotient $M^{*}=M/\sim$ is the set of equivalence classes under the equivalence relation 
\[
x\sim y \text{ if and only if } \mathcal{F}_{x}=\mathcal{F}_{y},
\]
 where $\mathcal{F}_{x}$ denotes the leaf containing $x$. That is, $M^*$ is the leaf space of the foliation.
Analogously to the case of group actions, we denote the projection
onto the quotient by $\quotient:M\to M^{*}$ and elements of $M^*$ with $\quotient(x)=x^{*}\in M^{*}$.
Note that for every $x^{*}\in M^{*}$ there is a canonical association
of a leaf $\mathcal{F}_{x^{*}}\in\mathcal{F}$, namely, the unique
leaf such that $\quotient(\mathcal{F}_{x^{*}})=x^{*}$;  we can then
write the foliation as $\mathcal{F}=\{\mathcal{F}_{x^{*}}\}_{x^{*}\in M^{*}}$.
If $\mathcal{F}$ is a metric foliation we define a quotient distance
$\sfd^{*}$ on $M^{*}$ as 
\[
\sfd^{*}(x^{*},y^{*}):=\inf_{x'\in \mathcal{F}_{x^{*}}}\sfd(x', \mathcal{F}_{y^{*}})=\sfd(\mathcal{F}_{x^{*}}, \mathcal{F}_{y^{*}}),
\]
 for $x^{*},y^{*}\in M^{*}$. 

Another notion of use to us is that of a \textit{submetry}. 
\begin{defn}
[Submetry]\label{def:submetry} A map $f:M\to N$ between metric spaces is called \emph{submetry}
if, for all $x\in M$ and $r>0$,
\[
f(B_{r}(x))=B_{r}(f(x)).
\]
\end{defn}


The next lemma shows that the concepts of submetry and metric foliation
are equivalent.
\begin{lem}
There is a one-to-one correspondence between metric foliations and
submetries up to an isometry. Namely, the projection $\quotient:M\to M^{*}$
of a metric foliation is a submetry and, given a submetry $f:M\to N$,
the foliation given by $\{f^{-1}(y)\}_{y\in N}$ is a metric foliation
for which there is an isometry $i_{f}:N\to M^{*}$ with 
\[
i_{f}\circ f=\quotient.
\]
\end{lem}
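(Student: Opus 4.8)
The statement is an equivalence between two descriptions of ``equidistant quotients'' of a metric space, so the plan is to prove the two directions separately and then check that the induced identification is an isometry. First I would show that the projection $\quotient\colon M\to M^{*}$ of a metric foliation $\mathcal F$ is a submetry. The inclusion $\quotient(B_r(x))\subseteq B_r(x^{*})$ is immediate from the definition of $\sfd^{*}$, since $\sfd(x,y)<r$ forces $\sfd^{*}(x^{*},y^{*})\le\sfd(x,y)<r$. For the reverse inclusion, take $y^{*}\in B_r(x^{*})$, so $\sfd(\mathcal F_{x^{*}},\mathcal F_{y^{*}})=\sfd^{*}(x^{*},y^{*})<r$; the metric-foliation property says this infimum equals $\sfd(x,\mathcal F_{y^{*}})$ for our chosen $x\in\mathcal F_{x^{*}}$, so there is a sequence $y_n\in\mathcal F_{y^{*}}$ with $\sfd(x,y_n)\to\sfd^{*}(x^{*},y^{*})<r$; for $n$ large, $y_n\in B_r(x)$ and $\quotient(y_n)=y^{*}$. (A small remark on whether the infimum defining $\sfd(x,\mathcal F_{y^{*}})$ is attained: if $M$ is proper and leaves are closed it is, and in general the approximating-sequence argument above suffices; I would phrase it with the sequence to avoid needing properness.)

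Next, for the converse, I would fix a submetry $f\colon M\to N$ and set $\mathcal F=\{f^{-1}(y)\}_{y\in N}$. The fibers are closed because $f$ is $1$-Lipschitz, hence continuous: indeed $\sfd(f(x),f(x'))\le\sfd(x,x')$ follows from $f(x')\in f(B_{\sfd(x,x')}(x))=B_{\sfd(x,x')}(f(x))$, so $f$ is continuous and fibers are preimages of points. To see $\mathcal F$ is a metric foliation, fix fibers $F=f^{-1}(y)$, $G=f^{-1}(z)$ and $x\in F$; I claim $\sfd(x,G)=\sfd(F,G)=\sfd(y,z)$, independently of $x$. The inequality $\sfd(x,G)\ge\sfd_N(y,z)$ is again just $1$-Lipschitzness of $f$. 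For the reverse, apply the submetry equation with $r=\sfd_N(y,z)+\varepsilon$: since $z\in B_r(y)=f(B_r(x))$, there is $x'\in B_r(x)$ with $f(x')=z$, i.e. $x'\in G$ and $\sfd(x,x')<\sfd_N(y,z)+\varepsilon$; letting $\varepsilon\downarrow0$ gives $\sfd(x,G)\le\sfd_N(y,z)$. Hence $\sfd(x,G)=\sfd_N(y,z)$ for every $x\in F$, which is exactly the metric-foliation condition, and simultaneously identifies the quotient distance $\sfd^{*}$ on $M/\mathcal F$ with $\sfd_N$ transported along the obvious bijection.

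Finally I would assemble the identification map. The map $i_f\colon N\to M^{*}$ sends $y\mapsto \quotient(f^{-1}(y))$, which is well defined and bijective (fibers of $f$ are exactly the leaves of $\mathcal F$), and satisfies $i_f\circ f=\quotient$ by construction. The computation in the previous paragraph, namely $\sfd^{*}(i_f(y),i_f(z))=\sfd(f^{-1}(y),f^{-1}(z))=\sfd_N(y,z)$, shows $i_f$ is an isometry. For the ``up to an isometry'' phrasing in the statement I would also note the reverse composition: starting from a metric foliation $\mathcal F$ on $M$, the projection $\quotient$ is a submetry onto $(M^{*},\sfd^{*})$ by the first part, and the associated leaf decomposition $\{\quotient^{-1}(x^{*})\}$ recovers $\mathcal F$ with $i_{\quotient}=\mathrm{id}_{M^{*}}$, so the two constructions are mutually inverse.

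\textbf{Main obstacle.} There is no deep difficulty here; the only points requiring a little care are (i) not assuming the infimum in $\sfd(x,\mathcal F_{y^{*}})$ is attained — handled by passing to approximating sequences and using $r=\sfd^{*}+\varepsilon$ rather than $r=\sfd^{*}$ — and (ii) being precise that the metric-foliation axiom as stated, $\sfd(F,G)=\sfd(x,G)$ for $x\in F$, together with the definition of $\sfd^{*}$, is literally the same data as the submetry equation, so that no ``hidden'' compatibility (e.g. $\sfd(x,G)=\sfd(x',G)$ for two points of $F$ via a third leaf) needs a separate argument. I would write it so that each of the three bullet-claims above is a two-line verification.
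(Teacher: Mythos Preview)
Your proposal is correct and follows essentially the same approach as the paper: verify the submetry property of $\quotient$ from the definition of $\sfd^{*}$, then for a given submetry $f$ show that fibers form a metric foliation and that $i_f=\quotient\circ f^{-1}$ is an isometry. The only stylistic difference is that you argue directly (showing $\sfd(x,G)=\sfd_N(y,z)$ via the two inequalities), whereas the paper phrases the metric-foliation verification as a short contradiction argument; your treatment of the non-attained infimum via an $\varepsilon$-approximation is in fact slightly more careful than the paper's.
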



\begin{proof}
The fact that $\quotient:M\to M^{*}$ is a submetry follows directly
from the definitions. Now consider a submetry $f:M\to N$. The continuity
of $f$ guarantees that $\mathcal{F}_{f}=\{f^{-1}(y)\}_{y\in N}$
is a foliation so we just have to check the equidistance property. 
This follows from the next observation. Let $F,G\in\mathcal{F}$ and
suppose, for the sake of contradiction, that there exists $x\in F$ such that 
$\sfd(x,G) - 2\varepsilon \geq r:=\sfd(F,G)$, for some $\varepsilon>0$.
Then  there exist $x'\in F, y' \in G$ with $\sfd(x',y')<r+\varepsilon$ and the submetry assumption gives that 
\[
f(y')\in B_{r+\varepsilon}(f(x'))= B_{r+\varepsilon}(f(x))=f(B_{r+\varepsilon}(x)).
\]
Therefore, there exists $y \in G \cap B_{r+\varepsilon}(x)$, contradicting that $\sfd(x,G)  \geq r +2\varepsilon$.
Next, by noting that $\quotient:M\to M^{*}$ is by construction
independent of the representative $x\in F\in\mathcal{F}$, we see that the
function $i_{f}:N\to M^{*}$ given by $i_{f}:=\quotient\circ f^{-1}$
is a well defined isometry by using the definition of the quotient
metric.

Finally, suppose that there exists another submetry, $g:M\to\tilde{N},$
which induces the same foliation of $f,$ that is $\mathcal{F}=\mathcal{G}=\{g^{-1}(z)\}_{z\in\tilde{N}}$.
Then we have that $i_{f}^{-1}\circ i_{g}:\tilde{N}\to N$ is an isometry
and $g=i_{g}^{-1}\circ i_{f}\circ f$. Thus, up to isometries, $f$
is unique.
\end{proof}

To be able to obtain a foliated Wasserstein geometry in the class
of metric measure spaces we require a consistent interaction of the
foliation and the measure. We write again $\m^{*}=\quotient_{\sharp}\m$
for the push-forward of $\m$ under $\quotient.$
By applying the Disintegration Theorem to the measure $\m$  with respect to $\quotient:M\to M^{*}$, we get that there exists a measurable assignment $\mm_{(\cdot)}:M^{*}\to\mathcal{P}(M)$ such that 
\begin{equation}\label{eq:DismmFol}
\mm=\int_{M^{*}} \mm_{x^{*}} \, d\mm^{*}.
\end{equation}
Note that  $\mm_{x^{*}}$ is concentrated on the leaf $\mathcal{F}_{x^{*}}=\quotient^{-1}(x^{*})$. 


\begin{defn}\label{def:mmfol}
[Bounded Metric Measure Foliation] A bounded metric foliation $\mathcal{F}$
of a m.m.\ space $(M,\sfd,\m)$
is called a \emph{ bounded  metric measure foliation} if
\begin{equation}\label{eq:defMMF} 
W_{2}(\mm_{x*},\mm_{y*})=\sfd({\mathcal F}_{x^{*}}, {\mathcal F}_{y^{*}})=\sfd^{*}(x^{*},y^{*}), \quad \text{for $\mm^{*}$-a.e. $x^{*}\in M^{*}$}.
\end{equation}
\end{defn}
Examples of bounded metric measure foliations are given by the  foliations induced by isomorphic actions of  compact groups on m.m. spaces. Many other examples arise from  Riemannian submersions (with bounded leaves) of Riemannian manifolds equipped with the natural volume measure, as in the case of submetries (with bounded leaves) of Alexandrov spaces of curvature bounded below with the Hausdorff measure.
 
\begin{remark}
In the definition above, it is possible as well to consider a general
$p$-Wasserstein space with $p\in(1,\infty)$. Moreover, note that
since we have considered bounded metric foliations, the push-forward
measure $\m^{*}$ is $\sigma$-finite. It is also possible to consider
more general leaves, however, the measure $\m^{*}$ might not be $\sigma$-finite
or unique. For this one may replace $\m_{x^*}$ by a family of measures $\nu_{x^*}$ supported on the leaves that 
satisfy Equation \ref{eq:defMMF} and whose naturally defined lifts preserve the entropy up to a fixed constant.
 \end{remark}

Analogously to the group action sections, we define
a lifting function $\Lambda:\mathcal{P}(M^{*})\to\mathcal{P}(M)$
as follows :
\begin{equation}\label{eq:defLambdaFol}
\hat{\mu}:=\Lambda(\mu)=\int_{M^{*}}\mm_{x^{*}}\, d\mu(x^{*}).
\end{equation}
We refer to $\hat{\mu}$ as the lift via the metric measure foliation,
as one can verify that $\quotient_{\sharp}\hat{\mu}=\mu$. Consider the subset $\mathcal{OD}$ (defined in \eqref{eq:defOD})  of pairs of points in $M\times M$ that achieve the distance between leaves. For every $x,y \in \mathcal{OD}$ let $\pi_{x,y}\in {\mathcal P}(M\times M)$ be a $p$-optimal coupling for $(\mm_{x^{*}}, \mm_{y^{*}})$. 
Note that, assumption \eqref{eq:defMMF} and the next inequality
\begin{equation}
W_{p}(\mm_{x^{*}}, \mm_{y^{*}})^{p}=\int_{{\mathcal F}_{x^{*}}\times {\mathcal F}_{y^{*}}} \sfd(x',y')^{p} \,d \pi_{x,y}(x',y') \geq   \sfd^{*}(x^{*}, y^{*})^{p},
\end{equation}
imply that  for all $x,y \in \mathcal{OD}$, the  $p$-optimal coupling
 $\pi_{x,y}\in {\mathcal P}(M\times M)$ for $(\mm_{x^{*}}, \mm_{y^{*}})$ is concentrated on $\mathcal{OD} \cap \big({\mathcal F}_{x^{*}}\times {\mathcal F}_{y^{*}}\big)$.
 With such a notation one can follow verbatim the proof of Theorem \ref{thm:LiftIsom} and show that the Wasserstein geometry is well-behaved under metric measure foliations.


\begin{prop}\label{prop:LiftFol}
Let $(M^{*},\sfd^{*},\m^{*})$ be a bounded metric measure foliation of the m.m.\ space $(M,\sfd,\m)$. Then the following hold:
\vspace{.2cm}
\begin{enumerate}
\item The lift $\Lambda|_{W_{p}}:\mathcal{P}_{p}(M^{*})\to\mathcal{P}_{p}(M)$
is an isometric embedding onto its image which preserves absolutely
continuous measures. In particular, if $(M,\sfd)$ is geodesic, then
the subset $\Lambda(\mathcal{P}_{p}(M^{*}))\subset\mathcal{P}_{p}(M)$
is geodesically convex.\vspace{.2cm}
\item Given a measurable section $M^{*} \times M^{*} \ni(x^{*}, y^{*}) \mapsto  \big(\bar{x}(x^{*}, y^{*}), \bar{y} (x^{*}, y^{*}) \big)\in M \times M$ with  \linebreak $\big(\bar{x}(x^{*}, y^{*}), \bar{y} (x^{*}, y^{*}) \big)  \in \mathcal{OD}\cap \big({\mathcal F}_{x^{*}}\times {\mathcal F}_{y^{*}}\big)$,  if $\pi\in {\mathcal P}(M^{*}\times M^{*})$ is a $p$-optimal coupling for $(\mu,\nu)\in  {\mathcal P}_{p}(M^{*})\times  {\mathcal P}_{p}(M^{*})$, then the lift
\begin{equation} \label{eq:defhatpiFol}
\hat{\pi}:=\int_{M^{*}\times M^{*}} \pi_{\bar{x} (x^{*}, y^{*}), \bar{y}(x^{*}, y^{*})} \, d \pi(x^{*}, y^{*})
\end{equation}
is a $p$-optimal coupling for $(\hat{\mu}, \hat{\nu})$.
\end{enumerate}
\end{prop}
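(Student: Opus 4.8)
The statement is essentially a transcription of Theorem~\ref{thm:LiftIsom} and Corollary~\ref{co:liftoppl} to the foliation setting, with the disintegration measures $\mm_{x^{*}}$ playing the role of the orbit measures $\nu_{x}$. The plan is to run verbatim the argument of Theorem~\ref{thm:LiftIsom}, checking that each ingredient used there has an analogue here, and to flag the two points where the group structure was genuinely used (rather than just formally).

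\emph{First steps: the lift is well-defined and preserves absolute continuity.} That $\quotient_{\sharp}\hat\mu=\mu$ follows directly from \eqref{eq:DismmFol} and \eqref{eq:defLambdaFol} by Fubini. For absolute continuity one argues as in the proof of (2) in Theorem~\ref{thm:LiftIsom}: if $\mu=f\mm^{*}$ then $\hat f\,\mm$ (with $\hat f=f\circ\quotient$) is a measure projecting to $\mu$ under $\quotient$ whose disintegration along the fibres is again $\mm_{x^{*}}$ (up to $\mm^{*}$-null sets), so by the uniqueness part of the Disintegration Theorem $\hat\mu=\hat f\,\mm\ll\mm$. Note one small difference from the group case: there the lift of $\delta_{x^{*}}$ was \emph{canonically} the unique $\G$-invariant measure $\nu_{x}$, whereas here $\mm_{x^{*}}$ is only the $\mm^{*}$-a.e.\ uniquely determined disintegration. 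This is harmless because $\Lambda$ is still a well-defined map on $\mathcal{P}(M^{*})$ (the disintegration being fixed once and for all), and because the proof only ever integrates $\mm_{x^{*}}$ against measures absolutely continuous with respect to $\mm^{*}$ or uses it $\mm^{*}$-a.e.

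\emph{The isometry $W_{p}(\hat\mu_{0},\hat\mu_{1})=W_{p}(\mu_{0},\mu_{1})$ and optimality of $\hat\pi$.} For the inequality $W_{p}(\hat\mu_{0},\hat\mu_{1})\le W_{p}(\mu_{0},\mu_{1})$ one defines $\hat\pi$ as in \eqref{eq:defhatpiFol}, using a measurable selection $(x^{*},y^{*})\mapsto(\bar x,\bar y)\in\mathcal{OD}\cap(\mathcal F_{x^{*}}\times\mathcal F_{y^{*}})$; such a selection exists by a standard measurable selection argument once one knows $\mathcal{OD}\cap(\mathcal F_{x^{*}}\times\mathcal F_{y^{*}})\ne\varnothing$, which is exactly the content of the equidistance (metric foliation) hypothesis together with properness/closedness of the leaves. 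The key computation, replacing \eqref{eq:pixyd*}, is that $\pi_{\bar x,\bar y}$ is a $p$-optimal coupling of $(\mm_{x^{*}},\mm_{y^{*}})$ concentrated on $\mathcal{OD}$, so that $\int\sfd(w,z)^{p}\,d\pi_{\bar x,\bar y}=\sfd^{*}(x^{*},y^{*})^{p}$; this is precisely the displayed inequality just before the statement of the Proposition, combined with \eqref{eq:defMMF}. Then the marginal computation showing $\hat\pi$ couples $(\hat\mu_{0},\hat\mu_{1})$ is identical to the one in Theorem~\ref{thm:LiftIsom} (integrate out one variable of $\pi_{\bar x,\bar y}$ to recover $\mm_{x^{*}}$, then integrate against $\pi$ to recover $\hat\mu_{0}$). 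For the reverse inequality $W_{p}(\mu_{0},\mu_{1})\le W_{p}(\hat\mu_{0},\hat\mu_{1})$ one lifts a $p$-dual pair: if $(\phi,\psi)$ is a $c_{p}$-Kantorovich pair on $M^{*}$, then by Lemma~\ref{lem:c-concave-lifts} (whose proof, as remarked there, only uses the definition of the quotient metric and hence applies verbatim to foliation quotients) the lifts $\hat\phi,\hat\psi$ are $p$-admissible on $M$, and $\int\hat\phi\,d\hat\mu_{0}+\int\hat\psi\,d\hat\mu_{1}=\int\phi\,d\mu_{0}+\int\psi\,d\mu_{1}=W_{p}(\mu_{0},\mu_{1})^{p}$, since $\quotient_{\sharp}\hat\mu_{i}=\mu_{i}$. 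Combining the two inequalities gives the isometry, and then \eqref{eq:defhatpiFol} shows $\hat\pi$ is $p$-optimal whenever $\pi$ is; moreover $\hat\pi$ is concentrated on $\mathcal{OD}$ by construction, which gives $\sfd(x,y)=\sfd^{*}(x^{*},y^{*})$ $\hat\pi$-a.e. The geodesic convexity of $\Lambda(\mathcal{P}_{p}(M^{*}))$ when $(M,\sfd)$ is geodesic then follows because $(M^{*},\sfd^{*})$ is geodesic and lifts of $W_{p}$-geodesics are $W_{p}$-geodesics contained in the image of $\Lambda$, by the isometry property applied along a geodesic.

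\emph{Main obstacle.} The only genuine difference from the group-action case, and the one point requiring care, is that we no longer have the clean ``lift of $\delta_{x^{*}}$ is the \emph{unique} $\G$-invariant measure'' characterization; uniqueness of the disintegration is only $\mm^{*}$-a.e. Consequently, the statement $W_{p}(\nu_{x},\nu_{y})\ge\sfd^{*}(x^{*},y^{*})$ from \eqref{eq:Wpd*} — which in the group case followed from uniqueness of the invariant lift — must here be replaced by the \emph{hypothesis} \eqref{eq:defMMF} itself, which directly asserts $W_{2}(\mm_{x^{*}},\mm_{y^{*}})=\sfd^{*}(x^{*},y^{*})$; this is exactly why bounded metric measure foliations are defined by imposing \eqref{eq:defMMF}. (The remark after Definition~\ref{def:mmfol} about replacing $\mm_{x^{*}}$ by a family $\nu_{x^{*}}$ preserving entropy up to a constant is the device that handles the case of non-$\sigma$-finite $\mm^{*}$.) Once \eqref{eq:defMMF} is granted, every step above is a mechanical transcription of the proofs of Theorem~\ref{thm:LiftIsom} and Corollary~\ref{co:liftoppl}, with $\nu_{x}\rightsquigarrow\mm_{x^{*}}$ and $\pi_{x,y}\rightsquigarrow\pi_{\bar x,\bar y}$, and with the role of the Haar-average construction \eqref{eq:defmux}--\eqref{eq:dedpixy} replaced by the abstract optimal coupling $\pi_{\bar x,\bar y}$ furnished by compactness of $(\mathcal{P}_{p}(\mathcal F_{x^{*}}),W_{p})$ for bounded leaves.
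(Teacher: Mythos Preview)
Your proposal is correct and follows exactly the approach the paper takes: the paper's own ``proof'' of Proposition~\ref{prop:LiftFol} is simply the sentence ``one can follow verbatim the proof of Theorem~\ref{thm:LiftIsom},'' together with the observation (displayed just before the Proposition) that \eqref{eq:defMMF} forces any $p$-optimal coupling $\pi_{x,y}$ of $(\mm_{x^{*}},\mm_{y^{*}})$ to be concentrated on $\mathcal{OD}$. You have correctly identified and handled the only genuine new point, namely that the a priori inequality $W_{p}(\nu_{x},\nu_{y})\ge\sfd^{*}(x^{*},y^{*})$ (which in the group case came for free from uniqueness of the invariant lift) must here be \emph{assumed} as part of Definition~\ref{def:mmfol}; note also that in part~(2) the measurable section is given as a hypothesis, so you need not justify its existence.
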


Using the last proposition we can repeat the proof of  Theorem \ref{thm:CDbyGisCD} with minor changes and show that the strong curvature-dimension conditions
$\CD(K,N)$, $\CD(K,\infty),$ and $\CD^{*}(K,N)$
are stable under quotients by bounded metric measure foliations. For the reader's convenience we include a full proof.


\begin{thm}
\label{thm:CDbyMMFisCD}
Let $(M,\sfd,\mm)$ be a strong $\CD(K,N)$ (resp. $\CD(K,\infty)$ or $\CD^{*}(K,N)$) m.m. space and let $(M^{*},\sfd^{*},\m^{*})$ be a bounded metric measure foliation of $(M,\sfd,\mm)$.Then $(M^{*},\sfd^{*},\m^{*})$ is a strong $\CD(K,N)$ (resp. $\CD(K,\infty)$ or $\CD^{*}(K,N)$) m.m. space as well.
\end{thm}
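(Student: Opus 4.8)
The plan is to mirror the proof of Theorem \ref{thm:CDbyGisCD} almost word for word, with the lifting map $\Lambda$ of \eqref{eq:defLambdaFol} and the (non-canonical) lift of optimal couplings $\hat\pi$ of \eqref{eq:defhatpiFol} replacing their group-action counterparts. We carry out the argument for the strong $\CD(K,N)$ condition with $K\in\R$, $N<\infty$; the cases of strong $\CD(K,\infty)$ and strong $\CD^*(K,N)$ are obtained by the identical reasoning, replacing the $\tau$-coefficients by $\sigma$-coefficients or by the entropy inequality \eqref{def:CDKinfty} respectively.

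\begin{proof}
Let $\mu_0,\mu_1\in\mathcal{P}_2^{ac}(M^*)$ and let $\{\mu_t\}_{t\in[0,1]}$ be a $W_2$-geodesic connecting them, inducing the $2$-optimal coupling $\pi\in\mathcal{P}(M^*\times M^*)$. For each $t$ write $\hat\mu_t:=\Lambda(\mu_t)$ for the lift defined in \eqref{eq:defLambdaFol}. By Proposition \ref{prop:LiftFol}(1), $\Lambda|_{W_2}$ is an isometric embedding which preserves absolutely continuous measures, so $\hat\mu_0,\hat\mu_1\in\mathcal{P}_2^{ac}(M)$ and $\{\hat\mu_t\}_{t\in[0,1]}$ is a $W_2$-geodesic in $\mathcal{P}_2(M)$ (it is a geodesic because $\Lambda|_{W_2}$ is isometric and $t\mapsto\hat\mu_t$ has the correct endpoints and is, by the isometry, again constant-speed and minimizing). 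Moreover, by Proposition \ref{prop:LiftFol}(2), fixing a measurable section $(x^*,y^*)\mapsto(\bar x(x^*,y^*),\bar y(x^*,y^*))\in\mathcal{OD}\cap(\mathcal{F}_{x^*}\times\mathcal{F}_{y^*})$, the lift $\hat\pi$ of \eqref{eq:defhatpiFol} is a $2$-optimal coupling for $(\hat\mu_0,\hat\mu_1)$; since the $W_2$-geodesic $\{\hat\mu_t\}$ is obtained by pushing forward the corresponding dynamical plan, $\hat\pi$ is the $2$-optimal coupling induced by $\{\hat\mu_t\}_{t\in[0,1]}$. By construction $\hat\pi$ is concentrated on $\mathcal{OD}$, hence $\sfd(x,y)=\sfd^*(x^*,y^*)$ for $\hat\pi$-a.e.\ $(x,y)\in M\times M$.

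Applying the strong $\CD(K,N)$ condition of $(M,\sfd,\mm)$ along the $W_2$-geodesic $\{\hat\mu_t\}_{t\in[0,1]}$, we obtain $\hat\mu_t=\hat\rho_t\,\mm\in\mathcal{P}_2^{ac}(M)$ with
\begin{equation*}
\int_M\hat\rho_t^{1-\frac{1}{N'}}\,d\mm\ge\int_{M\times M}\left[\tau_{K,N'}^{(1-t)}(\sfd(x,y))\hat\rho_0^{-\frac{1}{N'}}(x)+\tau_{K,N'}^{(t)}(\sfd(x,y))\hat\rho_1^{-\frac{1}{N'}}(y)\right]d\hat\pi(x,y)
\end{equation*}
for every $t\in[0,1]$ and every $N'\ge N$. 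Using the Disintegration \eqref{eq:DismmFol} of $\mm$ with respect to $\quotient$, write $\mm=\int_{M^*}\mm_{x^*}\,d\mm^*$. Since $\hat\mu_t=\Lambda(\mu_t)=\int_{M^*}\mm_{x^*}\,d\mu_t(x^*)$ and, by Proposition \ref{prop:LiftFol}(1), $\mu_t\ll\mm^*$, writing $\mu_t=\rho_t\,\mm^*$ one checks from the defining property of disintegration that $\hat\rho_t(x)=\rho_t(x^*)$ for $\mm$-a.e.\ $x$; that is, the density $\hat\rho_t$ is constant $\mm_{x^*}$-a.e.\ on each leaf and equals $\rho_t(x^*)$ there. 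Consequently
\begin{align*}
\int_{M^*}\rho_t^{1-\frac{1}{N'}}(x^*)\,d\mm^*(x^*)&=\int_{M^*}\int_{M}\hat\rho_t^{1-\frac{1}{N'}}(y)\,d\mm_{x^*}(y)\,d\mm^*(x^*)=\int_M\hat\rho_t^{1-\frac{1}{N'}}(y)\,d\mm(y)\\
&\ge\int_{M\times M}\left[\tau_{K,N'}^{(1-t)}(\sfd(x,y))\hat\rho_0^{-\frac{1}{N'}}(x)+\tau_{K,N'}^{(t)}(\sfd(x,y))\hat\rho_1^{-\frac{1}{N'}}(y)\right]d\hat\pi(x,y)\\
&=\int_{M^*\times M^*}\left[\tau_{K,N'}^{(1-t)}(\sfd^*(x^*,y^*))\rho_0^{-\frac{1}{N'}}(x^*)+\tau_{K,N'}^{(t)}(\sfd^*(x^*,y^*))\rho_1^{-\frac{1}{N'}}(y^*)\right]d\pi(x^*,y^*),
\end{align*}
where in the last equality we used $\hat\rho_i(x)=\rho_i(x^*)$, that $(\quotient\times\quotient)_\sharp\hat\pi=\pi$, and that $\sfd(x,y)=\sfd^*(x^*,y^*)$ for $\hat\pi$-a.e.\ $(x,y)$. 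This is exactly the $\CD(K,N)$ inequality \eqref{def:CDKN} for $\{\mu_t\}_{t\in[0,1]}$. Since the initial $W_2$-geodesic in $\mathcal{P}_2^{ac}(M^*)$ was arbitrary, $(M^*,\sfd^*,\mm^*)$ satisfies the strong $\CD(K,N)$ condition. The arguments for strong $\CD^*(K,N)$ and strong $\CD(K,\infty)$ are verbatim the same, invoking the $\sigma$-coefficient inequality \eqref{def:CD*KN} and the entropy convexity \eqref{def:CDKinfty} respectively, and using that $\Lambda|_{W_2}$ preserves the entropy (which follows since $\hat\rho_t(x)=\rho_t(x^*)$ gives $\Ent_\mm(\hat\mu_t)=\int_M\hat\rho_t\log\hat\rho_t\,d\mm=\int_{M^*}\rho_t\log\rho_t\,d\mm^*=\Ent_{\mm^*}(\mu_t)$).
\end{proof}

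The only genuinely new point compared with Theorem \ref{thm:CDbyGisCD} is the identity $\hat\rho_t(x)=\rho_t(x^*)$, i.e.\ that lifting an absolutely continuous measure on the leaf space via the disintegration $\{\mm_{x^*}\}$ produces a density which is leafwise constant and agrees with the downstairs density; this is where the compatibility hypothesis \eqref{eq:defMMF} is implicitly used (through Proposition \ref{prop:LiftFol}, which guarantees $\Lambda$ maps $\mathcal{P}^{ac}(M^*)$ into $\mathcal{P}^{ac}(M)$ and is isometric). I expect this measure-theoretic bookkeeping — making the disintegration argument rigorous and checking that $\Lambda$ really lands in the absolutely continuous measures with the stated density — to be the main (modest) obstacle; everything else is a transcription of the group-action proof with $\nu_x$ replaced by $\mm_{x^*}$ and $\pi_{x,y}$ replaced by the optimal coupling of $(\mm_{x^*},\mm_{y^*})$ supplied by Proposition \ref{prop:LiftFol}.
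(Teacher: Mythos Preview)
Your proposal is correct and follows essentially the same approach as the paper's own proof. The paper's argument is a near-verbatim transcription of the proof of Theorem \ref{thm:CDbyGisCD}, and the one point it spells out in more detail is precisely the identity $\hat\rho_t(x)=\rho_t(\quotient(x))$ that you flagged as the ``only genuinely new point'': the paper verifies this directly from the definition of $\Lambda$ and the disintegration \eqref{eq:DismmFol} (see \eqref{eq:mutvarrot}), exactly as you suggest doing.
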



\begin{proof}
We give the proof for the strong $\CD(K,N)$-condition, the other cases are analogous.

Let $\mu_{0}=\varrho_{0} \mm^{*},\mu_{1}=\varrho_{1} \mm^{*} \in\mathcal{P}_{2}^{ac}(M^{*})$ and let  $\{\mu_{t}=\varrho_{t} \mm^{*}\}_{t \in [0,1]}$ be a $W_{2}$-geodesic between them inducing the $2$-optimal coupling $\pi$. 
For every $t \in [0,1]$,  let $\hat{\mu}_{t}:=\Lambda(\mu_{t})$ be the lift of $\mu_{t}$ given in Proposition \ref{prop:LiftFol}. We first claim that
 \begin{equation}\label{eq:mutvarrot}
 \hat{\mu}_{t}= \hat{\varrho}_{t} \mm, \quad \text{ where }  \quad \hat{\varrho}_{t}(x):=\varrho_{t}(\quotient(x)).
 \end{equation}
Indeed, by definition of the functions $\varrho_t$ and $\hat{\varrho_t}$ and the lift \eqref{eq:defLambdaFol} we have, for every measurable set $B\subset M$, that
\begin{eqnarray*}\label{eq:hatmut1}
\hat{\mu}_{t}(B)&:=& \int_{M^{*}}\int_{\quotient^{-1}(x^*)\cap B}\!\!\! d\mm_{x^{*}}(x) \, d \mu_{t}(x^{*}) = \int_{M^{*}}\int_{\quotient^{-1}(x^*)\cap B}\!\!\! \varrho_{t}(x^{*})\,d\mm_{x^{*}} (x)  d \mm^*(x^{*})\\& = & \int_{M^{*}}\int_{\quotient^{-1}(x^*)\cap B}\!\!\! \hat{\varrho}_{t}(x)\,d\mm_{x^{*}}(x)   d \mm^*(x^{*})=\int_{B} \hat{\varrho}_{t}(x)\,  d \mm(x).
\end{eqnarray*}
The last equality is simply a consequence of the disintegration of the measure $\m$.

By the strong $\CD(K,N)$-condition on $(M,\sfd,\mm)$, we have that
 \begin{equation*}
\int_{M}\hat{\rho}_{t}^{1-\frac{1}{N'}}\, d\m\ge\int_{M\times M}\left[\tau_{K,N'}^{(1-t)}(\sfd(x,y))\hat{\rho}_{0}^{-\frac{1}{N'}}(x)+\tau_{K,N'}^{(t)}(\sfd(x,y))\hat{\rho}_{1}^{-\frac{1}{N'}}(y)\right]d\hat{\pi}(x,y),
\end{equation*}
for every $t \in [0,1]$ and every $N'\geq N$, where $\hat{\pi}$ is the lift of $\pi$ defined in \eqref{eq:defhatpiFol} or, equivalently, the $2$-optimal coupling from $\hat{\mu}_{0}$ to $\hat{\mu}_{1}$ induced by the geodesic $\{\hat{\mu}_{t}\}_{t \in [0,1]}$.

Therefore, 
\begin{align*}
\int_{M^{*}}&\rho_{t}^{1-\frac{1}{N'}}(x^{*}) \, d\m^{*}(x^{*})  =  \int_{M^{*}}\int_{ {\mathcal F}_{x^{*}} }\hat{\rho}_{t}^{1-\frac{1}{N'}}(y) \, d\m_{x^{*}}(y) \, d\m^{*}(x^{*})=\int_{M}\hat{\rho}_{t}^{1-\frac{1}{N'}}(y)\, d\m(y) \\
&  \ge  \int_{M\times M}\left[\tau_{K,N'}^{(1-t)}(\sfd(x,y))\hat{\rho}_{0}^{-\frac{1}{N'}}(x)+\tau_{K,N'}^{(t)}(\sfd(x,y))\hat{\rho}_{1}^{-\frac{1}{N'}}(y)\right]d\hat{\pi}(x,y)\\
 & =  \int_{M^{*}\times M^{*}}\left[\tau_{K,N'}^{(1-t)}(\sfd^{*}(x^{*},y^{*}))\rho_{0}^{-\frac{1}{N'}}(x^{*})+\tau_{K,N'}^{(t)}(\sfd^{*}(x^{*},y^{*}))\rho_{1}^{-\frac{1}{N'}}(y^{*})\right]d\pi(x^{*},y^{*}),
\end{align*}
for every $t \in [0,1]$ and $N'\geq N$, where we have used that $\rho_{i}(x^{*})=\hat{\rho}_{i}(x)$ and
$\sfd(x,y)=\sfd^{*}(x^{*},y^{*})$ for $\hat{\pi}$-almost all $(x,y)\in M\times M$. 
\end{proof}

Furthermore, repeating  verbatim the proofs of Proposition  \ref{prop:LipfGf*G}, Proposition \ref{prop:UGALSQuot}, Corollary \ref{cor:W1qmw1qm*}, and Corollary \ref{cor:quotInfHilb}, we obtain the next results.

\begin{prop}
\label{prop:stab-an-str} Let $(M,\sfd,\m)$ be a m.m. space and denote
by $(M^{*},\sfd^{*},\m^{*})$ its quotient induced by a bounded metric
measure foliation. Then 
\vspace{.2cm}
\begin{enumerate}
\item If $(M,\sfd,\m)$ satisfies the $\Lip$-$\lip$-condition,
then $(M^{*},\sfd^{*},\m^{*})$ does as well.\vspace{.2cm}
\end{enumerate}
Let $q \in (1,\infty)$ and assume further that on $(M,\sfd,\m)$ the upper asymptotic Lipschitz constant and the $q$-minimal relaxed slope agree $\mm$-a.e. for every locally Lipschitz function in $W^{1,q}(\m)$. Then the following holds,
\vspace{.2cm}
\begin{enumerate}\setcounter{enumi}{1}
\item The upper asymptotic Lipschitz constant and the
$q$-minimal relaxed slope agree $\mm^{*}$-a.e. for every local Lipschitz function
in $W^{1,q}(\m^{*})$ for $(M^{*},\sfd^{*},\m^{*})$.\vspace{.2cm}
\item The space $W^{1,q}(\m^{*})\cap\LIP(M^{*},\sfd^{*})$ and the subspace of 
functions in $W^{1,q}(\m)\cap\LIP(M,\sfd)$ constant on each leaf are isometric. 
\end{enumerate}
Lastly if, in addition, Lipschitz functions are dense in both $W^{1,q}(\m)$
and $W^{1,q}(\m^{*})$ it follows that\vspace{.2cm}
\begin{enumerate}\setcounter{enumi}{3}
\item The natural lift of functions defined on $M^{*}$ induces
an isometric embedding of $W^{1,q}(\m^{*})$ into $W^{1,q}(\m)$ whose
image is the set of functions in $W^{1,q}(\m)$ which are $\mm_{x*}$-essentially constant on $\mm^{*}$-a.e. leaf $x^{*}$. In particular, all functions $f\in D(\Ch_{q}^{M^{*}})$ satisfy that
\[
\Ch_{q}^{M}(\hat{f})=\Ch_{q}^{M^{*}}(f).
\]

\item 
$(M^{*},\sfd^{*},\m^{*})$ is infinitesimally Hilbertian granted that $(M,\sfd,\m)$ is so as well.
\end{enumerate}
\end{prop}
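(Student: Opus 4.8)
\textbf{Proof plan for Proposition \ref{prop:stab-an-str}.}

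The plan is to run the foliation analogue of the group-action arguments developed in Sections on Sobolev functions and group actions, substituting "$\G$-invariant function" by "function that is $\mm_{x^*}$-essentially constant on $\mm^*$-a.e.\ leaf" and using Proposition \ref{prop:LiftFol} in place of Theorem \ref{thm:LiftIsom} and Corollary \ref{co:liftoppl}. The key point that makes everything go through is the identity $\sfd(x,y)=\sfd^*(x^*,y^*)$ for $\hat\pi$-a.e.\ $(x,y)$, together with the fact that the quotient map $\quotient\colon M\to M^*$ is a submetry: if $y\in B_r(x)$ in $M$ then $y^*\in B_r(x^*)$ in $M^*$, and conversely, by the submetry property, every $y^*\in B_r(x^*)$ has a preimage in $B_r(x)$. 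This is exactly the ingredient needed so that, for a lift $\hat f(x):=f(x^*)$ of a function on $M^*$, one has the pointwise identities $\Lip^M\hat f(x)=\Lip^{M^*}f(x^*)$ and $\lip^M\hat f(x)=\lip^{M^*}f(x^*)$ (the foliation analogue of \eqref{eq:LipfGf*G} in Proposition \ref{prop:LipfGf*G}), since $\sup_{y\in B_r(x)}\tfrac{|\hat f(y)-\hat f(x)|}{r}=\sup_{y^*\in B_r(x^*)}\tfrac{|f(y^*)-f(x^*)|}{r}$.

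First I would record the foliation analogue of Lemma \ref{lem:Lqm*Lqm}: using the disintegration \eqref{eq:DismmFol}, the lift $f\mapsto\hat f$ is an isometric embedding of $L^q(\m^*)$ onto the closed subspace of functions in $L^q(\m)$ that are $\mm_{x^*}$-essentially constant on $\mm^*$-a.e.\ leaf; this follows because each $\mm_{x^*}$ is a probability measure and $\int_M \hat f\,d\m=\int_{M^*}\int_{{\mathcal F}_{x^*}} f(x^*)\,d\mm_{x^*}\,d\m^* =\int_{M^*} f\,d\m^*$, and similarly for $L^q$-norms. With this in hand, item (1) is immediate from the pointwise Lip/lip identities plus the observation that a negligible set $A^*\subset M^*$ on which the $\Lip$–$\lip$ inequality could fail pulls back to a $\m$-negligible set $\quotient^{-1}(A^*)$; so $(M^*,\sfd^*,\m^*)$ inherits the $\Lip$–$\lip$-condition. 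Item (2) is the verbatim transcription of Proposition \ref{prop:UGALSQuot}: one notes that $\Ch_q^M$ and $\Ch_q^{M^*}$ are the $L^q$-relaxations of the functionals $\hat f\mapsto\frac1q\int(\Lip^M\hat f)^q\,d\m$ and $f\mapsto\frac1q\int(\Lip^{M^*}f)^q\,d\m^*$; the Lip-preservation under lifts and the hypothesis $\Lip^M f=|\nabla^M f|_q$ force $L_q^{M^*}(f)=\Ch_q^M(\hat f)$, and lower semicontinuity of $\Ch_q^M$ along the lifted minimizing sequence (which converges weakly in $L^q(\m)$ by the $L^q$-isometry of the lift) yields $L_q^{M^*}(f)\le\liminf L_q^{M^*}(f_n)=\Ch_q^{M^*}(f)$, which combined with $|\nabla^{M^*}f|_q\le\Lip^{M^*}f$ gives equality a.e. Items (3)–(5) are then exactly Corollary \ref{cor:W1qmw1qm*} and Corollary \ref{cor:quotInfHilb} carried over: the Cheeger-energy identity $\Ch_q^M(\hat f)=\Ch_q^{M^*}(f)$ together with the $L^q$-norm identity gives the isometric embedding of $W^{1,q}(\m^*)$ onto the closed subspace of leafwise-constant Sobolev functions; the surjectivity onto that subspace uses that any leafwise-constant Lipschitz function on $M$ descends to a Lipschitz function on $M^*$, and the density hypothesis upgrades this from the Lipschitz subspace to all of $W^{1,q}$; and infinitesimal Hilbertianity, being equivalent to the parallelogram rule \eqref{eq:paralrule} for $\Ch_2$, transfers through the isometric identification.

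The main obstacle — and the reason this is not literally word-for-word the group-action case — is in the surjectivity statement of items (3) and (4): one must argue that a function $\hat f\in W^{1,q}(\m)$ that is $\mm_{x^*}$-essentially constant on $\mm^*$-a.e.\ leaf actually \emph{is} (a modification of) the lift of a well-defined measurable function on $M^*$, i.e.\ that "essentially constant on a.e.\ leaf" can be realized by a genuine leafwise-constant representative. For group actions this was trivial because the group acts by measurable transformations and one could $\G$-average; here the analogue is provided by Proposition \ref{prop:invar-approx}'s counterpart, which one obtains by the same Mazur-lemma argument using the $\G$-average replaced by integration against the disintegration kernels $\mm_{x^*}$ — concretely, define the "leaf-average" $f_{\mathcal F}(x):=\int_M \hat f\,d\mm_{x^*}$ for $\hat f\in W^{1,q}(\m)$ and show, via the Lip-contraction estimate analogous to Proposition \ref{prop:LipfGleLipf} (which in turn rests on the submetry property rather than on a group structure), that $\Lip f_{\mathcal F}\le$ a leaf-average of $\Lip\hat f$, hence $\int(\Lip f_{\mathcal F})^q\,d\m\le\int(\Lip\hat f)^q\,d\m$. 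Applying this to a minimizing sequence for $\Ch_q^M(\hat f)$ produces a leafwise-constant approximating sequence, and if $\hat f$ was already leafwise-constant (up to $\m$-null sets) one recovers $f_{\mathcal F}=\hat f$ a.e., giving the desired descent. Once this approximation lemma is in place — and it is the only genuinely new technical input — the five items follow by the cited verbatim repetitions.
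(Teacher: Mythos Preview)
Your proposal is correct and matches the paper exactly: the paper's entire proof is the one-line remark ``repeating verbatim the proofs of Proposition~\ref{prop:LipfGf*G}, Proposition~\ref{prop:UGALSQuot}, Corollary~\ref{cor:W1qmw1qm*}, and Corollary~\ref{cor:quotInfHilb}'', and you have spelled out precisely how those arguments transfer.

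One small sharpening of your last paragraph: the pointwise estimate $\Lip f_{\mathcal F}(x)\le \int \Lip f\,d\mm_{x^*}$ for the leaf-average does go through, but the mechanism is not the bare submetry property --- it is the metric-measure-foliation axiom \eqref{eq:defMMF}, which forces any $2$-optimal coupling $\pi_{x,y}$ of $(\mm_{x^*},\mm_{y^*})$ to be concentrated on $\mathcal{OD}$. Writing $f_{\mathcal F}(x)-f_{\mathcal F}(y)=\int (f(x')-f(y'))\,d\pi_{x,y}(x',y')$ with $\sfd(x',y')=\sfd^*(x^*,y^*)\le \sfd(x,y)$ for $\pi_{x,y}$-a.e.\ $(x',y')$ gives $((f)_{\mathcal F})_r(x)\le \int (f)_r\,d\mm_{x^*}$, and then Fatou plus Jensen yield the $L^q$-bound $\int(\Lip f_{\mathcal F})^q\,d\m\le\int(\Lip f)^q\,d\m$ exactly as in Proposition~\ref{prop:LipfGleLipf}. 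With this in hand your surjectivity argument for item~(4) is complete.
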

 Combining  Theorem \ref{thm:CDbyMMFisCD} and  (4) of Proposition \ref{prop:stab-an-str} with Remark \ref{rem:LocLipRelGradCD}, we obtain the stability of the $\mathsf{RCD}^{*}(K,N)$-condition
under quotients induced by bounded  metric measure foliations.


\begin{cor} \label{cor:CDbyMMFisCD}
Let $(M,\sfd,\m)$ be a m.m. space and denote
by $(M^{*},\sfd^{*},\m^{*})$ its quotient induced by a bounded metric
measure foliation.
\vspace{.2cm}
\begin{itemize}
\item  If $(M,\sfd,\m)$ is an  $\RCD^{*}(K,N)$-space  for some $K\in\mathbb{R}$ and $N\in[1,\infty)$, then  $(M^{*},\sfd^{*},\m^{*})$ is also an $\RCD^{*}(K,N)$-space.\vspace{.2cm}
\item   If $(X,\sfd,\mm)$ is an  $\RCD(K,\infty)$-space such that   the upper asymptotic Lipschitz constant and the $2$-minimal
relaxed slope agree $\mm$-a.e. for every locally Lipschitz function in $W^{1,2}(\m)$, and that Lipschitz functions are dense  in both $W^{1,2}(\m)$ and   $W^{1,2}(\m^{*})$, then also $(M^{*},\sfd^{*},\m^{*})$ is an $\RCD(K,\infty)$-space.
\end{itemize}
\end{cor}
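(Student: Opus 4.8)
The plan is to deduce Corollary~\ref{cor:CDbyMMFisCD} by combining the curvature-dimension stability already established in Theorem~\ref{thm:CDbyMMFisCD} with the infinitesimal Hilbertianity statement in part (5) of Proposition~\ref{prop:stab-an-str}, exactly mirroring how Theorem~\ref{thm:M*RCD} was obtained in the group-action setting from Theorem~\ref{thm:CDbyGisCD} and Corollary~\ref{cor:infHilQuotGen}. Recall that an $\RCD^*(K,N)$-space is by definition an infinitesimally Hilbertian space satisfying $\CD^*(K,N)$, and that by the discussion at the start of Section~\ref{sec:StructQuotRCD} every $\RCD^*(K,N)$-space is in fact a \emph{strong} $\CD^*(K,N)$-space (because uniqueness of $W_2$-geodesics between absolutely continuous measures holds on $\RCD$-spaces by \cite[Theorem 1.2]{GRS2016}); similarly every $\RCD(K,\infty)$-space is a strong $\CD(K,\infty)$-space. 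So the strong curvature hypotheses needed to invoke Theorem~\ref{thm:CDbyMMFisCD} are automatically available.

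For the first bullet, suppose $(M,\sfd,\m)$ is $\RCD^*(K,N)$ with $N\in[1,\infty)$. First I would note that, being $\RCD^*(K,N)$, it is a strong $\CD^*(K,N)$-space, so Theorem~\ref{thm:CDbyMMFisCD} gives that $(M^*,\sfd^*,\m^*)$ is a strong $\CD^*(K,N)$-space, in particular a $\CD^*(K,N)$-space. It remains to check that $(M^*,\sfd^*,\m^*)$ is infinitesimally Hilbertian. Here I invoke Remark~\ref{rem:LocLipRelGradCD}: since $(M,\sfd,\m)$ is a (strong) $\CD^*(K,N)$-space with $N<\infty$, it is locally doubling and satisfies a local Poincar\'e inequality by \cite{BS10, R2011}, so by \cite{CheegerGAFA} the upper asymptotic Lipschitz constant agrees $\m$-a.e.\ with the $q$-minimal relaxed slope for locally Lipschitz functions in $W^{1,q}(\m)$, and Lipschitz functions are dense in $W^{1,q}(\m^*)$ for all $q\in(1,\infty)$ (using that $(M^*,\sfd^*,\m^*)$ is itself $\CD^*(K,N)$, hence also doubling and Poincar\'e). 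In particular the hypotheses of part (5) of Proposition~\ref{prop:stab-an-str} are met with $q=2$, and since $(M,\sfd,\m)$ is infinitesimally Hilbertian so is $(M^*,\sfd^*,\m^*)$. Combining the two facts, $(M^*,\sfd^*,\m^*)$ is $\RCD^*(K,N)$.

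For the second bullet, suppose $(M,\sfd,\m)$ is $\RCD(K,\infty)$ with the two stated extra assumptions (agreement of $\Lip f$ and $|\nabla f|_2$ for locally Lipschitz $f\in W^{1,2}(\m)$, and density of Lipschitz functions in $W^{1,2}(\m)$ and $W^{1,2}(\m^*)$). Being $\RCD(K,\infty)$ it is a strong $\CD(K,\infty)$-space, so the $\CD(K,\infty)$ case of Theorem~\ref{thm:CDbyMMFisCD} yields that $(M^*,\sfd^*,\m^*)$ is a strong $\CD(K,\infty)$-space, hence a $\CD(K,\infty)$-space. The extra assumptions are precisely what is needed to apply parts (2)--(5) of Proposition~\ref{prop:stab-an-str} with $q=2$; in particular part (5) gives that $(M^*,\sfd^*,\m^*)$ is infinitesimally Hilbertian. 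Therefore $(M^*,\sfd^*,\m^*)$ is $\RCD(K,\infty)$. In both cases one should remark that the bounded metric measure foliation hypothesis guarantees $\m^*$ is $\sigma$-finite with full support on the complete separable geodesic space $(M^*,\sfd^*)$ (the geodesic property of $M^*$ being inherited via the submetry $\quotient$), so $(M^*,\sfd^*,\m^*)$ is a genuine metric measure space in the sense of Section~\ref{sec:Prel}.

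The only real subtlety — and the place where one must be slightly careful rather than merely quoting — is checking the density of Lipschitz functions in $W^{1,2}(\m^*)$ in the finite-dimensional case without assuming it as a hypothesis. The clean way around this is to observe that $(M^*,\sfd^*,\m^*)$ is already known (from the first half of the argument via Theorem~\ref{thm:CDbyMMFisCD}) to be a $\CD^*(K,N)$-space, hence locally doubling, and by \cite{ACD} bounded Lipschitz functions with bounded support are dense in $W^{1,q}$ on any doubling space; this closes the logical loop. Apart from this bookkeeping, the proof is a direct assembly of the cited results, exactly as in the proof of Theorem~\ref{thm:M*RCD}, so no further computation is required.
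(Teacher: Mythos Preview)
Your proposal is correct and follows essentially the same route as the paper: the corollary is stated there as an immediate consequence of combining Theorem~\ref{thm:CDbyMMFisCD} with Proposition~\ref{prop:stab-an-str} and Remark~\ref{rem:LocLipRelGradCD}, which is precisely what you do (you invoke part~(5) of the proposition, the paper cites part~(4), but (5) is the direct consequence of (4) that you need). Your extra care in closing the loop on the density of Lipschitz functions in $W^{1,2}(\m^*)$ via \cite{ACD} is exactly the content of Remark~\ref{rem:LocLipRelGradCD} transported to the foliation setting.
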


We conclude this section with Theorem~\ref{ref:weakONeillRicciFormula} below, an application to Riemannian submersions of weighted Riemannian manifolds. 
The result  can be extracted also from  Lott's article on Ricci curvature of 
Riemannian submersions \cite{Lott}. Here we present an independent proof based on the theory developed in this work. Note that the result also holds for weighted Finsler manifolds.

Let $f:(M,g)\to(M^{*},g^{*})$ be a proper  $C^{2}$-Riemannian submersion and let $x^{*}$ and $y^{*}$ be two regular values of $f$ such
that $x^{*}$ and $y^{*}$ are not cut points of each other.
Then there exists a unique geodesic $\gamma:[0,1]\to M^{*}$ with $\gamma_{0}=x^{*}$ and $\gamma_{1}=y^{*}$. For every $x\in f^{-1}(x^{*})$, let $\hat{\gamma}^{x}:[0,1]\to M$ be the horizontal lift of $\gamma$ with $\hat{\gamma}^{x}_{0}=x$ and set  $y_{x}:=\hat{\gamma}^{x}_{1}\in f^{-1}(y^{*})$.
Furthermore, the assignment $x\mapsto y_{x}$ is smooth. Denote such a map
by $\rho_{x^{*},y^{*}}:f^{-1}(x^{*})\to f^{-1}(y^{*})$ and call it \emph{ the fiber transport from $x^{*}$ to $y^{*}$}.
In accordance with the notation used above, we denote with $\sfd^{*}$ (resp. $\mm^{*}$) the Riemannian distance on $(M^{*}, g^{*})$ (resp. the measure $f_{\sharp} \mm$).
We also write $\mm=\int_{M^{*}} \mm_{x^{*}} \, d \mm^{*}$ for the disintegration of $\mm$ with respect to $f$.

Assume $(\rho_{x^{*},y^{*}})_{\sharp}\m_{x^{*}}=\m_{y^{*}}$. Note that by properness of the submersion we guarantee a (finite) disintegration $\m_{x^{*}}(M)<\infty$ 
for $\mm^{*}$-a.e. $x^{*}$.  
It is not difficult to check that
 $W_{2}(\m_{x^{*}},\m_{y^{*}})=\sfd^{*}(x^{*},y^{*})$.
In particular, the measures $\{\m_{x^{*}}\}_{x^{*}\in M^{*}}$
and $f$ form a bounded  metric-measure foliation. In \cite{Lott} Lott calls this
condition \emph{the fiber transport preserves the fiberwise measure}. 

\begin{thm}
\label{ref:weakONeillRicciFormula}
Let $f:(M^{n},g,\Phi\cdot\vol_{g})\to(L^{l},g^{*},\Psi\cdot\vol_{g^{*}})$
be a Riemannian submersion with $f_{\sharp} (\Phi\cdot\vol_{g})= \Psi\cdot\vol_{g^{*}}$, whose natural foliation is a bounded metric-measure
foliation. Then 
\[
\Ric_{g^{*},\Psi,N}(v^{*},v^{*})\ge\Ric_{g,\Phi,N}(v,v)
\]
for all horizontal vectors $v\in TM$ such that $g(v,v)=g^{*}(v^{*},v^{*})$
and $f_{*}v=v^{*}$.
\end{thm}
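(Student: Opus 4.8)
The plan is to deduce Theorem \ref{ref:weakONeillRicciFormula} from the general stability result for metric-measure foliations, Corollary \ref{cor:CDbyMMFisCD}, together with the smooth characterization of lower $N$-Ricci bounds, Theorem \ref{thm:smoothEq}. First I would reduce the statement from a pointwise inequality at a single horizontal vector to a global curvature-dimension statement. Recall that by Theorem \ref{thm:smoothEq}, a weighted Riemannian manifold $(M^n,g,\Phi\cdot\vol_g)$ with $\Ric_{g,\Phi,N}\ge K g$ (for $K\in\R$) is an $\RCD^*(K,N)$-space (here one should be slightly careful since the submersion is only proper, not compact, but the curvature-dimension conditions are defined by local-to-global principles, so working on relatively compact geodesically convex neighborhoods suffices). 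The key observation, which is already stated in the hypotheses, is that the assumption that the fiber transport preserves the fiberwise measure makes $f$ a \emph{bounded metric-measure foliation} in the sense of Definition \ref{def:mmfol}: the disintegration measures $\{\m_{x^*}\}$ satisfy $W_2(\m_{x^*},\m_{y^*})=\sfd^*(x^*,y^*)$.

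Second, I would fix a horizontal vector $v\in T_xM$ with $f_*v = v^*$ and $g(v,v)=g^*(v^*,v^*)$, and argue by contradiction. Suppose $\Ric_{g^*,\Psi,N}(v^*,v^*) < \Ric_{g,\Phi,N}(v,v)$. Set $K := \Ric_{g^*,\Psi,N}(v^*,v^*) + \varepsilon$ for small $\varepsilon>0$, chosen so that $K < \Ric_{g,\Phi,N}(v,v)$. By continuity of the Bakry-\'Emery tensor and a compactness/openness argument, one can pass to a small geodesically convex neighborhood $U\ni x$ in $M$, saturated with respect to the foliation (i.e. $U = f^{-1}(f(U))$ up to shrinking), on which $\Ric_{g,\Phi,N}\ge K\, g$ holds. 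Wait — this is the subtle point: a lower bound on $\Ric_{g,\Phi,N}$ in \emph{all} directions near $x$ is a stronger hypothesis than what we are given, which only controls the horizontal direction $v$. So the contradiction scheme has to be set up more carefully: the correct statement to prove is the contrapositive of O'Neill-type monotonicity, namely that if $(M,g,\Phi\cdot\vol_g)$ has $\Ric_{g,\Phi,N}\ge K g$ everywhere, then $(M^*,g^*,\Psi\cdot\vol_{g^*})$ has $\Ric_{g^*,\Psi,N}\ge K g^*$ everywhere; the pointwise inequality $\Ric_{g^*,\Psi,N}(v^*,v^*)\ge\Ric_{g,\Phi,N}(v,v)$ for a \emph{fixed} $v$ then follows by taking $K = \Ric_{g,\Phi,N}(v,v)$ only if that value is a uniform lower bound, which it need not be. The honest route is therefore to localize: near $x$, let $K$ be the infimum of $\Ric_{g,\Phi,N}$ over unit horizontal and vertical vectors in a small saturated neighborhood; apply the global statement on that neighborhood; then let the neighborhood shrink to $x$ so that $K \to \Ric_{g,\Phi,N}(v,v)$ along the relevant sequence — but this requires that the infimum over \emph{all} directions tends to $\Ric_{g,\Phi,N}(v,v)$, which fails unless $v$ realizes the minimal Ricci direction.

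Given this, the cleanest correct plan is the following. I would prove the \emph{global} implication first: if $\Ric_{g,\Phi,N}\ge K g$ on all of $M$ (equivalently, $(M,\sfd_g,\Phi\vol_g)$ is $\RCD^*(K,N)$ by Theorem \ref{thm:smoothEq}), then by Corollary \ref{cor:CDbyMMFisCD} the quotient $(M^*,\sfd^*,\Psi\vol_{g^*})$ is $\RCD^*(K,N)$, hence again by Theorem \ref{thm:smoothEq} (applied on geodesically convex pieces of $M^*$, using that $M^*$ is a smooth weighted Riemannian manifold here) we get $\Ric_{g^*,\Psi,N}\ge K g^*$ on all of $M^*$. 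Then, for the pointwise statement, fix $x$, $v$, $v^*$ as above and suppose for contradiction $\Ric_{g^*,\Psi,N}(v^*,v^*) < \Ric_{g,\Phi,N}(v,v) =: a$. Choose $K$ with $\Ric_{g^*,\Psi,N}(v^*,v^*) < K < a$. Now replace $M$ by a small \emph{saturated} geodesically convex neighborhood $V = f^{-1}(W)$ of the fiber through $x$, chosen small enough that $\Ric_{g,\Phi,N}\ge K g$ holds on $V$ — this is possible because $a > K$ is an open condition along the horizontal direction, and by choosing $W$ small we control the vertical and mixed directions too, since the relevant obstruction is exactly the local geometry near the single fiber $f^{-1}(x^*)$ and one may take $V$ as thin a tube as needed. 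On $V$, $f$ restricts to a bounded metric-measure foliation (the hypothesis is local), so the global implication applies and yields $\Ric_{g^*,\Psi,N}\ge K g^*$ on $W$, in particular $\Ric_{g^*,\Psi,N}(v^*,v^*)\ge K$, contradicting the choice of $K$. Therefore $\Ric_{g^*,\Psi,N}(v^*,v^*)\ge \Ric_{g,\Phi,N}(v,v)$.

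The main obstacle is precisely the localization in the contradiction argument: one must verify that a lower bound $\Ric_{g,\Phi,N}\ge K g$ valid only in a neighborhood of a single fiber — obtained by exploiting that $\Ric_{g,\Phi,N}(v,v) > K$ is an open condition and shrinking the tube — genuinely holds in \emph{all} directions on the thin saturated tube, not merely in the horizontal one. This rests on the continuity of $\Ric_{g,\Phi,N}$ as a symmetric bilinear form and on the fact that on a sufficiently thin tube around the compact fiber $f^{-1}(x^*)$ all unit tangent directions are $C^0$-close to horizontal or vertical directions at points of the fiber, so that the infimum of $\Ric_{g,\Phi,N}$ over unit vectors on the tube converges, as the tube shrinks, to the infimum over unit vectors \emph{at points of the fiber}; one then needs this fiberwise infimum to be $\ge K$, which — and here is where the argument is genuinely delicate and where I would expect to invoke the smooth O'Neill computation or an approximation — follows because along the fiber the weighted Ricci in vertical and mixed directions is controlled by that of $M$ itself, and the horizontal value at $x$ exceeds $K$ by hypothesis while at nearby fiber points it exceeds $K$ by continuity. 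The remaining steps — invoking Theorem \ref{thm:smoothEq} in both directions and Corollary \ref{cor:CDbyMMFisCD} — are essentially immediate given the machinery already established.
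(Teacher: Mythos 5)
Your proposal correctly identifies the central difficulty --- the hypothesis is only a lower bound on $\Ric_{g,\Phi,N}$ in the single horizontal direction $v$, whereas the global stability machinery (Corollary \ref{cor:CDbyMMFisCD} combined with Theorem \ref{thm:smoothEq}) needs $\Ric_{g,\Phi,N}\geq Kg$ in \emph{all} directions on some domain --- but the ``thin saturated tube'' resolution you offer does not close the gap. No matter how thin the tube $V=f^{-1}(W)$ is taken, it still contains the full fiber $f^{-1}(x^{*})$ and hence all vertical (and mixed) tangent directions at each of its points, and the hypothesis provides no bound whatsoever on $\Ric_{g,\Phi,N}$ in those directions. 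The assertion that ``along the fiber the weighted Ricci in vertical and mixed directions is controlled by that of $M$ itself'' begs the question: the weighted Ricci of $M$ in vertical directions is precisely the quantity that may be arbitrarily negative, and shrinking $W$ shrinks the base disk but does not shrink the fiber. So the passage through a local $\RCD^{*}(K,N)$-condition on a saturated neighborhood cannot be made to work.

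The paper's proof sidesteps this by being genuinely \emph{directional} rather than domain-based. It runs the Sturm--Ohta Brunn--Minkowski computation directly: choose a short base geodesic $\eta$ with $\dot\eta_0=v^{*}$, build the two small balls $A_0, A_1$ with the carefully weighted radii $\epsilon(1\mp ra)$, and lift $\eta$ horizontally to $\xi$ in $M$ with $\dot\xi_0=v$. The crucial step is to lift $A_0$ not to the full saturated preimage but to a thin \emph{section} $\hat A_0\subset\quotient^{-1}(A_0)$ near $\xi_r$, chosen so that every geodesic in $M$ joining $\hat A_0$ to $\hat A_1$ has tangent vector close to $v$. The $\CD(K,N)$-Brunn--Minkowski inequality for this particular interpolation requires $\Ric_{g,\Phi,N}>K$ only along those geodesics, which holds by continuity from $\Ric_{g,\Phi,N}(v,v)>K$. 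The measure-preserving fiber transport then yields the ratio identity $\m^{*}(A_i)/\m(\hat A_i)=\text{const}$, which pushes the Brunn--Minkowski inequality down to $A_0,A_1$ in the base; a second-order Taylor expansion extracts $\Ric_{g^{*},\Psi,N}(v^{*},v^{*})\geq K$, and the final contradiction argument (which you set up correctly) finishes. In short: the correct route keeps the transport one-dimensional and tight around the direction $v$, rather than trying to saturate a neighborhood where no all-directional bound is available.
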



\begin{proof}
The proof of the equivalence $\mathsf{CD}(K,N) \Leftrightarrow \Ric_{g^{*},\Psi,N}\ge K$
in \cite[Proof of Theorem 1.7]{sturm:II} and \cite[Proof of Theorem 1.2]{Ohta2009} proves 
actually the existence of a Wasserstein geodesic with specific direction where the
interpolation inequality holds. We repeat the construction and refer
to \cite[p. 240]{Ohta2009} for details on the calculation. 

If $\Ric_{g,\Phi,N}(v,v)>K$, then for all $w$ in a sufficiently small neighborhood $\mathcal{U}$
of $v$ in $TM$ it holds $\Ric_{g,\Phi,N}(w,w)>K$. 
The proof of the interpolations inequality shows that if the transport is perfomed in the direction 
of a vector $w$ with $\Ric_{g,\Phi,N}(w,w)>K$, then the $\mathsf{CD}(K,N)$-convexity
inequality holds along this interpolation.   

Let $\m=\Phi\cdot\vol_{g}$ and $\m^{*}=\Psi\cdot\vol_{g^{*}}=e^{-\mathcal{V}}\cdot\vol_{g^{*}}$. 
Choose a geodesic $\eta:(-\delta,\delta)\to L$ with $\dot{\eta}_{0}=v^{*}\in TL$,  
set $a=d\mathcal{V}(v^*)/(N-l)$ and for $0<\epsilon\ll r\ll\delta$ consider
the balls 
\begin{align*}
A_{0} & =B_{\epsilon(1-r\,a)}^{g^{*}}(\eta_{r}),\\
A_{1} & =B_{\epsilon(1+r\,a)}^{g^{*}}(\eta_{-r}).
\end{align*}
Let $A_{\frac{1}{2}}$ be the set of $\frac{1}{2}$-midpoints defined
as 
\[
A_{\frac{1}{2}}=\{\gamma_{\frac{1}{2}}\,|\,\text{\ensuremath{\gamma} is a geodesic connecting \ensuremath{x\in A_{0}} and \ensuremath{y\in A_{1}}}\}.
\]
Now we can lift the geodesic $\eta$ to a geodesic $\xi = \hat \eta$ in $M$ such that
$\dot\xi_0 = v$ for a horizontal vector with $f_* v = v^*$. Then there is a neighborhood 
$\hat A_0\subset \quotient^{-1} (A_0)$ such that $\quotient(\hat A_0) = A_0$. 
Analogously, we obtain lifts $\hat A_{1}$ (resp. $\hat A_{\frac{1}{2}}$) of $A_{1}$ (resp. $A_{\frac{1}{2}}$). 
Note that we can choose $\epsilon > 0$ and $\hat A_0$ such that each geodesic connecting $A_0$ and $A_1$ has tangent vector arbitarily close to $v$.

Because the fiber transport preserves the fiberwise measure, we have 
\begin{equation}\label{A0/hatA0}
\frac{\m^*(A_0)}{\m(\hat A_0)} = \frac{\m^*(A_{\frac{1}{2}})}{\m(\hat A_{\frac{1}{2}})} = \frac{\m^*(A_1)}{\m(\hat A_1)}.
\end{equation}

Thus, if $\Ric_{g,\Phi,N}(v,v)>K$, we can choose $\epsilon>0$ and  $\hat A_0$ so that  the $(K,N)$-Brunn\textendash Minkowski inequality
 holds for the lifted sets $\hat A_0$ and $\hat A_1$. Since, by \eqref{A0/hatA0}, the volume ratios agree, we get that the $(K,N)$-Brunn\textendash Minkowski inequality also holds for $A_0$ and $A_1$, yielding
\[
\m^{*}(A_{\frac{1}{2}})^{\frac{1}{N}}\ge\frac{1}{2}\tau_{K,N}^{\frac{1}{2}}(2r+O(\epsilon))^{\frac{1}{2}}\left\{ \m^{*}(A_{0})^{\frac{1}{N}}+\m^{*}(A_{1})^{\frac{1}{N}}\right\} 
\]
if we choose $\delta>0$ sufficiently small. Then the choice of $a=d\mathcal{V}(v^*)/(N-l)$ gives  
\[
\frac{\m^{*}(A_{\frac{1}{2}})}{c_{l}\epsilon^{l}}\ge e^{-\mathcal{V}(\eta_0)}\left\{ 1+\frac{1}{2}\left(K-\nabla^2_{g^*}\mathcal{V}(v^*,v^*)+\frac{(d\mathcal{V}(v^*))^{2}}{N-l}\right)r^{2}\right\} .
\]
Now, since we always have
\[
\frac{\m^{*}(A_{\frac{1}{2}})}{c_{l}\epsilon^l}=e^{-\mathcal{V}(\eta_0)} \left(1+\frac{\Ric_{g^{*}}(v^{*},v^{*})}{2}r^{2}\right)+O(r^{3}),
\]
we get 
\[
\Ric_{g^{*}}(v^{*},v^{*})\ge K-\nabla^2_{g^*}\mathcal{V}(v^*,v^*)+\frac{(d\mathcal{V}(v^*))^{2}}{N-l}.
\]
This shows that $\Ric_{g^{*},\Psi,N}(v^{*},v^{*})\ge K$. 

To finish the proof assume that the claim is wrong, then $\Ric_{g,\Phi,N}(v,v) > \Ric_{g^{*},\Psi,N}(v^{*},v^{*})$. In particular, there is a $K\in \mathbb{R}$ such that $\Ric_{g,\Phi,N}(v,v) > K > \Ric_{g^{*},\Psi,N}(v^{*},v^{*})$. But we have just proved that this implies $\Ric_{g^{*},\Psi,N}(v^{*},v^{*}) \ge K$, which is a contradiction.
\end{proof}

\color{black} 

\appendix


\section{Discrete curvature notions}\label{App:Discrete}

In this Appendix we show that the ideas developed in the article also apply to
discrete spaces. As the proofs are simpler in this setting, we present
a self-contained account on quotients with respect to Ollivier's curvature
condition and Ricci curvature of graphs. For simplicity, we only consider
isometric, measure-preserving group actions. It is also possible to  prove the results in the more general case of metric measure foliations.


\subsection{Ricci curvature of Markov chains}
Let $(M,\sfd)$ be a separable complete metric space.
\begin{defn}
[Markov Chains]  A measurable assignment 
\[
\mu_{(\cdot)}:M\to\mathcal{P}(M)
\]
is called a \emph{Markov chain}. 
\end{defn}
Assume a compact Lie group $\G$ acts isometrically on $(M,\sfd)$. As usual, denote by $\nu_{\G}$ the Haar measure on $\G$. Then there is a natural Markov chain defined on $(M^{*},\sfd^{*})$ given
by 
\[
x^{*}\mapsto\check{\mu}_{x^{*}}=\int\quotient_{\sharp}\mu_{gx} \, d\nu_{\G}(g).
\]


\subsubsection{Ollivier's coarse Ricci curvature}
Assume that the Markov chain $\mu_{(\cdot)}$ has image in $\mathcal{P}_{1}(M)$.


\begin{defn}[Coarse Ricci curvature]The triple $(M,\sfd,\mu_{(\cdot)})$ is said
to have \emph{coarse Ricci curvature bounded below by $k\in\mathbb{R}$}
(in the sense of Ollivier \cite{Ollivier2009}) if 
\[
W_{1}(\mu_{x},\mu_{y})\le(1-k) \, \sfd(x,y).
\]
\end{defn}


\begin{prop}
If $(M,\sfd,\mu_{(\cdot)})$ has coarse Ricci curvature bounded below
by $k\in\mathbb{R}$, then so does $(M^{*},\sfd^{*},\check{\mu}_{(\cdot)})$. 
\end{prop}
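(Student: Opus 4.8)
The statement to prove is that Ollivier's coarse Ricci curvature lower bound $k$ passes to the quotient Markov chain $\check\mu_{(\cdot)}$ on $(M^*,\sfd^*)$. The plan is to directly estimate $W_1(\check\mu_{x^*},\check\mu_{y^*})$ in terms of $\sfd^*(x^*,y^*)$, exploiting convexity of the Wasserstein distance and the fact that the push-forward $\quotient$ is $1$-Lipschitz on the level of measures (a fact already recorded in the excerpt, where it is shown that $\quotient_\sharp:\mathcal P_p(M)\to\mathcal P_p(M^*)$ is $1$-Lipschitz, indeed a submetry). First I would recall the two elementary inequalities: (i) for any Borel measures and any admissible coupling, $W_1(\quotient_\sharp\alpha,\quotient_\sharp\beta)\le W_1(\alpha,\beta)$, since $\sfd^*(x^*,y^*)\le\sfd(x,y)$; and (ii) the Wasserstein distance $W_1$ is jointly convex in its two arguments, so $W_1\big(\int\alpha_g\,d\nu_\G(g),\int\beta_g\,d\nu_\G(g)\big)\le\int W_1(\alpha_g,\beta_g)\,d\nu_\G(g)$.

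Next, fix $x^*,y^*\in M^*$ and pick representatives $x,y\in M$ achieving $\sfd(x,y)=\sfd^*(x^*,y^*)$, which exist because the $\G$-orbits are compact (this is exactly the set $\mathcal{OD}$ from the excerpt). For $g\in\G$ we have $\sfd(gx,gy)=\sfd(x,y)=\sfd^*(x^*,y^*)$ since $\G$ acts by isometries. Then I would estimate, using first convexity (ii), then the $1$-Lipschitz property (i), then the coarse Ricci bound on $M$, then the isometry property:
\begin{align*}
W_1(\check\mu_{x^*},\check\mu_{y^*})
&=W_1\Big(\int_\G\quotient_\sharp\mu_{gx}\,d\nu_\G(g),\int_\G\quotient_\sharp\mu_{gy}\,d\nu_\G(g)\Big)\\
&\le\int_\G W_1(\quotient_\sharp\mu_{gx},\quotient_\sharp\mu_{gy})\,d\nu_\G(g)\\
&\le\int_\G W_1(\mu_{gx},\mu_{gy})\,d\nu_\G(g)\\
&\le\int_\G(1-k)\,\sfd(gx,gy)\,d\nu_\G(g)=(1-k)\,\sfd^*(x^*,y^*).
\end{align*}
This is exactly the coarse Ricci curvature bound for $(M^*,\sfd^*,\check\mu_{(\cdot)})$.

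I should also check that $\check\mu_{(\cdot)}$ is well-defined as a Markov chain, i.e.\ that $\check\mu_{x^*}$ does not depend on the choice of representative $x\in\quotient^{-1}(x^*)$ — this follows because replacing $x$ by $hx$ merely reparametrizes the Haar integral $\int_\G\quotient_\sharp\mu_{gx}\,d\nu_\G(g)$ by right translation $g\mapsto gh$, under which $\nu_\G$ is invariant; one should also note measurability of $x^*\mapsto\check\mu_{x^*}$ and that $\check\mu_{x^*}\in\mathcal P_1(M^*)$, both of which are routine given the compactness of $\G$ and of the orbits. There is no real obstacle here; the only point requiring a little care is the existence of distance-achieving representatives and the joint convexity of $W_1$, both of which are standard (the convexity is immediate from gluing couplings, and the existence of minimizers from compactness of orbits plus lower semicontinuity of $\sfd$). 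If one prefers to avoid invoking convexity of $W_1$ as a black box, it can be proved in one line by taking, for each $g$, an optimal coupling $\pi_g$ of $(\quotient_\sharp\mu_{gx},\quotient_\sharp\mu_{gy})$ and forming $\int_\G\pi_g\,d\nu_\G(g)$ as a (sub-optimal) coupling of $(\check\mu_{x^*},\check\mu_{y^*})$.
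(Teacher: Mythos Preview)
Your proof is correct and uses essentially the same ingredients as the paper's: joint convexity of $W_1$ (which the paper cites as \cite[Theorem 4.8]{Villani}), the $1$-Lipschitzness of $\quotient_\sharp$, the isometry identity $\sfd(gx,gy)=\sfd(x,y)$, and the choice of distance-realizing representatives. The only organizational difference is that the paper first reduces to the case where $\mu_{(\cdot)}$ is $\G$-invariant (so that $\check\mu_{x^*}=\quotient_\sharp\mu_x$) and then applies the $1$-Lipschitz property, whereas you keep the Haar average explicit and do everything in a single chain of inequalities; the two arguments are equivalent.
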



\begin{proof}
We first claim that we can assume that $\mu_{(\cdot)}$ is $\G$-invariant,
i.e. $(\tau_{g})_{\sharp}\mu_{x}=\mu_{g^{-1}x}$ for all $g\in\G$. To see this,
note that $x\mapsto\int\mu_{gx}\, d\nu_{\G}(g)$ is $\G$-invariant and
by \cite[Theorem 4.8]{Villani},
\begin{align*}
W_{1}\left(\int\mu_{gx}\, d\nu_{\G}(g),\int\mu_{gy}\, d\nu_{\G}(g)\right) & \le\int W_{1}(\mu_{gx},\mu_{gy})\, d\nu_{\G}(g)\\
& \le(1-k)\int \sfd(gx,gy)\, d\nu_{\G}(g) \\
& =(1-k)\, \sfd(x,y).
\end{align*}

In the following, assume $\mu_{(\cdot)}$ is $\G$-invariant. In this case,
$\check{\mu}_{x^{*}}=\quotient_{\sharp}\mu_{x}$. Since $\quotient$ is
$1$-Lipschitz we obtain 
\[
W_{1}(\check{\mu}_{x^{*}},\check{\mu}_{y^{*}})\le W_{1}(\mu_{x},\mu_{y})\le(1-k)\, \sfd(x,y).
\]
In particular,  
\begin{align*}
W_{1}(\check{\mu}_{x^{*}},\check{\mu}_{y^{*}})  \le(1-k)\inf_ {\substack{x' \in \quotient^{-1}(x^{*})\\ y' \in \quotient^{-1} (y^{*})}} \sfd(x',y') =(1-k) \, \sfd^{*}(x^{*},y^{*}).
\end{align*}
\end{proof}


\begin{remark}
Instead of a constant lower bound $k\in\mathbb{R}$ a similar result holds
for  a varying lower bound  $k$, i.e. 
\[
W_{1}(\mu_{x},\mu_{y})\le(1-k(x,y)) \, \sfd(x,y).
\]
In that case, a natural choice for $k^{*}$ is given by 
\[
k^{*}(x^{*},y^{*})=\sup_{\sfd(x,y)= \sfd(x^{*},y^{*})}\int_{\G}	k(gx,gy) \, d\nu_{\G}(g).
\]
If $\mu_{(\cdot)}$ is $\G$-invariant, then one can choose $k$ to
be $\G$-invariant, i.e. $k(gx,gy)=k(x,y)$ so that $k^{*}(x^{*},y^{*})=k(x,y)$.
\end{remark}

\subsubsection{Maas-Mielke Ricci curvature on finite spaces}

In this section we show that the Ricci curvature proposed independently
by Maas \cite{Maas2011} and Mielke \cite{Mielke2013} (see also \cite{EM2016}) on discrete spaces
is preserved under taking quotients. 

In the following, let $M$ be a finite set and $\mu_{(\cdot)}$ be
a Markov chain. In this case the Markov chain $\mu_{(\cdot)}$ can
be described as a function $K:M\times M\to\mathbb{R}$, called \emph{Markov
kernel}, as follows. Consider $A\subset M$ and let
\[
\mu_{(x)}(A)=\sum_{y\in A}K(y,x).
\]
Recall that, for any measure $\mu$ on $M$, there is a function $\rho:M\to[0,\mu(M)]$
such that 
\[
\mu(A)=\sum_{x\in A}\rho(x).
\]
We say that $\rho$ is a \emph{probability function} if $\mu$ is a probabilty
measure. We denote the set of probability functions by $\mathscr{P}(M)$. We also assume that $K$ is \emph{irreducible}, i.e. for any $x,y\in M$
there are $x_{0}=x,\ldots,x_{n}=y$ such that $K(x_{i-1},x_{i})>0$.
It is known that any irreducible Markov chain on a finite set has
a \emph{unique stationary probability measure $\mu$ }such that 
\[
\mu=\int\mu_{(x)}d\mu(x).
\]
If represented as a function $\pi:M\to[0,1]$, this means that
\[
\pi(y)=\sum_{y\in M}K(x,y)\pi(x).
\]

In order to show that the metric on the space of probability measures
on $M$ constructed below is symmetric we also need to assume the
Markov chain is symmetric, i.e. 
\[
K(x,y)\pi(x)=K(y,x)\pi(y), \quad \text { for all }  x,y \in M.
\]

Using  the Markov kernel, Maas \cite{Maas2011} and Mielke \cite{Mielke2013} defined
a non-local metric on the space of probability measures which imitates
the Benamou-Brenier characterization of the Wasserstein metric in
$\mathbb{R}^{n}$ \cite{BB2000}. Instead of the original definition, we present
an equivalent approach due to   Erbar and Maas \cite{EM2016}, 
which behaves better
under convex combinations and which will allow to us to prove that the 
space of  probability functions $\mathscr{P}(M^{*})$ in the quotient  (endowed with a suitable distance that we will define below) is isometric to  the subset  $\mathscr{P}^{\G}(M)$ of $\G$-invariant probability functions on $M$.

Define the  functions $\theta:M\times M\to\mathbb{R}$ and
$\alpha:\mathbb{R}\times[0,\infty)^{2}\to[0,\infty]$ by
\[
\theta(s,t)=\int_{0}^{1}s^{p}t^{1-p}dp
\]
and 
\[
\alpha(x,s,t)=\begin{cases}
0 & \theta(s,t)=0,x=0;\\
\frac{x^{2}}{\theta(s,t)} & \theta(s,t)\ne0;\\
\infty & \theta(s,t)=0,x\ne0.
\end{cases}
\]

Given $\rho\in\mathscr{P}(M)$ and a function $V\colon M\times M \to \mathbb{R}$, define 
\[
\mathscr{A}'(\rho,V)=\frac{1}{2}\sum_{x,y\in M}\alpha(V(x,y),\rho(x),\rho(y))K(x,y)\pi(x).
\]

Given two probability functions $\bar{\rho}_{0}$ and $\bar{\rho}_{1}$,
define a set $\mathscr{CE}(\bar{\rho}_{0},\bar{\rho}_{1})$ 
composed of pairs $(\rho,V)$, where 
\begin{align*}
\rho:[0,1] & \to\mathbb{R}^{M} &  & \text{is continuous with values in }\mathscr{P}(M),  \text{ with } \rho_{0} =\bar{\rho}_{0}, \,
\rho_{1}  =\bar{\rho}_{1},\\
V:[0,1] & \to\mathbb{R}^{M\times M} &  & \text{is locally integrable}\\
\end{align*}
and for all $x\in M$ the following holds in the sense of distributions
\[
\dot{\rho}_{t}(x)+\frac{1}{2}\sum_{y\in M}(V_{t}(x,y)-V_{t}(y,x))K(x,y)=0.
\]
Now we define a geodesic metric as follows 
\[
\mathscr{W}(\bar{\rho}_{0},\bar{\rho}_{1})^{2}=\inf\left\{\int_{0}^{1}\mathscr{A}'(\rho_{t},V_{t})dt\,|\,(\rho,V)\in\mathscr{CE}(\bar{\rho}_{0}\bar{\rho}_{1}) \right\}.
\]

Assume $\G$ is a compact group acting on $M$ which preserves the
Markov chain. Let $\mathscr{P}^{\G}(M)$ be the set of $\G$-invariant
probability functions. Note that $(\mathscr{P}^{\G}(M),\mathscr{W})$
is a subspace of $(\mathscr{P}(M),\mathscr{W})$.  Given $\bar{\rho}_{0},\bar{\rho}_{1}\in\mathscr{P}(M)$ and $(\rho,V)\in\mathscr{CE}(\bar{\rho}_{0},\bar{\rho}_{1})$,   define  the $\G$-averages $(\rho^{\G},V^{\G})$   by
\begin{align*}
\rho_{t}^{\G}(x) & =\int_{\G}\rho_{t}(gx)\, d\nu_{\G}(g)\\
V_{t}^{\G}(x,y) & =\int_{\G}\int_{\G}V_{t}(gx,g'y) \, d\nu_{\G}(g) \, d\nu_{\G}(g').
\end{align*}
It is easy to see that   $(\rho^{\G},V^{\G}) \in \mathscr{CE}  (\bar{\rho}_{0}^{\G},\bar{\rho}_{1}^{\G})$. 
\\One can verify that $\mathscr{A}'$ is convex w.r.t. linear interpolations
(see \cite[Corollary 2.8]{EM2016}), thus 
\[
\mathscr{A}'(\rho^{\G},V^{\G})\le\mathscr{A}'(\rho,V).
\]
Hence, letting $\mathscr{CE}^{\G}(\bar{\rho}_{0},\bar{\rho}_{1})$
be the pairs which agree with their $\G$-averages, we get  
\[
\mathscr{W}(\bar{\rho}_{0},\bar{\rho}_{1})^{2}=\inf\left\{\int_{0}^{1}\mathscr{A}'(\rho_{t},V_{t})dt\,|\,(\rho,V)\in\mathscr{CE}^{\G}(\bar{\rho}_{0}\bar{\rho}_{1}) \right\}, \quad  \text{ for all }  \bar{\rho}_{0},\bar{\rho}_{1} \in \mathscr{P}^{\G}(M).
\]

Let $K^{*}$ be the quotient Markov chain, i.e. 
\[
K^{*}(x^{*},y^{*})=\sum_{\substack{x\in\quotient^{-1}(x^{*})\\y\in\quotient^{-1}(y^{*})}}K(x,y).
\]
As above, it is possible to define a metric $\mathscr{W}^{*}$ via the
$K^{*}$ on $\mathscr{P}(M^{*})$ and show that the natural lift
$\Lambda:\mathscr{P}(M^{*})\to\mathscr{P}(M)$ defined by 
$$\Lambda(\bar{\rho})(x)=\bar{\rho}^{\wedge}(x)=\bar{\rho}(x^{*})$$
gives a bijection between $\mathscr{P}(M^{*})$ and $\mathscr{P}^{\G}(M)$.
Furthermore, given $\bar{\rho}_{0},\bar{\rho}_{0}\in\mathscr{P}(M^{*})$
there is also a one-to-one correspondence between $\mathscr{CE}(\bar{\rho}_{0},\bar{\rho}_{1})$
and $\mathscr{CE}^{\G}(\bar{\rho}_{0},\bar{\rho}_{1})$ given by
lifting each $(\rho,V)$, i.e. by defining
\begin{align*}
\hat{\rho}_{t}(x) & :=  \rho_{t}(x^{*}),\\ \hat{V}_{t}(x,y) & :=  V_{t}(x^{*},y^{*}).
\end{align*}
Directly from the definitions, it is not difficult to prove the next result.


\begin{prop} The natural lift  $\Lambda:\mathscr{P}(M^{*})\to\mathscr{P}(M)$  induces an isometry between $(\mathscr{P}(M^{*}),\mathscr{W}^{*})$
and $(\mathscr{P}^{\G}(M),\mathscr{W})$. In particular, $\mathscr{P}^{\G}(M)$ is a weakly convex subspace of $\mathscr{P}(M)$, i.e. between any
two points in $\mathscr{P}^{\G}(M)$ there is a geodesic lying in $\mathscr{P}^{\G}(M)$ connecting those points.
\end{prop}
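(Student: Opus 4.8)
The goal is to show that the natural lift $\Lambda:\mathscr{P}(M^{*})\to\mathscr{P}(M)$, $\Lambda(\bar\rho)(x)=\bar\rho(x^{*})$, is an isometry between $(\mathscr{P}(M^{*}),\mathscr{W}^{*})$ and $(\mathscr{P}^{\G}(M),\mathscr{W})$, and deduce weak geodesic convexity of $\mathscr{P}^{\G}(M)$. The strategy mirrors the proof of Theorem~\ref{thm:LiftIsom}: establish the two matching inequalities $\mathscr{W}^{*}(\bar\rho_{0},\bar\rho_{1})\ge\mathscr{W}(\hat\rho_{0},\hat\rho_{1})$ and $\mathscr{W}^{*}(\bar\rho_{0},\bar\rho_{1})\le\mathscr{W}(\hat\rho_{0},\hat\rho_{1})$ by transporting competitors back and forth along the one-to-one correspondence $\mathscr{CE}(\bar\rho_{0},\bar\rho_{1})\leftrightarrow\mathscr{CE}^{\G}(\hat\rho_{0},\hat\rho_{1})$ already described in the text.

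First I would check that $\Lambda$ restricts to a bijection $\mathscr{P}(M^{*})\to\mathscr{P}^{\G}(M)$: a $\G$-invariant probability function on $M$ is constant on orbits, hence descends to $M^{*}$, and conversely; this uses only that $\G$ acts by measure-preserving maps so that the stationary measure $\pi$ and its quotient $\pi^{*}$ are compatible under $\Lambda$ (one should record that $\pi$ is $\G$-invariant, which follows from uniqueness of the stationary measure of the $\G$-invariant kernel $K$). Second, for the inequality $\mathscr{W}(\hat\rho_{0},\hat\rho_{1})\le\mathscr{W}^{*}(\bar\rho_{0},\bar\rho_{1})$: take a near-optimal $(\rho,V)\in\mathscr{CE}^{M^{*}}(\bar\rho_{0},\bar\rho_{1})$ for $\mathscr{W}^{*}$, form its lift $(\hat\rho,\hat V)$ with $\hat\rho_{t}(x)=\rho_{t}(x^{*})$, $\hat V_{t}(x,y)=V_{t}(x^{*},y^{*})$, verify it lies in $\mathscr{CE}^{\G}(\hat\rho_{0},\hat\rho_{1})$ (the continuity equation on $M$ follows from the one on $M^{*}$ by summing over fibers and using $K^{*}(x^{*},y^{*})=\sum_{x\in\quotient^{-1}(x^{*}),\,y\in\quotient^{-1}(y^{*})}K(x,y)$), and compute $\mathscr{A}'_{M}(\hat\rho_{t},\hat V_{t})=\mathscr{A}'_{M^{*}}(\rho_{t},V_{t})$. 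This last identity is the key algebraic point: grouping the sum $\tfrac12\sum_{x,y}\alpha(\hat V(x,y),\hat\rho(x),\hat\rho(y))K(x,y)\pi(x)$ by pairs of orbits, $\alpha$ is constant on each block by $\G$-invariance of $\hat\rho,\hat V$, and $\sum_{x\in\quotient^{-1}(x^{*}),\,y\in\quotient^{-1}(y^{*})}K(x,y)\pi(x)=K^{*}(x^{*},y^{*})\pi^{*}(x^{*})$ because $\pi$ is $\G$-invariant with $\pi(x)=\pi^{*}(x^{*})/\#\G(x^{*})$ (up to the normalization used to define $\pi^{*}$). Third, for the reverse inequality $\mathscr{W}^{*}(\bar\rho_{0},\bar\rho_{1})\le\mathscr{W}(\hat\rho_{0},\hat\rho_{1})$: take a near-optimal $(\sigma,W)\in\mathscr{CE}^{M}(\hat\rho_{0},\hat\rho_{1})$, replace it by its $\G$-average $(\sigma^{\G},W^{\G})$ which still lies in $\mathscr{CE}^{M}$ and, by convexity of $\mathscr{A}'$ (\cite[Corollary 2.8]{EM2016}), has no larger action, so we may assume $(\sigma,W)$ is $\G$-invariant; then it descends to a competitor in $\mathscr{CE}^{M^{*}}(\bar\rho_{0},\bar\rho_{1})$ with the same action by the same block computation. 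Combining the two inequalities gives $\mathscr{W}^{*}(\bar\rho_{0},\bar\rho_{1})=\mathscr{W}(\hat\rho_{0},\hat\rho_{1})$, i.e. $\Lambda$ is an isometric bijection onto $\mathscr{P}^{\G}(M)$.

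For the final assertion, weak convexity of $\mathscr{P}^{\G}(M)$ follows immediately: $(\mathscr{P}(M^{*}),\mathscr{W}^{*})$ is a geodesic space (the metric $\mathscr{W}^{*}$ is defined by the same action-minimization recipe, which is known to produce a geodesic space), so any two points of $\mathscr{P}^{\G}(M)$ are the $\Lambda$-images of two points of $\mathscr{P}(M^{*})$, the $\mathscr{W}^{*}$-geodesic between the latter exists, and its $\Lambda$-image is a $\mathscr{W}$-geodesic (by the isometry) lying in $\mathscr{P}^{\G}(M)$ (since $\Lambda$ has image exactly $\mathscr{P}^{\G}(M)$).

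\textbf{Main obstacle.} The only delicate point is bookkeeping the normalization constants relating $\pi$, $\pi^{*}$, $K$, $K^{*}$ and the fiber sizes, so that the identity $\mathscr{A}'_{M}(\hat\rho,\hat V)=\mathscr{A}'_{M^{*}}(\rho,V)$ holds on the nose rather than up to a constant; one must fix the definition of the quotient stationary measure (equivalently, normalize $\pi^{*}(x^{*}):=\sum_{x\in\quotient^{-1}(x^{*})}\pi(x)$) so that the $\G$-averaging and the fiber-summation are consistent with the factor of $\pi(x)$ appearing in $\mathscr{A}'$. Once that normalization is pinned down, every step above is a routine rearrangement of finite sums, and the convexity input from \cite{EM2016} is quoted verbatim.
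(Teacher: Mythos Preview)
Your proposal is correct and follows exactly the approach the paper has in mind: the paper does not give a proof at all, writing only ``Directly from the definitions, it is not difficult to prove the next result,'' and the preceding paragraphs set up precisely the ingredients you use---the bijection $\Lambda$, the one-to-one correspondence between $\mathscr{CE}(\bar\rho_0,\bar\rho_1)$ and $\mathscr{CE}^{\G}(\hat\rho_0,\hat\rho_1)$ via lifting, and the reduction to $\G$-invariant competitors via the convexity of $\mathscr{A}'$ from \cite{EM2016}. Your identification of the normalization of $\pi^{*}$ versus $\pi$ as the only point requiring care is apt; the paper silently assumes this bookkeeping works out (and indeed with the paper's definition $K^{*}(x^{*},y^{*})=\sum_{x,y}K(x,y)$ the correct choice is $\pi^{*}(x^{*})=\pi(x)$, not the fiber sum you wrote in the obstacle paragraph, so that $\sum_{x,y}K(x,y)\pi(x)=\pi(x)\,K^{*}(x^{*},y^{*})=\pi^{*}(x^{*})K^{*}(x^{*},y^{*})$ holds on the nose).
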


Given a probability function $\bar{\rho}\in\mathscr{P}(M)$, we define the \emph{entropy} of the Markov chain as follows 
\[
\mathscr{H}(\bar{\rho})=\sum_{x\in M}\bar{\rho}(x)\log\bar{\rho}(x)\pi(x).
\]


\begin{defn}[Maas-Mielke Ricci curvature]  The Markov chain given by the kernel
$K$ is said to have \emph{Ricci curvature bounded below by $K\in\mathbb{R}$} (in the sense of Maas \cite{Maas2011} and Mielke \cite{Mielke2013})
if, for any constant speed geodesic $\{\rho_{t}\}_{t\in[0,1]}$ in
$(\mathscr{P}(M),\mathscr{W})$,
\[
\mathscr{H}(\rho_{t})\le(1-t)\mathscr{H}(\rho_{0})+t\mathscr{H}(\rho_{1})-\frac{K}{2}t(1-t)\mathscr{W}^{2}(\rho_{0},\rho_{1}).
\]
\end{defn}

On the quotient space there is also a (Markov chain) invariant probability
function called $\pi^{*}$ and a corresponding entropy $\mathscr{H}^{*}$.  Since by assumption $\G$ preserves the Markov chain, the group also preserves the (Markov
chain) invariant probability function $\pi$. In particular, $\pi$
agrees with the natural lift of the invariant probability function
$\pi^{*}$ on $M$. Therefore, if $\bar{\rho}\in\mathscr{P}(M^*)$, we obtain  
\begin{align*}
\mathscr{H}(\bar{\rho}^{\wedge}) & =\sum_{x\in M}\bar{\rho}^{\wedge}(x)\log\bar{\rho}^{\wedge}(x)\pi(x) \\ 
						  &=\sum_{x^{*}\in M^{*}}\sum_{x\in\quotient^{-1}(x^{*})}\bar{\rho}(x)\log\bar{\rho}(x^{*})\pi(x)\\
						 & =\sum_{x\in\quotient^{-1}(x^{*})}\bar{\rho}(x)\log\bar{\rho}(x^{*})\pi^{*}(x^{*})=\mathscr{H}^{*}(\bar{\rho}).
\end{align*}
Therefore we can entirely work in $\mathscr{P}^{\G}(M)$ instead of $\mathscr{P}(M^{*})$. 

Since $\mathscr{P}^{\G}(M)$ is a geodesic subspace, we immediately
get the following result.
\begin{prop}
If the Markov chain $K$ has Ricci curvature bounded below by $K\in\mathbb{R}$
and $\G$ preserves the Markov chain, then the quotient Markov chain
has Ricci curvature bounded below by $K\in\mathbb{R}$ as well.
\end{prop}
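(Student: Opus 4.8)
The plan is to reduce the statement about the quotient Markov chain to the convexity of the entropy along geodesics in the already-identified subspace $\mathscr{P}^{\G}(M)$. The key structural input is the preceding proposition: the natural lift $\Lambda$ is an isometry between $(\mathscr{P}(M^{*}),\mathscr{W}^{*})$ and $(\mathscr{P}^{\G}(M),\mathscr{W})$, and $\mathscr{P}^{\G}(M)$ is weakly convex, i.e. between any two points of $\mathscr{P}^{\G}(M)$ there is a $\mathscr{W}$-geodesic lying entirely in $\mathscr{P}^{\G}(M)$. Together with the identity $\mathscr{H}(\bar\rho^{\wedge})=\mathscr{H}^{*}(\bar\rho)$ for all $\bar\rho\in\mathscr{P}(M^{*})$ established just above, this should make the argument essentially a transfer of the convexity inequality across the isometry.

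First I would take two probability functions $\bar\rho_{0},\bar\rho_{1}\in\mathscr{P}(M^{*})$ and a constant-speed $\mathscr{W}^{*}$-geodesic $\{\bar\rho_{t}\}_{t\in[0,1]}$ connecting them. Applying the lift $\Lambda$ pointwise in $t$, one obtains a curve $\{\bar\rho_{t}^{\wedge}\}_{t\in[0,1]}$ in $\mathscr{P}^{\G}(M)\subset\mathscr{P}(M)$; since $\Lambda$ is an isometry for the respective metrics, this lifted curve is a constant-speed $\mathscr{W}$-geodesic in $\mathscr{P}(M)$, and moreover $\mathscr{W}(\bar\rho_{0}^{\wedge},\bar\rho_{1}^{\wedge})=\mathscr{W}^{*}(\bar\rho_{0},\bar\rho_{1})$. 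Here one should be slightly careful: being a geodesic in the subspace with the restricted metric is the same as being a geodesic in the ambient space precisely because the isometry statement gives equality of distances, not merely a $1$-Lipschitz bound; I would spell this out in one line. Then, invoking the hypothesis that the Markov chain $K$ on $M$ has Ricci curvature bounded below by $K$, the entropy $\mathscr{H}$ satisfies along this lifted geodesic
\[
\mathscr{H}(\bar\rho_{t}^{\wedge})\le (1-t)\mathscr{H}(\bar\rho_{0}^{\wedge})+t\,\mathscr{H}(\bar\rho_{1}^{\wedge})-\frac{K}{2}t(1-t)\mathscr{W}^{2}(\bar\rho_{0}^{\wedge},\bar\rho_{1}^{\wedge}).
\]

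Finally I would rewrite this inequality back on $M^{*}$: using $\mathscr{H}(\bar\rho_{t}^{\wedge})=\mathscr{H}^{*}(\bar\rho_{t})$ for every $t$, and $\mathscr{W}(\bar\rho_{0}^{\wedge},\bar\rho_{1}^{\wedge})=\mathscr{W}^{*}(\bar\rho_{0},\bar\rho_{1})$, the displayed inequality becomes exactly the $K$-convexity of $\mathscr{H}^{*}$ along $\{\bar\rho_{t}\}_{t\in[0,1]}$. Since $\{\bar\rho_{t}\}$ was an arbitrary constant-speed $\mathscr{W}^{*}$-geodesic, this shows the quotient Markov chain $K^{*}$ has Ricci curvature bounded below by $K$, as desired. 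I do not expect any serious obstacle here; the only point requiring a modicum of care is the verification that the $\mathscr{W}$-geodesic property is preserved under $\Lambda$ (and that we may indeed work inside $\mathscr{P}^{\G}(M)$ rather than $\mathscr{P}(M)$), but this is immediate from the isometry and weak convexity already recorded in the preceding proposition. If one wanted to be fully rigorous about the definition of ``geodesic'' used in the Maas--Mielke curvature condition, one could alternatively argue directly: take any $\mathscr{W}$-geodesic between the lifts in $\mathscr{P}(M)$, note that by weak convexity and the isometry statement it must coincide (up to reparametrization, and up to the nonbranching of geodesics in this finite-dimensional setting if needed) with the lift of the $\mathscr{W}^{*}$-geodesic, and proceed as above; but the cleaner route through the isometry should suffice.
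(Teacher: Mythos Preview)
Your proposal is correct and follows essentially the same approach as the paper, which simply observes that the result is immediate because $\mathscr{P}^{\G}(M)$ is a geodesic subspace (isometric to $\mathscr{P}(M^{*})$) and the entropies agree under the lift. You have spelled out in detail exactly the transfer argument the paper leaves implicit.
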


\subsection{Bakry-\'Emery and curvature-dimension conditions on graphs}

Based on the Bakry-\'Emery condition for diffusion operators \cite{BE1985}, Lin-Yau \cite{LY2010} 
introduced the Bakry-\'Emery condition to the setting of graph Laplacians. A stronger 
variant yielding the Li-Yau inequality appeared recently in \cite{BauerETAL2015}.

An \emph{undirected graph} $G=(V,E)$ is a countable possibly infinite
set of \emph{vertices} $V$ and a set of \emph{edges} $E$ such that
each element $e\in E$ is  a subset of $V$ with one or two elements.
For two vertices $x,y\in V$ we write $x\sim y$ whenever $\{x,y\}\in E$.

A \emph{weight} on $G$ is a symmetric function $\omega:V\times V\to[0,\infty)$
such that $\omega(x,y)>0$ if and only if $x\sim y$ and $x\ne y$. The
graph is said to be \emph{locally finite} if 
\[
d(x)=\sum_{x\sim y}\omega(x,y)<\infty.
\]
Given any undirected graph there is a natural weight function $\omega(x,y)=1$, whenever $x\sim y$.
Such graphs are usually called \emph{unweighted graphs}.

A final ingredient is a \emph{positive, locally finite measure} given
as a function $\m:V\to[0,\infty)$. A natural choice is given by $\m(x)=d(x)$.

\subsubsection{Function spaces on graphs and curvature-dimension conditions}

Let $C_{0}(V)$ be the set of real-valued functions on $V$. For $p\in[1,\infty)$,  the
$\ell^{p}$-space is given by 
\[
\ell^{p}(V,\m)= \Big\{f\in C_{0}(V)\,|\,\|f\|_{p}^{p}:=\sum_{x\in V}|f(x)|^{p}\m(x)<\infty \Big\}
\]
and 
\[
\ell^{\infty}(V,\m)=\big\{f\in C_{0}(V)\,|\,\|f\|_{\infty}:=\sup_{x\in V}|f(x)|<\infty \big\}.
\]
The $\ell^{2}$-norm is associated to an inner product, i.e. for $f,g\in\ell^{2}(V,\m)$
the inner product 
\[
\langle f,g\rangle=\sum f(x)g(x)\m(x)
\]
induces the norm $\|\cdot\|_{2}$.  The $p$-power of the modulus of the discrete gradient is defined as
\[
|\nabla f|^{p}(x)=\frac{1}{p\m(x)}\sum_{y\sim x}\omega(x,y)|f(y)-f(x)|^{p},
\]
see \cite{LY2010, Mugnolo2013}.

It is now possible to define the discrete Sobolev spaces $w^{1,p}(V,\m)$
as 
\[
w^{1,p}(V,\m)=\{f\in\ell^{p}(V,\m)\,|\,\||\nabla f|^{p}\|_{w^{1,p}}<\infty\}
\]
with the  norm given by 
\[
\|f\|_{w^{1,p}}^{p}=\|f\|_{p}^{p}+\||\nabla f|^{p}\|_{1}.
\]

If $p=2$ this is induced by a scalar product denoted by 
\[
2\Gamma(f,g)(x)=\frac{1}{\m(x)}\sum_{y\sim x}\omega(x,y)(f(y)-f(x))(g(y)-g(x)).
\]
This functional is often called \emph{$\Gamma$-operator} or \emph{carr\'e-du-champ}. 
 
The \emph{discrete $\m$-Laplacian} is defined as 
\[
\Delta f(x)=\frac{1}{\m(x)}\sum_{y\sim x}\omega(x,y)(f(y)-f(x)).
\]
The definition implies that
\[
2\Gamma(f,g)=\Delta(f\cdot g)-f\cdot\Delta g-g\cdot\Delta f.
\]
Following Bakry-\'Emery \cite{BE1985,LY2010} we define the \emph{$\Gamma_{2}$-functional} (\emph{carrÈ-du-champ itÈrÈ})
as follows:
\[
2\Gamma_{2}(f,g)=\Delta(\Gamma(f,g))-\Gamma(f,\Delta g)-\Gamma(\Delta f,g).
\]


\begin{defn}
[Bakry-\'Emery condition] The graph $G$ is said to satisfy the \emph{discrete
curvature-dimension condition} $\CD(K,N)$, $K\in\mathbb{R}$, $N\in[1,\infty]$
if, for every function $f:V\to\mathbb{R}$, the following inequality holds:
\[
\Gamma_{2}(f,f)\ge K\Gamma(f,f)+\frac{1}{N}(\Delta f)^{2}.
\]
\end{defn}
This condition already makes it possible to characterize certain graphs
and to obtain  eigenvalue bounds (see \cite{LY2010}). However, the lack of a chain
rule prevents to transfer the result from diffusion operators and continuous
times heat flows to the discrete setting. In \cite{BauerETAL2015} the authors 
introduced a variant of this condition which gives a  Li-Yau-type inequality 
for the heat
flow.
\begin{defn}
[Exponential curvature-dimension] The graph $G$ satisfies the exponential
curvature-dimension condition $\CDE(K,N)$ if 
\[
\Gamma_{2}(f,f)-\Gamma\left(f,\frac{\Gamma(f,f)}{f}\right)\ge K\Gamma(f,f)+\frac{1}{N}(\Delta f)^{2}.
\]
\end{defn}


\begin{remark}
It is possible to obtain also a pointwise curvature condition by assuming that $K:V\to\mathbb{R}$ and $N:V\to[1,\infty]$ are functions on $V$. 
\end{remark}


\subsubsection{Group actions on graphs} We say that a  group \emph{$\G$ acts isometrically on the graph $G$}, if for
all $g\in\G$, $g\cdot x\sim g\cdot y$ whenever $x\sim y$.
We say $\G$ \emph{preserves weights} if 
\[
\omega(g\cdot x,g\cdot y)=\omega(x,y).
\]
Note that if $\G$ preserves weights then  $\G$  also preserves the weighted degree, i.e.
$d(g\cdot x)=d(x)$. 

If $\m(x)=\m(g\cdot x)$, then the action is said to be \emph{measure-preserving}.
The action is said to be  locally of locally finite order if, for all $x\in V$, the orbit 
\[
\G(x)=\{g\cdot x\,|\,g\in\G\}
\]
is finite.

Assume in the following that $\G$ acts isometrically  on  a locally finite
graph $G$. We can define a \emph{quotient graph $G^{*}$}
as follows.   The vertex set $V^{*}$ is given by the set of orbits; as usual, $x^{*}$ denotes the orbit corresponding to $x$ and $\quotient$ is the quotient map. Furthermore, whenever $\{x,y\}\in E$
then by definition $\{x^{*},y^{*}\}\in E^{*}$.  The quotient graph $V^{*}$ can be endowed with the quotient  measure function $\m^{*}$ given by
\[
\m^{*}(x^{*})=\sum_{x\in\quotient^{-1}(x)}\m(x).
\]
If $\G$ acts measure-preserving, then $\m^{*}(x^{*})=\# \G(x)\cdot\m(x)$. Finally, the weight function $\omega^{*}$ of the quotient graph is
given by 
\[
\omega^{*}(x^{*},y^{*})=\m^{*}(x^{*})\sum_{x\in\quotient^{-1}(x^{*}),y\in\quotient^{-1}(y^{*})}\frac{\omega(x,y)}{\m(x)}.
\]

\begin{remark}
(1) The edge set allows one to define a metric 
which makes $(V,\sfd)$ into a discrete metric space by just defining the distance to be the number of edges of a shortest path. Then it is easy
to see that the metric of the quotient graph obtained in this way agrees
with the quotient metric 
\[
\sfd^{*}(x^{*},y^{*})=\inf_{g\in\G} \sfd(x,g\cdot y).
\]

(2) One may replace the action $\G$ by any partition $\mathcal{F}=\{\mathcal{F}_{x}\}_{x\in V}$
of $V$ into finite sets such that  whenever $x'\in\mathcal{F}_{x}$
and $x\sim y$ then there is a $y'\in\mathcal{F}_{y}$ with $x'\sim y'$.
Or, equivalently, if $\quotient:V\to V/\mathcal{F}$ is the natural
quotient map and $E^*:=\quotient\times \quotient ((V\times V) \cap E) $ then 
\[
(\{x^*\}\times V^*) \cap E^* = \quotient\times \quotient((\{x\}\times V) \cap E)
\]
for all $x\in V$.
As in the metric setting, we call such a partition a \emph{metric foliation}.
\end{remark}
Given a function $f:V^{*}\to\mathbb{R}$ there is a natural lift $\hat{f}:V\to\mathbb{R}$
defined as usual by
\[
\hat{f}(x)=f(x^{*}).
\]
From the definition we see that 
\[
\sum_{x^{*}\in V^{*}}|f(x^{*})|^{p}\m^{*}(x^{*})=\sum_{x\in V}|\hat{f}(x)|^{p}\m(x).
\]
Furthermore, the weight and measure functions are chosen to show that
the Dirichlet form on $G^{*}$ agrees with the Dirichlet form of $\G$-invariant
functions on $G$. 


\begin{prop}
Let $f:V^{*}\to\mathbb{R}$ be a function and $p\in[1,\infty)$. Then
\[
\frac{1}{\m^{*}(x^{*})}\sum_{y^{*}\sim x^{*}}\omega^{*}(x^{*},y^{*})(f(y^{*})-f(x^{*}))^{p}=\sum_{x\in\quotient^{-1}(x^{*})}\frac{1}{\m(x)}\sum_{y\sim x}\omega(x,y)(\hat{f}(y)-\hat{f}(x))^{p}.
\]
In particular, $(\Delta^{*}f)^{\wedge}=  \Delta \hat{f}$ and 
\[
(\Gamma^{*}(f,g))^{\wedge}=\Gamma(\hat{f},\hat{g})
\]
for all functions $f,g:V^{*}\to\mathbb{R}$, where $\Delta^{*}$ and
$\Delta$ are the Laplacian on $G$ and $G^{*}$ and $\Gamma$ and
$\Gamma^{*}$ are the corresponding $\Gamma$-operators.
\end{prop}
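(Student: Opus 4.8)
The plan is to verify the displayed identity by a direct computation that simply unwinds the definitions of $\omega^{*}$, $\m^{*}$, and the lift $\hat f$, together with the defining property of the quotient graph, namely that $x^{*}\sim y^{*}$ in $G^{*}$ precisely when there exist representatives $x\in\quotient^{-1}(x^{*})$, $y\in\quotient^{-1}(y^{*})$ with $x\sim y$ in $G$. First I would fix $x^{*}\in V^{*}$ and expand the left-hand side using the formula $\omega^{*}(x^{*},y^{*})=\m^{*}(x^{*})\sum_{x\in\quotient^{-1}(x^{*}),\,y\in\quotient^{-1}(y^{*})}\frac{\omega(x,y)}{\m(x)}$. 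The factor $\m^{*}(x^{*})$ cancels against the $\tfrac{1}{\m^{*}(x^{*})}$ in front, leaving
\[
\sum_{y^{*}\sim x^{*}}\ \sum_{\substack{x\in\quotient^{-1}(x^{*})\\ y\in\quotient^{-1}(y^{*})}}\frac{\omega(x,y)}{\m(x)}\,(f(y^{*})-f(x^{*}))^{p}.
\]
Since $f(x^{*})=\hat f(x)$ and $f(y^{*})=\hat f(y)$ for any such representatives, the summand becomes $\frac{\omega(x,y)}{\m(x)}(\hat f(y)-\hat f(x))^{p}$, which now only depends on the vertices $x,y$ of $G$.

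The next step is to reorganize the double sum. The outer sum ranges over neighbors $y^{*}$ of $x^{*}$ in $G^{*}$, and for each such $y^{*}$ the inner sum ranges over all pairs $(x,y)$ with $\quotient(x)=x^{*}$, $\quotient(y)=y^{*}$; crucially, pairs with $x\not\sim y$ contribute $\omega(x,y)=0$, so one may as well sum over all pairs of representatives. Hence the whole expression equals $\sum_{x\in\quotient^{-1}(x^{*})}\ \sum_{y\in V,\ y\sim x}\frac{\omega(x,y)}{\m(x)}(\hat f(y)-\hat f(x))^{p}$, where I have used that every edge of $G$ incident to $x$ projects to an edge of $G^{*}$ incident to $x^{*}$, so no terms are lost or double-counted when the two sums are merged. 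This is exactly the right-hand side, proving the displayed identity.

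For the consequences, I would take $p=1$: the identity then reads, after multiplying through, $\sum_{y^{*}\sim x^{*}}\omega^{*}(x^{*},y^{*})(f(y^{*})-f(x^{*}))=\m^{*}(x^{*})\sum_{x\in\quotient^{-1}(x^{*})}\frac{1}{\m(x)}\sum_{y\sim x}\omega(x,y)(\hat f(y)-\hat f(x))$, i.e. $\m^{*}(x^{*})\,\Delta^{*}f(x^{*})=\m^{*}(x^{*})\sum_{x\in\quotient^{-1}(x^{*})}\Delta\hat f(x)$. Since $\hat f$ is $\G$-invariant (constant on orbits) and the action is isometric and weight-preserving, $\Delta\hat f$ is also $\G$-invariant, so each of the $\#\quotient^{-1}(x^{*})$ terms equals $\Delta\hat f(x)$ for a single representative $x$; comparing with $\m^{*}(x^{*})=\#\G(x)\cdot\m(x)$ one gets $\Delta^{*}f(x^{*})=\Delta\hat f(x)$, that is $(\Delta^{*}f)^{\wedge}=\Delta\hat f$. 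Finally, for the $\Gamma$-operator identity I would use the polarization formula $2\Gamma(f,g)=\Delta(fg)-f\Delta g-g\Delta f$ valid on $G$ and the analogous one on $G^{*}$, note that $(fg)^{\wedge}=\hat f\hat g$, and apply $(\Delta^{*}h)^{\wedge}=\Delta\hat h$ termwise to conclude $(\Gamma^{*}(f,g))^{\wedge}=\Gamma(\hat f,\hat g)$.

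I do not expect a genuine obstacle here: the only subtle point is bookkeeping in the reindexing of the double sum, specifically the observation that summing over edges $y^{*}\sim x^{*}$ in $G^{*}$ and then over representative pairs is the same as summing over all pairs of representatives (because non-adjacent pairs carry zero weight) and also the same as summing over edges $y\sim x$ in $G$ for fixed representative $x$ of $x^{*}$. Once that identification is made carefully, everything else is formal substitution. The mild care needed is to make sure the quotient-graph adjacency relation is used in the right direction and that the weight/measure normalizations in the definitions of $\omega^{*}$ and $\m^{*}$ are inserted verbatim.
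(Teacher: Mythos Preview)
Your proof of the displayed identity is correct and is essentially the same as the paper's: both substitute the definition of $\omega^{*}$ to cancel $\m^{*}(x^{*})$, replace $f(y^{*})-f(x^{*})$ by $\hat f(y)-\hat f(x)$, and then reindex the double sum over $y^{*}\sim x^{*}$ and $y\in\quotient^{-1}(y^{*})$ as a single sum over $y\sim x$, using that $\omega(x,y)>0$ exactly when $x\sim y$.

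Your deduction of $(\Delta^{*}f)^{\wedge}=\Delta\hat f$ from the displayed identity, however, has a genuine gap. Setting $p=1$ in the identity you proved gives
\[
\Delta^{*}f(x^{*})=\sum_{x\in\quotient^{-1}(x^{*})}\Delta\hat f(x),
\]
and under the $\G$-invariance of $\Delta\hat f$ (which does require the action to be weight- and measure-preserving) the right-hand side equals $\#\G(x)\cdot\Delta\hat f(x_0)$ for any representative $x_0$, \emph{not} $\Delta\hat f(x_0)$. The sentence ``comparing with $\m^{*}(x^{*})=\#\G(x)\cdot\m(x)$ one gets $\Delta^{*}f(x^{*})=\Delta\hat f(x)$'' does not remove that factor: you already divided both sides by $\m^{*}(x^{*})$, so there is nothing left to absorb $\#\G(x)$. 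A quick sanity check on $K_{2,2}$ with the $\mathbb{Z}/2$-action swapping the two vertices in each part (unit weights and unit measure) gives $\Delta^{*}f(x^{*})=2\,\Delta\hat f(x)$, confirming the discrepancy. The same issue propagates to the $\Gamma$ claim via polarization. (The paper's own proof simply writes ``which gives the claim'' at this point; with the stated normalization of $\omega^{*}$ the displayed identity does not by itself yield the pointwise identities, so this seems to be a normalization slip in the paper's conventions rather than a defect in your overall strategy.)
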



\begin{proof}
Since $\hat{f}$ is constant along orbits, we see that 
\[
f(y^{*})-f(x^{*})=\hat{f}(y)-\hat{f}(x)
\]
whenever $\quotient(x)=x^{*}$ and $\quotient(y)=y^{*}$. Thus, for
$y^{*}\sim x^{*}$, we have
\begin{align*}
\frac{1}{\m^{*}(x^{*})}\omega^{*}(x^{*},y^{*})(f(y^{*})-f(x^{*}))^{p} & =\left(\sum_{\substack{x\in\quotient^{-1}(x^{*})\\y\in\quotient^{-1}(y^{*})}}\frac{\omega(x,y)}{\m(x)}\right)\cdot(f(y^{*})-f(x^{*}))^{p}\\[.3cm]
 & =\sum_{x\in\quotient^{-1}(x^{*})}\frac{1}{\m(x)}\sum_{y\in\quotient^{-1}(y^{*})}\omega(x,y)(\hat{f}(y)-\hat{f}(x))^{p}.
\end{align*}
Finally, recalling that $\omega(x,y)>0$ if and only if $x\sim y$, it follows  that, for fixed $x^*$ and $x$,
\[
\sum_{x^{*}\sim y^{*}}\sum_{y\in\quotient^{-1}(y^{*})}\omega(x,y)(\hat{f}(y)-\hat{f}(x))^{p}=\sum_{y\sim x}\omega(x,y)(\hat{f}(y)-\hat{f}(x))^{p}
\]
which gives the claim. 
\end{proof}
\begin{cor}
The natural lift $f\mapsto\hat{f}$ induces an isometry between the $\ell^{p}$-spaces
$\ell^{p}(V^{*},\m^{*})$ and the closed subspaces of $\G$-invariant
function of $\ell^{p}(V,\m)$. Similarly, it induces an isometry
between the discrete Sobolev spaces $w^{1,p}(V^{*},\m^{*})$ and
the $\G$-invariant functions in $w^{1,p}(V,\m)$. 
\end{cor}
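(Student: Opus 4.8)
The plan is to reduce everything to two identities that are already established: the displayed equality $\sum_{x^{*}\in V^{*}}|f(x^{*})|^{p}\m^{*}(x^{*})=\sum_{x\in V}|\hat f(x)|^{p}\m(x)$ proved just before the last Proposition, and the pointwise energy identity of that Proposition. Both express that the natural lift $f\mapsto\hat f$ is compatible with the relevant structure; the remaining work is purely bookkeeping, parallel to Lemma \ref{lem:Lqm*Lqm} and Theorem \ref{thm:IdentCh;ChM*} in the metric measure setting.

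First I would treat the $\ell^{p}$-part. The displayed identity says precisely that $f\mapsto\hat f$ is a linear isometry of $\ell^{p}(V^{*},\m^{*})$ into $\ell^{p}(V,\m)$. Note that the identity $\sum_{x\in V}h(x)\m(x)=\sum_{x^{*}\in V^{*}}\big(\sum_{x\in\quotient^{-1}(x^{*})}\m(x)\big)h(x^{*})=\sum_{x^{*}\in V^{*}}h(x^{*})\m^{*}(x^{*})$ holds for \emph{any} function $h$ on $V^{*}$, simply because $\m^{*}(x^{*})=\sum_{x\in\quotient^{-1}(x^{*})}\m(x)$; applying this to $h=fg$ and using $\widehat{fg}=\hat f\hat g$ shows that for $p=2$ the inner product is preserved as well, so the lift is an isometry of Hilbert spaces. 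The image is contained in the subspace of $\G$-invariant functions, since $\hat f$ is constant on each orbit by construction; conversely, if $g\in\ell^{p}(V,\m)$ is $\G$-invariant then $\bar g(x^{*}):=g(x)$ (for any $x\in\quotient^{-1}(x^{*})$) is well-defined, $\widehat{\bar g}=g$, and $\bar g\in\ell^{p}(V^{*},\m^{*})$ by the isometry, so the image is exactly the $\G$-invariant functions. Finally this subspace equals $\bigcap_{g\in\G}\ker(T_{g}-\mathrm{Id})$, where $T_{g}f(x):=f(g^{-1}x)$ acts isometrically on $\ell^{p}(V,\m)$, hence it is closed.

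Next I would treat the Sobolev part. Reading the last Proposition as the statement that, after multiplication by the appropriate measure weights, $|\nabla^{*}f|^{p}$ on $V^{*}$ lifts to $|\nabla\hat f|^{p}$ on $V$ — which is exactly why the quotient weight $\omega^{*}$ and quotient measure $\m^{*}$ were defined as they were — one obtains $\big\||\nabla^{*}f|^{p}\big\|_{1}=\big\||\nabla\hat f|^{p}\big\|_{1}$ by summing over $x^{*}$. Combining this with the $\ell^{p}$-identity gives $\|f\|_{w^{1,p}(V^{*},\m^{*})}=\|\hat f\|_{w^{1,p}(V,\m)}$ for every $f$, so the lift restricts to an isometric embedding $w^{1,p}(V^{*},\m^{*})\hookrightarrow w^{1,p}(V,\m)$. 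The identification of the image with the $\G$-invariant functions in $w^{1,p}(V,\m)$, and the fact that these form a closed subspace, follow exactly as in the $\ell^{p}$ case, using that $w^{1,p}$ is complete and that the translations $T_{g}$ act isometrically on it.

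I do not expect a genuine obstacle here: all the analytic content sits in the displayed $\ell^{p}$-identity and in the Proposition, and the only point that needs a word of care is the verification that the $\G$-invariant functions form a \emph{closed} subspace, which is immediate from the isometric action of $\G$. In spirit this is the discrete counterpart of the Sobolev-space identification under quotients carried out in Lemma \ref{lem:Lqm*Lqm} and Theorem \ref{thm:IdentCh;ChM*}, and the proof should be correspondingly short.
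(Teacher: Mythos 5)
The $\ell^p$ half of your argument is correct and complete: the displayed identity just before the Proposition gives the norm equality, the image of the lift is exactly the set of $\G$-invariant functions, and closedness follows — your remark about $\bigcap_{g\in\G}\ker(T_g-\mathrm{Id})$ works, though it is already automatic since the isometric image of a Banach space is closed. The $p=2$ inner-product remark is also fine (and would follow from polarization anyway).

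The $w^{1,p}$ half, however, contains a genuine gap: the identity $\||\nabla^{*}f|^{p}\|_{1}=\||\nabla\hat f|^{p}\|_{1}$ you invoke does \emph{not} follow from the Proposition as stated. Reading the Proposition's displayed equation in terms of the $p$-gradient gives $|\nabla^{*}f|^{p}(x^{*})=\sum_{x\in\quotient^{-1}(x^{*})}|\nabla\hat f|^{p}(x)$; that is, the lift of $|\nabla^{*}f|^{p}$ is not $|\nabla\hat f|^{p}$ but its orbit sum. Multiplying by $\m^{*}(x^{*})=\sum_{x\in\quotient^{-1}(x^{*})}\m(x)$ and summing over $x^{*}$ then gives $\||\nabla^{*}f|^{p}\|_{\ell^{1}(\m^{*})}=\sum_{x\in V}\m^{*}(x^{*})\,|\nabla\hat f|^{p}(x)=\sum_{x\in V}\frac{\m^{*}(x^{*})}{\m(x)}\,|\nabla\hat f|^{p}(x)\,\m(x)$, which exceeds $\||\nabla\hat f|^{p}\|_{\ell^{1}(\m)}$ by the factor $\m^{*}(x^{*})/\m(x)$, the orbit cardinality $\#\G(x)$ in the measure-preserving case. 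You can check this on $V=\{a,b,c,d\}$ with edges $\{a,c\},\{b,d\}$, $\m\equiv 1$, $\omega\equiv 1$, and $\G=\mathbb{Z}/2$ swapping $a\leftrightarrow b$, $c\leftrightarrow d$: then $\m^{*}\equiv 2$, $\omega^{*}\equiv 4$, and the quotient Dirichlet energy of any $f$ is twice that of $\hat f$. In fact the Proposition itself is internally inconsistent on this point: for $p=1$ its displayed identity reads $\Delta^{*}f(x^{*})=\sum_{x\in\quotient^{-1}(x^{*})}\Delta\hat f(x)$, which contradicts its own ``In particular'' claim $(\Delta^{*}f)^{\wedge}=\Delta\hat f$ unless every orbit is a singleton. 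The culprit is the normalization of $\omega^{*}$ in the paper, which carries an extra sum over $x\in\quotient^{-1}(x^{*})$; the weight that makes both the Laplacian identity and the $w^{1,p}$-isometry hold is simply $\omega^{*}(x^{*},y^{*})=\sum_{x\in\quotient^{-1}(x^{*}),\,y\in\quotient^{-1}(y^{*})}\omega(x,y)$, without the prefactor $\m^{*}(x^{*})/\m(x)$. Your proof strategy is the intended one and would go through once that correction is made, but as written it rests on a norm identity the stated Proposition does not yield, and a careful check of the summation is where the argument breaks down.
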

Since the calculation of $\Gamma^{*}$ and $\Delta^{*}$ can be done
either on the quotient graph $G^{*}$ or via the lift on $G$ we obtain
the following theorem.
\begin{thm}
Assume $G$ is a locally finite graph satisfying the $\CD(K,N)$-condition
(resp. $\CDE(K,N)$-condition). If $\G$ acts isometrically and locally of  finite order on $G$, then the quotient graph satisfies the $\CD(K,N)$-condition
(resp. $\CDE(K,N)$-condition).
\end{thm}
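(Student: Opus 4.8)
The plan is to prove the theorem by combining the explicit formula for the $\Gamma$-operator and Laplacian on the quotient graph $G^*$ (obtained from the preceding Proposition and its Corollary) with the fact that these operators are computed by lifting: for $\G$-invariant functions on $G$, the quotient operators $\Delta^*$, $\Gamma^*$, and hence $\Gamma_2^*$ coincide (under the lift) with the restrictions of the corresponding operators on $G$. Since the curvature-dimension conditions $\CD(K,N)$ and $\CDE(K,N)$ are both pointwise inequalities phrased purely in terms of these operators applied to arbitrary functions, the strategy is to show that testing the quotient inequality against an arbitrary function $f:V^*\to\R$ is the same as testing the original inequality on $G$ against the $\G$-invariant lift $\hat f$, evaluated at points of the orbit $\quotient^{-1}(x^*)$.

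First I would record that $\Gamma_2^*(f,f)^\wedge = \Gamma_2(\hat f,\hat f)$: this follows immediately from the identity $(\Gamma^*(f,g))^\wedge = \Gamma(\hat f,\hat g)$ and $(\Delta^* f)^\wedge = \Delta\hat f$ established in the Proposition, because $\Gamma_2$ is built from $\Delta$ and $\Gamma$ by algebraic composition ($2\Gamma_2(f,g) = \Delta\Gamma(f,g) - \Gamma(f,\Delta g) - \Gamma(\Delta f,g)$), and each constituent commutes with the lift. Here one must be slightly careful: the lift of a composition requires that $\Delta^*$ applied to a function on $V^*$ lifts to $\Delta$ applied to its lift, which is precisely the content of $(\Delta^* f)^\wedge = \Delta\hat f$; iterating, $\Delta^*\Gamma^*(f,g)$ lifts to $\Delta\Gamma(\hat f,\hat g)$, and similarly for the remaining terms. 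For the $\CDE$ case one additionally needs that $\Gamma^*\!\left(f,\frac{\Gamma^*(f,f)}{f}\right)^\wedge = \Gamma\!\left(\hat f,\frac{\Gamma(\hat f,\hat f)}{\hat f}\right)$, which again follows since the lift is a ring homomorphism on functions (it preserves pointwise products and quotients, as $\widehat{fg}=\hat f\hat g$ and $\widehat{f/g}=\hat f/\hat g$ wherever defined) and preserves $\Gamma$.

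Then, given that $G$ satisfies $\CD(K,N)$, for the fixed test function $f:V^*\to\R$ with lift $\hat f$ the inequality
\[
\Gamma_2(\hat f,\hat f)(x) \ge K\,\Gamma(\hat f,\hat f)(x) + \tfrac1N (\Delta\hat f)^2(x)
\]
holds at every $x\in V$. Choosing for each $x^*\in V^*$ a representative $x\in\quotient^{-1}(x^*)$ and evaluating there, the left side equals $\Gamma_2^*(f,f)(x^*)$, the first term on the right equals $K\,\Gamma^*(f,f)(x^*)$, and the last equals $\tfrac1N(\Delta^* f)^2(x^*)$, all by the lifting identities. Since $f$ and $x^*$ were arbitrary, this is exactly the $\CD(K,N)$-condition for $G^*$; the $\CDE(K,N)$ case is identical, using the additional homomorphism property noted above. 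The hypothesis that $\G$ acts isometrically and with orbits of (locally) finite order guarantees that $G^*$ is a well-defined locally finite weighted graph, so that all the quantities above make sense; it also underlies the validity of the Proposition we are invoking.

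The main obstacle, such as it is, is purely bookkeeping: verifying that the lift $f\mapsto\hat f$ genuinely commutes with the full nonlinear expressions defining $\CDE$ (the quotient $\Gamma(f,f)/f$), which requires checking that lifting is a homomorphism for all the pointwise algebraic operations involved and that no issue arises at vertices where $\hat f$ vanishes — but on a graph these are finitely many arithmetic identities at each vertex, and the $\CDE$ condition is only stated for functions where the expression is defined, so this is routine. There is no analytic difficulty here comparable to the continuous setting, since everything is a pointwise identity on a discrete set; the content of the theorem is entirely the compatibility of the quotient construction of weights and measures with the $\Gamma$-calculus, which was already secured by the earlier Proposition.
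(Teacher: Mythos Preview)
Your proposal is correct and follows precisely the approach the paper takes: the paper's proof is the single sentence ``Since the calculation of $\Gamma^{*}$ and $\Delta^{*}$ can be done either on the quotient graph $G^{*}$ or via the lift on $G$ we obtain the following theorem,'' and you have simply written out the details this sentence leaves implicit. Your handling of the $\CDE$ case (noting that the lift is a ring homomorphism so that $\Gamma^*(f,\Gamma^*(f,f)/f)$ lifts correctly) is a detail the paper does not spell out, but it is the obvious and correct elaboration.
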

\begin{remark}
Again as in the setting of general metric spaces, the curvature-dimension
conditions are only preserved in the weighted theory. Indeed, the
quotient of an unweighted graph  in general  can be weighted.
\end{remark}

\section{Group actions and  (super-)Ricci flow}\label{App:RF}

In this section we show that the ideas of the paper also apply
to super-Ricci flows.  First of all, recall that  a  super-solution to the Ricci flow (a \emph{super-Ricci flow} for short) is 
 a time-dependent Riemannian manifold $(M,g_{t})_{t\in[0,T)}$ satisfying 
\[
\partial_{t}g_{t}+2\mathrm{Ric}_{g_{t}}\ge0.
\]
Super-Ricci flows have many common features with spaces with non-negative Ricci curvature.
Based on a characterization of super-Ricci flow by contractivity of
the induced time-dependent heat flow, Sturm \cite{Sturm:SuperRicciI} analyzed the
time-convexity of the entropy functional  and used it to  characterize
super Ricci flows.
\\ By the uniqueness results of Hamilton \cite{Ham} and Chen-Zhu \cite{ChenZhu}, it is known that the Ricci flow for metrics of bounded curvature preserves the isometry group of the solution, i.e.    if $(M,g_{t})_{t\in[0,T)}$ is a Ricci flow of complete metrics with bounded curvature then  $\textrm{Isom}(M,g_{0})\subset \textrm{Isom}(M,g_{t})$ for all $t \in [0,T]$. Conversely, by Kotschwar's backward uniqueness of Ricci flow \cite{Kotschwar} also the converse inclusion holds;   this gives that if $(M,g_{t})_{t\in [0,T]}$ is a Ricci flow of complete metrics  with  bounded curvature, then the isometry group does not change along the flow, i.e. $\textrm{Isom}(M,g_{0})= \textrm{Isom}(M,g_{t})$ for all $t \in [0,T]$.  The goal of this section is to show that in  such a situation, the time-dependent quotient space is a (weighted $N$-dimensional) super-Ricci flow.

Let $(M, \sfd_{t},\m_{t})_{t\in[0,T)}$ be a time-dependent
family of m.m.\ spaces. We assume that each $\sfd_{t}$ induces the same
topology on $M$ and that the measures $\m_{t}$ are mutually absolutely
continuous with respect to each other. Also denote by $(\mathcal{P}_{2}^{(t)}(M),W_{t})$
the $2$-Wasserstein space and its metric induced by $\sfd_{t}$.

Let $u:(t-\epsilon,t+\epsilon)\to\mathbb{R}\cup\{\infty\}$. Define
\begin{align*}
\partial_{t}^{+}u(t) & =\limsup_{s\to t}\frac{u(t)-u(s)}{t-s},\quad \partial_{t}^{-}u(t)=\liminf_{s\to t}\frac{u(t)-u(s)}{t-s},\\
\partial_{t}^{+}u(t-) & =\limsup_{s\nearrow t}\frac{u(t)-u(s)}{t-s},\quad \partial_{t}^{-}u(t+)=\liminf_{s\searrow t}\frac{u(t)-u(s)}{t-s}.
\end{align*}

Let $\Ent:[0,T)\times M$ denote the time-dependent Shannon entropy
functional 
\[
\Ent_{t}(\mu)=\begin{cases}
\int f^{(t)}\log f^{(t)}d\m_{t} & \mu=f^{(t)}\m\\
\infty & \text{otherwise}.
\end{cases}
\]

\begin{defn}
[\protect{Super-$N$-Ricci flow \cite[Definition 0.6]{Sturm:SuperRicciI}}] The time-dependent m.m.\ space $(M,\sfd_{t},\m_{t})_{t\in[0,T)}$
is a \emph{super-$N$-Ricci flow} if the Shannon entropy $\Ent$
is dynamical $N$-convex, i.e. if for  almost all $t\in[0,T)$ and every $\mu_{0},\mu_{1}\in\mathcal{P}_{2}^{(t)}(M)$
with $\Ent_{t}(\mu_{0}),\Ent_{t}(\mu_{1})\in\mathbb{R}$, there is
a geodesic $\tau\mapsto\mu_{\tau}$ in $\mathcal{P}_{2}^{(t)}(M)$
connecting $\mu_{0}$ and $\mu_{1}$ such that $\tau\mapsto\Ent_{t}(\mu_{\tau})$
is absolutely continuous and
\[
\partial_{\tau}^{+}\Ent_{t}(\mu_{1-})-\partial_{\tau}^{-}\Ent_{t}(\mu_{0+})\ge  - \frac{1}{2} \partial_{t}^{-}W_{t-}(\mu_{0},\mu_{1})^{2}+\frac{1}{N}\left|\Ent_{t}(\mu_{0})-\Ent_{t}(\mu_{1})\right|^{2}.
\]
We say it is a \emph{strong super-$N$-Ricci flow} if the inequality
holds along all geodesics $\tau\mapsto\mu_{\tau}$.
\end{defn}


\begin{prop}\label{Prop:QuotSRF}
Assume $(M, \sfd_{t},\m_{t})_{t\in[0,T)}$ is a time-dependent m.m.\ space
and let  $\G$ be a compact Lie group acting  effectively by isomorphisms of m.m. spaces on each
$(M, \sfd_{t},\mm_{t})$. Furthermore, assume the induced quotient metrics $\sfd_{t}^{*}$
induce all  the same topology on $M^{*}$ and the quotient measures $\m_{t}^{*}$  are all mutually absolutely continuous.
If $(M,\sfd_{t},\m_{t})_{t\in[0,T)}$
is a strong super-$N$-Ricci flow then $(M^{*}, \sfd_{t}^{*},\m_{t}^{*})$
is a  strong  super-$N$-Ricci flow as well.
\end{prop}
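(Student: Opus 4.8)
The plan is to mirror the static case (Theorem~\ref{thm:CDbyGisCD}) and reduce the time-dependent convexity inequality on $M^*$ to the corresponding inequality on $M$ along $\G$-invariant Wasserstein geodesics. The two ingredients I would set up first are: (i) a time-slicewise version of the lift isometry from Theorem~\ref{thm:LiftIsom}, giving for each fixed $t$ an isometric embedding $\Lambda_t:\mathcal{P}_2^{(t)}(M^*)\hookrightarrow \mathcal{P}_2^{(t)}(M)\cap\mathcal{P}^\G(M)$ which preserves absolute continuity and maps $W_t^*$-geodesics to $\G$-invariant $W_t$-geodesics; and (ii) the identity relating entropies, namely $\Ent_t(\hat\mu)=\Ent_t^*(\mu)$ for $\G$-invariant lifts, which follows exactly as in the proof of Theorem~\ref{thm:CDbyGisCD} from the disintegration \eqref{eq:mmDis} of $\m_t$ over $M^*$ and the $\G$-invariance of the density $\hat\rho_t$ of $\hat\mu_t$. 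Note this uses that $\mathsf{p}_\sharp\m_t=\m_t^*$ and that the group acts by $\m_t$-preserving isometries for every $t$, which is part of the hypothesis (effective isomorphic action on each slice).

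The main steps, in order, would be: First, fix a time $t\in[0,T)$ at which the strong super-$N$-Ricci flow inequality holds on $M$ (a full-measure set of times), and take $\mu_0,\mu_1\in\mathcal{P}_2^{(t)}(M^*)$ with $\Ent_t^*(\mu_0),\Ent_t^*(\mu_1)\in\mathbb{R}$. Lift to $\hat\mu_0,\hat\mu_1\in\mathcal{P}_2^{(t)}(M)$, which are $\G$-invariant, absolutely continuous, and have finite entropy by the entropy identity. Second, pick any $W_t^*$-geodesic $(\mu_\tau)_{\tau\in[0,1]}$ connecting them and let $(\hat\mu_\tau)$ be its lift; by the time-slice version of Theorem~\ref{thm:LiftIsom} this is a $W_t$-geodesic in $\mathcal{P}_2^{(t)}(M)$, and $W_t^*(\mu_0,\mu_1)=W_t(\hat\mu_0,\hat\mu_1)$. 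Third, apply the \emph{strong} super-$N$-Ricci flow hypothesis along the geodesic $(\hat\mu_\tau)$: this gives absolute continuity of $\tau\mapsto\Ent_t(\hat\mu_\tau)$ and the differential inequality
\[
\partial_\tau^+\Ent_t(\hat\mu_{1-})-\partial_\tau^-\Ent_t(\hat\mu_{0+})\ge -\tfrac12\partial_t^- W_{t-}(\hat\mu_0,\hat\mu_1)^2+\tfrac1N\big|\Ent_t(\hat\mu_0)-\Ent_t(\hat\mu_1)\big|^2.
\]
Fourth, translate each term back to $M^*$: the entropy terms transfer via $\Ent_t(\hat\mu_\tau)=\Ent_t^*(\mu_\tau)$ (so the left side and the last term on the right match directly, as does absolute continuity in $\tau$), and the metric derivative term transfers via $W_t(\hat\mu_0,\hat\mu_1)=W_t^*(\mu_0,\mu_1)$ \emph{for every fixed $t$}, whence $\partial_t^- W_{t-}(\hat\mu_0,\hat\mu_1)^2=\partial_t^- W_{t-}^*(\mu_0,\mu_1)^2$ since the two functions of $t$ literally coincide. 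This yields the desired inequality on $M^*$ along the arbitrary geodesic $(\mu_\tau)$, i.e.\ the strong super-$N$-Ricci flow property.

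The main obstacle is the second point above made precise: I must know that the lift isometry of Theorem~\ref{thm:LiftIsom} holds \emph{simultaneously and compatibly for each $t$}, so that $t\mapsto W_t(\hat\mu_0,\hat\mu_1)$ agrees with $t\mapsto W_t^*(\mu_0,\mu_1)$ as functions (not just pointwise in a way that could fail to transfer $\liminf$'s). Since $\Lambda_t$ is defined for each $t$ by the same formula \eqref{eq:liftmeas} using the Haar measure $\nu_\G$ — which does not depend on $t$ — the lift $\hat\mu_i=\Lambda_t(\mu_i)$ is actually the \emph{same} measure for all $t$ (the formula involves only $\G$ and $\mu_i$, not $\sfd_t$ or $\m_t$), so $W_t(\hat\mu_0,\hat\mu_1)=W_t^*(\mu_0,\mu_1)$ for all $t$ by Theorem~\ref{thm:LiftIsom} applied slicewise; hence $\partial_t^-$ of the two sides coincide and the difficulty dissolves. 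A secondary technical point to check carefully is that the full-measure set of ``good times'' on $M$ transfers to $M^*$, which is immediate since the inequality on $M$ is required for a.e.\ $t$ and we only ever use those same $t$'s. One should also note that essential non-branching / $\GTB$ is \emph{not} needed here because we use the \emph{strong} version of the flow (inequality along all geodesics), exactly paralleling the remark after Theorem~\ref{thm:CDbyGisCD} that the key point is validity of the condition along $\G$-invariant Wasserstein geodesics. I would close by remarking that the same argument, using Proposition~\ref{prop:LiftFol} in place of Theorem~\ref{thm:LiftIsom}, gives the analogous statement for quotients by bounded metric-measure foliations.
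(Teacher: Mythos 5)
Your proposal is correct and follows essentially the same argument as the paper: lift measures and geodesics slicewise via Theorem~\ref{thm:LiftIsom}, use the entropy identity for $\G$-invariant lifts, apply the strong super-$N$-Ricci flow inequality on $M$, and translate each term back. Your explicit observation that $\hat{\mu}_i=\Lambda(\mu_i)$ is literally the same measure for all $t$ (since the lift formula depends only on $\nu_\G$ and the action, not on $\sfd_t$ or $\m_t$), so that $W_t(\hat{\mu}_0,\hat{\mu}_1)=W_t^*(\mu_0,\mu_1)$ as functions of $t$ and hence the $\partial_t^-$ terms coincide, makes rigorous a step the paper passes over silently.
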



\begin{proof}
Just note that by Theorem \ref{thm:LiftIsom}, for fixed $t\in[0,T)$
the $\G$-invariant Wasserstein space $\mathcal{P}_{2}^{(t),\G}(M)$
is isometric to $\mathcal{P}_{2}^{(t)}(M^{*})$ and the entropy with
respect to $\m_{t}^{*}$ of a measure in $\mathcal{P}_{2}(M^{*})$ is
given by the entropy with respect to $\m_{t}$ of the $\G$-invariant
lift. Thus if we let $\Ent^{*}$ denote the entropy of the quotient
spaces and $\tau\mapsto\hat{\mu}_{\tau}$ the $\G$-invariant lift
of a geodesic $t\mapsto\mu_{\tau}$ then
\begin{align*}
\partial_{\tau}^{+}\Ent_{t}^{*}(\mu_{1-})-\partial_{\tau}^{-}\Ent_{t}^{*}(\mu_{0+}) & =\partial_{\tau}^{+}\Ent_{t}(\hat{\mu}_{1-})-\partial_{\tau}^{-}\Ent_{t}(\hat{\mu}_{0+})\\
 & \geq    - \frac{1}{2} \partial_{t}^{-}W_{t-}(\hat{\mu}_{0}, \hat{\mu}_{1})^{2}+\frac{1}{N}\left|\Ent_{t}(\hat{\mu}_{0})-\Ent_{t}(\hat{\mu}_{1})\right|^{2}  \\
 & =  - \frac{1}{2} \partial_{t}^{-}  W_{t-}^{*}(\mu_{0},\mu_{1})^{2} +\frac{1}{N}\left|\Ent_{t}^{*}(\mu_{0})-\Ent_{t}^{*}(\mu_{1})\right|^{2}.
\end{align*}
\end{proof}


\begin{remark} \label{rem:quotSRFGnoT}
Let $\G$ be as in   Proposition \ref{Prop:QuotSRF} and assume furthermore that the action of $\G$ is independent of the  time $t\in [0,T)$. Then the assumption that all the metrics $\sfd_{t}$ induce the same topology on $M$ and all the measures $\mm_{t}$ are mutually absolutely continuous ensure that the same is true on the quotient $M^{*}$, i.e.   the induced quotient metrics $\sfd_{t}^{*}$
induce all  the same topology on $M^{*}$ and the quotient measures $\m_{t}^{*}$  are all mutually absolutely continuous. So in this case the quotient of a strong $N$-super-Ricci flow is an strong $N$-super-Ricci flow in the quotient spaces without extra assumptions on the quotient structures.
\end{remark}

Combining Remark \ref{rem:quotSRFGnoT} with Proposition \ref{Prop:QuotSRF}, together with the fact that a smooth $N$-dimensional Ricci flow is a strong super-$N$-Ricci flow, we get the next result.


\begin{cor}\label{cor:QuotRF}
Let $(M,g_{t})_{t\in[0,T)}$ be a time-dependent family of complete $N$-dimensional
Riemannian manifolds with bounded curvature solving  the Ricci flow, and let $\G< {\rm Isom}(M,g_{0})$ be  a compact Lie subgroup of the isometry group. Then the induced quotient metric-measure  spaces $(M^{*}, \sfd_{t}^{*},\m_{t}^{*})_{t\in[0,T)}$
are a strong   super-$N$-Ricci flow.  Moreover, if   the induced quotient spaces are smooth weighted Riemannian manifolds 
$(M^{*},g_{t}^{*}, \Psi^*_t\vol_{g_{t}^*})_{t\in[0,T)}$, with $\Psi^*_t\vol_{g_{t}^*}=\quotient_{\sharp}\vol_{g_{t}}$, then  
\[
\frac{1}{2}\partial_{t}g_{t}^{*}+\mathrm{Ric}_{g_{t}^{*}}-(N-n)\frac{\nabla_{g_{t}^{*}}^{2}(\Psi_{t}^{*})^{\frac{1}{N-n}}}{(\Psi_{t}^{*})^{\frac{1}{N-n}}}\ge0
\]
where $n=\dim M^{*}$.
\end{cor}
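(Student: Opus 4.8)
The plan is to deduce Corollary \ref{cor:QuotRF} as a near-immediate consequence of the machinery already set up, so that the proof is essentially a chain of citations with one routine verification at the end. First I would recall the uniqueness results for Ricci flow with bounded curvature quoted in the introduction of this section: by Hamilton \cite{Ham} and Chen--Zhu \cite{ChenZhu} one has ${\rm Isom}(M,g_0)\subset{\rm Isom}(M,g_t)$, and by Kotschwar's backward uniqueness \cite{Kotschwar} the reverse inclusion holds, so ${\rm Isom}(M,g_0)={\rm Isom}(M,g_t)$ for all $t\in[0,T)$. Consequently a compact Lie subgroup $\G<{\rm Isom}(M,g_0)$ acts by isometries on $(M,g_t)$ for every $t$, and since the action is by Riemannian isometries it is also volume-preserving; thus $\G$ acts by isomorphisms of the metric measure spaces $(M,\sfd_t,\vol_{g_t})$ for every $t$, with a $\G$-action that does \emph{not} depend on $t$.

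Next I would invoke that a smooth complete $N$-dimensional Ricci flow $(M,g_t)_{t\in[0,T)}$ is a strong super-$N$-Ricci flow in the sense of Sturm (this is \cite[Theorem~1.3 / Corollary~1.5]{Sturm:SuperRicciI}, already referenced here; the convexity inequality holds along \emph{every} Wasserstein geodesic because on a smooth manifold optimal transport between absolutely continuous measures is unique and given by a map). The hypotheses required to apply Proposition \ref{Prop:QuotSRF} are: the $\sfd_t$ all induce the same topology on $M$ — true since $g_t$ vary smoothly and each is a genuine Riemannian metric — and the measures $\vol_{g_t}$ are mutually absolutely continuous — true since $\vol_{g_t}=\big(\det(g_t)/\det(g_0)\big)^{1/2}\vol_{g_0}$ with a smooth positive density. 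Since the $\G$-action is $t$-independent, Remark \ref{rem:quotSRFGnoT} tells us the quotient metrics $\sfd_t^*$ all induce the same topology on $M^*$ and the quotient measures $\m_t^*=\quotient_\sharp\vol_{g_t}$ are all mutually absolutely continuous. Hence Proposition \ref{Prop:QuotSRF} applies verbatim and yields that $(M^*,\sfd_t^*,\m_t^*)_{t\in[0,T)}$ is a strong super-$N$-Ricci flow. This proves the first assertion.

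For the second assertion I would argue that when the quotient happens to be a smooth weighted Riemannian manifold $(M^*,g_t^*,\Psi_t^*\vol_{g_t^*})$ with $\Psi_t^*\vol_{g_t^*}=\quotient_\sharp\vol_{g_t}$, then the abstract strong super-$N$-Ricci flow condition translates into the stated smooth PDE inequality. This is precisely the smooth characterization of super-$N$-Ricci flows established by Sturm: \cite[Theorem~1.3]{Sturm:SuperRicciI} shows that a smooth time-dependent weighted manifold $(M^*,g_t^*,e^{-f_t}\vol_{g_t^*})$ is a strong super-$N$-Ricci flow if and only if $\frac12\partial_t g_t^*+\Ric_{g_t^*}+\nabla^2_{g_t^*}f_t-\frac{1}{N-n}\,df_t\otimes df_t\ge 0$, and substituting $e^{-f_t}=(\Psi_t^*)$, i.e. $f_t=-\log\Psi_t^*$, and rewriting the Hessian term via the identity
\[
\nabla^2_{g_t^*}f_t-\frac{1}{N-n}df_t\otimes df_t=-(N-n)\frac{\nabla_{g_t^*}^2(\Psi_t^*)^{\frac{1}{N-n}}}{(\Psi_t^*)^{\frac{1}{N-n}}}
\]
gives exactly the displayed inequality with $n=\dim M^*$. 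So this step is a one-line bookkeeping computation once the right reference is cited.

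The main (and really the only) obstacle is a matter of bookkeeping rather than depth: one must be slightly careful that the smooth super-Ricci characterization in \cite{Sturm:SuperRicciI} is stated for the \emph{strong} version of the condition and that the passage through Proposition \ref{Prop:QuotSRF} genuinely preserves the \emph{strong} form — which it does, since $\G$-invariant Wasserstein geodesics in $\mathcal P_2^{(t)}(M)$ correspond bijectively, via Theorem \ref{thm:LiftIsom}, to \emph{all} Wasserstein geodesics in $\mathcal P_2^{(t)}(M^*)$, so a convexity inequality holding along every geodesic upstairs (the strong condition on a smooth manifold) yields it along every geodesic downstairs. Beyond that, one should note that properness/completeness of the flow is used only to guarantee that the disintegration measures $\m_{t,x^*}$ are finite and that $\mathcal P_2^{(t)}(M^*)$ is geodesic, both already handled in the earlier sections. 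With these points checked, the corollary follows by assembling Proposition \ref{Prop:QuotSRF}, Remark \ref{rem:quotSRFGnoT}, the Ricci-flow uniqueness theorems, and Sturm's smooth characterization.
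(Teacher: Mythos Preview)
Your proposal is correct and follows exactly the route the paper takes: the paper's proof is the one-line statement ``Combining Remark \ref{rem:quotSRFGnoT} with Proposition \ref{Prop:QuotSRF}, together with the fact that a smooth $N$-dimensional Ricci flow is a strong super-$N$-Ricci flow, we get the next result,'' and you have simply unpacked this, supplying the isometry-preservation citations, the verification of the hypotheses of Proposition \ref{Prop:QuotSRF}, and the translation to the smooth PDE via Sturm's characterization together with the Hessian identity for $(\Psi_t^*)^{1/(N-n)}$.
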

\begin{remark}
Under the assumptions of the last part of Corollary \ref{cor:QuotRF}, since the orbits $\G(x)$ at different times $t\in[0,T)$ are bi-Lipschitz
equivalent, their Haar measure $\nu_{x^{*}}$ is time independent. Thus it is
possible to obtain $f_{t}$ via the following formula
\[
\vol_{g_{t}}=\int\nu_{x^{*}}e^{-f_{t}}d\vol_{g_{t}^{*}}(x^{*}).
\]
\end{remark}


\end{document}